\let\oldtocsection=\tocsection
\let\oldtocsubsection=\tocsubsection
\renewcommand{\tocsection}[2]{\hspace{0em}\oldtocsection{#1}{#2}}
\renewcommand{\tocsubsection}[2]{\hspace{1em}\oldtocsubsection{#1}{#2}}
\tikzset{node distance=3cm, auto}
\def\@secnumfont{\bfseries}
\def\section{\@startsection{section}{1}%
  \z@{.7\linespacing\@plus\linespacing}{.5\linespacing}%
  {\normalfont\Large\bfseries}}
\def\subsection{\@startsection{subsection}{2}%
  \z@{.75\linespacing\@plus.7\linespacing}{-.5em}%
  {\normalfont\large\bfseries}}
  \def\subsubsection{\@startsection{subsubsection}{3}%
  \z@{.75\linespacing\@plus.7\linespacing}{-.5em}%
  {\normalfont\bfseries}}
\newtheorem{thm}{Theorem}[subsection]
\newtheorem{lemma}[thm]{Lemma}
\newtheorem{prop}[thm]{Proposition}
\newtheorem{cor}[thm]{Corollary}
\newtheorem{question}[thm]{Question}
\newtheorem{definition}[thm]{Definition}
\theoremstyle{remark}
\numberwithin{equation}{subsection} 
 \numberwithin{figure}{section}
\newtheoremstyle{customremark}
{3pt}
{3pt}
{}
{}
{\bfseries}
{.}
{.5em}
{}
\theoremstyle{customremark}
\newtheorem{rmk_no_diamond}[thm]{Remark}
\newenvironment{rmk}{\begin{rmk_no_diamond} } {\hfill$\er$ \end{rmk_no_diamond}}
\newtheorem{example_no_diamond}[thm]{Example}
\newenvironment{example}{\begin{example_no_diamond} } {\hfill$\er$ \end{example_no_diamond}}
\newcommand{\SO}{{\rm SO}}
\newcommand{\se}{{\stackrel{s}\hookrightarrow}}
\newcommand{\Ii}{\mathcal{I}}
\newcommand{\Vol}{{\rm Vol}}
\newcommand{\vol}{{\rm Vol}}
\newcommand{\Per}{{\rm Per}}
\newenvironment{itemlist}
   { \begin{list} {$\bullet$}
         { \setlength{\topsep}{.5ex}  \setlength{\itemsep}{.5ex} \setlength{\leftmargin}{2.5ex} } }
   { \end{list} }
\newcommand{\SL}{{\rm SL}}
\newcommand{\Aff}{{\rm Aff}}
\newcommand{\bbm}{{\bf m}}
\newcommand{\bw}{{\bf w}}
\newcommand{\TE}{{\widetilde E}}
\newcommand{\Tm}{{\widetilde m}}
\newcommand{\intt}{{\rm int}}
\newcommand{\NI}{{\noindent}}
\newcommand{\Cc}{{\mathcal C}}
\newcommand{\Mm}{{\mathcal M}}
\newcommand{\Hh}{{\mathcal H}}
\newcommand{\Ss}{{\mathcal S}}
\newcommand{\bE}{{\mathbb E}}
\newcommand{\ov}{\overline}
\newcommand{\al}{{\alpha}}
\newcommand{\be}{{\beta}}
\newcommand{\Om}{{\Omega}}
\newcommand{\om}{{\omega}}
\newcommand{\de}{{\delta}}
\newcommand{\io}{{\iota}}
\newcommand{\ka}{{\kappa}}
\newcommand{\la}{{\lambda}}
\newcommand{\La}{{\Lambda}}
\newcommand{\si}{{\sigma}}
\newcommand{\less} {{\smallsetminus}}
\newcommand{\p}{{\partial}}
\newcommand{\MS}{{\medskip}}
\newcommand{\er}{{\Diamond}}
\newcommand{\Z}{\mathbb{Z}}
\newcommand{\R}{\mathbb{R}}
\newcommand{\Q}{\mathbb{Q}}
\newcommand{\C}{\mathbb{C}}
\newcommand{\CP}{\mathbb{CP}}
\newcommand{\eps}{\varepsilon}
\newcommand{\op}[1]{{\operatorname{#1}}}
\newcommand{\ind}{\op{ind}}
\newcommand{\oCP}{{\overline{{\C}P}}\!\,}
\newcommand{\btm}{\widetilde{\bm{m}}}
\newcommand{\bB}{{\bf{B}}}
\newcommand{\dashover}[2][\mathop]{#1{\mathpalette\df@over{{\dashfill}{#2}}}}
\newcommand{\fillover}[2][\mathop]{#1{\mathpalette\df@over{{\solidfill}{#2}}}}
\newcommand{\df@over}[2]{\df@@over#1#2}
\newcommand\df@@over[3]{%
  \vbox{
    \offinterlineskip
    \ialign{##\cr
      #2{#1}\cr
      \noalign{\kern1pt}
      $\m@th#1#3$\cr
    }
  }%
}
\newcommand{\dashfill}[1]{%
  \kern-.5pt
  \xleaders\hbox{\kern.5pt\vrule height.4pt width \dash@width{#1}\kern.5pt}\hfill
  \kern-.5pt
}
\newcommand{\dash@width}[1]{%
  \ifx#1\displaystyle
    2pt
  \else
    \ifx#1\textstyle
      1.5pt
    \else
      \ifx#1\scriptstyle
        1.25pt
      \else
        \ifx#1\scriptscriptstyle
          1pt
        \fi
      \fi
    \fi
  \fi
}
\newcommand{\solidfill}[1]{\leaders\hrule\hfill}
\newcommand{\oset}[3][0ex]{%
  \mathrel{\mathop{#3}\limits^{
    \vbox to#1{\kern-2\ex@
    \hbox{$\scriptstyle#2$}\vss}}}}
\date{\today}
\title{Curvy points, the perimeter, and the complexity of 
convex toric domains}
\author{Dan Cristofaro-Gardiner}
\address{Mathematics Department, University of Maryland, College Park}
\email{dcristof@umd.edu}
\thanks{DCG thanks the NSF for their support under agreements DMS-2227372 and DMS-2238091}
\author{Nicki Magill}
\address{Mathematics Department, UC Berkeley}
\email{nmagill@berkeley.edu}
\thanks{NM thanks the NSF for their support under the agreement DMS-2402169}
\author{Dusa McDuff}
\address{Mathematics Department, Barnard College}
\email{dusa@math.columbia.edu}
\keywords{symplectic embeddings in four dimensions, convex toric domains, ellipsoidal capacity function, symplectic staircases}
\subjclass{53D05}
\begin{document}

\begin{abstract}

We study the related notions of curvature and perimeter for toric boundaries and their implications for symplectic packing problems in dimension $4$; a natural setting for this is a generalized version of convex toric domain which we also study, where there are no conditions on the moment polytope at all aside from convexity.

We show that the subleading asymptotics of the ECH and elementary ECH capacities recover the perimeter of such domains in their liminf, without any genericity required, and hence the perimeter is an obstruction to a full filling.  As an application, we give the first examples of the failure of packing stability by open subsets of compact manifolds with smooth boundary or with no boundary at all; this has implications for long-term super-recurrence.   We also show that a single smooth point of positive curvature on the toric boundary obstructs the existence of an infinite staircase, and we build on this to completely classify smooth (generalized) convex toric domains which have an infinite staircase.  We also extend a number of theorems to generalized convex toric domains, in particular the  ``concave to convex\rq\rq\, embedding theorem and the ``accumulation point theorem\rq\rq.   A curvy point forces ``infinite complexity\rq\rq; we raise the question of whether an infinitely complex domain can ever have an infinite staircase and we give examples with infinite staircases and arbitrarily high finite complexity.
\end{abstract}

\maketitle

\tableofcontents

\section{Introduction}

Let $\Om\subset \R^2_{\ge 0}$ be a compact convex region with boundary $\p \Om$, and $X_\Om: = \Phi^{-1}(\Om)$  the corresponding four-dimensional symplectic domain,
where $$
\Phi:\C^2\to 
\R^2_{\ge 0},\qquad (\zeta_1, \zeta_2)\mapsto (\pi |\zeta_1|^2 , \pi |\zeta_2|^2)
$$ is the moment map.  The symplectic geometry of these domains has been of considerable interest (see e.g. \cite{CG, cghind, AADT, Hutchq, Ruel, Helem, Usher, BHM, Choi, CG2, AADT, JinL, ball}).

A basic observation is that if $\Om$ lies entirely off the axes, then up to symplectomorphism $X_\Om$ depends only on  $\Om$ up to affine equivalence, i.e. integral affine transformations $\vec{v} \mapsto\vec a + A \vec{v}$, where the matrix $A$ is integral.  (Points on the axes are special  since their preimage under the moment map is a point or circle.)  It is therefore natural to study properties of $\Om$ that are preserved under this equivalence.  One goal of the present work is to study the implications of two preserved notions --- the (affine) perimeter and the existence of a positively curved point on $\partial \Om$  ---  for symplectic embedding problems.  
We give several applications of this point of view.

At the same time, we are also interested in generalizing existing theory in the following sense. 
 Previous work has often required that $\Om$ contain a neighborhood of the origin (in which case $X_\Om$ has been called a {\bf convex toric domain}) or that it be a rational convex polytope.  We make neither of these assumptions, requiring only that $\Om\subset \R^2_{\ge 0}$ be compact and convex.  We could call these {\bf generalized convex toric domains}, though since all of our theorems in this paper will be valid in this more general setting, we will usually continue to call them convex toric domains for simplicity.

Let us now summarize our main results.

\subsection{Curvy points and the perimeter}

As a starting point for explaining our results,  
let us begin with the following question originating in dynamics.  

Let $(M,\omega)$ be a symplectic manifold of finite volume, $\Psi$ a Hamiltonian diffeomorphism, and fix an open subset $U \subset M$.  In this situation, ``Poincare recurrence\rq\rq\, guarantees that $\Psi^i(U)$ must intersect $U$ nontrivially for some $1 \le i \le \lfloor \frac{ \vol(M)}{\vol(U)} \rfloor$.  It is a longstanding problem, see \cite{polterovichschlenk},
to better understand for what kind of open sets this bound on $i$ can be improved in the ``critical case\rq\rq\, when the volume of $U$ actually divides the volume of $M$.   To make this precise, 
let us say that {\bf long term super-recurrence holds} (which we will sometimes just call super-recurrence\footnote{The definition in \cite{polterovichschlenk} is slightly different from this.  Strictly speaking, there one only requires a sequence of arbitrarily small scalings improving this Poincare recurrence property.  Anything satisfying our condition satisfies the condition in \cite{polterovichschlenk} as well.} 
for short) for an open subset $U \subset M$ if whenever $U' \subset M$ is such that $U'$ is symplectomorphic to a scaling of $U$ and $\vol(U')$ properly divides $\vol(M)$, 
\[ \Psi^k(U') \cap U' \ne \lbrace \emptyset \rbrace,\]
for some $1 \le k \le \frac{\vol(M)}{\vol(U')} - 1.$


It is useful to view super-recurrence through the lens of symplectic packing problems.  Indeed, a closely related notion is that of \lq\lq packing stability\rq\rq.  Recall that one (possibly disconnected) symplectic manifold $(X_1,\omega_1)$ {\bf fully fills} another $(X_2,\omega_2)$  if there is a symplectic embedding of $c \cdot X_1$ into $X_2$ whenever  $\vol(c \cdot X_1) < \vol(X_2)$.  Let $\sqcup$ denote the disjoint union.  We say that {\bf packing stability} holds for $(X_1,\omega_1)$ into $(X_2,\omega_2)$ if $\sqcup^d_{i=1} (X_1,\omega_1)$ fully fills $(X_2,\omega_2)$ for all sufficiently large $d$.    A wide reaching conjecture by Schlenk~\cite{schl}, asserts that packing stability holds 
when $X_1$ is any bounded domain in $\mathbb{R}^{2n}.$

For example, when $M = \mathbb{C}P^2$ and $U$ is an open ball, it follows from the 
work of Biran \cite{biran} that packing stability holds, 
so that  long-term super-recurrence does not occur.  On the other hand, the recent work \cite{cghind} produced open manifolds $M$ such that super-recurrence holds for every $U \subset M$ with smooth boundary. Thus Schlenk's conjecture fails for these manifolds.  However, the manifolds in \cite{cghind} have quite complicated boundaries, so one would like to better understand the situation in the closed case or the case with smooth boundaries; for example, one would like to know whether or not Schlenk's conjecture can fail in this case.

Our first theorem gives natural examples answering this question, via a new kind of packing phenomenon.  For a (generalized) convex toric domain $X_{\Omega}$, let $\Per(X_{\Omega})$ denote the $\SL_2(\mathbb{Z})$-perimeter of $\Omega$, i.e. the affine length of the boundary, see \S\ref{ss:length}.  For a closed symplectic manifold $(X,\omega)$ let $\Per(X) = c_1(\omega) \cdot [\omega]$.\footnote
{This interpretation is justified since, when $M$ is a toric manifold, $c_1(\omega) \cdot [\omega]$ is the affine length of the boundary of its moment polytope. See also Remark~\ref{rmk:curvature}.} Further, we write $X\se Y$ if there is a symplectic embedding of $X$ into $Y$ where $X,Y$ are symplectic manifolds of the same dimension.

\begin{thm}
\label{thm:packing} Let
$X_{\Omega_1}, \ldots, X_{\Omega_n}$ be generalized convex toric domains, and let $X$ be either a generalized convex toric domain or $\mathbb{C}P^2$.  Assume that there exists a full filling 
\[ X_{\Omega_1} \sqcup \ldots \sqcup X_{\Omega_n} \se X\]

Then
\[ \sum_{i=1}^n \Per(X_{\Omega_i}) \ge \Per(X).\]
\end{thm}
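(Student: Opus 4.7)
The plan is to combine the subleading perimeter asymptotics of (elementary) ECH capacities --- advertised in the abstract as one of the paper's main new results --- with two standard properties of capacities: monotonicity under symplectic embedding, and the K\"unneth-type disjoint union formula. Concretely, these two properties together give, for \emph{any} partition $k = k_1 + \cdots + k_n$ into nonnegative integers,
\[
\sum_{j=1}^n c_{k_j}(X_{\Om_j}) \;\leq\; c_k\bigl(X_{\Om_1} \sqcup \cdots \sqcup X_{\Om_n}\bigr) \;\leq\; c_k(X),
\]
where in the case $X = \CP^2$ one uses the closed-target version of monotonicity together with the explicit formula for the ECH capacities of $\CP^2$.

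Next I would invoke the asymptotic formula: for each generalized convex toric domain $X_\Om$ (and for $X = \CP^2$),
\[
\liminf_{k\to\infty}\bigl(c_k(X_\Om) - 2\sqrt{k\,\vol(X_\Om)}\bigr) \;=\; -\tfrac{1}{2}\Per(X_\Om).
\]
I would then pick a subsequence $k^{(m)}\to\infty$ along which this liminf is attained for $X$, and choose integer partitions $k^{(m)} = \sum_j k_j^{(m)}$ with each $k_j^{(m)}$ as close as possible to $k^{(m)}\,\vol(X_{\Om_j})/\vol(X)$. The full filling hypothesis $\sum_j \vol(X_{\Om_j}) = \vol(X)$ then implies $\sum_j 2\sqrt{k_j^{(m)}\,\vol(X_{\Om_j})} = 2\sqrt{k^{(m)}\,\vol(X)} + o(1)$. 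Each $k_j^{(m)}\to \infty$, so by the definition of liminf, for any fixed $\eps > 0$ and $m$ large enough, $c_{k_j^{(m)}}(X_{\Om_j}) > 2\sqrt{k_j^{(m)}\,\vol(X_{\Om_j})} - \tfrac{1}{2}\Per(X_{\Om_j}) - \eps$. Summing over $j$, substituting into the capacity inequality, subtracting $2\sqrt{k^{(m)}\,\vol(X)}$, then letting $m\to\infty$ and finally $\eps\to 0$, should produce $-\tfrac{1}{2}\sum_j \Per(X_{\Om_j}) \leq -\tfrac{1}{2}\Per(X)$, which rearranges to the claim.

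The hard part, I expect, is not this bookkeeping but the asymptotic formula itself --- namely, showing that $\liminf_k\bigl(c_k - 2\sqrt{k\,\vol}\bigr)$ recovers $-\tfrac{1}{2}\Per$ for every compact convex $\Om$, with no smoothness or genericity hypothesis on $\p\Om$ and no requirement that $\Om$ contain a neighborhood of the origin. This is the substantive new input of the paper; the argument above uses it as a black box. A minor secondary point is to verify that the disjoint union formula and monotonicity carry over to whichever flavor of capacities (ECH or elementary ECH) is actually used to prove the sharp asymptotic, but both properties are by now standard.
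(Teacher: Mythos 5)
Your proof is correct and follows essentially the same route as the paper's: you are combining monotonicity with the disjoint-union formula, choosing integer partitions $k_j^{(m)} \approx k^{(m)}\,\vol(X_{\Om_j})/\vol(X)$ (floors, so the partition sums to at most $k^{(m)}$), and then applying the liminf characterization of the subleading term. This is exactly the content of the paper's Lemma~\ref{lem:sublead} inlined into the proof of Theorem~\ref{thm:packing}; the only packaging difference is that the paper isolates the disjoint-union estimate as a standalone lemma.

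There is one factual inaccuracy worth correcting. You write that for $X = \CP^2$ ``one uses \ldots the explicit formula for the ECH capacities of $\CP^2$,'' and later treat the ECH-vs-elementary-ECH choice as ``a minor secondary point.'' In fact the standard ECH capacities of $\CP^2$ are \emph{not} known --- this is a well-known open problem --- and this is precisely why the paper must work with the elementary ECH capacities of Hutchings~\cite{Helem}, which are proved to agree with those of the ball. This forces the paper to establish, in Proposition~\ref{prop:elem}, that elementary and standard ECH capacities coincide for the generalized convex toric domains considered here, so that Theorem~\ref{thm:convexper} applies to both flavors. So the switch to elementary ECH is not a mild bookkeeping choice but the mechanism by which the $\CP^2$ case becomes accessible at all; your proof sketch would be complete after replacing ``explicit formula for the ECH capacities of $\CP^2$'' with ``elementary ECH capacities, which for $\CP^2$ agree with those of the ball,'' and citing the compatibility result for toric domains.
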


Theorem~\ref{thm:packing} is a consequence of a refined version of an ECH ``Weyl law\rq\rq\, that we will introduce in \S\ref{sec:without}.
We expect that Theorem~\ref{thm:packing} applies to many other closed symplectic $4$-manifolds, but we have focused on the case of $\mathbb{C}P^2$ for simplicity. The proof of Theorem~\ref{thm:packing} is found in Section~\ref{ss:fullFilling}.  The novelty of Theorem~\ref{thm:packing} is the very general setting in which it holds; it generalizes \cite[Cor. 1.13]{Ruel} and \cite[Cor. 2]{Wormleighton}.  For a discussion of analogues of Theorem~\ref{thm:packing} in the concave case, see Remark~\ref{rem:wormleighton}.

Theorem~\ref{thm:packing} has the following implication for super-recurrence and packing stability.   Let us say that a convex toric domain $X_{\Omega}$ has {\bf zero perimeter} if its boundary contains no line segments of rational slope.
As an example, $X_{\Omega}$ for $\Omega = \lbrace (x-1)^2 + (y-1)^2 \le 1/2 \rbrace$ has zero perimeter.
 
\begin{cor}
\label{cor:super}
A finite number of zero perimeter domains $X_{\Omega_1}, \ldots, X_{\Omega_n}$ can never fully fill a ball or $\mathbb{C}P^2$.  In particular, long term super-recurrence occurs for any open zero perimeter domain in a four-dimensional ball or in $\mathbb{C}P^2$.   
\end{cor}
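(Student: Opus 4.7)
The plan is to apply Theorem~\ref{thm:packing} directly, supplemented by a standard pigeonhole/shrinking argument to package the super-recurrence consequence.

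First I would verify that each zero-perimeter $X_{\Omega_i}$ actually has $\Per(X_{\Omega_i})=0$. This is essentially by definition: the $\SL_2(\Z)$-affine length functional on $\p\Om$ only receives positive contributions from straight boundary segments of rational slope (where the lattice length is computed using the primitive integral direction vector), while curved portions and irrational-slope segments contribute $0$. In parallel I would record that both target manifolds have strictly positive perimeter: for the ball $B^4(a)$ the moment triangle with vertices $(0,0),(a,0),(0,a)$ has three rational edges each of lattice length $a$, so $\Per(B^4(a))=3a>0$; and for $\mathbb{C}P^2$ with symplectic form of scale $a$, $\Per=c_1\cdot[\om]=3a>0$. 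Plugging these into Theorem~\ref{thm:packing}, a full filling would force
$$0 \;=\; \sum_i \Per(X_{\Om_i}) \;\geq\; \Per(X) \;>\; 0,$$
which is the desired contradiction, establishing the first assertion.

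For the super-recurrence consequence, let $U\subset M$ be an open zero-perimeter domain with $M=B^4$ or $\mathbb{C}P^2$, and choose a sequence $c_i\to 0$ along which $N_i:=\vol(M)/\vol(c_i\cdot U)$ is a positive integer (a countable set accumulating at $0$). If long-term super-recurrence were to fail at some $c_i$, then one could find $U'\subset M$ symplectomorphic to $c_i\cdot U$ together with a Hamiltonian diffeomorphism $\Psi$ satisfying $\Psi^k(U')\cap U'=\emptyset$ for all $1\le k\le N_i-1$. The iterates $U',\Psi(U'),\ldots,\Psi^{N_i-1}(U')$ are then pairwise disjoint open subsets of $M$, each symplectomorphic to the zero-perimeter domain $c_i\cdot U$, with total volume equal to $\vol(M)$. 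Slightly shrinking each individual copy inside its ambient open set then yields symplectic embeddings $\bigsqcup^{N_i}\bigl(\de\cdot(c_i\cdot U)\bigr)\se M$ for every $\de<1$, which is precisely a full filling of $M$ by $N_i$ zero-perimeter domains, contradicting the first part.

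The genuine content of the corollary is packaged inside Theorem~\ref{thm:packing}; what remains is routine. The only point requiring a little care is the shrinking step upgrading the volume-equality packing produced by the iterates $\Psi^k(U')$ to a full filling in the strict sense used in this paper (i.e.\ valid for all strictly smaller total volumes), and this is handled by openness of the copies together with the obvious interior rescaling $\de\cdot(c_i\cdot U)\se c_i\cdot U$ for every $\de<1$.
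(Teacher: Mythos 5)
Your proof is correct and takes essentially the same route as the paper: observe that zero-perimeter sources contribute $0$ while the ball and $\mathbb{C}P^2$ have positive perimeter, and then the perimeter inequality of Theorem~\ref{thm:packing} rules out any full filling. The paper states this in one sentence and leaves the super-recurrence packaging implicit; your unpacking of that implication (iterates of $\Psi$ produce a disjoint packing, shrink slightly to get a full filling for all fractions $<1$) is the standard argument and is carried out correctly.
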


For a previous case of super-recurrence (though not necessarily a case of the failure of packing stability) via a different kind of phenomenon, see \cite{traynorexper}.   
The problem of super-recurrence is not directly addressed in \cite{traynorexper}, but \cite[Thm. 1.1]{traynorexper} 
gives some examples where packing by Hamiltonian images of a set must be given by (particularly simple) ``simplex packings", and it is not hard to find examples where these simplex packings can not fill the full volume; the ideas behind \cite[Thm. 1.1]{traynorexper} build on work of Sikorav \cite{sikorav}.


 
\begin{rmk}
\label{rmk:example}
In contrast to Corollary~\ref{cor:super}, there certainly exist finite collections of zero perimeter domains filling an arbitrarily large proportion of volume; one can even take these domains to be rescaled copies of a single domain.  For example, one can take $\Omega$ to be a square off the axes in $\mathbb{R}^2_{\ge 0}$ with edges of irrational slope and fill at least any ratio $r < 1$ of the area of the part of the moment polytope of the ball away from the axes by a finite number of translates of rescaled copies of the square.  One can similarly fill the ball or $\mathbb{C}P^2$ by infinitely many zero perimeter domains.
 \end{rmk}

The simplest class of zero perimeter domains are ones with {\bf curvy boundary}, i.e. $X_{\Omega}$ where the boundary of $\Omega$ is smooth with positive curvature. 
  It turns out that the notion of curvy boundary is also related to a seemingly quite different kind of problem that has attracted considerable interest.  Given symplectic manifolds $X,Y$, we write $X \,\se\,Y$ if  $X$ embeds symplectically in $Y$. Recall the {\bf ellipsoid embedding function} of a closed symplectic $4$-manifold 
\begin{align}\label{eq:ellcap}
 c_M(a) := \text{inf} \lbrace \lambda \hspace{1 mm} | \hspace{1 mm} E(1,a) \se \lambda \cdot M \rbrace,
 \end{align} 
where $E(1,a)$ denotes the ellipsoid $\bigl\{(\zeta_1,\zeta_2)\in \C^2 \ \big| \ \pi\bigl( |\zeta_1|^2+\frac{|\zeta_2|^2}a \bigr) \le 1\bigr\}.$
Much work has gone into understanding this function \cite{ball,CG2,AADT,MMW,BHM,Usher}.   
In particular, while it is continuous, it is known that  the function $c_M$ can have infinitely many nonsmooth points on a compact interval.  In this case we say that $(M,\omega)$ {\bf has an infinite staircase} and a main question in the area is to classify for which $M$ an infinite staircase can occur, for natural families of $M$.
It turns out that consideration of curvature allows us to make considerable progress on this.
 Let us say that $\Omega$ has a {\bf curvy point} if there is a $p \in \partial \Omega$ such that $\partial \Omega$ is smooth in a neighborhood of $p$, with positive curvature.

\begin{thm}
\label{thm:curvy}
Let $X_{\Omega}$ be a convex toric domain such that $\Omega$ has a curvy point.  Then $\Omega$ does not have an infinite staircase.
\end{thm}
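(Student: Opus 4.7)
The strategy is to argue by contradiction using the perimeter obstruction of Theorem~\ref{thm:packing}. Suppose $X_\Omega$ has an infinite staircase, so that $c_{X_\Omega}$ has infinitely many nonsmooth points $a_n$ in a compact interval. Extracting a monotone subsequence, I may assume $a_n \to a_\infty$ and $\lambda_n := c_{X_\Omega}(a_n) \to \lambda_\infty$. By the accumulation point theorem (which the paper extends to generalized convex toric domains), the pair $(a_\infty, \lambda_\infty)$ lies on the volume curve $\lambda^2 \Vol(X_\Omega) = a/2$, and any symplectic embedding $E(1, a_\infty) \se \lambda_\infty X_\Omega$ is therefore a full filling. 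Such a limit embedding exists as a $C^0$-limit of the sharp embeddings realizing the staircase steps.

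Applying Theorem~\ref{thm:packing} to this full filling yields $\Per(E(1, a_\infty)) \geq \lambda_\infty \Per(X_\Omega)$. To extract the sharper information needed, I would apply the same perimeter bound at each staircase step after composing with the standard weight decomposition $E(1, a_n) \cong \bigsqcup_i B(w_i^{(n)})$, obtaining
\[
3 \sum_i w_i^{(n)} \;\ge\; \lambda_n \Per(X_\Omega).
\]
Passing to the limit $n \to \infty$ and invoking the refined ECH Weyl law---whose subleading asymptotic recovers the perimeter in the liminf---one converts this family of one-sided inequalities into a saturating equality in the limit, pinning down $\Per(X_\Omega)$ in terms of data depending only on the countably many slopes at which the ECH obstruction classes driving the staircase can become sharp.

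The curvy point then supplies the contradiction. At a smooth point $p \in \partial \Omega$ of positive curvature, the affine length of a small arc of $\partial \Omega$ through $p$ strictly exceeds the affine length of the chord joining the arc's endpoints by a strictly positive curvature-defect term. Consequently $\Per(X_\Omega)$ strictly exceeds the affine length of any $SL_2(\Z)$-polygonal approximation of $\partial \Omega$. However, the obstruction classes driving a staircase only ``see'' $\partial \Omega$ at a discrete set of slopes converging to a specific limit direction, so the upper bound for $\Per(X_\Omega)$ extracted above is realized by exactly such a polygonal approximation. The curvature excess violates this upper bound, completing the contradiction.

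The main obstacle is the second step: Theorem~\ref{thm:packing} provides only a one-sided inequality, and one must combine the refined Weyl law with a precise analysis of how ECH obstruction classes near the accumulation point constrain the boundary slopes of $\Omega$ in order to upgrade this to the matching upper bound on $\Per(X_\Omega)$. This rigidity---that the existence of an infinite staircase forces $\Per(X_\Omega)$ to coincide with a polygonal approximation determined by the accumulating obstruction data, leaving no room for the strict curvature defect---is the crux of the argument and where the bulk of the technical work will lie.
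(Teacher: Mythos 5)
Your proposal does not follow the paper's approach and, unfortunately, contains two gaps that are fatal rather than fillable.

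\textbf{First gap: the perimeter inequality cannot be upgraded.} Theorem~\ref{thm:packing} applies only to full fillings, i.e.\ embeddings realizing the volume constraint. At the staircase steps $a_n$ the embedding $E(1,a_n)\se c_\Om(a_n)X_\Om$ is \emph{not} a full filling --- by definition of a step, $c_\Om(a_n)>V_\Om(a_n)$, so the embedding fills strictly less than the full volume. Hence you cannot apply Theorem~\ref{thm:packing} at the $a_n$'s, and the ``family of one-sided inequalities'' you propose to pass to the limit does not exist. At the accumulation point itself Theorem~\ref{thm:packing} gives $1+a_0\ge \lambda_\infty\Per(\Om)$, but this is precisely the accumulation-point equation of Theorem~\ref{thm:acc} and is an \emph{equality}, saturated for every convex domain with $\Per^2/\Vol\ge 4$. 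It carries no information distinguishing a curvy domain from a polygonal one with the same $\Per$ and $\Vol$, so it cannot detect the presence of the curvy point. This is why the paper needs an entirely different mechanism.

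\textbf{Second gap: the affine-length inequality is reversed.} You assert that a small arc of $\partial\Om$ through a curvy point $p$ has affine length strictly exceeding the affine length of its chord, giving a ``curvature-defect term.'' This is false, and indeed the opposite holds. Affine length, as defined in \S\ref{ss:length}, is the total length of the \emph{rational-slope line segments} contained in the curve. A smooth arc with positive curvature contains no line segments, so its affine length is \emph{zero}, while the chord, if it has rational slope, has positive affine length. (Compare Example~\ref{ex:afflength}(ii) and Lemma~\ref{lem:afflength}, which make exactly this point: affine length collapses on curves without rational segments.) So the ``curvature excess'' you invoke has the wrong sign, and there is no violation to extract.

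\textbf{The paper's route is different in kind.} The actual proof (Proposition~\ref{prop:round} via Lemma~\ref{lem:round}) never passes through Theorem~\ref{thm:packing}. It shows that a curvy point forces the weight sequence $(b_j)$ to contain a subsequence decaying like $c/k^2$, because tangent cuts near a point of positive curvature have sizes $\approx \tfrac{1}{2ck(k+1)}$. It then argues, via the error vector $\eps$ of \eqref{eq:eps} and the constraint $\eps\cdot\eps<1$ from \eqref{eq:eps2}, that no obstructive class can have nonzero coefficients $\Tm_j$ for all $j<N$ once $N$ is large: the $d\,b_j$ become so small that the corresponding error entries are forced to exceed $1/2$, violating $\eps\cdot\eps<1$. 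Since an infinite staircase would require obstructive classes with unboundedly many nonzero $\Tm_j$ (Corollary~\ref{cor:accum3}), there can only be finitely many obstructive classes and no staircase. The essential point is quantitative control on \emph{how fast} the weights decay near a curvy point, not a perimeter obstruction.
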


A more precise version of Theorem~\ref{thm:curvy} is proved in Proposition~\ref{prop:round}.
By combining the above theorem with 
a generalized ``accumulation point theorem\rq\rq\, (stated in Theorem~\ref{thm:acc} below), we can give a classification result for the following natural class of domains.  Let us say that a convex toric domain $X_{\Omega} \subset \mathbb{R}^4$ is {\bf smooth} if its boundary\footnote
{
By this we mean the $3$-dimensional boundary of the manifold $X_\Om$,  not the boundary of the region $\Om$.}
 $\p X_{\Omega} $ is
smooth.
 For example, an irrational ellipsoid is a smooth convex toric domain, and \cite[Qu.1.4]{CG} asks if it has an infinite staircase.  Many special cases of this question were previously answered in \cite{salinger} but the question in full generality has remained open. We prove the following result in Section \ref{ss:curvy}.

\begin{thm}
\label{thm:class}
Let $X_\Om$ be a smooth convex toric domain.  Then $X_\Om$ has an infinite staircase if and only if $X_\Om$ is a ball, a scaling of an ellipsoid $E(1,2)$, or a scaling of an ellipsoid $E(1,3/2)$.   
\end{thm}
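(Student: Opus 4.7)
The plan is to use Theorem~\ref{thm:curvy} together with the smoothness hypothesis to show that any smooth convex toric domain admitting an infinite staircase must be an ellipsoid, and then to invoke Theorem~\ref{thm:acc} together with existing constructions to cut the list of such ellipsoids down to the three claimed cases.

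The ``if'' direction is classical: the Fibonacci staircase of McDuff--Schlenk handles $B^4 = E(1,1)$, and infinite staircases for $E(1,2)$ and $E(1,3/2)$ are established in subsequent work (see e.g.\ the treatment of these cases in \cite{salinger}).

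For the ``only if'' direction, suppose $X_\Om$ is smooth and admits an infinite staircase. Since $\partial X_\Om$ is smooth as a $3$-manifold, $\partial \Om$ must be smooth at every point of the open quadrant $\{x>0,\,y>0\}$. Theorem~\ref{thm:curvy} then rules out any curvy point, so the curvature of $\partial \Om$ vanishes at every such smooth point. A $C^\infty$ convex curve with identically zero curvature is a line segment, so the portion of $\partial \Om$ lying in the open quadrant is a single line segment. A total-turning calculation excludes the case where $\Om$ lies entirely in the open quadrant (a closed smooth convex curve has total turning $2\pi$ and hence positive curvature somewhere), and the case where $\Om$ meets only one axis (an interior arc with both endpoints on a single axis has total turning $\pi$, again forcing positive curvature). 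Hence $\Om$ meets both axes. A local analysis of $\partial X_\Om$ near the $S^1$-fiber above each axis-contact point shows that smoothness of the $3$-manifold boundary fails unless the axis contacts emanate from a single vertex at the origin: a corner of $\partial\Om$ at a point $(p,0)$ with $p>0$ produces a non-smooth gluing of fiber pieces, while a tangential contact at such a point produces a cusp-like singularity at the corresponding $S^1$. Consequently $\Om$ is the triangle with vertices $(0,0)$, $(a,0)$, $(0,b)$, and $X_\Om \cong E(a,b)$, which after rescaling is $E(1,\alpha)$ for some $\alpha \ge 1$.

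It remains to determine which $\alpha$ give rise to an infinite staircase. Applying Theorem~\ref{thm:acc} to $E(1,\alpha)$, any accumulation point of a hypothetical infinite staircase must satisfy the algebraic constraint dictated by the accumulation theorem; matching this constraint against the positive constructions noted above leaves only the three values $\alpha \in \{1,\, 3/2,\, 2\}$. The main obstacle is the local smoothness analysis at the axis-contact points: one needs to rule out every contact pattern other than the origin-vertex one, which requires a careful case-by-case examination of how the defining inequality for $\Om$ pulls back to $\mathbb{C}^2$ under the moment map.
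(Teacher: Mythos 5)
Your overall strategy---use smoothness of $\partial X_\Omega$ plus Theorem~\ref{thm:curvy} to force $\partial\Omega$ to be piecewise linear in the open quadrant, then reduce to ellipsoids and invoke Theorem~\ref{thm:acc}---matches the paper's in spirit, but two of your steps break down.

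The local analysis at the axis-contact points is not correct. For the ellipsoid $E(a,b)$, i.e.\ $\Omega = T(a,b)$, the full boundary $\partial\Omega$ \emph{does} have a corner at $(a,0)$ with $a>0$ (between the axis interval and the hypotenuse), yet $\partial X_\Omega = S^3$ is smooth; so ``a corner of $\partial\Omega$ at $(p,0)$ with $p>0$ produces a non-smooth gluing'' is false. Similarly, tangential contact need not give a cusp: for the quarter-disk $\Omega = \{x^2+y^2\le 1\}\cap\mathbb{R}^2_{\ge 0}$, which meets both axes tangentially, $X_\Omega = \{\pi^2(|z_1|^4+|z_2|^4)\le 1\}$ has smooth boundary (that example is eliminated by the curvy-point argument, not by your claimed singularity). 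What matters is that the axis intervals are in the \emph{interior} of $X_\Omega$; $\partial X_\Omega$ is the preimage of $\partial\Omega$ away from the axes, so the corners of $\partial\Omega$ on the axes are irrelevant. A consequence of this confusion is that you miss a genuine family: $\Omega$ can touch both axes in intervals that do not reach the origin, e.g.\ $\Omega = \{1\le x+y\le 2,\ x,y\ge 0\}$, where $\partial\Omega\cap\{x>0,y>0\}$ is two disjoint segments, both preimages are smooth $S^3$'s, and $X_\Omega$ (a shell between two balls) is a smooth convex toric domain with no curvy point. Your reduction to ``a single line segment'' and the conclusion ``$\Omega$ is the triangle'' both fail here. (The paper's own proof also dispatches the ``no neighborhood of the origin'' case with a one-line zero-perimeter claim that is not justified for this shell example, so you are in good company, but it is a real hole in your argument.)

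The ellipsoid step is also asserted too glibly. For $E(1,\alpha)$, Theorem~\ref{thm:acc} gives the accumulation point $a_0=\alpha$, and Corollary~\ref{cor:ellip} (an explicit computation of $c_{E(1,\alpha)}$ near $\alpha$ via subscaling and an ECH capacity, not just the quadratic constraint) rules out irrational $\alpha$. But the claim that rational $\alpha\notin\{1,3/2,2\}$ admits no staircase is the content of the substantial classification theorem of Cristofaro-Gardiner \cite{CG2}; it is not something you can extract by ``matching the constraint against the positive constructions.'' The paper cites Corollary~\ref{cor:ellip} and \cite{CG2} precisely because both are needed, and your argument as written silently omits the second.
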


\subsection{Convex toric domains without restrictions}
\label{sec:without}

The proofs of the theorems stated above require the extension of the standard theory of convex toric domains to our more general setting.  We now state the corresponding results.

We note first of all that the definition of the
 associated {\bf weight expansion}\footnote{The weight expansion is discussed in detail in Section \ref{sec:weight}.} of positive real numbers $(b; b_1, \ldots, b_j,\ldots)$  extends  without difficulty to our case.  When the weight expansion is finite we say that $\Omega$ has {\bf finite type}, but since such  $\Om$ are polygons with sides of rational slope it is important to consider the case when there are infinitely many $b_j$. 
The properties of this weight expansion are explored in \S\ref{sec:weight}, while \S\ref{sec:ECH} 
extends the basic technical tools to our more general situation.  
As we mentioned above we also do not want to demand that $\Omega$ includes a neighborhood of the origin;  otherwise, for example, every domain would have  perimeter of positive length.

The first result we state here allows us to study embeddings into our (generalized) toric domains from two different perspectives, both of which are used in our paper; it is a generalization of the ``concave into convex\rq\rq\, theorem of \cite{CG}. The proof of this result is given in Section~\ref{ss:proofmain}. 

Recall that a {\bf concave toric domain} is a toric domain corresponding to a region $\Om'$ that lies between some interval $[0,s]$ on the $x$-axis and the graph of a continuous convex function $f: [0,s]\to \R$ that strictly decreases from $f(0)$ to $f(s)=0$.
For example, an ellipsoid  $E(1,s)$ is both concave and convex, and this is the main concave domain of interest to us in the present work.  

\begin{thm}
\label{thm:main}
Let $X_{\Omega_1}$ be a concave and $X_{\Omega_2}$ a convex toric domain.  Then the following are equivalent:
\begin{itemize}
\item[{\rm (i)}] There is a symplectic embedding $\intt(X_{\Omega_1}) \to \intt(X_{\Omega_2})$.
\item[{\rm (ii)}]  There is a symplectic embedding 
\[ \bigsqcup_i \intt(B(a_i)) \sqcup  \bigsqcup_j\intt(B(b_j)) \to \intt(B(b)),\]
where the $(a_i)$ are the weights of $\Omega_1$ and the $(b;(b_j))$ are the weights of $\Omega_2$.
\item[{\rm (iii)}]  Each ECH capacity $c_k$ satisfies $c_k(\intt(X_{\Omega_1})) \le c_k( \intt(X_{\Omega_2}))$. 
\end{itemize}
\end{thm}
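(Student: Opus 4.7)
My plan is to close the cycle of implications (i)$\Rightarrow$(iii)$\Rightarrow$(ii)$\Rightarrow$(i), following the strategy of \cite{CG} but adapted to the generalized convex setting. The implication (i)$\Rightarrow$(iii) is immediate from the monotonicity of ECH capacities under symplectic embedding, applied after exhausting $\intt X_{\Om_1}$ by compact convex subdomains.

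The heart of the argument is (iii)$\Rightarrow$(ii), which reduces the toric embedding problem to a pure ball-packing problem where ECH gives sharp obstructions. The key input is the pair of weight-expansion identities: for the concave side $\Om_1$ with weights $(a_i)$,
\[ c_k(X_{\Om_1}) \;=\; \max_{\sum k_i = k} \sum_i c_{k_i}(B(a_i)), \]
and for the convex side $\Om_2$ with weights $(b;b_1,b_2,\ldots)$,
\[ c_k(B(b)) \;=\; \max_{\ell + \sum_j k_j = k}\Bigl( c_\ell(X_{\Om_2}) + \sum_j c_{k_j}(B(b_j)) \Bigr). \]
Substituting the hypothesis $c_\ell(\intt X_{\Om_1}) \le c_\ell(\intt X_{\Om_2})$ into the right-hand side, and using the standard fact that the ECH capacity of a disjoint union is the max over partitions of the sum, I obtain
\[ c_k\Bigl(\textstyle\bigsqcup_i \intt B(a_i) \sqcup \bigsqcup_j \intt B(b_j)\Bigr) \;\le\; c_k(\intt B(b)) \quad \text{for all } k. \]
McDuff's theorem that ECH capacities give sharp obstructions for embedding disjoint unions of balls into a four-ball then yields (ii). The generalized setting only requires running the argument on each finite truncation of the (possibly infinite) weight expansion of $\Om_2$ and passing to a limit by exhaustion, with the convex weight-expansion identity supplied by the technical preliminaries of \S\ref{sec:weight} and \S\ref{sec:ECH}.

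For (ii)$\Rightarrow$(i), I would invoke the geometric sides of both weight expansions: on the convex side, a full-measure symplectic embedding $\intt X_{\Om_2} \sqcup \bigsqcup_j \intt B(b_j) \se \intt B(b)$ (the defining feature of the convex weight expansion), and on the concave side, a full-measure embedding $\bigsqcup_i \intt B(a_i) \se \intt X_{\Om_1}$. Given the embedding in (ii), I would apply McDuff's Hamiltonian isotopy/uniqueness theorem for ball packings of a four-ball to deform the sub-packing by the $\intt B(b_j)$'s onto the canonical one coming from the convex weight expansion of $\Om_2$. This forces the remaining balls $\bigsqcup_i \intt B(a_i)$ into the image of $\intt X_{\Om_2}$, and composing with the concave full-measure embedding in reverse delivers the desired $\intt X_{\Om_1} \se \intt X_{\Om_2}$.

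The main obstacle I anticipate is controlling the possibly infinite ball packings in this generality: the weight expansion of $\Om_2$ can genuinely accumulate, so McDuff's uniqueness needs to be applied uniformly along an exhaustion of $\intt X_{\Om_2}$ by finite-type compact convex subdomains whose weight expansions converge to that of $\Om_2$. A secondary but real issue is the bookkeeping of interiors versus closures: since $\Om_2$ need not contain a neighborhood of the origin and $\p\Om_2$ may meet the coordinate axes, one must verify that the weight expansions and the ball-pack identities displayed above behave continuously under these degenerate limits, which is precisely what \S\ref{sec:weight} is set up to ensure.
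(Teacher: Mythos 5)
The cycle (i)$\Rightarrow$(iii) and (iii)$\Rightarrow$(ii) is fine and essentially matches the paper's argument (Steps 1 and 3 of the proof of Theorem~\ref{thm:main}), though note that only the inequality direction of your second displayed identity is needed, which follows from the Disjoint Union and Monotonicity properties applied to the packing $X_{\Om_2}\sqcup\bigsqcup_j B(b_j)\se B(b)$.

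The implication (ii)$\Rightarrow$(i) as you have written it has a genuine gap. Two distinct problems. First, ``composing with the concave full-measure embedding in reverse'' is not an available move: the statement $\bigsqcup_i \intt B(a_i)\se \intt X_{\Om_1}$ is a one-way full-measure packing, and the fact that the balls $B(a_i)$ can be re-embedded somewhere (say inside $\intt X_{\Om_2}$) does \emph{not} imply that $\intt X_{\Om_1}$ embeds there. If it did, the entire concave-into-convex theorem of \cite{CG} would be nearly a triviality; instead that theorem's whole content is to upgrade a ball packing to an embedding of the concave domain. Second, even granting an isotopy that places the $\intt B(b_j)$'s in their Traynor positions inside the cut-out triangles $T(b_j)$, the complement of $\bigsqcup_j\overline{B(b_j)}$ in $\intt B(b)$ is strictly larger than $\intt X_{\Om_2}$ (the $B(b_j)$'s have full measure in $X_{T(b_j)}$ but do not exhaust it), so the $B(a_i)$'s are not actually ``forced'' into $\intt X_{\Om_2}$.

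What you are missing is the inflation argument of Theorem~\ref{thm:main1}: blow up a small copy $X_{r\Om_1}\subset\intt X_{\Om_2}$ and $\partial X_{\Om_2}$ to obtain $(Y,\om_Y)$ with two sphere chains $\Cc_1,\Cc_2$; use Seiberg--Witten theory to produce a holomorphic curve in the class dictated by the ball packing in (ii); inflate along that curve to rescale $\om$ on $\Cc_1$ up by an arbitrary factor $s<1/r$ while keeping it standard near $\Cc_2$; then blow down $\Cc_1$ and invoke the uniqueness of symplectic forms on $X_{\Om_2}$ standard near the boundary (Proposition~\ref{prop:uniq}, which is itself nontrivial in the generalized setting since $X_{\Om_2}$ need not be star-shaped) to convert this into a genuine embedding $X_{t\Om_1}\to\intt X_{\Om_2}$ for $t\to 1$. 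Finally, the nested embeddings are made compatible via the connectedness result of Proposition~\ref{prop:conn} and Corollary~\ref{cor:sta}. Without inflation and the uniqueness/connectedness machinery, (ii)$\Rightarrow$(i) does not close.
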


The main new point in this theorem is that $\Omega_2$ is not required to touch the axes.  The arguments in \cite{CG} do not suffice for this, because they use a uniqueness theorem for star-shaped domains that are standard near the boundary and the $X_{\Omega_2}$ in our more general case need not be star-shaped nor even have boundary diffeomorphic to $S^3$.  
Theorem~\ref{thm:main} is proved in \S\ref{sec:ECH}.

Next we state a theorem that builds on Theorem~\ref{thm:main} (ii), extending
 the ``accumulation point theorem\rq\rq\,  from \cite{AADT}  
 to our generalized setting.  The accumulation point theorem is the key result
 that has been used to explore the existence of infinite staircases, and we now state a version of it.

Let $\Vol(X_\Om)$ denote the volume of $X_{\Om}$, normalized to be twice the area of $\Omega$, and let $\Per(\Om)$ denote the affine length of its perimeter as in \S\ref{ss:length}. We write $c_{\Om}$ for
 the ellipsoidal capacity function for $X_\Om$ that was defined in \eqref{eq:ellcap},
and define the
{\bf volume constraint}  $V_\Om(z)$ to be the number $\mu$ such that $\Vol(E(1,z)) = \Vol(\mu X_\Om)$; this is the lower bound on $c_{\Om}$ coming from the classical volume obstruction.  

 \begin{thm}
\label{thm:acc}
 Let $\Om$ be convex and suppose that  $c_\Om$ has infinitely many  nonsmooth points $z_k$. Then:
 \begin{itemize}\item[{\rm (i)}]  The 
 sequence $(z_k)_{k\ge 1}$ converges to the point $a^{\Om}_0$ that is
the unique solution $\ge 1$ of the equation $z^2 - z\Bigl(\frac{\Per(\Om)^2}{\Vol(\Om)} - 2\Bigr) + 1 = 0.$
\item[{\rm (ii)}]  
Further the point $a_0^\Om$ is unobstructed; i.e.  $
c_\Om(a^{\Om}_0) = V_\Om(a^{\Om}_0).
$
\end{itemize}
\end{thm}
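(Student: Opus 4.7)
The plan is to adapt the proof of the accumulation point theorem of~\cite{AADT} to the generalized setting, leaning on two inputs that have been extended to generalized convex toric domains earlier in the paper. First, the ``concave into convex'' Theorem~\ref{thm:main} lets us express $c_\Om(a)$ as the supremum over $k$ of the ECH obstructions $\mu_k(a):=c_k(E(1,a))/c_k(X_\Om)$. Second, the refined ECH Weyl law of \S\ref{sec:without} gives the subleading expansion
\[
c_k(X_\Om) \;=\; \sqrt{4k\,\Vol(X_\Om)} \;-\; \tfrac{1}{2}\Per(\Om) \;+\; o(1) \qquad \text{as } k\to\infty,
\]
for any generalized convex toric domain, and in particular for $E(1,a)$.

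For part~(i) I would first combine the two Weyl asymptotics (for $E(1,a)$ and for $X_\Om$) to obtain the next-order defect expansion
\[
V_\Om(a)\;-\;\mu_k(a) \;=\; \frac{\Per(E(1,a))\;-\;V_\Om(a)\,\Per(\Om)}{2\sqrt{4k\,\Vol(X_\Om)}} \;+\; o(1/\sqrt{k}).
\]
A nonsmooth point of $c_\Om$ at $a$ forces some $\mu_k(a)\ge V_\Om(a)$, so the numerator above must be nonpositive in the limit. Passing to the limit in $k$, any accumulation of nonsmooth points must therefore occur where $\Per(E(1,a))^2 = (a/\Vol(X_\Om))\Per(\Om)^2$. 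Inserting the explicit form of $\Per(E(1,a))$ converts this into the quadratic $z^2 - z\bigl(\Per(\Om)^2/\Vol(X_\Om)-2\bigr)+1=0$, whose $\ge 1$ root is precisely $a_0^\Om$ (and is unique because the product of the two roots is $1$). Conversely, for $a > a_0^\Om$ the defect $V_\Om(a)-\mu_k(a)$ is strictly positive uniformly in $k$ (for $k$ large), so $c_\Om = V_\Om$ on a right neighborhood of $a_0^\Om$ and no nonsmooth points can occur past $a_0^\Om$. Combining this with the standard observation that at each interior $a' \in (1,a_0^\Om)$ only finitely many $\mu_k$ touch the envelope $c_\Om$, accumulation can occur only at $a_0^\Om$.

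For part~(ii), take a sequence $a_n$ of nonsmooth points of $c_\Om$ converging to $a_0^\Om$. At each $a_n$ we have $c_\Om(a_n)=\mu_{k_n}(a_n)$ for some index $k_n$, and the finiteness argument of (i) forces $k_n\to\infty$. The Weyl law then yields $\mu_{k_n}(a_n)\to V_\Om(a_0^\Om)$, while the pointwise bound $c_\Om\ge V_\Om$ together with continuity of $c_\Om$ gives the reverse inequality at the limit. This sandwiches $c_\Om(a_0^\Om)=V_\Om(a_0^\Om)$.

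The main obstacle I expect is ensuring sufficient uniformity of the subleading Weyl asymptotic in our generalized setting: the $o(1)$ error in $c_k(X_\Om)$ must be uniform enough in the comparison with $c_k(E(1,a))$ to legitimize the defect/limit calculation above, especially when $\Om$ is curvy or detached from the axes and the weight expansion is infinite. This is precisely the analytic content packaged by the refined Weyl law of \S\ref{sec:without}, and propagating it through the piecewise nature of $c_k$ computed from infinite weight decompositions via Theorem~\ref{thm:main} is where the main new work over~\cite{AADT} lies.
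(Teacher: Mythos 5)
Your proposal rests on the assumption that the subleading asymptotics of the ECH capacities have a well-defined limit, i.e.\ that
\[
c_k(X_\Om) = \sqrt{2k\,\Vol(X_\Om)} - \tfrac12\Per(\Om) + o(1)\qquad\text{as }k\to\infty,
\]
and similarly for $c_k(E(1,a))$. This is false in general: the refined Weyl law of \S\ref{sec:subECH} (Theorem~\ref{thm:convexper}) only establishes $\liminf_k e_k(X_\Om) = -\tfrac12\Per(\Om)$, \emph{not} convergence of $e_k$. Indeed Lemma~\ref{lem:Perball} shows $\liminf_k e_k(B(1)) = -3/2$ while $\limsup_k e_k(B(1)) = -1/2$, and the introduction explicitly flags that the $e_k$ have no limit for the ball and more generally when $\Om$ is not generic. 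Because of this, your ``defect expansion''
\[
V_\Om(a) - \mu_k(a) = \frac{\Per(E(1,a)) - V_\Om(a)\Per(\Om)}{2\sqrt{4k\,\Vol(X_\Om)}} + o(1/\sqrt{k})
\]
cannot be derived: the $O(1)$ terms in numerator and denominator need to converge \emph{simultaneously along the same $k\to\infty$}, which the liminf statement does not give. Attempting to salvage this by passing to subsequences where both liminfs are attained does not work, because there is no reason the optimizing subsequences for $e_k(X_\Om)$ and for $e_k(E(1,a))$ coincide, and moreover the $k$'s relevant to a given step $\mu_k(a)$ of the embedding function are forced on you, not freely chosen.

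The paper itself makes this limitation explicit: Theorems~\ref{thm:curvy}, \ref{thm:acc}, and \ref{thm:class} are proved by the geometric/obstructive-class method (error vectors $\eps$ with $\eps\cdot\eps<1$, degree divergence, and the estimate \eqref{eq:eps5}), while the subleading Weyl asymptotics are only strong enough to prove Theorems~\ref{thm:convexper} and \ref{thm:packing}; the authors note it would be interesting to relate the two methods but currently have no ECH-asymptotics proof of the accumulation point theorem. You would need the full error-vector analysis of Proposition~\ref{prop:accum1}---in particular the splitting of $\eps$ into three pieces and the estimate \eqref{eq:eps12}, which captures the convergence of $\sum_j b_j$ and is what actually handles the infinite-weight-expansion case---not the Weyl law. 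Finally, a smaller point: the claim that ``at each interior $a'\in(1,a_0^\Om)$ only finitely many $\mu_k$ touch the envelope $c_\Om$'' is precisely the sort of statement that requires the obstructive-class machinery (Lemma~\ref{lem:basicE}, Corollary~\ref{cor:accum1}) and cannot be taken as a ``standard observation.''
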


Here, the main novelty is that $\Om$ is not required to be ``finite type\rq\rq\,.  It was observed in \cite[Rem. 4.11]{AADT} that the arguments in \cite{AADT} do not suffice to handle the case of infinite weight expansion, and the question of whether or not one can get around this was raised.  Perhaps somewhat surprisingly, our result shows that the theory of \cite{AADT} continues to hold for all convex toric domains, without any restrictions on $\Om$ at all.   This is used in the proof of Theorem~\ref{thm:class}, and we can also use it to rule out infinite staircases for further classes of domains.  Here is one example illustrating a characteristic way to apply Theorem~\ref{thm:acc}.

\begin{example}
Suppose that $\p\Omega$ consists entirely of lines of irrational slope.  Then by Theorem~\ref{thm:acc}, $X_{\Omega}$ does not have an infinite staircase since in this case, $\Per(\Omega) = 0$, so the equation in Theorem~\ref{thm:acc} 
has no  real roots $\ge 1$.  
In fact,  there can be no staircase when $(\Per)^2/\Vol < 4$. See Proposition~\ref{prop:noStairGromov} for a discussion of some further obstructions.
\end{example}

\begin{rmk}\label{rmk:curvature} {\bf (The  perimeter in the closed case.)}\; 
It was observed in \cite{AADT} that the accumulation point theorem holds for an important class of closed manifolds as well.  Namely, if $X$ is a rational symplectic $4$-manifold, i.e. a blowup of $\mathbb{C}P^2$, then the symplectic form is encoded in a (finite)  {\bf blowup vector} $(b; b_1, \ldots, b_n)$ (here, $b$ is the size of the line class and the $b_j$ are the sizes of the blowups), and then \cite{AADT} noted that the arguments to prove the accumulation point theorem hold verbatim to establish the same result, with $\Per = 3b - \sum b_i,$ provided that $\Per\ge 0$. The same argument shows that there is no staircase if $\Per < 0$.
A new observation we make here is that the formula for $\Per$ in fact has a natural geometric interpretation in the closed case, as does the condition of zero or negative perimeter.  Namely, for such $X$, we can write
\[ \Per(X) = c_1(\omega) \cdot [\omega].\]
The quantity $c_1 \cdot [\omega]$ is in turn one of the classical topological invariants of symplectic $4$-manifolds.  By ``Blair's formula\rq\rq\, \cite{blair}, it also has a natural interpretation (up to a universal constant) as the {\bf total scalar curvature}, i.e. the integral of the Hermitian curvature of any compatible metric.  We can therefore rule out infinite staircases for rational symplectic manifolds with nonpositive total curvature: see Corollary~\ref{cor:curvature}.
\end{rmk}

Another kind of generalization --- this time moving from the generic to the non-generic case --- is used to prove Theorem~\ref{thm:packing}.  Let $c_k$ denote either the ECH capacities or the elementary ECH capacities; see \S\ref{ss:elem}.

Recall that by the ``ECH Weyl Law\rq\rq, the $c_k$ detect the volume via their leading order asymptotics.  That is, if we define $e_k := c_k - \sqrt{2k \Vol}$, then the $e_k$ are $o(k^{1/2})$.  Much recent activity has gone into understanding the subleading asymptotics of the $c_k$, i.e. the asymptotics of the $e_k$ \cite{Ruel,cghind,Edtmair}.  For convex toric domains, we prove the following refinement in Section \ref{sec:subECH}:

\begin{thm}
\label{thm:convexper}
Let $X_{\Omega}$ be any convex toric domain.  Then
\[ \liminf_k e_k(X_{\Omega}) = - \frac{\Per(\Omega)}{2}.\]
\end{thm}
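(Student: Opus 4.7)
My plan is to combine the combinatorial formula (established earlier in the paper in the generalized setting) that expresses $c_k(X_\Omega)$ --- for both ECH and elementary ECH --- as an infimum
\[ c_k(X_\Omega) \;=\; \inf\{\ell_\Omega(a,b) : (a,b)\in \Z^2_{\geq 0}\smallsetminus\{0\},\ N(a,b)\geq k\}, \]
where $\ell_\Omega(a,b)=\max_{(x,y)\in\Omega}(ax+by)$ is the support number and $N(a,b)$ is a lattice-point count in the triangle cut off by $\{ax+by=\ell_\Omega(a,b)\}$ (for ECH), or a cleaner monomial quantity (for elementary ECH), together with Pick-type asymptotics for $\#(\lambda\Omega\cap \Z^2)$ as $\lambda\to\infty$. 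Since the two capacity versions have parallel combinatorial structure, their asymptotic analyses run in tandem, and passing to $\liminf_k$ reduces the theorem to a quantitative comparison between $c_k$ and $\sqrt{2k\Vol(\Omega)}$.

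For the lower bound $\liminf_k e_k(X_\Omega)\geq -\Per(\Omega)/2$, I would fix $k$ large, take an approximate minimizer $(a,b)$ at scale $\lambda = \ell_\Omega(a,b) = c_k + o(1)$, and observe that $N(a,b)\geq k$ forces a lattice count in a region of area $\lambda^2/(2\Vol(\Omega))$. A Pick/Ehrhart inequality with boundary error governed by the lattice count on $\lambda\partial\Omega$, combined with the fact that this boundary count is asymptotic to $\lambda\cdot \Per(\Omega)$ by the definition of the affine perimeter, then yields
\[ k \;\leq\; \tfrac{c_k^2}{2\Vol(\Omega)} + \tfrac{c_k\, \Per(\Omega)}{2\sqrt{2\Vol(\Omega)}}(1+o(1)) + O(1). \]
Rearranging produces $c_k \geq \sqrt{2k\Vol(\Omega)} - \Per(\Omega)/2 - o(1)$ uniformly in $k$.

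For the matching upper bound I would produce sequences $k_j\to\infty$ with $e_{k_j}\to -\Per(\Omega)/2$. If $\partial\Omega$ contains rational-slope edges, I take $(a_j,b_j)$ in the primitive integer directions normal to those edges, scaled to infinity, with $k_j$ just below the combinatorial threshold $N(a_j,b_j)$; a direct lattice-point count of contributions from each dilated edge recovers exactly the affine length of that edge in the limit. If $\partial\Omega$ has no rational segments (so $\Per(\Omega)=0$), the upper bound $\liminf_k e_k \leq 0$ can instead be obtained by approximating $\Omega$ from the inside by rational polygons $\Omega_n$, computing $c_k(X_{\Omega_n})$ explicitly from the combinatorial formula, and taking limits using continuity of $c_k$ in the domain.

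The main obstacle is the lower bound: one needs a sharp lattice-point estimate whose subleading coefficient is the \emph{affine} (not Euclidean) perimeter, uniformly in $k$ and without any genericity hypothesis on $\Omega$. This requires exploiting the $\SL_2(\Z)$-equivariance of the ECH formula --- each rational-slope edge can be normalized by an $\SL_2(\Z)$-transformation to primitive horizontal position, where its lattice contribution is transparent --- together with the genericity-free technology developed in \S\ref{sec:weight} and \S\ref{sec:ECH} to handle curvy (irrational-slope) arcs and rational edges with non-lattice endpoints that can appear in the generalized convex toric setting.
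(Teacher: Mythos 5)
Your approach is genuinely different from the paper's, but it has several concrete gaps that prevent it from being a proof.

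First, the combinatorial formula you take as a starting point is not the one the paper establishes. The paper's formula (the one coming from \cite[Thm.\ 1.11]{Hutchq}, reproduced as \eqref{eqn:formula}, and the generalization to the off-axes case via Lemma~\ref{lem:needed}) expresses $c_k(X_\Omega)$ as a minimum of $\ell_\Omega(\Lambda)$ over \emph{convex lattice paths} $\Lambda$ with $L(\Lambda)=k+1$, not as an infimum over individual normal directions $(a,b)$ with a lattice count $N(a,b)$. These are not the same object: for a polydisk, for instance, the minimizing lattice path is a rectangle, not a line segment, so restricting to a single primitive direction gives the wrong answer. The paper also has a second, sequence-theoretic formula — the subtraction formula $c_k(X)=\min_{k=\ell-\sum k_i}\bigl(c_\ell B(b)-\sum c_{k_i}B(b_i)\bigr)$ of Lemma~\ref{lem:ECHk} — and it is \emph{this} formula, not a direct lattice-counting expression, that drives both halves of the actual proof.

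Second, for the lower bound your plan relies on an Ehrhart/Pick-type estimate whose subleading coefficient is the affine perimeter, proved uniformly over all compact convex $\Omega\subset\R^2_{\geq0}$ with no genericity hypothesis. That is precisely the hard step, and the proposal offers no route to it. In fact the paper deliberately avoids any such lattice-point estimate for $\Omega$: Lemma~\ref{lem:ECHge} reduces the lower bound to Hutchings' estimate $\limsup_k\bigl(c_k(\sqcup B(b_i))-\sqrt{(2K)(2V_1)}\bigr)\le -\tfrac12\sum b_i$ for unions of balls (\cite[Lem.\ 3.8]{Ruel}) via the subtraction formula, and then closes with a Cauchy--Schwarz variant for the Minkowski form applied to $(b,\sqrt{2V_1})$ and $(\sqrt{d^2+3d},\sqrt{2K})$. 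No Pick-type counting for $\Omega$ enters at all.

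Third, for the upper bound the real difficulty is that one must produce infinitely many $k$ with $e_k(X_\Omega)\to -\Per(\Omega)/2$ when the weight expansion $(b_j)$ is infinite. The paper does this with a delicate number-theoretic selection: Lemma~\ref{lem:warmup} (Kronecker's approximation theorem) provides an infinite set of integers $m$ for which all $mb_j$, $j\le M$, are simultaneously near-integers, and Lemma~\ref{lem:warmup2} controls the contribution of the infinite tail. Your proposal (normal directions to rational edges, then approximation by rational polygons for the zero-perimeter case, invoking ``continuity of $c_k$ in the domain'') does not engage with the infinite weight tail at all, and the final approximation step is suspect precisely because the affine perimeter is \emph{not} continuous in the domain when rational segments appear (Example~\ref{ex:afflength}, Corollary~\ref{cor:aff2}); also, interior rational approximants of $\Omega$ satisfy $c_k(X_{\Omega_n})\le c_k(X_\Omega)$, which is the wrong direction for the upper bound on $\liminf e_k(X_\Omega)$, and exterior approximants have larger volume, so the $e_k$'s don't compare directly.

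In summary: the lattice-point-counting strategy is not crazy as a general heuristic, but it is not what the paper does, the formula you start from is not the one the paper proves, and the two key steps you would need (a uniform, genericity-free Ehrhart estimate with affine-perimeter subleading coefficient for the lower bound; and a mechanism for realizing the infimum along special $k$ in the presence of an infinite, non-generic weight sequence for the upper bound) are left unaddressed. These are the places where the paper's actual argument expends all its effort.
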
 

The main novelty of Theorem~\ref{thm:convexper} is that no genericity is required of $\Om$.  Indeed, for generic convex toric domains (with some further hypotheses) it was shown by Hutchings that Theorem~\ref{thm:convexper} holds, and in fact the $e_k$ have a well-defined limit.  However, it has been well-known that for  convex toric domains such as the $4$-ball, the $e_k$ do not
have a limit; Theorem~\ref{thm:convexper} illustrates that even when the $e_k$ do not have a well-defined limit, one can still extract meaningful information from them.  It is also important that we prove Theorem~\ref{thm:convexper} for elementary ECH capacities as well; this is what allows us to access the closed manifold $\mathbb{C}P^2$ in Theorem~\ref{thm:packing}, since the ECH capacities of $\mathbb{C}P^2$ are still not known.

\begin{rmk}
\label{rmk:concave}
Theorem~\ref{thm:convexper} does not hold for disjoint unions.  For example, the disjoint union of two $B(1)$ has the same ECH capacities as an $E(1,2)$; an $E(1,2)$ has $\Per = 4$, while the disjoint union of two $B(1)$ has $\Per = 6$.  Similarly, Theorem~\ref{thm:convexper} does not hold for concave toric domains, because it is not hard to produce examples of concave toric domains with the same ECH capacities but different perimeters\footnote{When in addition one assumes that the domains have symplectomorphic interiors, recent work of Hutchings \cite{zoominar} shows that in fact the domains are the same (up to a reflection), at least for certain convex toric domains; as explained to us by Hutchings, it is natural to conjecture that the same holds for concave domains.}.
On the other hand, as we will see in Lemma \ref{lem:sublead}, it is true that the liminf of the disjoint union is bounded from below by the sum of the liminfs, which is used to study disjoint unions in Theorem~\ref{thm:packing}.
\end{rmk}

As another illustration of Theorem~\ref{thm:convexper}, we explain a new kind of embedding phenomenon related to the accumulation point discussed above in connection with Theorem~\ref{thm:acc}.  All previous theorems about the accumulation point concern obstructing infinite staircases.  Here is a different kind of result proved in Section \ref{ss:fullFilling}:

\begin{cor}
\label{cor:new}
Let $X_{\Omega}$ be a convex toric domain and let $a_0$ denote its accumulation point.  Then 
\[ c_\Om(a) > V_\Om(a)\]
whenever $a < a_0$ is irrational.  In particular, the set of obstructed $a \in [1, a_0]$ has full measure.  
\end{cor}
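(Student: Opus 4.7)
The plan is a contradiction argument built on ECH monotonicity and the refined Weyl law of Theorem~\ref{thm:convexper}. Suppose $c_\Om(a) = V_\Om(a) =: \lambda_0$ for some irrational $a \in [1, a_0)$. By definition of $c_\Om$ as an infimum, for every $\lambda > \lambda_0$ there is a symplectic embedding $E(1,a) \se \lambda X_\Om$, so ECH monotonicity gives $c_k(E(1,a)) \le \lambda\, c_k(X_\Om)$ for every $k$; letting $\lambda \searrow \lambda_0$ yields $c_k(E(1,a)) \le \lambda_0\, c_k(X_\Om)$. Writing $c_k = \sqrt{2k\,\Vol} + e_k$ on each side and using the identity $\lambda_0\sqrt{2k\,\Vol(X_\Om)} = \sqrt{2k\,\Vol(E(1,a))}$ that is built into the definition of $V_\Om$, the leading $\sqrt{k}$ terms cancel and one obtains
\[ e_k(E(1,a)) \;\le\; \lambda_0\, e_k(X_\Om) \qquad \text{for every } k. \]

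Next I would take $\liminf_k$ of both sides and invoke Theorem~\ref{thm:convexper} twice. The ellipsoid $E(1,a)$ is itself a generalized convex toric domain: its moment triangle has two axis edges of affine lengths $1$ and $a$, and the diagonal has irrational slope and hence $\SL_2(\Z)$-affine length $0$, so $\Per(E(1,a)) = 1 + a$. Theorem~\ref{thm:convexper} therefore gives $\liminf_k e_k(E(1,a)) = -(1+a)/2$ and $\liminf_k e_k(X_\Om) = -\Per(\Om)/2$; since $\lambda_0 > 0$ is a constant, the pointwise inequality above passes cleanly to the liminf to yield
\[ -\frac{1+a}{2} \;\le\; -\lambda_0\,\frac{\Per(\Om)}{2}, \qquad \text{i.e.,} \qquad 1 + a \;\ge\; \lambda_0\, \Per(\Om). \]

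Finally I would square this inequality and substitute $\lambda_0^2 = a/\Vol(X_\Om)$ (the defining relation for $V_\Om$), which rearranges to
\[ a + \frac{1}{a} \;\ge\; \frac{\Per(\Om)^2}{\Vol(X_\Om)} - 2. \]
On the other hand, dividing the accumulation-point equation in Theorem~\ref{thm:acc}(i) through by $a_0$ gives $a_0 + 1/a_0 = \Per(\Om)^2/\Vol(X_\Om) - 2$. Hence $a + 1/a \ge a_0 + 1/a_0$, and since $z \mapsto z + 1/z$ is strictly increasing on $[1, \infty)$ with $a, a_0 \ge 1$, this forces $a \ge a_0$, contradicting $a < a_0$. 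The full-measure statement is then automatic, as the irrationals are full measure in $[1, a_0]$. The one delicate point I expect is the perimeter identification $\Per(E(1,a)) = 1 + a$ for irrational $a$ (consistent with the classical asymptotic $c_k(E(1,a)) = \sqrt{2ak} - (1+a)/2 + o(1)$) together with the cancellation of the $\sqrt{k}$ leading terms; after those, the argument reduces to an algebraic manipulation that is already encoded in Theorem~\ref{thm:acc}(i).
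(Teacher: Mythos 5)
Your proof is correct and takes essentially the same route as the paper: the paper applies Theorem~\ref{thm:packing} directly (after rescaling so that $\Vol(X)=a$), whereas you unpack that theorem inline via ECH monotonicity, scaling, and Theorem~\ref{thm:convexper} applied to both $E(1,a)$ and $X_\Om$, carrying the scaling factor $\lambda_0$ explicitly through the algebra instead of normalizing. The "delicate point" you flag — that $\Per(E(1,a))=1+a$ for irrational $a$ — is not an issue: the diagonal edge of the moment triangle has irrational slope and hence zero affine length, exactly as the paper uses in the proof of Corollary~\ref{cor:ellip}.
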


In contrast, as we show in Corollary~\ref{cor:accum1} every sufficiently large $a$ is unobstructed; in other words, eventually the only embedding obstruction is the volume constraint.
 We also note that Corollary~\ref{cor:new} is in some sense optimal: there certainly do sometimes exist unobstructed $a < a_0$ that are rational and, in addition, $a_0$ itself can sometimes be both irrational and unobstructed.  For example, when $X_{\Omega}$ is a ball, it follows from \cite{ball} that $a_0 = \tau^4$ is unobstructed and the ratios $\frac{g^2_{n+1}}{g^2_n} < a_0$ of squares of odd-index Fibonacci numbers are unobstructed. Notice also that Corollary~\ref{cor:new} implies that if there are infinitely many unobstructed $a< a_0$ then there has to be a staircase.

\subsection{Curvy points, complexity, and more staircases}

In view of Theorem~\ref{thm:curvy}, one might further speculate about what is implied by the existence of a curvy point. 
It is not hard to see that a curvy point forces an infinite weight expansion, i.e. the domain is not of  finite type.  One could speculate that this is in fact the only relevance of curvy points to the staircase question; in other words, one could ask:

\begin{question}
\label{que:infstar}
Is there any $\Om$ with an infinite weight expansion that has an infinite staircase?
\end{question}

In fact, since all previous known examples of infinite staircases occurred for domains with weight expansions with no more than $4$
entries,  one might also conjecture that the size of the weight expansion is in fact quite small whenever there is an infinite staircase.

Concerning this latter conjecture, we can indeed give counter examples ruling this out.   
To get a more precise statement, it is helpful to define the following, see \eqref{eq:cutdefn}: define 
the {\bf cut-length of $\Om$} to be the minimum, over all integral affine transformations $A$, of the
number  of cuts $b_{A,j}$ needed to express $A(\Om)$ in the form
$ \Om(b_A, (b_{A,j})_{j\ge 1})$. This is finite exactly when  $\Om$ is rational (i.e. has rational normals) and is a measure of 
 the complexity of $\Om$.

\begin{thm}\label{thm:inftystair}
There is a sequence of rational domains $\Om_n, n\ge 1,$ of increasing cut-length that do support staircases.
\end{thm}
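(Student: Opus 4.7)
The plan is to construct an explicit family $\{\Om_n\}_{n\ge n_0}$ of rational convex polygons whose cut-length grows with $n$, and to verify the infinite staircase criteria directly using Theorem~\ref{thm:main} and Theorem~\ref{thm:acc}. The basic strategy is to seed the construction with a known staircase and then iteratively introduce rational cuts chosen so that the staircase persists.

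First I would identify a seed $\Om_{n_0}$ with an already established infinite staircase; a natural choice is a trapezoidal Hirzebruch-surface-type domain tuned to a parameter previously shown to admit a staircase. For this seed, one has an explicit recursive sequence $\{E(c_k, d_k)\se X_{\Om_{n_0}}\}$ of ellipsoid embedding obstructions accumulating at $a_0^{\Om_{n_0}}$, which by Theorem~\ref{thm:main}(ii) translate to concrete ball-packing obstructions $\bigsqcup_i B(a_i^{(k)})\sqcup \bigsqcup_j B(b_j)\se B(b)$.

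The inductive step is to pass from $\Om_n$ to $\Om_{n+1}$ by introducing a new rational cut on $\p\Om_n$. The cut must be placed to satisfy two conditions simultaneously: (a) the ratio $\Per(\Om_n)^2/\Vol(\Om_n)$, which by Theorem~\ref{thm:acc}(i) determines the accumulation point $a_0^{\Om_n}$, is preserved; and (b) the ball-packing obstructions encoding the staircase continue to hold for $\Om_{n+1}$. I would achieve (b) by placing the cut in a region of $\p\Om_n$ disjoint from the ``active'' portion of the boundary used by the obstructing embeddings, and (a) by tuning the cut's size, pairing it with a small compensating cut elsewhere if necessary. Theorem~\ref{thm:acc}(ii) then serves as a consistency check: since the accumulation point is unchanged and the original embeddings persist, $a_0^{\Om_{n+1}}$ remains unobstructed, which together with the infinite tail of surviving obstructions exhibits the staircase. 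To control the cut-length, I would use affine invariants of $\Om_n$—for instance, the multiset of rational outward normal directions of edges of $\p\Om_n$, considered modulo the $\SL_2(\Z)$-action—to bound cut-length below by a quantity growing with $n$, since each newly inserted cut contributes an essentially new direction.

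The main obstacle is the inductive step: generically, inserting a cut changes both $\Per$ and $\Vol$, hence shifts the accumulation point, so arranging for $\Per^2/\Vol$ to be preserved while simultaneously keeping the staircase-realizing obstructions intact is delicate. A further subtlety is that, even if the accumulation point and the individual embedding obstructions persist, one must check via Theorem~\ref{thm:main}(iii) that the ECH capacities of $X_{\Om_{n+1}}$ actually continue to witness each staircase step. Balancing these constraints—especially re-proving unobstructedness at the accumulation point after each modification—is where the bulk of the work will lie.
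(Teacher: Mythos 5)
Your proposal has a fundamental gap at its core: the inductive step, as described, cannot work. You propose to pass from $\Om_n$ to $\Om_{n+1}$ by inserting a new rational cut ``in a region of $\p\Om_n$ disjoint from the active portion of the boundary used by the obstructing embeddings,'' so that the obstruction classes are unchanged. But Corollary~\ref{cor:accum3} shows exactly this is impossible: if $(\bE_k)$ is a sequence of obstructive classes with distinct break points for $\Om(b;(b_j))$, then for every $j$ with $b_j \ne 0$ the coefficient $\Tm_{kj}$ is eventually nonzero. In other words, a (pre-)staircase's obstruction classes must eventually ``see'' every negative weight. The reason is that the values $\mu_{\bE_k}$ must converge to the volume constraint $V_\Om = \sqrt{z/\Vol(\Om)}$ at the accumulation point by Proposition~\ref{prop:accum1}(ii); if the $\bE_k$ ignore a weight $b_{j_0}>0$ then they would also be obstructive for $\Om_{j_0} := \Om(b;(b_j)_{j\ne j_0})$ with strictly larger volume, and the same values cannot simultaneously equal $\sqrt{a_0/\Vol(\Om)}$ and $\sqrt{a_0/\Vol(\Om_{j_0})}$. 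So your plan to preserve both $\Per^2/\Vol$ and the obstructing classes while adding a cut is not merely delicate but self-contradictory, and the consistency check you invoke from Theorem~\ref{thm:acc}(ii) is precisely what rules it out.

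Your proposal also omits the issue of overshadowing. Having infinitely many obstructive classes accumulating at $a_0$ is necessary but not sufficient for a staircase: a single class whose obstruction passes through the accumulation point can dominate all of them, as in the ghost-stair phenomenon (Proposition~\ref{prop:obs}). The paper's actual argument has to rule this out (Proposition~\ref{prop:OS}), and this is by far the hardest part of the proof. The correct logic is the reverse of yours: you first choose a recursive family of perfect classes $(\bE_k(n))$ generated from two adjacent seed classes $\bE_0, \bE_1(n)$ via a recursion $x_{k+1} = t_n x_k - x_{k-1}$, then \emph{define} $\Om_n := \Om(1;\lim_k \btm_k/d_k)$ as the domain forced by these classes (so there is no freedom to place cuts ``disjointly''), and finally verify the three conditions of Proposition~\ref{prop:stair}: obstructiveness, perfection via Cremona reduction, and absence of overshadowing. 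The cut-length lower bound then comes from the singularity order of a vertex of $\Om_n$ growing like $2n+5$ together with Corollary~\ref{cor:cutLength}, which is in the same spirit as your affine-invariant idea, but that is the only part of your plan that survives.
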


The proof is given in \S\ref{sec:stair}.  
The regions $\Om_n$ are rational with only seven sides. However, the normals to the sides get increasingly complicated as $n$ increases.  We suspect that one could find many more examples, in which $\Om$ could have an arbitrarily large number of sides; however that is not our emphasis here, and even in our relatively simple examples the constructions and calculations, which are based on those in \cite{MM,MMW}, are quite complicated.

 As for Question~\ref{que:infstar}, the answer remains unknown.  We do show that if it is the case that a region with an infinite staircase must have finite weight length, then this must be for a subtle reason.  Namely, 
 in 
 Proposition~\ref{prop:obs} we show that irrational ellipsoids $E(1,\al)$, which by 
 the arguments here (or, in special cases, the arguments in \cite{salinger})
 are known not to have staircases, 
 do support 
 \lq\lq ghost stairs\rq\rq;  that is, there are  infinitely many obstructive classes  that have no effect on the capacity function because they are \lq\lq overshadowed\rq\rq\, by another larger obstructive class. 
 This shows for example that the proof of Theorem~\ref{thm:curvy}, which goes by showing that in this case there can only be finitely many obstructive classes, does not extend to the general  case.

\begin{rmk}\rm To establish the above results, we use 
two rather different general approaches.  We either argue geometrically 
analyzing the particular curves that obstruct embeddings, or argue using properties of the ECH capacities.  These methods seem to have different advantages and our theorems hopefully illustrate this.  In particular, the only proof we know of Theorem~\ref{thm:curvy}, Theorem~\ref{thm:acc} and Theorem~\ref{thm:class} takes the first approach. On the other hand, Theorem~\ref{thm:convexper} and Theorem~\ref{thm:packing} are proved using ECH or elementary ECH capacities,  in particular their subleading asymptotics.  It would be interesting to further explore the relationship between these two methods. For example, one might investigate whether the ghost obstructions seen in  Proposition~\ref{prop:obs} are also given by appropriate ECH capacities.
\end{rmk}

\subsection{Further questions}

We conclude with several other open questions that our work raises but that we do not address here.

Let $D$ be a bounded domain in $\mathbb{R}^{2n},$ let $X$ be a finite volume symplectic $2n$-manifold, and define the $k^{th}$ packing number $p_{k,D}(X)$ to be the proportion of the volume of $X$ that can be filled by $k$ disjoint symplectically embedded copies of an appropriate scaling of $D$.  Our Corollary ~\ref{cor:super} gives many examples in $\R^4$ of pairs $(D,X)$ where $p_{k,D}(X) < 1$ for all $k$.  However, our obstructions give no information about the  
{\em asymptotic packing number}
\[p_D(X) := \liminf_{k \to \infty} p_{k,D}(X).\]
As we explained in Remark~\ref{rmk:example}, there do exist zero perimeter domains $D$ with $p_D(X) = 1$, but we do not know whether this occurs for all zero perimeter domains.  
In fact, 
in the first version of this paper, we asked the following:

\begin{question}
Must $p_D(X) = 1$ for any such pair $(D,X)$?
\end{question}

This question has subsequently been answered, in the negative, in \cite{cghi} via some four-dimensional examples, but the question of asymptotic symplectic packing still remains very much open.  When $D$ is a ball, \cite[Rem. 1.5.G]{McPolt} shows that this question has an affirmative answer.

There are also many open questions about infinite staircases.  For example, we now know by Theorem~\ref{thm:acc} that the accumulation point theorem holds under very general hypotheses, and we would like to study it further. 
\begin{question}
\label{que:accum}
If $X_\Om$ has an infinite staircase, must the accumulation point $a_0$ be irrational?
\end{question}

Heuristically, one expects Question~\ref{que:accum} to have an affirmative answer, since the accumulation point can be thought of as the germ of the infinite staircase and a rational number does not seem to contain enough information.  Moreover, we now have a plethora of infinite staircases, see e.g. \cite{AADT, MPW}, and all of them have irrational accumulation points.

Here are some other questions about infinite staircases.
 In all known examples, the infinite staircases contain infinitely many visible peaks given locally by a straight line through the origin followed a horizontal line.  Such obstructions are 
 determined by ``perfect\rq\rq\, classes, see Remark~\ref{rmk:perfectObs}; is this a general phenomenon?  One would also like to know 
whether  any kind of classification is possible, for example for the class of (generalized) convex domains.  Our Theorem~\ref{thm:curvy} shows that in attempting such a classification, one can essentially restrict to domains with piecewise linear boundary.

In a different direction, one can further study the subleading asymptotics of ECH capacities.  Now that we have a wide class of examples where the limit is not defined, but the liminf contains geometrically interesting information, one can attempt to build on this.  One potentially fruitful direction to study involves the difference between the liminf and the limsup.   For example, let $U$ be a bounded star-shaped domain in $\mathbb{R}^4$, with smooth boundary.  Then it seems possible that the difference between the liminf and the limsup measures information about the dynamics on the boundary, for example one can ask:

\begin{question}
\label{que:ivrii}
Is there a relationship between $\limsup_k e_k(U) - \liminf_k e_k(U)$ and the measure of the set of periodic Reeb orbits on $\partial U$?
\end{question}   

To start, one can speculate that when the measure of the set of periodic Reeb orbits is zero, the $\limsup$ and the $\liminf$ should be equal; this would be an analogue of a celebrated result of Ivrii for the Laplace spectrum \cite{ivrii} and has been proved for various toric domains in \cite[Cor. 1, Thm. 3]{Wormleighton} and \cite[Thm. 1.10]{Ruel}.  The restriction that $D$ is star-shaped can also presumably be relaxed; for example, one could demand only that $D$ is a smooth compact Liouville domain with finite ECH capacities, or one could study the ECH spectral invariants on a three-dimensional contact manifold directly without reference to a filling.  In the case of toric domains, additional speculation about conditions under which the $e_k$ have a well-defined limit appears in \cite{Wormleighton}.  Another interesting question for toric domains is to understand if $\liminf_k e_k(X)$ has any natural interpretation in the concave case; as we explained in Remark~\ref{rmk:concave}, the ECH capacities do not always determine the perimeter of a concave toric domain.

\subsection{Organization}
In Section~\ref{sec:weight}, we define the cutting algorithm used to determine the weight length and introduce various notions of length associated with convex toric domains. In Section~\ref{sec:ECH}, we prove Theorem~\ref{thm:main}, which characterizes when a concave region embeds into a convex region, and give formulas to compute the ECH and elementary ECH capacities of a generalized convex toric domain. In Section~\ref{sec:acc}, we prove the accumulation point theorem (Theorem~\ref{thm:acc}). In Section~\ref{sec:subECH}, we compute the subleading asymptotics of the ECH capacities of convex toric domains in the proof of Theorem~\ref{thm:convexper} and apply this computation to applications about full fillings with the proofs of Theorem~\ref{thm:packing}, Corollary~\ref{cor:new}, and Corollary~\ref{cor:super}. In Section~\ref{sec:nostair}, we prove Theorems~\ref{thm:curvy} and~\ref{thm:class}, showing that certain convex toric domains do not admit infinite staircases, and explore the phenomenon of ghost stairs in irrational ellipsoids in Section~\ref{ss:ghost}. In Section~\ref{sec:stair}, we prove Theorem~\ref{thm:inftystair}, which provides infinitely many new examples of domains with increasing complexity that have infinite staircases. 

\vspace{3 mm}

\NI {\bf Acknowledgements}
We thank Tara Holm, Michael Hutchings, Matthew Salinger, and Morgan Weiler for very helpful discussions. Additionally, we thank Michael Hutchings and Richard Hind for very useful comments on a draft of our work.  We also thank Michael Hutchings and Rohil Prasad for inspiring discussions related to Question~\ref{que:ivrii}, and the referee for very detailed and helpful comments.  

\section{Weight decompositions, symmetries and length measurements}\label{sec:weight}

The first subsection reviews the cutting algorithm, describes some of its subtleties, and in equation \eqref{eq:cutdefn}
defines an associated measure of complexity: the {\bf cut length}. In \S\ref{ss:Crem}, after a  brief discussion of the realization problem,
we discuss the properties of the Cremona transform, 
showing that it preserves  the ECH capacities and defining an associated complexity measure: the {\bf Cremona length}; see Remark~\ref{rmk:Crem}.  Finally in \S\ref{ss:length} we explain two different ways of measuring the length of planar curves, namely affine length and 
length with respect to a toric domain, and in Lemma~\ref{lem:needed} prove a technical result about the latter  measurement that is used in \S\ref{sec:subECH}.

\subsection{The cutting algorithm}\label{ss:cut}

The  cutting algorithm, which is adapted from \cite{CG},
assigns to a (generalized) convex toric domain $\Omega$ a collection $\Om(b; (b_j))$, where $b$ is a positive real number and the $b_j$ form  a nonincreasing sequence of real numbers. To describe it, suppose first that $\Om$  contains a neighborhood of the origin in $\R^2_{\ge 0},$ and define  $\p^+\Om$ to be the closure of $\p \Om\cap \R^2_{>0}$.
  In this case, there is a unique $b>0$ such that the line $x+y=b$ is tangent to $\p^+\Om$, and 
\begin{align}\label{eq:twoOm}
\intt(\Om) = \intt (T(b)) \less (\Om_1 \sqcup \Om_2)
\end{align}
where $T(b)$ is the standard triangle with vertices $(0,0), (b,0), (0,b)$
and the closed regions $\Om_1, \Om_2$ have vertices at $v_1: = (0,b), v_2: = (b,0)$  respectively.
For $i=1,2$ there is a unique (orientation preserving) affine transformation that takes the corner of $\Om_i$ at $v_i$
to the corner of $\R^2_{\ge 0}$ at the origin and takes the two edges emanating from $v_i$ to segments on the axes. The image $\Om_i'$ of $\Om_i$ is then a concave region  with vertices at $(0,0), (0,\ell_1), (\ell_2,0)$, and we decompose it into a union of balls as in \cite{CG}: We begin by a cut of size $a$, where the line $x+y=a$  is tangent to $\p^+ \Om_i'$, and decomposes $\Om'_i$ into three regions, the triangle $T(a)$ and two concave regions $R_1,R_2$ (one or both may be empty), each with a Delzant corner\footnote
{A Delzant (or smooth) corner is one that is affine equivalent to the corner of $\R^2_{\ge 0}$ at $(0,0)$; equivalently, the two primitive integral normals to its edges form a matrix of determinant $\pm 1$.} 
 on the appropriate axis.  
We then move each of these corners to $(0,0)$ by an affine transformation, and repeat the process. 
This gives a sequence of cuts that are best described by a graph as in 
Figure~\ref{fig:1}.   (For further properties of this graph, see Remark~\ref{rmk:cut} below.)
After decomposing both $\Om_1$ and $\Om_2$ in this way, we define
 the sequence
$(b_j)_{j\ge 1}$ to consist of the sizes of all the cuts, listed in nonincreasing order.

In the general case, when $\Om$ does not contain a neighborhood of the origin, we translate $\Om$ in the positive quadrant to a region $\Om'$ whose boundary intersects both the $x$- and the $y$-axis, and then choose $b$ so that the line $x+y=b$ is also tangent to $\p\Om'$.  Then 
$$
 \Om' = T(b) \less  \intt\,(\Om_0\sqcup \Om_1 \sqcup \Om_2)
$$
where  $\Om_1,\Om_2$ are as before and $\Om_0$ is a concave region with Delzant corner at the origin. 
We  cut up  each of these three regions $\Om_0, \Om_1, \Om_2$ as before, and again define $(b_j)$ to be the
union of the sizes of the cuts listed in nonincreasing order.

\begin{figure}\label{fig:1}
\advance\leftskip-3.5cm
\vspace{-1in}
\includegraphics{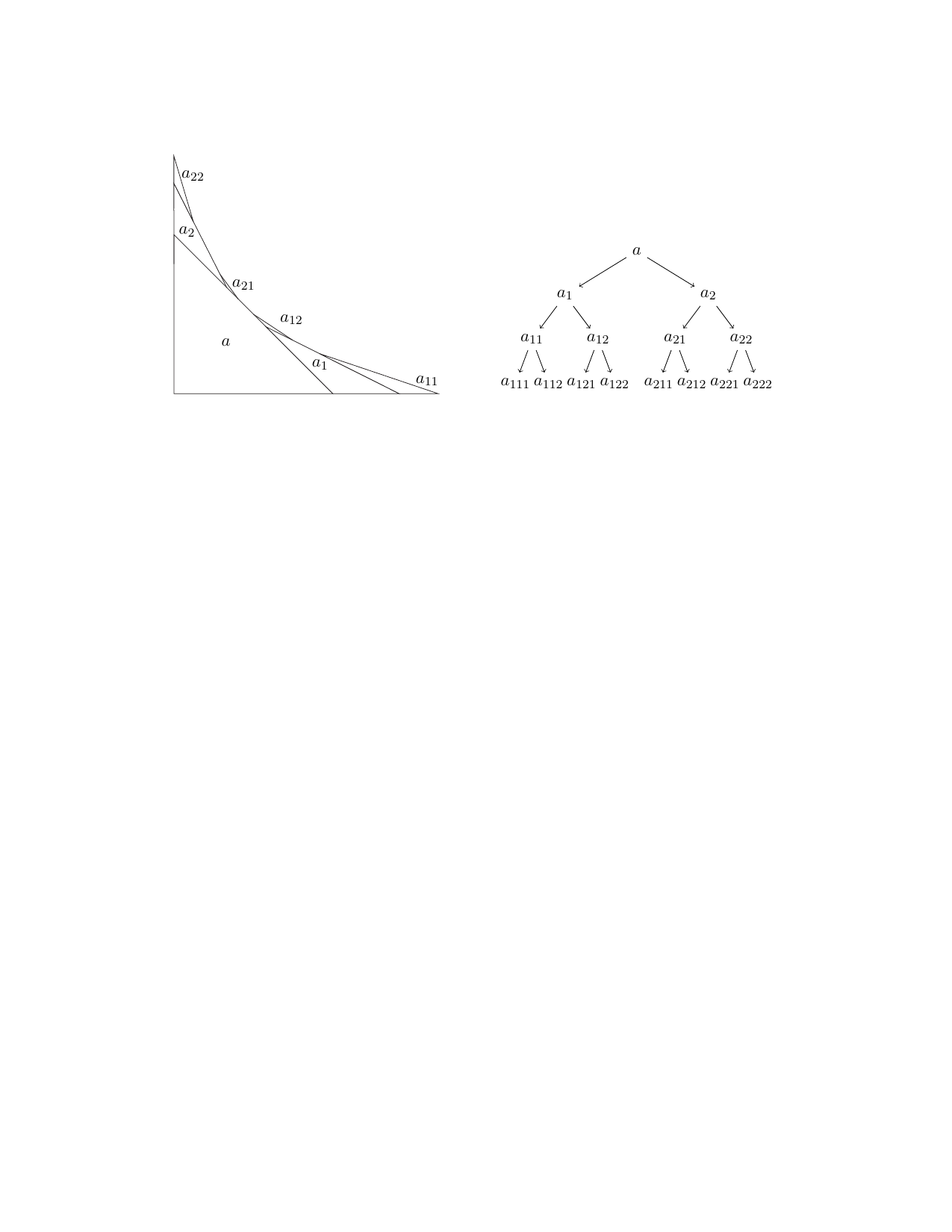}\vspace{-7.5in}
\caption{This illustrates the cutting algorithm for a concave region $\Om$. The first triangle  has size $a$; the second cuts   have sizes $a_1,a_2$ where $a_1+a_2 \le a$, the third set of cuts  have normal vectors $(1,3), (2,3), (3,2), (3,1)$ and  sizes $a_{11}, a_{12}, a_{21}, a_{22}$, 
where $a_{11}+a_{12} \le a_1$,  $a_{21}+a_{22} \le a_2$, and $a_{12} + a_{21} \le a - (a_1+a_2)$.
After three cuts, there are four concave regions $R_{i_1i_2}$ given by the closures of the  components of $\Om'\less \bigl(T(a)\cup T(a_{1})\cup T(a_2)\bigr)$, and so on.}
\end{figure}

\begin{rmk}\label{rmk:cut}\rm {\bf (Comments on the cutting procedure)} 
(i)\ Let $\Om'$ be a concave region with corner at $(0,0)$.
After the $k$th stage of the cutting procedure described above
we have $2^{k}$ concave regions  $R_I$ (some possibly empty), with disjoint interiors, and boundaries on $\p \Om'$,
indexed by $I\in \Ii$, where $\Ii$ is the set of all finite tuples  $I: =  (i_1,\dots, i_{k})$ with $i_j\in \{1,2\}$.
We then cut off  two standard triangles $T_{I1}, T_{I2}$ in each nonempty $R_I$ of sizes $a_{I1}, a_{I2}\ge 0$ where $a_{I1}+ a_{I2}\le a_I$; see Fig.~\ref{fig:1}.  This cut is tangent to $\p \Om'$ at some point $p_I$ of its outer edge.  Note the following
\begin{itemlist}

 \item[{\rm (a)}]
  If any $a_{Ii}=0$, that branch of the tree simply stops.  Geometrically, this means that the 
  point $p_I$ at which the cut meets  $\p \Om'$ is an endpoint of the arc $R_I\cap \p\Om'$.
  Similarly, if $a_{I1} + a_{I2} = a_I$ then the point at which these cuts meet lies in $\p\Om'$ and there are no further cuts centered at this vertex; moreover this vertex is a nonsmooth point of $\p\Om'$.

 \item[{\rm (b)}]  For each rational number $ r> 0$  there is a cut (possibly trivial, i.e. of size $a_I=0$, and hence unseen) whose outer edge has slope $-r$.

 \item[{\rm (c)}]  The boundary of the concave region $\Om'$ has Delzant corners if and only if $\Om$ has finite weight expansion and no two (nontrivial) cuts have the same endpoint.

 \item[{\rm (d)}] If $\Om'$ is {\bf rational} (that is, if its boundary is a finite union of line segments of rational slopes),
 then after a finite number of cuts, $R_I$ is a standard triangle, and the process stops completely. In all other cases, there is at least one infinite chain of cuts.  Note that, if $\p \Om'$ contains a line segment $S$ of rational slope $p/q$, then every cut whose outer edge  has slope $> p/q$ (resp. $< p/q)$ lies entirely to the left (resp. to the right) of $S$. It follows that there is a cut  whose outer edge  contains $S$.
   \end{itemlist}
   \MS
   
   \NI (ii)
 Above we have described an algorithm that cuts up a concave region $\Om'$  into standard triangles. However, given the $(b_j)$ 
  there is no canonical way to make these cuts,
 even if  we specify the intersection of $\Om'$ with the axes. 
 Thus, in general there are   many different concave regions with the same weight sequence $(b_j)$ and the same intersections with the axes.  This phenomenon is even more  apparent when we are given the weight sequence $(b; (b_j))$ of a convex region. 
 In particular we cannot always  construct a convex region by making the first  three cuts at different vertices, and then cyclically  making a cut at each edge.  For example, it is only possible to construct a convex region $\Om$ with weights $(2;1,1,1,1/2)$ if at least two of
  of the first three cuts are placed along the same edge. Indeed if we put three cuts of size $1$ at different vertices of $T(2)$ then no further cuts are possible since  $\Om$  must meet each of the boundary edges of $T(2)$.
\end{rmk}

Although the geometric structure of $X_\Om$ depends significantly on whether or not $\Om$ intersects the axes, the properties that are relevant to the considerations in this paper (such as the weight decomposition $(b; (b_j))$ and the ECH capacities) do not change when $\Om$ is translated off the axes. Moreover, if $\Om\subset \R^2_{>0}$, then $X_\Om$ is symplectomorphic to $X_{A\Om}$ where $A$ is any integral affine transformation such that $A\Om\subset \R^2_{>0}$.  We saw above that each such region $A\Om$ is the translate of a unique region of the form  $\Om(b; (b_j)_{j=1}^n)$, and we set ${\rm length}(A\Om): = n$. 
We  then define the {\bf cut length} of a region $\Om\subset \R^2_{>0}$
as follows:
\begin{align}\label{eq:cutdefn}
{\rm Cut}(\Om): = \min_{A\in \SL(2,\Z)}  {\rm length}(A\Om). 
\end{align}

If $\Om$ contains the origin, then we define its cut length to be that of any of its translates $\Om'$ in $\R^2_{>0}$.
Clearly, this length is finite only if $\Om$ is rational in the sense of Remark~\ref{rmk:cut}~(i) (d). Note that the cut length of a region $\Om$ may bear little relation to the number $n$ of cuts used to present  $\Om$  as $\Om(b;(b_j))$: for example there is no bound on the number of cuts needed to present the image $AT(1)$ of the standard triangle, as $A$ ranges over $\SL(2,\Z)$, while all these regions have cut length $0$.  We discuss a (possibly different) measure of complexity of $\Om$ in Remark~\ref{rmk:Crem}.

For general polytopes $\Om$, it is not
clear 
how to calculate the minimum in \eqref{eq:cutdefn}.  The following result shows that we can estimate this in terms of
the order of the singularities of its vertices. Here, if $\Omega$ is a rational polygon with vertices $\{v_1,\hdots,v_k\}$, we  denote the outward-pointing primitive normal vector to the edge connecting $v_i$ to $v_{i+1}$ by $n_i$ (where the indices are taken $\mod k$),
and then define the {\bf singularity order of the vertex $v_i$} to be $|\det(n_{i-1},n_{i})|.$

Let $\{F_k\}_{k\geq 0}$ denote the Fibonacci sequence where $F_0=F_1=1.$

\begin{lemma} \label{lem:cutBound}
    Let $\Omega$ be a convex rational polygon with cut length $k$. Then, the order of singularity of any vertex of $\Omega$ is at most $8F_k^2.$
\end{lemma}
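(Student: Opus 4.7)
Since the order of singularity $|\det(n_{i-1},n_i)|$ at a vertex is an $\SL(2,\Z)$-invariant---$\SL(2,\Z)$ acts on primitive integer normals by $A\cdot n = A^{-T}n$, preserving absolute determinants---we may replace $\Om$ by $A\Om$ for an $A\in\SL(2,\Z)$ realizing the minimum in \eqref{eq:cutdefn} and assume throughout that $\Om=\Om(b;(b_j)_{j=1}^k)$ is itself presented by exactly $k$ cuts. The plan is to track the outward primitive integer normals that appear on $\p\Om$ and to show they obey a Fibonacci bound in terms of $k$.

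The first task is to understand what a single cut does to the collection of outward normals. The base triangle $T(b)$ has three initial outward normals $(-1,0)$, $(0,-1)$, $(1,1)$, any two consecutive of which have $|\det|=1$. A direct computation in the local coordinates defining a cut---namely the affine map of \S\ref{ss:cut} that sends the Delzant corner being cut to the origin of $\R^2_{\ge 0}$ with its two adjacent edges along the positive axes---shows that the cut introduces an outward primitive normal equal to $u_1+u_2$, where $u_1,u_2$ are the outward normals of the two edges of $\p\Om$ meeting at that corner. Equivalently, each cut performs a Stern--Brocot (Farey) mediant operation on adjacent primitive normals.

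The crux is then an inductive Fibonacci bound: every primitive outward normal $n$ of $\p\Om(b;(b_j)_{j=1}^k)$ admits an expression $n=\alpha\,u_1+\beta\,u_2$, where $u_1,u_2$ is an adjacent pair of initial $T(b)$-normals (so $|\det(u_1,u_2)|=1$) and the nonnegative integers $\alpha,\beta$ satisfy $\max(\alpha,\beta)\le F_{k+1}$. The inductive step exploits that the extremal growth under iterated mediants is exactly Fibonacci: a chain of cuts that alternately takes the mediant of the most recently produced normal with its two neighbors has coefficients obeying $F_{j+1}=F_j+F_{j-1}$, and no other strategy produces larger coefficients. Even when the $k$ cuts are distributed across the cones rooted at the three corners of $T(b)$, the depth of the mediant chain inside any single cone is at most $k$, so the Fibonacci bound persists (and the cross-cone case only improves it, since no one cone can have absorbed all $k$ cuts).

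With this in hand, the singularity estimate is a short calculation. At any vertex $v$ of $\Om$, write the two adjacent outward normals as $n_j=\alpha_j u_1+\beta_j u_2$ in a common basis of adjacent initial normals, which is the generic single-cone case that dominates. Multilinearity of $\det$ together with $|\det(u_1,u_2)|=1$ then yields $|\det(n_{i-1},n_i)|=|\alpha_1\beta_2-\beta_1\alpha_2|\le 2F_{k+1}^2$, and the standard inequality $F_{k+1}=F_k+F_{k-1}\le 2F_k$ gives the claimed bound $8F_k^2$. The step we expect to require the most care is the Fibonacci bound itself: making the Stern--Brocot bookkeeping precise when cuts are spread across cones, and verifying that the canonical decomposition $n=\alpha u_1+\beta u_2$ genuinely achieves $\max(\alpha,\beta)\le F_{k+1}$ rather than a weaker exponential bound.
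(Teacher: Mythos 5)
Your approach is essentially the one the paper takes: both arguments exploit the Stern--Brocot/mediant structure of the cutting tree to get a Fibonacci bound on the normals, and then read off the singularity order from that. The only real difference is bookkeeping: the paper bounds the integer \emph{entries} of the normals in the three concave pieces $\Omega_\bullet'$ via the recursion $C_{i+1}\le C_i+C_{i-1}$ (which holds because one mediant parent of a layer-$(i+1)$ normal sits at layer $i$ and the other at layer $\le i-1$), obtaining $C_i\le F_i$, and then recovers a factor of $2$ by applying the transposes of the affine maps $A_1,A_2$ when passing back to $\Omega$; you instead bound the \emph{coefficients} in a basis of adjacent $T(b)$-normals and recover the factor of $2$ from $F_{k+1}\le 2F_k$. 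Two small remarks: your stated bound $F_{k+1}$ is one index looser than what the recursion actually gives (after $k$ cuts the maximum is $F_k$ --- after one cut the new normal is $u_1+u_2$ with max coefficient $1=F_1$, not $F_2=2$), though this is harmless here; and the assertion that ``no other strategy produces larger coefficients'' is exactly the place where the paper's explicit parent-depth observation is doing the work, so if you were to write this out you would want to justify that one mediant parent always lies at least two layers shallower than the node, which is what rules out $2^k$-type growth.
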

\begin{proof}
Since the order of any vertex is preserved under integral affine transformations, 
we can assume    without loss of generality that  the cutting procedure of $\Omega$ achieves the cut length.   As described in the cutting algorithm, the weight sequence is computed by considering the weight sequences of the three regions, $\Omega_0,\Omega_1,\Omega_2$. 

    The regions $\Omega_0':=\Omega_0,$ $\Omega_1':=A_1(\Omega_1)$, and $\Omega_2':=A_2(\Omega_2)$ are concave regions that can be translated to contain a neighborhood of the origin where \[A_1=\begin{pmatrix}
        0 & 1 \\ -1 & -1
    \end{pmatrix}^{-1} \quad \text{and} \quad A_2=\begin{pmatrix}
        -1 & -1 \\ 1 & 0 
    \end{pmatrix}^{-1}.\] 
 For each of these concave regions $\Omega_\bullet'$ for $\bullet=0,1,2$, we follow the process outlined in Remark \ref{rmk:cut} organizing the cuts in a tree. Let $(a_j^i,b_j^i)$ for $1 \leq j \leq 2^i$ denote the normal vectors to the cuts on the $i$th layer in the tree describing the cuts of $\Omega_\bullet'$. Define $C_i$ to be the maximum element of $\{|a_j^i|,|b_j^i|\}_{1 \leq j \leq 2^i}$. On the $i$th layer of the tree, the normal vector to each cut is the sum of a normal vector on the $(i-1)$st layer with the normal vector on some lower level. Hence, we have that  $C_{i+1} \leq C_i +C_{i-1},$ which implies that $C_i \leq F_i$ as $C_1=C_0=1.$ By assumption, the cut length of $\Omega$ is $k$, so each $\Omega_\bullet'$ can have at most $k$ layers of the tree. We can conclude that the normal vectors to the cuts of $\Omega_\bullet'$ have entries at most $F_k.$

    By the definition of $A_1,A_2,$ the absolute value of the entries of the normal vectors to the cuts in $\Omega$ are at most $2F_k.$
    Hence   the order of any singularity in $\Omega$ is at most $8F_k^2$ as claimed. 
\end{proof}

The following corollary is an immediate consequence of Lemma~\ref{lem:cutBound}.
\begin{cor} \label{cor:cutLength}
    Let $\{\Omega_n\}_{n\geq 0}$ be a sequence of rational convex polygons, and define $o_n$ to be the maximum of
    the  singularity orders of the vertices in $\Omega_n.$ If the sequence $\{o_n\}_{n \geq 0}$ is unbounded, then the sequence of cut lengths $\{{\rm Cut}(\Om_n)\}$ is unbounded.  
\end{cor}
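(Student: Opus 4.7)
The plan is to prove the contrapositive: I will show that if the cut lengths $\{{\rm Cut}(\Omega_n)\}$ are bounded, then the singularity orders $\{o_n\}$ must also be bounded, which is exactly the negation of the hypothesis-conclusion pair.

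First I would suppose, toward a contradiction, that $\{{\rm Cut}(\Omega_n)\}$ is bounded, say by some integer $K$. Then for each $n$, the cut length of $\Omega_n$ is at most $K$, so Lemma~\ref{lem:cutBound} applies with $k = {\rm Cut}(\Omega_n) \leq K$. This gives a uniform upper bound $8F_{{\rm Cut}(\Omega_n)}^2 \leq 8F_K^2$ on the singularity order of every vertex of every $\Omega_n$, using monotonicity of the Fibonacci sequence.

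Taking the maximum over vertices in each polygon, this yields $o_n \leq 8F_K^2$ for all $n$, so the sequence $\{o_n\}$ is bounded by the constant $8F_K^2$, contradicting the hypothesis that $\{o_n\}$ is unbounded. Hence $\{{\rm Cut}(\Omega_n)\}$ must be unbounded, as claimed.

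There is no substantive obstacle here; the entire content is packaged in Lemma~\ref{lem:cutBound}, and the corollary is obtained by a one-line contrapositive argument once one observes that the Fibonacci sequence is nondecreasing so that a uniform bound on cut length produces a uniform bound on the Fibonacci factor in the estimate.
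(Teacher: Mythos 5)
Your contrapositive argument is correct and is essentially what the paper intends: it labels the corollary an immediate consequence of Lemma~\ref{lem:cutBound}, and your observation that Fibonacci monotonicity turns a uniform bound $\mathrm{Cut}(\Omega_n)\le K$ into the uniform bound $o_n \le 8F_K^2$ is exactly the point. No gaps.
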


\subsection{The realization problem and the Cremona action}\label{ss:Crem}

We saw above that each convex domain $\Om$ has the form $\Om(b; (b_j))$, where the
 parameters $(b; (b_j))$ are uniquely determined by $\Om$.  However, the assignation $\Om \to \Om(b; (b_j))$ is neither injective nor surjective. 
 
 To see that
 different domains $\Om$ may give rise to the same  tuple $(b; (b_j))$, consider the case  $(b; (b_j)) = (3;1,1)$.
Then $\Om$ might either be the triangle $T(3)$ with two corners of size $1$ cut off, or it might be $T(3)$ with the corner at $(0,3)$ removed by cutting along the line $y = 1+x$.  (A related  point is made in Remark~\ref{rmk:cut}~(ii).)

At the same time, the question as to which tuples $(b; (b_j))$ do define convex domains also has subtleties.  
The most obvious necessary conditions are:
\begin{align}\label{eq:cutcondit}
\sum b_j^2 < b^2, \quad \sum b_j < 3b, \quad b_1+b_2\le b,
\end{align}
where the last condition is needed in order to fit in the first two triangles. 
However these conditions are not sufficient.  For one, we also need the union of balls $\sqcup_j B(b_j)$ to embed symplectically in $X_{T(b)}$ (or equivalently into $\C P^2(b)$) in such way that their interiors $\intt B(b_j)$ are disjoint. The obstructions to such an embedding are  given by the set of exceptional divisors $E$ in blowups of $\C P^2$, and are made fully explicit in  Karshon--Kessler~\cite{KK}.
A more subtle point is that  even if the balls $\sqcup_j \intt B(b_j)$ do embed in  $B(b) = X_{T(b)}$ there is no guarantee that they can be embedded via the cutting procedure described  in Remark~\ref{rmk:cut}.  For example, since the ball $B(b)$ may be fully filled by four balls of size $b/2$, one might wonder if there is a toric domain corresponding to the tuple $(b; (b/2 - \eps)^{\times 4})$. However, unless $\eps\ge b/6$ so that one can put three of the four cuts along one edge,  it is straightforward to check that no such domain exists.

The {\bf Cremona group} acts on tuples of the form $(b;(b_j))$ (where the $(b_j)$ are not necessarily decreasing or positive) by composing permutations of the $b_j$ with 
 the following  
 transformation of order $2$:
\begin{align}\label{eq:Cremon}
\Cc:  (b;b_1, \ldots) \to (b+d;b_1+d,b_2+d,b_3+d,b_4, \ldots), \quad d = b - b_1 - b_2 - b_3.
 \end{align}

\begin{definition} \label{def:Cremona}  We say that the tuple $(b; (b_j))$  with all $b_j>0$ is {\bf ordered} if the 
$(b_j)$ are nonincreasing. If in addition 
$d\le 0$, we define the {\bf Cremona move} ${\rm Cr}$ to be given by the composite of $\Cc$ with the permutation that restores the order; while if $d>0$, we define   
${\rm Cr}$  to be the identity.
An (ordered) tuple is said to be {\bf reduced} if $d\ge 0$, that is if 
${\rm Cr}(b; (b_j)) = (b; (b_j))$.
\end{definition}

If a realization
 $\Om(b; (b_j))$ of $(b; (b_j))$ puts the first three cuts at different corners of $T(b)$, then it is easy to check that there is an affine transformation that 
 takes $\Om\bigl({\rm Cr}(b; b_1,b_2,b_3)\bigr)$ onto  $\Om(b; b_1,b_2,b_3)$; see Figure~\ref{fig:3} for an illustration. However, it is not clear what happens when two of the first three cuts are put along the same edge. (If the first three cuts are all along the first edge then $d\ge 0$ and the tuple is reduced.)
Notice also that, because a given tuple might have several,  or no, realizations as a toric domain, we cannot   
 in general  interpret the Cremona move as an action on toric domains.  As illustration, consider the following example.

\begin{example} (i)
Consider the tuple $(5;2,2,2,2,2)$.  Since $3\cdot 2>5$, this cannot arise from a convex toric domain.  On the other hand, ${\rm Cr}(5;2,2,2,2,2) = (4;2,2,1,1,1)$ can be realized, as can ${\rm Cr}(4;2,2,1,1,1) = (3;1,1,1,1)$.   
\MS

\NI (ii) 
In contrast, Karshon--Kessler show in \cite{KK} that a finite union of closed balls $\sqcup_{j=1}^k B(b_j)$ embeds symplectically into $\C P^2(b)$ exactly if the tuple
$(b;(b_j)_{j=1}^k)$ is Cremona equivalent through positive tuples to a reduced tuple, i.e. one in which $b\ge b_1+b_2+b_3$. 
\MS

\NI
(iii) The two situations considered here are somewhat different: the triangles we cut out of $T(b)$  intersect along boundary segments (though they have disjoint interiors), while 
 the closed balls $B_j$ in $\C P^2$ are always disjoint. Thus in the Cremona reduction process for toric domains we can ignore zeroes and the number of cuts can change,
 while that does not happen in the process considered in \cite{KK}.
\end{example}

We now show that even though the Cremona action is not fully geometric, it does preserve ECH capacities in the following sense.   Consider tuples of the form $(a_0; a_1, \ldots, a_n)$; 
the prototypical example of such a tuple is the negative weight sequence $(b; (b_j))$  of a convex toric domain.  To such a tuple associate its {\em ECH capacities} 
\[ c_{ECH}(B(a_0)) - c_{ECH}(B(a_1)) - \ldots - c_{ECH}(B(a_n))\] 
where the sequence subtraction operation refers to \cite{Hutchingsblog}.   More precisely, 
we define
\begin{align} &c_k(a_0; a_1, \ldots, a_n) =\\ \notag 
&\qquad\quad \min \big\{ d a_0 - d_1 a_1 - \ldots- d_n a_n\ \big|\  d^2 + 3d - d_1^2 - d_1 - \ldots - d_n^2 - d_n \ge 2k \big\},
\end{align}
where $d$ and the $d_i$ are nonnegative integers.  (This generalizes the formula for 
$ c_{k}(B(a_0))$ given in Lemma~\ref{lem:Perball}.)

\vspace{2 mm}
\begin{lemma}\label{lem:CremECH}
Assume that $(a_0; a_1, \ldots, a_n)$ are positive real numbers satisfying the {\bf admissibility conditions}
\[ a_0^2 > a_1^2 + \ldots + a_n^2, \quad \quad c_{ECH}( B(a_1) \cup \ldots \cup B(a_n) ) \le c_{ECH}( B(a_0) ).\]
Then the ECH capacities of $(a_0; a_1, \ldots, a_n)$ and 
${\rm Cr}(a_0; a_1, \ldots, a_n)$ are the same.
\end{lemma}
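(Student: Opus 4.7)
The plan is to lift the Cremona move from the parameter tuple $\tilde{a} = (a_0; a_1, \ldots, a_n)$ to the coefficient tuple $\vec{d} = (d; d_1, \ldots, d_n)$ by the same formula:
\[ \text{Cr}(\vec{d}) := \bigl(2d - d_1 - d_2 - d_3;\, d - d_2 - d_3,\, d - d_1 - d_3,\, d - d_1 - d_2,\, d_4, \ldots, d_n\bigr), \]
and then to exploit a compatibility between the two actions. First I would verify by direct computation three things: (i) $\text{Cr}$ is an involution on coefficient tuples; (ii) it preserves the grading $Q(\vec{d}) := d^2 + 3d - \sum_i (d_i^2 + d_i)$, with the main input being the identity $(d_1+d_2)^2 + (d_1+d_3)^2 + (d_2+d_3)^2 = (d_1+d_2+d_3)^2 + d_1^2 + d_2^2 + d_3^2$ for the quadratic part together with a telescoping for the linear part; and (iii) it intertwines with the Cremona on parameters via the pairing, namely $\text{Cr}(\vec{d}) \cdot \text{Cr}(\tilde{a}) = \vec{d} \cdot \tilde{a}$ where $\vec{d} \cdot \tilde{a} := d a_0 - \sum_i d_i a_i$. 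Each of these is a finite, routine algebraic manipulation.

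Once (i)--(iii) are in hand, the heart of the argument is an infimum comparison. For any non-negative integer tuple $\vec{d}$ with $Q(\vec{d}) \ge 2k$, the image $\vec{d}' = \text{Cr}(\vec{d})$ satisfies $Q(\vec{d}') \ge 2k$ and $\vec{d}' \cdot \tilde{a}' = \vec{d} \cdot \tilde{a}$. If $\vec{d}'$ were always non-negative, then $c_k(\tilde{a}') \le \vec{d} \cdot \tilde{a}$; taking infima and using involutivity would give $c_k(\tilde{a}) = c_k(\tilde{a}')$ directly. The main obstacle is that $\text{Cr}$ does not preserve non-negativity in general: a minimizing $\vec{d}$ for the left hand side may well have $d < d_i + d_j$ for some $i, j \in \{1,2,3\}$, in which case the naive translate has a negative entry.

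The hard step is therefore to show that, under the admissibility hypotheses on $\tilde{a}$, the infimum defining $c_k(\tilde{a})$ is realized on the sublattice of non-negative tuples whose Cremona images are still non-negative. My plan is a combinatorial exchange argument: starting from a non-negative minimizer $\vec{d}$ with some negative entry in $\text{Cr}(\vec{d})$, I would use the volume inequality $a_0^2 > \sum a_i^2$ (which controls $\vec{d} \cdot \tilde{a}$ via Cauchy--Schwarz) together with the ball-packing hypothesis $c_{ECH}(\sqcup B(a_i)) \le c_{ECH}(B(a_0))$ to construct a new non-negative tuple $\wt{\vec{d}}$ with $Q(\wt{\vec{d}}) \ge 2k$, $\wt{\vec{d}} \cdot \tilde{a} \le \vec{d} \cdot \tilde{a}$, and $\text{Cr}(\wt{\vec{d}}) \ge 0$, by swapping among the first three indices using the relations $d_i \mapsto d - d_j - d_k$. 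An alternative, and perhaps cleaner, route would be to observe geometrically that the admissibility conditions force $\tilde{a}$ to correspond to a Kähler class on $\mathbb{C}P^2 \# n \overline{\mathbb{C}P^2}$, and that the Cremona map lifts to a biholomorphism of the triple blowup which identifies the symplectic manifolds corresponding to $\tilde{a}$ and $\text{Cr}(\tilde{a})$; their ECH capacities therefore coincide on the nose. Either way, involutivity of $\text{Cr}$ means only one of the two inequalities needs to be proved.
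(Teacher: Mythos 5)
Your outline of (i)--(iii) and the identification of the non-negativity obstacle exactly matches the paper's Steps 1, 2, and the setup of Step 3, so the skeleton is right. But the step you flag as hard --- showing that a minimizer can be assumed to have a non-negative Cremona image --- is where your proposal has a genuine gap, and the mechanism you sketch would not do the job.

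You describe the fix as a ``combinatorial exchange argument \ldots by swapping among the first three indices using the relations $d_i \mapsto d - d_j - d_k$.'' Swapping, i.e.\ reordering, the first three coefficients does not change whether $\mathrm{Cr}(\vec d)$ is non-negative (the transformed third entry $d - d_1 - d_2$ is symmetric in $d_1,d_2,d_3$ only after reordering, and permutation leaves the deficit $d - d_i - d_j$ invariant as a set). What the paper actually does is additive, not permutational: assuming an ECH-admissible minimizer has $\Delta := d - d_1 - d_2 < 0$, one replaces $(d,d_1,d_2)$ by $(d + \Delta,\, d_1 + \Delta,\, d_2 + \Delta)$ with all other entries fixed. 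The two computations that make this work are exactly the ones your proposal leaves out: (a) the pre-ECH capacity changes by $\Delta(a_0 - a_1 - a_2) < 0$, using the admissibility hypotheses to ensure $a_0 > a_1 + a_2$; and (b) the grading changes by $\Delta^2 + \Delta \ge 0$ since $\Delta$ is a negative integer. Together these contradict minimality of the original tuple. (One also has to note that the same argument applies symmetrically to $\mathrm{Cr}(\tilde a)$, using $a_3 > 0$, so that the inverse Cremona of a minimizer for the primed side is again admissible.) Without (a) and (b), the infimum comparison has no teeth.

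Your alternative geometric route is appealing but would need an extra layer: the quantities $c_k(a_0;a_1,\ldots,a_n)$ are defined in the paper by formal sequence subtraction, not as ECH capacities of an actual symplectic manifold, and they must accommodate quasi-exceptional classes, not just exceptional ones. So invoking a biholomorphism of $\mathbb{C}P^2\#3\overline{\mathbb{C}P}{}^2$ would first require identifying the formal capacities with a geometrically defined spectrum. That is not obviously shorter than the direct algebraic argument and in any case is not what the paper does.
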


We note that the admissibility conditions simply assert that there is no obstruction from either volume or ECH to embedding the balls $B(a_i)$ into the ball $B(a_0)$; this holds automatically when the $(a_0;a_1,\ldots,a_n)$ arise from a convex toric domain.

\begin{cor}  If two convex domains have negative weight expansions in the same Cremona orbit, then they have identical ECH capacities, and therefore by Theorem~\ref{thm:main}, identical ellipsoid embedding functions. 
\end{cor}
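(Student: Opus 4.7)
The plan is to deduce the corollary by iterating Lemma~\ref{lem:CremECH} along the sequence of Cremona moves (and permutations) connecting the two weight expansions, and then invoking Theorem~\ref{thm:main} to translate equality of ECH capacities into equality of ellipsoid embedding functions.

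First I would recall that the ECH capacities of a convex toric domain $X_\Om$ with negative weight expansion $(b;(b_j))$ are computed by the formula displayed just before Lemma~\ref{lem:CremECH} (taking $a_0=b$ and $a_j=b_j$), and by continuity/limits when the expansion is infinite. This formula is manifestly symmetric in $(a_1,\dots,a_n)$, so permutations of the $b_j$ do not change the $c_k$. The Cremona group acts by composing permutations with the single generator $\Cc$ from \eqref{eq:Cremon}; so two weight expansions in the same Cremona orbit are connected by a finite sequence of such generators (applied only in the regime where the reordered tuple remains positive, as the hypothesis of the corollary refers to two \emph{convex domains}). It therefore suffices to show that each individual Cremona move preserves the $c_k$.

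Lemma~\ref{lem:CremECH} does exactly this, provided the admissibility hypotheses hold. I would verify that admissibility propagates along the orbit as follows. At the initial step it holds automatically, since the negative weight expansion of a convex toric domain comes from an honest symplectic packing of balls into $B(b)$ (this is the content recorded in the remark right after the lemma). For the inductive step, the ECH inequality $c_{ECH}(\sqcup_j B(a_j)) \le c_{ECH}(B(a_0))$ is preserved because Lemma~\ref{lem:CremECH} itself says that both sides are unchanged by the move. The volume inequality $a_0^2 > \sum_j a_j^2$ is preserved by a direct computation: with $d = b-b_1-b_2-b_3$, one has
\[
(b+d)^2-\sum_{i=1}^{3}(b_i+d)^2 - \sum_{j\ge 4} b_j^2 \;=\; b^2-\sum_j b_j^2 + 2d(b-b_1-b_2-b_3) - 2d^2 \;=\; b^2-\sum_j b_j^2,
\]
so the signed volume is a Cremona invariant. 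For infinite weight expansions the Cremona move only modifies $b,b_1,b_2,b_3$, so the same argument applies after truncating beyond a finite index large enough that all the relevant $c_k$ stabilize.

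Finally, once we know the ECH capacities agree, Theorem~\ref{thm:main} gives the statement about ellipsoid embedding functions: for any $a\ge 1$ the ellipsoid $E(1,a)$ is a concave toric domain, and by the equivalence of (i) and (iii) in Theorem~\ref{thm:main}, the existence of a symplectic embedding $\intt(E(1,a))\hookrightarrow \intt(\lambda X_\Om)$ depends on $X_\Om$ only through its ECH capacities; hence $c_{X_\Om}(a)$ is a function of the ECH capacities of $X_\Om$ alone. The main delicate point, and the only step that requires attention beyond bookkeeping, is checking that admissibility is preserved under the Cremona move so that Lemma~\ref{lem:CremECH} may be applied at every stage of the orbit; the volume computation above plus the ECH‑preservation clause of the lemma itself make this an induction rather than a genuine obstacle.
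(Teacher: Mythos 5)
Your overall strategy is the intended one: iterate Lemma~\ref{lem:CremECH} along the orbit (permutations being trivial since the $c_k$ formula is symmetric), then deduce equality of ellipsoid embedding functions from the equivalence of (i) and (iii) in Theorem~\ref{thm:main}. Your explicit verification that the volume quantity $a_0^2-\sum a_i^2$ is a Cremona invariant is correct and worth having on the page.

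However, there is a genuine gap in your admissibility bookkeeping. To re-apply Lemma~\ref{lem:CremECH} at the next step you need the \emph{new} tuple $(a_0';a_1',\ldots,a_n')$ to satisfy $c_{ECH}\bigl(B(a_1')\cup\ldots\cup B(a_n')\bigr)\le c_{ECH}\bigl(B(a_0')\bigr)$, and you justify this by saying ``Lemma~\ref{lem:CremECH} itself says that both sides are unchanged by the move.'' That is not what the lemma says. Lemma~\ref{lem:CremECH} asserts that the \emph{sequence-subtracted} capacities $c_k(a_0;a_1,\ldots,a_n)$ agree with $c_k(a_0';a_1',\ldots,a_n')$; it says nothing about the individual ball capacities $c_{ECH}(B(a_0))$ or $c_{ECH}(\sqcup_j B(a_j))$, and indeed both of these do change under the move (already $a_0'=a_0+d\neq a_0$ in general). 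So the inductive step, as written, does not close.

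The correct way to propagate admissibility is geometric rather than through the lemma: the hypothesis that both endpoint tuples are negative weight expansions of convex toric domains means the corresponding ball packings $\sqcup_j B(a_j)\hookrightarrow B(a_0)$ actually exist, and the Karshon--Kessler criterion recalled in \S\ref{ss:Crem} says that such a tuple is Cremona-equivalent \emph{through positive tuples} to a reduced tuple, with the ball-packing property preserved at each stage. Monotonicity of ECH capacities then gives the required inequality for every tuple along that path, and your volume computation supplies the other half of admissibility. This also tidies up the point you flagged parenthetically (``applied only in the regime where the tuple remains positive''): Karshon--Kessler is precisely what guarantees you can connect the two endpoints through positive, admissible tuples, so that the induction actually runs. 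Once that is repaired, the final step via Theorem~\ref{thm:main}(i)$\Leftrightarrow$(iii) is exactly right.
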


\begin{rmk}\label{rmk:Crem} \rm  Lemma~\ref{lem:CremECH} suggests that we  measure the complexity of a domain $\Om$ not by its cut length as described in \eqref{eq:cutdefn}  but by its {\bf Cremona length}
$cr(\Omega)$ defined as follows. Let us write  $\Om\sim \Om'$  if  $\Om(b;b_i)$ and $\Om'(b';b'_j)$ are in the same Cremona orbit, i.e. one can get from one of these sequences to the other by a series of Cremona transforms and reordering. 
Then we define
\begin{align}\label{eq:Cremleng}
 cr(\Omega) := \min_{\Omega' \sim \Omega} \lbrace | \Omega'(b; b_1, \ldots, b_m)|  \rbrace,
 \end{align}
where $| \Omega'(b; b_1, \ldots, b_m)|: = m$.
It is not clear whether this measure   agrees with the cut length defined in \eqref{eq:cutdefn}.
\end{rmk}

\begin{proof}[Proof of Lemma~\ref{lem:CremECH}]  It is convenient to work with the Cremona transformation $\Cc$  of \eqref{eq:Cremon}
rather than the Cremona move, since the latter involves a permutation.  Thus, we will show that the ECH capacities of $(a_0;a_1,\dots,a _n)$ agree with those of
$(a'_0; a'_1, \ldots, a'_n): = 
\Cc (a_0; a_1, \ldots, a_n)$.
To prove the lemma, it will also be convenient to  assume 
\begin{align}\label{eq:convenient} a_0 > a_1 + a_2.
\end{align}
This is permissible by continuity, since the inequality  $a_0 \ge a_1 + a_2$ follows from the fact that
$ c_1^{ECH}( B(a_1) \cup  B(a_2) ) \le c_1^{ECH}( B(a_0) )$. 

We will call any expression of the form $d a_0 - d_1 a_1 - \ldots- d_n a_n$, for the $d_i$ integers (not necessarily nonnegative), a {\bf pre-ECH capacity}.  Further, we will call a nonnegative tuple $(d; d_1, \ldots, d_n)$ satisfying $$
d^2 + 3d - d_1^2 - d_1 - \ldots - d_n^2 - d_n \ge 2k
$$
  {\bf ECH admissible} for $k$.  Thus, $c_k$ is the minimum of the pre-ECH capacities associated to ECH admissible tuples for $k$; we call such a tuple realizing $c_k$ a {\bf minimizer}.
\vspace{2 mm}

\NI
{\bf Step 1:}\ {\it  Equality of pre-ECH capacities.}\  First we claim that the pre-ECH capacity associated to $(d; d_1, \ldots, d_n)$ for $(a_0; a_1, \ldots, a_n)$ is the same as the pre-ECH capacity associated to $\Cc(d; d_1,\ldots,d_n) = (d'; d'_1, \ldots, d'_n)$ for $(a'_0; a'_1, \ldots, a'_n)$.  

To see this,  we need to show that
\[ a_0'd' - a'_1d_1' - a'_2d_2' - a'_3d'_3 = a_0d  - a_1 d_1 - a_2 d_2 - a_3 d_3.\]
This holds because of the easily checked fact that   $C^T J C = J$ where $C$ is the $4\times 4$ matrix that implements the Cremona transformation of~\eqref{eq:Cremon} on the tuple $(x_0,x_1,x_2,x_3)$ and $J$ is the matrix ${\rm diag}(-1,1,1,1)$.

For future reference, let us record the inverse Cremona transform $\Cc^{-1}$, implicit in the above equations, recovering the ordinary variables from the primed variables, defined via
\begin{equation}
\label{eqn:inverse}
d =  2d' - d_1' - d_2' - d_3', \quad  d_1 = d' - d_2' - d_3', \quad d_2 = d' - d_1'- d_3', \quad d_3 = d' - d_1' - d_2'.
\end{equation}
Note in particular that $\Cc^{-1} = \Cc$, in other words $\Cc$ has order $2$.

\vspace{2 mm}
\NI
{\bf Step 2:}\ {\it   Admissibility, part 1.}  Next, we note that the quantity $d^2 + 3d - d_1^2 - d_1 - \ldots - d_n^2 - d_n$ is invariant under the Cremona transform.   This follows from the stronger statement that the quantities
\[ d^2 - \sum d_i ^2, \quad \quad 3d - \sum d_i,\]
are invariant under $\Cc$. 
The first claim follows from the identity $C^T J C = J$ in  Step 1, while the second
(which is also  well-known) is  easy to check. 

\vspace{2 mm}
\NI
{\bf Step 3:} {\it  Admissibility, part 2.}
To proceed, we need to consider what happens if $d < d_1 + d_2$: the issue is that in this case, we would have $d'_3 < 0$, which would not be ECH admissible.  

In fact, we show that an admissible ECH minimizer for $k$ with respect to $(a_0; a_1, \ldots, a_n)$ never has this property.  To show this, assume the opposite, and  
consider $\Delta = d - d_1 - d_2 < 0$.  Define $d'' = d + \Delta, d''_1 = d_1 + \Delta, d''_2 = d_2 + \Delta$; otherwise we set $d''_j  = d_j$.  Now we note that
\begin{align}\label{eq:<}
a_0d'' - a_1d_1'' - a_2d''_2 = a_0d - a_1d_1 - a_2d_2 + \Delta(a_0 - a_1 - a_2) < a_0d - a_1d_1 - a_2d_2,
\end{align}
where in the strict inequality we have used the fact that $a_0 > a_1 + a_2$.  On the other hand
\begin{align*} &d''^2 + 3d'' - d''^2_1 - d''_1 - d''^2_2 - d''_2\\
 &\qquad\qquad = d^2 - d^2_1 - d^2_2 - \Delta^2 + 2 \Delta(d - d_1 - d_2) + 3d - d_1 - d_2 + \Delta
\\
&\qquad\qquad  = d^2 + 3d - d_1^2 - d_1 - d_2^2 - d_2 + \Delta^2 + \Delta \\
&\qquad\qquad  \ge d^2 + 3d - d_1^2 - d_1 - d_2^2 - d_2,
\end{align*}
since $\Delta<0$ is an integer.  
In particular, if the $d_i$ were minimizers for $k$,  then the $d''_i$ would be ECH admissible for some $k' \ge k$, but with a strictly smaller pre-ECH capacity, which is not possible.  

Now observe that the condition~\eqref{eq:convenient} also holds for the tuple  $(a'_0; a'_1, \ldots, a'_n)$, since  $a_3 > 0$ by
assumption. 
Therefore the above argument applies equally well to $(a'_0; a'_1, \ldots, a'_n)$.
In particular, we are justified in applying the inverse Cremona transform \eqref{eqn:inverse} to any ECH admissible minimizer with respect to $(a'_0; a'_1, \ldots, a'_n)$, and we will still get something ECH admissible. 

\vspace{2mm}
\NI {\bf Step 4.} \ {\it  Putting it together.}  From the previous steps, any ECH admissible minimizer for $k$, with respect to $(a_0; a_1, \ldots, a_n)$ induces by Cremona transform an ECH admissible minimizer for $k$, with respect to $(a'_0; a'_1, \ldots, a'_n)$
with the same pre-ECH capacity.  Similarly, any ECH admissible minimizer for $k$, with respect to $(a'_0; a'_1, \ldots, a'_n)$ induces by inverse Cremona transform an ECH admissible minimizer for $k$, with respect to $(a_0; a_1, \ldots, a_n)$, with the same pre-ECH capacity.  The Lemma now follows.
\end{proof}

\subsection{Length measurements}\label{ss:length}

We first review some well known facts about affine length, and then discuss how to measure the length of a lattice path with respect to a convex domain $\Om$.

The {\bf affine length} of a line segment $S$ of rational slope is the Euclidean length of its image under any integral affine transformation $A$ such that $A(S)$ is contained in the $x$-axis.
The affine length
$\Aff(C)$ of a curve $C$  is defined to be the sum of the affine lengths of a maximal collection of disjoint line  segments of rational  slope
that are contained in $C$.  
Below we consider only the affine length of curves of fixed concavity, either concave down or concave up. As the following examples show, the general notion is not very well behaved.

\begin{example}\label{ex:afflength}\rm (i). The affine length $\Aff(\p T(a,1))$ of the boundary of the triangle $T(a,1)$ with vertices $(a,0)$ and $(0,1)$ is $1+a$ if $a$ is irrational, and $1 + p/q + 1/q$ if  $a = p/q$ where $\gcd(p,q) = 1$.  It follows easily that the function $x\mapsto \Aff(\p T(x,1))$ is continuous at irrational $x$, but discontinuous at rational $x$.  Thus, for example, if 
we vary $\Om$ among  convex domains with fixed intersection with the axes, then 
the function $\Aff(\p^+\Om)$ is not continuous at $\p^+\Om$
 if $\p^+ \Om$ contains any rational line segment. \MS

\NI (ii) We may approximate the line from $(0,0)$ to $(a,1)$  in the uniform norm by a sequence of steps consisting of line segments of lengths $< \eps$  that are alternately horizontal and vertical. It is easy to check that the affine length of each such approximation is $a+1$, while if $a$ is irrational the line itself has zero affine length.
Thus we cannot expect the affine length to exhibit any good convergence behavior unless we restrict to curves of fixed concavity.
\end{example}

\begin{lemma}\label{lem:afflength} Let $R$ be the concave region in $\R^2_{\ge 0}$ that lies below the graph of a decreasing continuous function $f: [0,\ell_x]\to [0,\ell_y]$ with $f(0) = \ell_y$ and $f(\ell_x) = 0$. Suppose that the upper boundary $C$ of $R$ contains no line segments of rational slope. Let $C_n\subset R$, $n\ge 1$, be a sequence of curves given by the graphs of piecewise linear, decreasing, functions $f_n:[0,\ell_x]\to [0,\ell_y]$ with rational slopes monotone along $C_n$ that converge to $f$ in the uniform norm. Then $\Aff(C_n)\to 0$. \end{lemma}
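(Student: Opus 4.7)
The plan is to bound $\Aff(C_n)$ by exploiting two complementary effects. Segments of $C_n$ whose primitive integer direction $(p,-q)$ has $\max(p,q)$ large contribute little affine length per unit Euclidean size, while for each fixed rational slope, the total horizontal span of segments of $C_n$ with that slope must vanish as $n \to \infty$.

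I recall from \S\ref{ss:length} that a line segment with primitive integer direction $(p,-q)$ (with $p \ge 1$, $q \ge 0$, $\gcd(p,q) = 1$) and horizontal span $\Delta x$ has affine length $\Delta x/p$, and when $q \ge 1$ this equals $\Delta y/q$ where $\Delta y$ is the vertical drop. Writing the segments of $C_n$ with primitive directions $(p_{n,i}, -q_{n,i})$, horizontal spans $\Delta x_{n,i}$, and vertical drops $\Delta y_{n,i}$, one has $\sum_i \Delta x_{n,i} = \ell_x$, $\sum_i \Delta y_{n,i} = \ell_y$, and
\[ \Aff(C_n) \;=\; \sum_i \frac{\Delta x_{n,i}}{p_{n,i}}. \]
Fix a large integer $M$. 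I would partition the segments of $C_n$ into three groups: those with $p_{n,i} > M$, those with $p_{n,i} \le M$ and $q_{n,i} > M$, and those with $\max(p_{n,i}, q_{n,i}) \le M$. The first contributes at most $\ell_x/M$, the second (using the alternate formula $\Delta y_{n,i}/q_{n,i}$) at most $\ell_y/M$, while the third involves only primitive directions in the finite set $\mathcal{P}_M := \{(p,q) : \gcd(p,q) = 1,\ \max(p,q) \le M\}$.

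The key step is: for each rational slope $r$, if $I_{n,r} \subset [0,\ell_x]$ denotes the set of points where $f_n$ has slope $r$, then $|I_{n,r}| \to 0$ as $n \to \infty$. Here the hypothesis of fixed concavity (see Example~\ref{ex:afflength}(ii)), namely that each $f_n$ is convex just as $f$ is, is used to ensure $I_{n,r}$ is a single interval. I would argue by contradiction: if $|I_{n_k,r}| \ge \eta > 0$ along a subsequence, pass to a further subsequence so that the endpoints of $I_{n_k,r}$ converge, producing a limit interval $[a,b]$ with $b - a \ge \eta$. For any $x_1 < x_2$ in $(a,b)$, eventually $x_1, x_2 \in I_{n_k,r}$, so $f_{n_k}(x_2) - f_{n_k}(x_1) = r(x_2 - x_1)$; letting $k \to \infty$, uniform convergence gives $f(x_2) - f(x_1) = r(x_2 - x_1)$, hence $f$ is linear of slope $r$ on $[a,b]$, contradicting the hypothesis that $C$ contains no rational-slope line segment.

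Summing over the finite set $\mathcal{P}_M$, the third group contributes at most $\sum_{(p,q) \in \mathcal{P}_M} |I_{n,-q/p}|/p \to 0$ as $n \to \infty$. Combining, $\limsup_n \Aff(C_n) \le (\ell_x + \ell_y)/M$, and letting $M \to \infty$ gives $\Aff(C_n) \to 0$. The main obstacle will be the compactness step establishing $|I_{n,r}| \to 0$ for each fixed rational $r$; convexity of $f_n$ is essential there to keep $I_{n,r}$ a single interval, since otherwise the oscillating staircase pathology of Example~\ref{ex:afflength}(ii) would allow arbitrarily many short segments of a given rational slope to accumulate positive total length.
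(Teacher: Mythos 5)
Your proof is correct (granted the implicit hypothesis that each $f_n$ is convex, which both you and the paper rely on but which is not literally in the lemma statement), and it takes a genuinely different route from the paper's. The paper estimates
$\Aff(C_n) = \sum_i \tfrac{1}{\sqrt{p_{n,i}^2+q_{n,i}^2}}\,\mathrm{E.leng}(C_{n,i}) \le \tfrac{1}{M_n}\,\mathrm{E.leng}(C_n)$
with $M_n := \min_i \sqrt{p_{n,i}^2+q_{n,i}^2}$, and then asserts that $M_n\to\infty$ ``because $C$ contains no line segments of rational slope.'' That assertion does not actually follow: a convex piecewise-linear approximant $C_n$ can perfectly well contain, for every $n$, a tiny segment of slope $-1$ near the (unique) point where the tangent slope of $C$ equals $-1$, so $M_n$ remains $\sqrt{2}$ forever even though the \emph{length} of that segment shrinks to zero. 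What is true, and what your argument supplies, is a per-slope statement: for each fixed primitive $(p,q)$, the total horizontal span $|I_{n,-q/p}|$ of slope-$(-q/p)$ segments in $C_n$ tends to $0$ (your compactness argument via uniform convergence and convexity of $f_n$), while the slopes with $\max(p,q)>M$ contribute at most $(\ell_x+\ell_y)/M$. Your three-way decomposition turns the informal ``the denominators get large'' intuition into a correct proof, whereas the paper's proof as written conflates ``each rational slope contributes vanishing length'' with ``the minimum denominator tends to infinity.'' One further remark: you observe that convexity of $f_n$ is needed to keep $I_{n,r}$ a single interval, and this is essential beyond convenience — without fixed concavity of the approximants the lemma is simply false (zigzagging between slopes $-1/2$ and $-3/2$ on tiny teeth keeps $\Aff(C_n)$ bounded below while still lying in $R$ and converging uniformly to $f$), which is exactly the pathology of Example~2.3.1(ii). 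It would be worth flagging explicitly that this convexity hypothesis is being assumed, since the lemma statement omits it.
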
 \begin{proof} Because $C$ has fixed concavity, it is a rectifiable curve, that is, it has a well defined Euclidean length, $\ell(C)$, which is the limit of the Euclidean lengths $\ell(C_n) $ of the curves $C_n$. Thus we may suppose that $\ell(C_n) \le \ell(C)+1$. Now $C_n$ consists of a finite number of line segments $(C_{n,i})_{i\in I_{n}}$ of Euclidean lengths $\ell_{n,i}$ and slopes $-p_{n,i}/q_{n,i}$, where $\gcd(p_{n,i},q_{n,i}) = 1$. By cutting $C$ in two and rotating through a right angle if necessary, we may suppose that $p_{n,i}\le q_{n,i}$ for all pairs $n,i$. For each $k,n$, let $I_{n,k} = \{i\in I_n : q_{n,i}\le k\}$ and consider the sum $L_{n,k}: = \sum_{i\in I_{n,k}} \ell (C_{n,i})$. Then for each $k$ \begin{align*} \Aff(C_n) = \sum_{i\in I_n} \Aff(C_{n,i}) &= \sum_{i\in I_n} \frac{ \ell(C_{n,i})}{\sqrt{p_{n,i}^2+ q_{n,i}^2}}\\ &\le L_{n,k} + \frac{1}{k} \sum_{i\notin I_{n,k}} \ell(C_{n,i}) \le L_{n,k} + \frac 1k \ell (C_n), 
\end{align*} The lengths $\ell(C_n)$ are convergent and hence bounded so that it suffices to show that for each $k$, $\lim_{n\to \infty} L_{n,k} = 0$. But because $p_{n,i}\le q_{n,i}$ and the slopes are monotone, $L_{n,k}$ is the sum of the lengths of at most $k^2$ line segments in $C_n$ of increasing slopes $-i/j$, where $0\le i\le j \le k$. Assume $\lim_{n\to \infty} L_{n,k} \ne 0$. Then there is a subsequence $\{n_r\}$ and an $\eps>0$ such that
$L_{n_r,k}>\eps$ for all $r$. 
After passing to a further subsequence, there is a fixed rational slope
$-i/j$ such that $C_{n_r}$ contains a segment of slope $-i/j$ whose
length is bounded below by a positive constant. These segments converge to a nonzero line segment in $C$ of rational slope $-i/j$, contrary to the hypothesis.
\end{proof}

\begin{lemma}\label{lem:aff} Let $\Om'\subset \R^2_{\ge 0}$ be a concave region with vertices at $(0,0)$,  $(x_\infty,0)$,  and $(0,y_\infty)$, and upper boundary given by a 
continuous curve $C = \p^+ \Om'$ from 
$(0,y_\infty)$ to $(x_\infty,0)$.   Let $a$ and $ (a_I)_{I\in \Ii} $ be the sizes of the triangles in the decomposition
in Remark~\ref{rmk:cut}. Then 
\begin{align}\label{eq:aff}
\Aff(\p^+\Om') = x_\infty +y_\infty - a - \sum_{I\in \Ii} a_I.
\end{align}
\end{lemma}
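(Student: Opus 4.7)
The plan is to prove the formula by induction on the cutting tree, combined with a limit argument for infinite trees. After a single cut of size $a$ with tangent point $p = (x_p, a - x_p)$, direct computation of the straight edges of $R_1$ and $R_2$ (which become the axis edges of $R_i'$ under the affine transforms $A_i$) gives $x_1 + y_1 = (y_\infty - a) + x_p$ and $x_2 + y_2 = (x_\infty - a) + (a - x_p)$, so
\[
x_1 + y_1 + x_2 + y_2 \;=\; x_\infty + y_\infty - a.
\]
Second, since affine length is additive over disjoint rational segments and invariant under $\SL(2,\Z)$-affine maps, partitioning $C = \p^+\Om'$ by the single tangent point $p$ gives $L(\Om') = L(R_1') + L(R_2')$. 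Variants of the same computation handle the degenerate cases in which some $R_i$ is empty or the cut line is tangent along a whole segment of $C$.

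Iterating these two identities down to depth $N$ yields, for every $N \ge 0$,
\[
L(\Om') \;=\; x_\infty + y_\infty - \sum_{|K| \le N} a_K - D_{N+1}, \qquad D_{N+1} := \sum_{|J|=N+1}\bigl(x_J+y_J - L(R_J')\bigr) \;\ge\; 0,
\]
where the nonnegativity of each summand follows because a decreasing concave curve from $(0, y_J)$ to $(x_J, 0)$ has Euclidean, hence affine, length bounded by $x_J + y_J$.  In particular the series $\sum_K a_K$ converges, and when $\Om'$ is rational the tree is finite by Remark~\ref{rmk:cut}(i)(d), so $D_{N+1} = 0$ for large $N$ and the formula drops out at once.

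For a general $\Om'$ it remains to prove $D_{N+1} \to 0$. Let $\gamma^{(N+1)}$ be the piecewise linear curve from $(0, y_\infty)$ to $(x_\infty, 0)$ formed by the straight boundary edges of the regions $\{R_J : |J|=N+1\}$ (equivalently, the part of $\p(\bigcup_{|K| \le N} T(a_K))$ that separates the axes from the cuts).  Since the cut tangent lines are arranged in the order of their tangent points on $C$, $\gamma^{(N+1)}$ is a concave piecewise linear curve of rational slopes with affine length $\sum_{|J|=N+1}(x_J+y_J)$, and it converges uniformly to $C$ as $N \to \infty$ because the total area of $\bigcup R_J$ tends to zero (since $\sum_K a_K^2/2$ is bounded by the area of $\Om'$). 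To show $\Aff(\gamma^{(N+1)}) \to L(\Om')$ I adapt Lemma~\ref{lem:afflength}.  Decompose $C$ into its (countably many) maximal rational-slope segments $\{S_i\}$ and the complementary arc $C_0$; by Remark~\ref{rmk:cut}(i)(b,d) each $S_i$ corresponds to a unique cut $K_i$ in the tree of matching slope, and for $N \ge |K_i|$ the piece of $\gamma^{(N+1)}$ lying on the tangent line through $S_i$ is a monotonically decreasing segment of that tangent whose affine length converges to $\Aff(S_i)$ as the adjacent tangent lines in $\gamma^{(N+1)}$ accumulate at the endpoints of $S_i$.  For the remaining pieces of $\gamma^{(N+1)}$, which lie on tangent lines of points of $C_0$, the slope denominators diverge and hence their total affine length tends to $0$ by the estimate in the proof of Lemma~\ref{lem:afflength}.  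Summing over the two types gives $\Aff(\gamma^{(N+1)}) \to \sum_i \Aff(S_i) = L(\Om')$, forcing $D_{N+1} \to 0$ and completing the proof.  The main obstacle is this final step: verifying that the rational-segment pieces of $\gamma^{(N+1)}$ do not overshoot $\Aff(S_i)$ in the limit, and justifying the vanishing of the total affine length on the non-rational part using only uniform convergence together with the specific tangent-line structure of $\gamma^{(N+1)}$.
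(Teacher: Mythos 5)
Your overall strategy---proving the identity level-by-level, expressing the discrepancy as a nonnegative deficiency $D_{N+1}$, and showing $D_{N+1}\to 0$ via an approximating piecewise linear curve---is a sound route.  (The paper itself gives no proof of this lemma, only the citation \cite[Lem.~3.6]{Ruel}, so there is no in-paper argument to compare against.)  Your one-step telescoping and the nonnegativity of each summand via the Euclidean-length bound are both correct, and although the single-cut identity $x_1+y_1+x_2+y_2 = x_\infty + y_\infty - a$ acquires a correction $-\ell$ when the tangent line meets $C$ along a segment of positive length $\ell$, that correction cancels against the corresponding term in the affine-length decomposition, so the telescoped formula you write down is still valid.

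The gap is exactly where you flag it, in the step $D_{N+1}\to 0$.  Two points need repair.  First, your bookkeeping for $\gamma^{(N+1)}$ is internally inconsistent: if $\gamma^{(N+1)}$ consists only of the straight edges of the $R_J$, then $\Aff(\gamma^{(N+1)}) = \sum_{|J|=N+1}(x_J+y_J)$ as you state, but what you would then need is $\Aff(\gamma^{(N+1)})\to 0$, not $\Aff(\gamma^{(N+1)})\to L(\Om')$; whereas if $\gamma^{(N+1)}$ also carries the tangent segments of length $\ell_K := \Aff(L_K\cap C)$ for $|K|\le N$ (so that it genuinely is a concave curve converging to $C$), then $\Aff(\gamma^{(N+1)})=\sum_{|J|=N+1}(x_J+y_J)+\sum_{|K|\le N}\ell_K$, and with that correction one does have $D_{N+1}=\Aff(\gamma^{(N+1)})-L(\Om')$, which justifies the implication you invoke.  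You must adopt the second convention consistently.  Second, and more seriously, ``summing over the two types'' is an interchange of an $N$-limit with an infinite sum (over the countably many maximal rational segments $S_i$ and over the infinitely many cut lines $L_K$ with point tangency), and nothing you say gives a uniform bound justifying it.  It is not the case that the slope denominators occurring in $\gamma^{(N+1)}$ diverge: at every finite $N$ the curve still has pieces on the low-denominator tangent lines (the tails beyond $S_i$ or surrounding a tangent point in $C_0$).  The missing estimate is a two-scale dichotomy in the spirit of Lemma~\ref{lem:afflength}: for any threshold $M$, only finitely many cuts $K$ have slope $-p_K/q_K$ with $p_K^2+q_K^2\le M$ (each rational slope is used once, Remark~\ref{rmk:cut}(b)), and for these the contributions $g_K^{(N)}:=\Aff(L_K\cap\gamma^{(N+1)})$ decrease to $\ell_K$ as $N\to\infty$; for the remaining $K$, use $g_K^{(N)}\le M^{-1/2}\,{\rm E.leng}(L_K\cap\gamma^{(N+1)})$ together with the uniform bound ${\rm E.leng}(\gamma^{(N+1)})\le x_\infty+y_\infty$ (monotonicity, exactly as in \eqref{eq:Eleng}), so that this part is $\le (x_\infty+y_\infty)/\sqrt{M}$.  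Sending $N\to\infty$ and then $M\to\infty$ closes the argument.  Without this, the claimed limit does not follow from pointwise convergence of the individual pieces, and the proof is incomplete.
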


\begin{proof} See~\cite[Lem.3.6]{Ruel}.
\end{proof}

\begin{cor}\label{cor:aff1}  The convex region $\Om(b; (b_j))$ has volume $\Vol(\Om) = b^2 - \sum_j b_j^2$, and the affine length $\Per(\Om)$ of its boundary  is $3b - \sum_j b_j$.
\end{cor}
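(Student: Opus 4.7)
My plan is to use the decomposition $\intt(\Om) = \intt(T(b)) \setminus (\Om_0 \sqcup \Om_1 \sqcup \Om_2)$ from \S\ref{ss:cut}, treating both cases (whether or not $\Om$ meets the origin) uniformly by allowing $\Om_0$ to be empty when $\Om$ contains a neighborhood of the origin. The two baseline facts are $\Vol(T(b)) = b^2$ and $\Per(T(b)) = 3b$: the two axis legs have primitive directions $(1,0)$ and $(0,1)$, so their affine lengths are both $b$, while the hypotenuse has displacement $b\cdot(-1,1)$ with $(-1,1)$ primitive, so its affine length is also $b$. Each invariant of $\Om$ will then be computed piece by piece from the decomposition, applying Lemma~\ref{lem:aff} to the concave components.

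For the volume, I would show inductively that each concave region $\Om_i$ with cut sizes $a_i$ and $a_{i,I}$ satisfies $\Vol(\Om_i) = a_i^2 + \sum_I a_{i,I}^2$. A single cut of affine size $a$ inside a concave region excises a standard triangle (of volume $a^2$) and splits the remainder into two smaller concave regions to which the same process applies; the cut triangles have pairwise disjoint interiors, and their union exhausts $\Om_i$ up to measure zero since, by Lemma~\ref{lem:aff} applied to each residual region, the total affine perimeter of the residual after $n$ stages tends to zero, hence so does its area. Summing $\Vol(\Om) = b^2 - \sum_i \Vol(\Om_i)$ then gives $\Vol(\Om) = b^2 - \sum_j b_j^2$, with monotone convergence handling the infinite case.

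For the perimeter, I would decompose $\p \Om$ into the upper (concave) boundaries of the $\Om_i$ together with the portions of $\p T(b)$ not covered by any $\Om_i$. Letting $u_i, v_i$ denote the affine lengths of the two straight legs of $\p \Om_i$ (both lying on $\p T(b)$), Lemma~\ref{lem:aff} identifies the affine length of each concave upper boundary as $u_i + v_i - a_i - \sum_I a_{i,I}$, while the complementary straight parts of $\p T(b)$ contribute $3b - \sum_i (u_i + v_i)$ in total. Adding these, the $u_i+v_i$ terms cancel algebraically and yield
\[ \Per(\Om) = 3b - \sum_i a_i - \sum_i \sum_I a_{i,I} = 3b - \sum_j b_j. \]

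The main technical point is the infinite case: one must check that the concave arcs and the complementary straight pieces really exhaust $\p \Om$ bijectively (which follows from Remark~\ref{rmk:cut}(i) ensuring the cut points sit on $\p \Om_i$ correctly), and that the series $\sum_j b_j$ and $\sum_j b_j^2$ converge absolutely. The latter is automatic, since the partial sums are bounded a priori by $3b$ and $b^2$ respectively via the formulas just derived on the finite truncations of the cutting tree.
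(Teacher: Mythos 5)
Your argument follows the paper's proof exactly for the perimeter: the paper deduces it from Lemma~\ref{lem:aff}, affine invariance of affine length, and the decomposition $T(b)\setminus(\Om_0\sqcup\Om_1\sqcup\Om_2)$, and your telescoping of the $u_i+v_i$ terms is precisely the computation the paper leaves implicit. For the volume, however, the specific justification you give does not quite go through. The total affine perimeter of the residual after $n$ stages does \emph{not} tend to zero in general: the legs of the residual pieces $R_I$ lie on previously created cut edges, all of rational slope, and a bookkeeping computation using Lemma~\ref{lem:aff} shows their total affine length tends to $\Aff(\p^+\Om_i')$, which is positive whenever $\p^+\Om_i'$ contains a rational segment. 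Moreover, affine length alone does not bound Euclidean area --- a region bounded by irrational-slope segments has zero affine perimeter but can have arbitrary area --- so even if the perimeter did tend to zero it would not force the area to do so. The correct (and standard) observation is that the Euclidean diameters of the individual pieces $R_I$ shrink, so the inscribed polygons built from the cut triangles converge to $\Om_i$ and their areas converge; this is what the paper treats as ``immediate.'' The conclusion and overall structure of your proof are right, and the perimeter half is exactly the paper's argument; only this one exhaustion step needs a different justification.
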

\begin{proof} The first claim is immediate.  
The second is a straightforward consequence of Lemma~\ref{lem:aff}, the fact that affine length is invariant  under integral affine  transformations, and the decomposition  of $\Om$ into $T(b) \less (\Om_0\sqcup\Om_1\sqcup\Om_2)$ that is discussed in \S\ref{ss:cut}.
\end{proof}

\begin{cor}\label{cor:aff2}   With $\Om=\Om(b; (b_j))$ as in Corollary~\ref{cor:aff1}, the affine length $\Per(\Om): = 3b-\sum b_j$ is continuous  at $\Om_0$ w.r.t. the Hausdorff distance on the pair $(\Om, \Om\cap \{xy=0\})$ 
 if and only if $\p^+(\Om_0)$ has no rational segments.  
 \end{cor}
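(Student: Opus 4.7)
The plan is to write $\Per(\Om) = \Aff(\p\Om)$ as a sum of the affine length of the portion of $\p\Om$ lying on the coordinate axes plus $\Aff(\p^+\Om)$. The axis pieces are Euclidean segments of slope $0$ or $\infty$ (both counted as rational, where affine length equals Euclidean length), and under Hausdorff convergence of the pair $(\Om,\Om\cap\{xy=0\})$ they converge in length. Hence continuity of $\Per$ at $\Om_0$ is equivalent to continuity of $\Om \mapsto \Aff(\p^+\Om)$ at $\Om_0$, and it suffices to prove that this holds iff $\p^+\Om_0$ contains no rational segment. Using the cutting decomposition of \S\ref{ss:cut}, $\p^+\Om$ breaks up into at most three concave arcs, and the arguments below are applied component-wise.

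For the ``if'' direction, assume $\p^+\Om_0$ contains no rational segment, so $\Aff(\p^+\Om_0)=0$. Given $\Om_n \to \Om_0$ and $\eps>0$, every contribution to $\Aff(\p^+\Om_n)$ comes from a linear edge of slope $-p/q$ with $\gcd(p,q)=1$, where a segment of Euclidean length $L$ contributes $L/\sqrt{p^2+q^2}$. Split the contributions by $\sqrt{p^2+q^2}\ge M$ versus $<M$. The first part is at most $({\rm E.leng}(\p^+\Om_n))/M$, and by \eqref{eq:Eleng} applied to each concave component, the total Euclidean length is uniformly bounded, so choosing $M$ large yields $<\eps/2$. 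For $\sqrt{p^2+q^2}<M$ there are only finitely many slopes $s$, and concavity of each component of $\p^+\Om_n$ forces the set of tangent-slope-$s$ points to be a single (possibly degenerate) plateau of some Euclidean length $\ell_s(\Om_n)$. Since $\ell_s(\Om_0)=0$ by hypothesis and Hausdorff convergence of convex sets gives continuity of the support segment in every rational direction, $\ell_s(\Om_n)\to 0$ for each such $s$; summing the finitely many terms yields $<\eps/2$ for large $n$, so $\Aff(\p^+\Om_n)<\eps$.

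For the ``only if'' direction, suppose $S \subset \p^+\Om_0$ is a line segment of rational slope $-p/q$ and positive affine length $\ell=\Aff(S)>0$. Construct $\Om_n \subset \Om_0$ by removing a thin sliver near $S$: parallel-translate $S$ inward by $1/n$ and tilt the translate slightly to an irrational slope, connecting it to the adjacent boundary edges of $\Om_0$ by short auxiliary segments of Euclidean length $O(1/n)$, arranged so that $\Om_n$ remains convex and has the same intersection with the axes as $\Om_0$. Then $\Om_n \to \Om_0$ in the required Hausdorff sense, but the contribution of $S$ to affine length is replaced by zero (the new segment has irrational slope) plus an $O(1/n)$ contribution from the joining segments. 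Hence $\Aff(\p^+\Om_n) \to \Aff(\p^+\Om_0) - \ell < \Aff(\p^+\Om_0)$, establishing the desired discontinuity.

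The main technical obstacle is the assertion used in the ``if'' direction that, for every fixed rational slope $s$ with $\ell_s(\Om_0)=0$, the plateau length $\ell_s(\Om_n)$ tends to $0$. This is where the convexity of the $\Om_n$ is essential: on a concave boundary arc each slope contributes at most one plateau, so the length of that plateau is governed by the support function of $\Om_n$ in the normal direction to $s$, which is Hausdorff-continuous. I would record this as a brief preliminary lemma before the main proof to make the ``if'' direction transparent.
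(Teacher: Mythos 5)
Your proof is correct and carries out the adaptation that the paper's proof of this corollary merely gestures at (it cites Example~\ref{ex:afflength}(i), Lemma~\ref{lem:afflength}, and control of the axis intervals, then leaves ``further details to the reader''). The ``only if'' perturbation is the expected adaptation of Example~\ref{ex:afflength}(i). The more interesting contribution is your ``if'' direction: you split the affine-length contributions by denominator size, bounding the slopes with $\sqrt{p^2+q^2}\ge M$ by ${\rm E.leng}/M$ and handling the finitely many slopes with $\sqrt{p^2+q^2}<M$ via upper semicontinuity of the face (plateau) length in each such direction. This is strictly more robust than the argument in the proof of Lemma~\ref{lem:afflength} as written, which bounds $\Aff(C_n)\le {\rm E.leng}(C_n)/M_n$ with $M_n:=\min_i\sqrt{p_{n,i}^2+q_{n,i}^2}$ and then asserts $M_n\to\infty$; that assertion can fail, since one arbitrarily short edge of slope $-1$, say, pins $M_n=\sqrt 2$ for all $n$ even though the lemma's conclusion still holds. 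Your denominator split is exactly what closes that gap, and it applies directly to arbitrary Hausdorff-convergent convex domains rather than only to piecewise-linear rational-slope approximants, which is what the corollary actually requires. One small imprecision: what you invoke as ``continuity of the support segment'' is really \emph{upper semicontinuity} of the length of the face of $\Om$ in a fixed rational direction (a face can grow discontinuously in the Hausdorff limit, so there is no lower semicontinuity); but since $\ell_s(\Om_0)=0$, upper semicontinuity is precisely the inequality your argument uses, so this is only a matter of phrasing.
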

 \begin{proof} This holds by adapting the arguments  in Example~\ref{ex:afflength}(i) and Lemma~\ref{lem:afflength}.  Notice that for regions that do not contain a neighborhood of the origin, but do intersect the axes in intervals of length $>0$, we also control the lengths of these intervals; in other words nearby regions have intersections of approximately equal length.   Further details are left to the reader.
 \end{proof}

 We next discuss a property of the quantity $\ell_\Omega(\Lambda)$ that is a crucial ingredient in our arguments in \S\ref{ss:subtract}
 about ECH capacities.
 Here $\ell_\Omega(\Lambda)$ denotes
 the length of an (oriented) lattice path $\La$ 
 with respect to the convex region $\Om: = \Om(b; (b_j))\subset \R^2_{\ge 0}$ and is defined as follows, see  \cite[App]{CG}.\footnote
 {Readers of \cite[App]{CG} should be aware that in that reference convex regions always contain a neighborhood of the origin.}
Orient  $\Omega$ counterclockwise about a point in its interior.
Then the  length $\ell_\Omega(\Lambda)$ of an oriented  lattice path
 is the sum
 \begin{align}\label{eq:Omlength} 
\ell_\Omega(\Lambda) = \sum_{e\in \La} \ell_\Omega(e),
\end{align}
where  the ${\Omega}$ length $\ell_\Omega(e)$ of any oriented edge $e\in \La$ is $e \times p_e$, where $p_e$ is a point on $\p \Omega$ with a tangent in the same direction as $e$.\footnote
 {
 By slight abuse of language, we say that the oriented line $L$ through $p_e\in \p\Om$ is tangent to $\p\Om$ if $\Om$ lies entirely in the left  half plane with boundary $L$. Since $\Om$ is convex, every point in $\p \Om$ has at least one tangent.}
We also denote $\ell_\Om$ by $\ell_X$, where  $X = X_\Om$.
 
  As above, we denote the closures of the components of $T(b) \less \Om$ by 
$\Om_0, \Om_1, \Om_2$, where $\Om_0$ is the (possibly empty) region containing the origin,  $\Om_1$ meets the $y$-axis, and $\Om_2$ meets the $x$-axis. 
Recall also that  an oriented lattice path is said to be {\bf concave} if it is the upper boundary of a concave region of $\R^2_{\ge 0}$, with initial point on the $y$-axis, and final point on the $x$-axis.

Let $P$ be a (generalized) 
convex lattice polytope, that we assume   translated  so that $P = \Om(b';(b'_j)) \subset T(b')$ has at least one vertex on the $y$ axis, one on the $x$ axis, and one on the slant edge of $T(b')$.  Denote its boundary by $\La: = \p P$.
 As in \S\ref{ss:cut},  $T(b') \less P$ is the union of three (possibly empty)  toric regions $Q_0, Q_1,Q_2$ that are affine equivalent to the concave regions $Q_0'=Q_0, Q_1', Q_2'$.
Correspondingly, $\La$ is the union of some line segments on the boundary of $T(b')$ together with three lattice paths $\La_i\subset Q_i, i=0,1,2$ oriented as the boundary of $P$. Define  $\La'_3$ to be the slant edge of $T(b')$,  so that 
$ \ell_{T(b)}(\Lambda'_3)=b b'$.
Further,  define
$\La_i'\subset Q_i'$, where $ i=0,1,2$, to be the concave lattice path affine equivalent to $\La_i$.  (Thus $\La_0' = \La_0$.) 
The next  result exploits the fact that the point $p_e$ corresponding to an edge $e$  in $\La_i$ lies in $\p \Om_i$.

\begin{lemma}\label{lem:needed} Let $\La$ be the boundary of a convex lattice polytope $P = \Om(b', (b'_j))$ in $\R^2_{\ge 0}$, and  define 
the concave lattice paths $\La_i', i=0,\dots,3$
 as above. 
Then, for any convex region $\Om = \Om(b; (b_j))$,
\begin{equation}
\label{eqn:needed2}
\ell_\Omega(\Lambda) = \ell_{T(b)}(\Lambda'_3) - \sum^2_{i=0}\ell_{\Omega'_i}(\Lambda'_i), 
\end{equation}
where  the $\Om_i'$ are the concave regions defined above.
\end{lemma}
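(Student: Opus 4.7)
The plan is to express $\ell_\Omega(\Lambda)$ as a contribution from the ``outer triangle'' $T(b)$ associated to $\Omega$, minus three corrections, one for each concave region $\Omega_i$ cut off from $T(b)$ to form $\Omega$.  The crucial geometric input is the observation stated just before the lemma: for any edge $e \in \Lambda_i$, the contact point $p_e \in \partial \Omega$ that defines $\ell_\Omega(e)$ lies on the outer arc $\partial \Omega \cap \partial \Omega_i$, so the computation of $\ell_\Omega$ on $\Lambda_i$ depends only on the local geometry of $\Omega$ near the corner $\Omega_i$.

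For the base case $\Omega = T(b)$, all $\Omega'_i$ are empty and the right-hand side reduces to $\ell_{T(b)}(\Lambda'_3) = bb'$.  On the left, the support-function interpretation of $\ell_{T(b)}(\partial P)$ as the mixed area $2V(P, T(b))$, together with the hypothesis that $P$ has vertices on all three sides of $T(b')$, gives $2V(P, T(1))= b'$ (the contributions from the two axis-directed edges of $T(1)$ vanish, and the one from the slant equals $b'$), so that $2V(P, T(b)) = b \cdot 2V(P, T(1)) = bb'$.  For general $\Omega$, the difference $\ell_{T(b)}(\Lambda) - \ell_\Omega(\Lambda)$ splits cleanly into three pieces, one per cut, because by the local observation only edges in $\Lambda_i$ see the cut $\Omega_i$: for $e \in \Lambda_i$ the support-function deficit between $T(b)$ and $\Omega$ in the direction $n_e$ records precisely how far $\Omega_i$ retracts from $\partial T(b)$ in that direction.

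Each of these three pieces is then identified with $\ell_{\Omega'_i}(\Lambda'_i)$ via the integral affine transformations $A_i \colon \Omega_i \to \Omega'_i$ and $B_i \colon Q_i \to Q'_i$; these share the same linear part $M_i$ (with $\det M_i = \pm 1$), so the edges of $\Lambda_i$ transport via $M_i$ to the edges of $\Lambda'_i$ and the contact points on $\partial \Omega_i$ transport via $A_i$ to contact points on $\partial \Omega'_i$, with the cross product defining $\ell$ transforming covariantly up to a translation term.  The main obstacle is this translation discrepancy $t_{A_i} - t_{B_i} = M_i(v_i^{(b')} - v_i^{(b)})$ between $A_i$ and $B_i$: one must show that the resulting cross-product corrections sum to zero along $\Lambda_i$.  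This telescoping holds because the discrepancy points along a single coordinate axis in the frame of $Q'_i$, while the total edge displacement along $\Lambda_i$ equals the difference of its two endpoints, both of which sit on the two axis-pieces of $\partial Q_i$, so the relevant coordinate difference is precisely what cancels the translation contribution.  Combining the base case with this cut-by-cut identification yields the formula of the lemma.
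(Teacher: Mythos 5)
Your proposal is correct in outline but packages the argument quite differently from the paper, so a brief comparison is worthwhile. The paper's proof is entirely direct: for $\Lambda_0$ it observes that the relevant arc of $\partial\Omega_0'$ sits inside $\partial\Omega$ with reversed orientation, giving $\ell_{\Omega_0'}(\Lambda_0) = -\ell_\Omega(\Lambda_0)$; for $\Lambda_1$ (and symmetrically $\Lambda_2$) it applies the explicit integral matrix $A=\begin{pmatrix}-1&1\\0&1\end{pmatrix}$, uses the cross-product identity $-Ae\times A(p-(0,b))= -e\times(p-(0,b)) = e\times(0,b)-\ell_\Omega(e)$, and sums over $e\in\Lambda_1$ noting that $e\times(0,b)$ depends only on the horizontal component of $e$, so the sum telescopes to a single boundary term. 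Your reformulation via mixed areas — $\ell_{T(b)}(\partial P)=2V(P,T(b))=bb'$ as the base case, then splitting the deficit $\ell_{T(b)}(\Lambda)-\ell_\Omega(\Lambda)$ into per-cut pieces matched with $\ell_{\Om_i'}(\Lambda_i')$ by the affine transformation — is conceptually clean and correct (the base case computation $2V(P,T(1))=b'$ via support functions checks out), but ultimately reduces to the same cross-product covariance and telescoping; it does not avoid those computations, it just frames them. Two remarks on your write-up: first, the middle step (that for $e\in\Lambda_i$ the support-function deficit in the direction normal to $e$ \emph{is} the $\Omega_i'$-contribution under the affine change of frame) is stated but not really established — this is exactly the paper's computation with the matrix $A$ and the base point $(0,b)$, and needs to be carried out. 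Second, your worry about the "translation discrepancy $t_{A_i}-t_{B_i}$" is a red herring: $\ell_{\Om_i'}(\Lambda_i')$ depends only on the \emph{vectors} of the edges of $\Lambda_i'$, not their positions, and the linear parts of $A_i$ and $B_i$ agree, so whichever translation is used to place $\Lambda_i'$ changes nothing. The telescoping you invoke is genuinely needed, but only to control the term $e\times v_i^{(b)}$ coming from transporting the tangent points on $\partial\Omega$, not to reconcile $A_i$ with $B_i$.
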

\begin{proof}
 Since  the  upper  boundary of $\Om_0= \Om_0'$ disjoint from the axes is contained in that  of $\Om$ but with   opposite orientations, we have
$$
\ell_{\Omega_0'}(\Lambda_0) = -\ell_{\Omega}(\Lambda_0).
$$

We next claim that 
\begin{equation}
\label{eqn:turn0}
\ell_{\Omega}(\Lambda_1) =  b_1 - \ell_{\Omega'_1}(\Lambda'_1),
\end{equation}
where $b_1$ is the $T(b)$-length of the 
slant edge of $B(b')$ lying strictly above $\La_3$.
To see this, note first that, if $p$ is the point on $\p\Om$ whose tangent vector is parallel to an edge $e$ in $\La_1$, then  $A(p-(0,b))$ is a point on $\p \Om_1'$ parallel to $Ae$, where $A = \begin{pmatrix}-1 & 1\\ 0&1\end{pmatrix}$.
Hence, $\ell_{\Omega'_1}(Ae) = - Ae \times A(p - (0,b))$, with the negative sign due to the reversing of orientation.
But
\[- Ae \times A(p - (0,b)) = - e \times (p - (0,b) ) = 
 e \times (0,b) - \ell_{\Omega}(e).\]
The claim in \eqref{eqn:turn0} now follows by summing over all the edges $e$ in $\Lambda_1$, 
because the quantity $e \times (0,b) $ depends only on the horizontal displacement of $e$.
The analogous argument also holds for $\Lambda'_2$.  The claimed \eqref{eqn:needed2} now follows, since the edges in $\Lambda$ along the axes contribute nothing to either side of the equation, so that $\ell_\Omega(\Lambda)$ is the sum of contributions from $\Omega_1,\Omega_2,$ and $\Omega_3.$
\end{proof}

\section{ECH capacities and ball embeddings}\label{sec:ECH}

In this section we first prove Theorem~\ref{thm:main} which characterizes when a concave region embeds into a (generalized) convex region both in terms of ball embeddings and in terms of ECH capacities.
We then establish some useful results about the ECH capacities $c_k(X_\Om)$ that are known when $X$ has  finitely many negative weights $b_j$ and contains a neighborhood of the origin. In particular, Lemma~\ref{lem:ECHk} establishes the following useful formula.

\begin{align}\label{eq:cECHk}
c_k(X) = \min_{k = \ell - k_1 - \ldots - k_m} c_\ell B(b) - c_{k_1} B(b_1) - \ldots - c_{k_m} B(b_m).
\end{align}

Finally, we show in \S\ref{ss:elem} that the ECH capacities for $X_\Om$ agree with the elementary capacities defined by Hutchings in \cite{Helem}.

\subsection{Preliminary results}\label{ss:prelim}

We first review some basic properties of ECH capacities. For $X$ either a convex or concave toric domain, the ECH capacities are a sequence of real numbers
\[ 0=c_0(X) \leq c_1(X) \leq \hdots < \infty.\] By work of Hutchings in \cite{Hutchq}, for convex or concave toric domains\footnote{See also \cite{ECH_volume} for more general settings.}, the ECH capacities satisfy the following properties:
\MS

\NI
$\bullet$ {\bf (Monotonicity)} If $X_1 \se X_2$, then $c_k(X_1) \leq c_k(X_2)$ for all $k \geq 0.$
\smallskip

\NI
$\bullet$    {\bf (Scaling)} If $\la$ is a nonzero real number, then
    \[ c_k(\la X)=|\la| c_k(X).\]

\NI $\bullet$ 
 {\bf (Disjoint Union)} The following equality holds:
    \[ c_k(\bigsqcup_{i=1}^n X_i)=\max_{k_1+\hdots+k_n=k} \sum_{i=1}^n c_{k_i}(X_i).\]

\NI $\bullet$  {\bf (Volume)} The following equality holds:  
    \[ \lim_{k \to \infty} \frac{c_k(X)^2}{2k}=\Vol(X).\]

\MS

Other important ingredients in the proof of Theorem~\ref{thm:main} are 
the  following  results about the uniqueness of symplectic forms on convex domains and the
 connectedness of embedding spaces. 

\begin{prop} 
\label{prop:uniq} For any convex region $\Om$, the group of compactly supported diffeomorphisms of $\intt X_\Om$ acts transitively on
the space of symplectic forms on $X_\Om$ that are standard near the boundary. Moreover, 
the group of compactly supported symplectomorphisms of $(\intt X_\Om, \om_{std})$
 is contractible. 
\end{prop}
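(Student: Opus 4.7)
The plan is to prove the two statements separately: a Moser-style argument for the transitivity and a reduction to known contractibility results for the compactly supported symplectomorphism group.

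For transitivity, let $\omega_0, \omega_1$ be two symplectic forms on $X_\Omega$ that agree with $\omega_{std}$ on a neighborhood $U$ of $\partial X_\Omega$. The first step is to show that the difference $\tau := \omega_1 - \omega_0$, which is a closed 2-form with compact support in $\intt X_\Omega$, represents the zero class in $H^2_c(\intt X_\Omega)$. By Poincar\'e--Lefschetz duality, $H^2_c(\intt X_\Omega) \cong H_2(X_\Omega)$, so it suffices to check that every 2-cycle has a representative in $U$, in which case $\int_\sigma \tau = 0$. This is verified case-by-case according to whether $\Omega$ meets zero, one, or two axes: when $\Omega \subset \R^2_{>0}$ one has $X_\Omega \cong T^2 \times D^2$, and the generator of $H_2 \cong \Z$ is a torus fiber, which can be isotoped into the collar $U$; when $\Omega$ meets an axis, additional generators come from vanishing-cycle disks over axis points, which can similarly be pushed into $U$. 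It follows that $\tau = d\alpha$ for some compactly supported 1-form $\alpha$, which after truncation may be taken to vanish on a smaller neighborhood of $\partial X_\Omega$.

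Next I would apply Moser's trick to the family $\omega_t = \omega_0 + t\, d\alpha = (1-t)\omega_0 + t\omega_1$. Global symplecticity of the linear interpolation is not automatic, but a standard subdivision repairs this: one picks $0 = t_0 < t_1 < \cdots < t_N = 1$ so that each subfamily between $\omega_{t_i}$ and $\omega_{t_{i+1}}$ is pointwise symplectic (possible by compactness together with the open condition $\omega \wedge \omega > 0$ near each of the finitely many $\omega_{t_i}$). On each subinterval the Moser vector field defined by $\iota_{X_s}\omega_s = -\alpha$ integrates to a compactly supported isotopy pulling $\omega_{t_{i+1}}$ back to $\omega_{t_i}$; composing these isotopies gives the desired compactly supported diffeomorphism of $\intt X_\Omega$ taking $\omega_1$ to $\omega_0$.

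For the contractibility of $\op{Symp}_c(\intt X_\Omega, \omega_{std})$, I would reduce to Gromov's theorem that $\op{Symp}_c(B^4, \omega_{std})$ is contractible. Choosing $R$ large enough that $\Omega \subset T(R)$ after an integral affine translation (which changes $X_\Omega$ only by a symplectomorphism), one has $X_\Omega \subset B(R)$, and extension by the identity gives a continuous inclusion $\op{Symp}_c(\intt X_\Omega) \hookrightarrow \op{Symp}_c(B(R))$. I would then construct a deformation retraction of $\op{Symp}_c(B(R))$ onto the subgroup of elements supported in $\intt X_\Omega$: using convexity of $\Omega$, one builds a continuous family of Hamiltonian isotopies of $B(R)$ (generated by a suitable Hamiltonian in the moment coordinates, vanishing on $X_\Omega$) that move the support of any given symplectomorphism into $\intt X_\Omega$. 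Contractibility of $\op{Symp}_c(B(R))$ then transfers to the retract, proving the claim.

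The main technical obstacle is the construction of the deformation retraction in the contractibility step: convexity of $\Omega$ is used essentially, since for a general (e.g.\ non-star-shaped) domain the required shrinking isotopy need not exist, and one must check continuity in the symplectomorphism. A secondary subtlety is the cohomology calculation in the Moser step, specifically verifying that every class in $H_2(X_\Omega)$ has a representative contained in the boundary collar when $\partial X_\Omega$ is not $S^3$; here the toric structure of $X_\Omega$, in particular the existence of a global $T^2$-action whose orbits already include the relevant 2-cycles, provides the extra leverage needed beyond the star-shaped case treated in earlier work.
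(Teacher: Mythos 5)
Your Moser step contains a genuine gap that cannot be patched by the subdivision trick. In dimension $4$ the linear interpolation $\omega_t=(1-t)\omega_0+t\omega_1$ between two cohomologous symplectic forms (even with the same orientation) need not stay nondegenerate: $\omega_0\wedge\omega_0>0$ and $\omega_1\wedge\omega_1>0$ do not force $\omega_0\wedge\omega_1>0$, and at $t=1/2$ one has $\omega_{1/2}^2=\tfrac14(\omega_0^2+2\omega_0\wedge\omega_1+\omega_1^2)$, which can vanish or change sign. Subdividing the interval does not help: the intermediate forms $\omega_{t_i}$ lie on the \emph{same} linear path, so if the path degenerates at some $t^*$ it does so regardless of how finely you cut; the openness of $\omega\wedge\omega>0$ gives control only in a small parameter neighborhood of each $t_i$ and does not bridge the degenerate point. (Moser with subdivision works when one has a \emph{continuous} path of symplectic forms to begin with, not when one is trying to manufacture such a path out of a possibly degenerate linear interpolation.) Establishing connectedness of the space of symplectic forms on $X_\Omega$ standard near $\partial X_\Omega$ is precisely the hard part, and it is what the paper's Lemma~\ref{lem:s2s2} supplies: the paper radially expands so the nonstandard set lands inside a small toric square, identifies that piece with $S^2\times S^2$ with the form standard near three or four coordinate spheres, and invokes a Gromov--McDuff-type uniqueness theorem built from pseudoholomorphic spheres. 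The cohomological observation you make (that $\omega_1-\omega_0$ is exact with compactly supported primitive) is correct but is only a small first step.

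The contractibility argument also has a gap. You propose to extend by the identity into $\op{Symp}_c(B(R))$ and then deformation-retract onto $\op{Symp}_c(\intt X_\Omega)$, but there is no reason such a retraction exists: conjugating $\phi\in\op{Symp}_c(B(R))$ by a shrinking symplectomorphism moves the support but does not land the whole one-parameter family continuously in $\op{Symp}_c(\intt X_\Omega)$, and in general the inclusion of the compactly supported symplectomorphism group of a subdomain is \emph{not} a homotopy equivalence with the ambient one. The paper instead proves contractibility by a fibration argument: it identifies $\op{Symp}_c(\intt X_\Omega)$ with symplectomorphisms of $S^2\times S^2$ fixing a neighborhood of three or four coordinate spheres, and invokes Gromov's computation of $\op{Symp}(S^2\times S^2,\omega_{std})\simeq\op{SO}(3)\times\op{SO}(3)$ plus the action on a contractible space of $J$-holomorphic spheres. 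For the cases touching the axes, it further uses a transitive action on the space of symplectic spheres over the axis to reduce back to the off-axes case. Your reduction to $B(R)$ does not recover any of this structure.
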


\begin{prop}
\label{prop:conn}
Let $\Omega_2$ be a convex region and let $\Omega_1$ be a concave region.  Then any two symplectic embeddings of $X_{\Omega_1}$ into $\intt(X_{\Omega_2})$ are isotopic via an ambient compactly supported isotopy of $\intt(X_{\Omega_2})$.
\end{prop}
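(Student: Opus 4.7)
The plan is to reduce this to the known connectedness of symplectic ball packings in $\intt(X_{\Omega_2})$, using the weight expansion of the concave domain $\Omega_1$.

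First, let $(a_i)_{i\ge 1}$ be the weight sequence produced by the cutting algorithm of \S\ref{ss:cut} applied to $\Omega_1$. That construction decomposes $\intt(X_{\Omega_1})$ canonically into the disjoint union $U := \bigsqcup_i \intt(B(a_i))$ together with a codimension-one toric skeleton of measure zero. A symplectic embedding $\phi : X_{\Omega_1} \hookrightarrow \intt(X_{\Omega_2})$ therefore restricts to a ball packing $\phi|_U$ of the target by open balls of sizes $(a_i)$. Conversely, by the arguments in the proof of Theorem~\ref{thm:main}.(i)$\Leftrightarrow$(ii) (applied in parametric form), any such ball packing extends to an embedding of $X_{\Omega_1}$, uniquely up to a compactly supported symplectomorphism of $\intt(X_{\Omega_1})$ by Proposition~\ref{prop:uniq}.

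The central ingredient is the classical connectedness theorem for symplectic ball packings of a connected open symplectic $4$-manifold (McDuff's ball isotopy theorem; see e.g.\ \cite{McPolt}), applied to the target $\intt(X_{\Omega_2})$. It yields a compactly supported ambient Hamiltonian isotopy $\{\Psi_t\}_{t\in[0,1]}$ of $\intt(X_{\Omega_2})$ with $\Psi_1 \circ \phi_0|_U = \phi_1|_U$, after permuting balls of equal radius if necessary. The full embeddings $\Psi_1 \circ \phi_0$ and $\phi_1$ then agree on the open dense subset $U$, and hence on all of $\intt(X_{\Omega_1})$ by continuity. Since both are toric near the boundary of their common image, they differ by a compactly supported symplectomorphism of a neighborhood of $\phi_1(X_{\Omega_1})$; by the contractibility statement of Proposition~\ref{prop:uniq} this difference is Hamiltonian isotopic to the identity through compactly supported symplectomorphisms. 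Extending that isotopy by zero and concatenating with $\Psi_t$ gives the desired ambient compactly supported isotopy of $\intt(X_{\Omega_2})$.

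The main obstacle is the application of McDuff's theorem in our generalized setting, where $\Omega_2$ need not contain the origin, may have non-smooth boundary, and may have infinite weight expansion. When $\Omega_2$ is of rational finite type, one can reduce directly to embeddings into a large ball containing $\intt(X_{\Omega_2})$, where McDuff's theorem is classical; in general one needs an exhaustion argument, approximating $\Omega_2$ by finite-type convex truncations and using the ECH capacity bounds of Theorem~\ref{thm:main}.(iii) to guarantee that the ball packings and their isotopies can be confined to a precompact sub-domain, so that compactness of the sub-packing and a diagonal argument produce a limiting isotopy with compact support in $\intt(X_{\Omega_2})$.
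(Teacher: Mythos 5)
Your strategy is genuinely different from the paper's: you try to reduce the isotopy question for concave domain embeddings to the connectedness of symplectic ball packings of $\intt(X_{\Omega_2})$, whereas the paper works via the blowup--blowdown correspondence of Lemma~\ref{lem:blowupdown}, which converts the problem directly into a uniqueness statement for cohomologous symplectic forms on a fixed blowup $Y$ (settled by Proposition~\ref{prop:uniq}). The ball packing connectedness then falls out as a \emph{special case} of Proposition~\ref{prop:conn} (take $\Omega_1$ to be a disjoint union of triangles, as the paper points out in the remark after Lemma~\ref{lem:blowupdown}), not as an input.

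The central gap in your argument is that ``McDuff's ball isotopy theorem applied to $\intt(X_{\Omega_2})$'' is precisely the special case of the proposition you are trying to prove. The classical result in \cite{McPolt} establishes connectedness of ball packings of $\mathbb{C}P^2$ (or the ball), using the uniqueness of symplectic forms on $\mathbb{C}P^2$ together with a blowup argument. It does not cover generalized convex toric domains, which need not be star-shaped, need not contain the origin, and may not even have boundary $S^3$; the paper explicitly flags this in the discussion before Proposition~\ref{prop:uniq}, noting that the star-shaped uniqueness result used in \cite{CG} no longer applies and must be replaced by the new $S^2\times S^2$-based statement of Lemma~\ref{lem:s2s2}. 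Establishing ball packing connectedness for $\intt(X_{\Omega_2})$ therefore needs exactly the uniqueness and contractibility statements of Proposition~\ref{prop:uniq}; once one has those, the paper's route through Lemma~\ref{lem:blowupdown} is more direct and handles disconnected $\Omega_1$ simultaneously, so the detour through ball packings gains nothing.

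There are secondary problems as well. The assertion that ``any such ball packing extends to an embedding of $X_{\Omega_1}$'' is false as stated: only packings whose boundary pieces fit together along the toric skeleton extend, which is a delicate gluing condition, and the cited equivalence in Theorem~\ref{thm:main} only produces existence of \emph{some} embedding from existence of \emph{some} packing, not an extension of an arbitrary given packing. The uniqueness claim appealing to Proposition~\ref{prop:uniq} is also misapplied there, since that proposition concerns symplectic forms on $X_\Omega$, not extensions of packings. Finally, even if the target were a ball, the isotopy lemma is stated for finitely many closed balls; extending to infinitely many open balls requires a genuine exhaustion argument with a uniform compact support, which you sketch but do not carry out, and which is not obviously easier than the paper's direct approach. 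If you want to pursue your reduction, you would need to first establish the ball-packing connectedness for generalized convex toric targets (which would reproduce most of the paper's machinery) and then give the gluing/extension argument and the infinite-packing exhaustion in full.
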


The above claims  differ from the statements proved in \cite{CG} in two ways. Firstly we allow the negative weight decomposition of $\Om_2$ to be infinite --- note that the weight decomposition  of $\Om_1$ was always allowed to be infinite --- and secondly we enlarge the class of convex regions considered to include regions that do not contain a neighborhood of the origin.  
The first generalization imposes no real difficulty, basically because it follows from Proposition~\ref{prop:conn} that we can always slightly shrink $\Om_2$ so that it has a finite weight expansion.   However, the construction in \cite{CG} that proved Theorem~\ref{thm:main} when $\Om_2$ contains a neighborhood of the origin made crucial
use of Gromov's uniqueness result for symplectic forms on star-shaped domains that are standard at infinity. 
We replace this by the uniqueness result in Proposition~\ref{prop:uniq}, that follows from
a strengthened form of the uniqueness result in \cite[Thm.9.4.7]{JHOL} for symplectic forms on $S^2\times S^2$.

We start by proving Proposition~\ref{prop:uniq}.  
Our argument relies on the following uniqueness result
for symplectic forms $\om$ on $S^2 \times S^2$ that are standard near three or four spheres.

Let $S^2$ have the standard cylindrical coordinates $(z,\theta)\in [-1,1]\times S^1$, where the circles $\{\pm 1\}\times S^1$ are collapsed to points, and define $p_\pm = \lbrace z = \pm 1 \rbrace$.
Let $S_1, \ldots, S_4$ denote the four distinguished spheres 
\begin{align}\label{eq:S1234}
S_1 = \lbrace p_- \rbrace \times S^2, \quad S_2 = \lbrace p_+ \rbrace \times S^2, \quad S_3 = S^2 \times \lbrace p_- \rbrace, \quad  S_4 = S^2 \times \lbrace p_+ \rbrace.
\end{align} 
Call $dz \wedge d \theta$ the standard symplectic form (with the obvious extension over the poles), write $\om_{std}: = \sum^2_{i=1} dz_i \wedge d\theta_i$, and define the nonstandard set to be the closure of the set on which  $\om-\om_{std} \ne 0$.

\begin{lemma} 
\label{lem:s2s2}
Let $\om$ be a symplectic form on $S^2 \times S^2$ such that the nonstandard set for $\omega$ is compactly supported in $(S^2 \times S^2) \setminus \Ss$, where $\Ss$ is either $S_1  \cup S_2\cup S_3$ or $S_1\cup\ldots \cup S_4$, where the $S_i$ are as above.  Then, there is a
diffeomorphism
compactly supported in $(S^2 \times S^2) \setminus \Ss,$ between  $(S^2 \times S^2, \omega)$ and $(S^2 \times S^2, \om_{std} )$.

Moreover, the group of  compactly supported symplectomorphisms of  $S^2\times S^2\less \Ss$ is  contractible.
\end{lemma}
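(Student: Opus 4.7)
The strategy is to adapt the proof of \cite[Thm.~9.4.7]{JHOL}, which uses a pair of $J$-holomorphic foliations to produce a diffeomorphism from any symplectic $(S^2\times S^2,\omega)$ (in the standard cohomology class) to the product, while arranging that the resulting diffeomorphism is supported in the complement of $\Ss$.

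First, I would choose an $\omega$-compatible almost complex structure $J$ that coincides with the product complex structure $J_0$ on a neighborhood $U$ of $\Ss$; this exists because $\omega=\omega_{\op{std}}$ on $U$ and the space of compatible almost complex structures extending $J_0$ is contractible. Then, using automatic regularity, positivity of intersections, and Gromov compactness for the classes $A=[S^2\times \op{pt}]$ and $B=[\op{pt}\times S^2]$, one obtains two transverse $J$-holomorphic foliations of $S^2\times S^2$. Because $J=J_0$ on $U$, the spheres $S_3,S_4$ (resp.\ $S_1,S_2$) are $J_0$-holomorphic representatives of $A$ (resp.\ $B$), hence are themselves leaves of the two foliations. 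Moreover, by uniqueness of $J$-holomorphic curves in these classes through a given point, the leaves sufficiently close to the $S_i$ must coincide with the corresponding product leaves on a possibly smaller neighborhood $U'\subset U$.

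The pair of foliations furnishes a diffeomorphism $\Phi\colon S^2\times S^2\to S^2\times S^2$ sending each point to the pair of indices of its $A$- and $B$-leaves, which by the above is the identity on $U'$. The pulled-back form $\Phi^*\omega$ is then cohomologous to $\omega_{\op{std}}$, is compatible with the product structure, and agrees with $\omega_{\op{std}}$ on $U'$. A relative Moser argument applied to the interpolation $\omega_t=(1-t)\omega_{\op{std}}+t\Phi^*\omega$ on the compact set $S^2\times S^2\setminus V$, where $V\Subset U'$ is a smaller neighborhood of $\Ss$, produces a further isotopy compactly supported in $(S^2\times S^2)\setminus\Ss$, interpolating $\Phi^*\omega$ and $\omega_{\op{std}}$. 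Composing with $\Phi$ gives the required diffeomorphism.

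The principal obstacle is the claim that the $J$-holomorphic foliations really do match the product foliation on a full neighborhood of $\Ss$ (and not merely pass transversely through it). This is a uniqueness/removal-type statement: any $J$-holomorphic sphere in class $A$ or $B$ that comes sufficiently close to one of the $S_i$ must, by positivity of intersections with the corresponding standard sphere and the product nature of $J$ on $U$, coincide with a product line on a neighborhood of its intersection points with $\Ss$. Once this is secured, the parametric version of exactly the same foliation construction proves the contractibility assertion: the space of compatible $J$ that equal $J_0$ on some neighborhood of $\Ss$ is contractible, and sending such a $J$ to its foliation diffeomorphism gives a deformation retraction of the compactly supported symplectomorphism group of $(S^2\times S^2\setminus\Ss,\omega_{\op{std}})$ to the identity. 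In the two cases, $S^2\times S^2\setminus\Ss$ is either $(\mathbb{R}\times S^1)\times\mathbb{R}^2$ or $\mathbb{R}^4$, so this is consistent with the known Gromov-type contractibility results in these settings.
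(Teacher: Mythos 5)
The central gap is your claim that the foliation diffeomorphism $\Phi$ is the identity on a neighborhood $U'$ of $\Ss$. What is true, and what you correctly observe, is that the leaves coincide locally with product leaves near $\Ss$; but $\Phi$ is built by recording where each $A$-leaf meets $S_3$ and where each $B$-leaf meets $S_1$, and these are \emph{global} features of the leaves, not local ones. For a point $(q_1,q_2)$ near $S_2 = \{p_+\}\times S^2$, the $B$-leaf through it does coincide locally with a product line and therefore hits $S_2$ at $(p_+,q_2)$; but as that leaf travels from $S_2$ to $S_1$ it exits the region where $J=J_0$, and nothing forces its intersection with $S_1$ to be $(p_-,q_2)$. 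Hence near $S_2$ one only has $\Phi(q_1,q_2)=(q_1,\psi_2(q_2))$ for some generally nontrivial area-preserving map $\psi_2$ of the sphere, and symmetrically $\Phi(q_1,q_2)=(\psi_4(q_1),q_2)$ near $S_4$. (Near $S_1$ and $S_3$ the argument does work, because an $A$-leaf near $S_1$, or a $B$-leaf near $S_3$, stays entirely inside the product region, so there the leaf \emph{is} globally a product sphere.) Your relative Moser step therefore has no starting point: $\Phi^*\omega$ and $\omega_{\mathrm{std}}$ need not agree near $S_2$ or $S_4$, so the interpolation $\omega_t$ is not constant near $\Ss$ and the Moser flow is not supported in $(S^2\times S^2)\setminus\Ss$.

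This is exactly what the paper's Steps 2 through 4 are for. After arranging (by the retraction trick in Step 1) that the diffeomorphism is the identity near $S_1\cup S_3$ and has the product form $(q_1,q_2)\mapsto(q_1,\psi_2(q_2))$, resp.\ $(\psi_4(q_1),q_2)$, near $S_2$, resp.\ $S_4$, one must then absorb $\psi_2$ by composing with a \emph{fibered} symplectomorphism $\gamma(q_1,q_2)=(q_1,\beta_{\alpha(z(q_1))}(q_2))$ built from a path $\beta_t$ of area-preserving maps, verify that the line $t\,\gamma^*\omega_{\mathrm{std}}+(1-t)\omega_{\mathrm{std}}$ remains symplectic, and apply another Moser isotopy that does not disturb the normalizations already achieved; then repeat for $S_4$. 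These correction steps are the real content of the lemma and are missing from your proposal. The contractibility claim inherits the same gap, since your proposed retraction sends $J$ to a diffeomorphism that is not in general supported off $\Ss$; the paper instead reduces to Gromov's computation of $\mathrm{Symp}(S^2\times S^2,\omega_{\mathrm{std}})$. A minor point in addition: $S^2\times S^2\setminus(S_1\cup\cdots\cup S_4)\cong(\mathbb{R}\times S^1)\times(\mathbb{R}\times S^1)$, not $\mathbb{R}^4$, so the last sentence of your proposal also needs to be revised.
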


The proof is an elaboration of that in \cite[Ch.9.4]{JHOL}, and is deferred until the end of this section.
\MS

In the following it is convenient to consider convex regions that are good in the following sense.

\begin{definition}\label{def:good}  A convex region $\Om$ is said to be {\bf good} if it lies off the axes and  is rational with
 Delzant corners.\footnote{
i.e. the matrices formed by the corresponding conormals, which are integral, have determinant $1$.}
 \end{definition}
 
  In this case 
the leaves of the characteristic foliation over the boundary edges of $\Om$ are circles, and by collapsing these we obtain a toric manifold $\ov{X}_{\Om}$ with moment polytope $\Om$.

\MS

\begin{proof}[Proof of Proposition~\ref{prop:uniq}]
We must show that for  any convex region $\Om$, the group of compactly supported diffeomorphisms of $\intt X_\Om$ acts transitively on
the space of symplectic forms on $X_\Om$ that are standard near the boundary. Moreover, 
the group of compactly supported symplectomorphisms of $(\intt\, X_\Om, \om_{std})$
 is contractible. 
If $\Om$ contains a neighborhood of the origin then $X_\Om$ is star-shaped and the result is well known.
Therefore we concentrate on the proof for convex domains $\Om$ which do not contain a neighborhood of the origin. 
Our argument relies on the well known fact that the group of compactly supported symplectomorphisms of $(\intt\, X_\Om, \om_{std})$
is an ANR and hence homotopy equivalent to a CW complex. (The proof is sketched in \cite[Rmk.9.5.5]{JHOL}.)  It therefore suffices to
show that the group of symplectomorphisms with support in a fixed compact subset of $\Om$ is contractible. 
Thus we may suppose
that $\Om$ is rational, i.e. has finite negative weight decomposition, since for any compact subset $K\subset \intt\,\Om$,
 there is a rational region $\Om'\subset \intt\,\Om$ such that $K \subset \intt\,\Om'$.
\MS

\NI {\bf Step 1.} {\it The case when $\Om$ is good.}

 Consider the coordinates $z_k = \sqrt{ \mu_k / \pi } e^{i \theta_k}, \mu_k\ge 0, k=1,2,$ on $\mathbb{C}^2$.  Then, in these coordinates
\[ 
\omega_{std} = \frac{1}{2 \pi} \sum^2_{k=1} d \mu_k \wedge d \theta_k.
\]
We fix a point $(\ell_1,\ell_2)$ in the interior of $\Omega$, and a symplectic form $\om$ on $X_\Om$ that is
 standard near the boundary. By extending it by $\om_{std}$, we may consider it to be a symplectic form on $\mathbb{C}^2$.
Define the {\em nonstandard set} for $\omega$ to the closure of the set of points in $\mathbb{C}^2$ for which $\omega$ differs from $\omega_{std}$.   

The 
 radial retraction of $\C^2 \less \{\mu_1\mu_2=0\}$ towards $(\ell_1,\ell_2)$ is given by
 the following formula:
 \begin{align}\label{eq:retract}
 f_t: (\mu_1, \mu_2, \theta_1, \theta_2) \to ( (1-t) \ell_1 + t \mu_1, (1-t) \ell_2 + t \mu_2, \theta_1, \theta_2).
 \end{align}
Since  $f_t^*(\omega_{std}) = t  \omega_{std}$,
 the expansion $g_t: = f_t^{-1}$ has the property that for all $t\in (0,1]$ the form
$$
\om_t: = t  g_t^*(\om)
$$
is standard near infinity, with nonstandard set  contained in $\intt X_\Om$. Therefore, there is a family of diffeomorphisms $h_t, 0<t\le 1$ with support in $\intt X_\Om$ such that $h_1 = \rm{id}$ and $h_t^*(\om) = \om_t$ for all $t\in (0,1]$. 

Next observe that
 for sufficiently small $t=\eps>0$, this nonstandard set     is contained in $X_{Sq}$, where $Sq\subset \intt\, \Om$ is a small 
square  with center $(\ell_1,\ell_2)$, and the form  $\om_\eps$ induces a symplectic form on $X_{Sq}$ that is standard near the four spheres that lie over the boundary  $\p(Sq)$.  
Hence, by Lemma~\ref{lem:s2s2}, the form $\om_\eps$ is diffeomorphic to the product form $\om_{std}$ by a diffeomorphism $\phi$ that is the time $1$-map of an isotopy $\phi^t$ with support in $\intt\,  X_{Sq}$. Therefore for small enough $\eps>0$  the diffeomorphism $h_\eps\circ \phi$  pulls $\om$ back to $\om_{std}$.
This proves the first claim in Proposition~\ref{prop:uniq}.  

To see that the
group of compactly supported symplectomorphisms of $(\intt\,  X_\Om, \om_{std})$
 is  contractible, notice first that as in \cite[Rmk.9.5.5]{JHOL} it suffices to show that every compact subset $C$ of this group contracts to a point. Each such set $C$ consists of symplectomorphisms with support in some compact subset $\Om_\eps\subset \intt\, \Om$.   Therefore we may homotope $C$ via the maps $g\mapsto f_t \circ g\circ f_t^{-1}$  to be a compact subset $S$ of the group of symplectomorphisms of $X_{Sq}$ that are the identity near the boundary, and then appeal to Lemma~\ref{lem:s2s2} to find a further homotopy (consisting of symplectomorphisms of $X_\Om$ with support in $\intt X_{Sq}$) that contracts  $S$ to a point.
\MS

\NI
{\bf Step 2.}  {\it The case when $\Om$ is rational and intersects just one of the axes in an interval of positive length.}

In this case, we
may prove that  the form is diffeomorphic to the standard form by choosing the point $(\ell_1, \ell_2)$ to lie on this axis and then
arguing as 
 in Step 1, interpreting  the formula for the retraction as appropriate.  
 Note that now we apply Lemma~\ref{lem:s2s2}  in the case when the form is standard on just three spheres. 
 But this makes no essential difference to either part of the argument.

 \MS

\NI
{\bf Step 3.} {\it The case when $\Om$ is rational and intersects both axes in an interval of positive length.}

In this case, we may consider  $\Om$ as $\Om_2\less \intt\, \Om_1$, where $\Om_2$ contains a neighborhood of the origin and $\Om_1$ is concave.   Then, by \cite{CG},  there is a compactly supported diffeomorphism $\psi$ of $X_{\Om_2}$ such that $\psi^*(\om) = \om_{std}$. If we choose $r>1$ so that  $\om|_{X_{r\Om_1}} = \om_{std}|_{X_{r\Om_1}}$,  the restriction of $\psi$ to $X_{r\Om_1}$ is a symplectic embedding in $X_{\Om_2}$. Therefore by \cite[Prop.1.5]{CG} there is an ambient isotopy $g_t$ of $(X_{\Om_2}, \om_{std})$ such that $g_0 = id$ and $ g_1|_{X_{r\Om_1}} = \psi\circ \io$, where $\io: X_{r\Om_1}\to X_{\Om_2}$ is the inclusion. 
Then $(g_1)^{-1}\circ \psi$ is the desired compactly supported diffeomorphism of $X_\Om$. 

To prove that the group $G={ \rm Symp}_c(\intt X_\Om, \om_{std})$ is  contractible,  consider its action on the space ${\mathcal  Cyl}$ of symplectically embedded cylinders that are isotopic to the unique cylinder\footnote
{One can work with  spheres instead of cylinders by first shrinking $\Om$ to a convex set $\Om'$ with Delzant boundary, and then considering symplectomorphisms of the compact toric manifold $\ov X_{\Om'}$ that are the identity near the components of the boundary divisor that do not lie over the axes.}
 $C_x$ in $X_\Om$ that lies over the $x$-axis by a symplectic isotopy with support in $\intt\,  X_\Om$.  As in \cite[Ch.9.5]{JHOL}, $G$ acts transitively on ${\mathcal Cyl}$, and also the space ${\mathcal Cyl}$ is  contractible since it contains a unique $J$-holomorphic element  for  each $\om_{std}$-compatible $J$ that is standard near $\p X_\Om$. As in the situation considered there, this
implies that every compact subset of the group $G$ deformation retracts into the subgroup of $G$  consisting of elements that are the identity near $C_x$.
But this latter group is contractible by Step 2.   Further details are left to the reader.
\MS

This completes the proof.
 \end{proof}

We now turn to the proof of Proposition~\ref{prop:conn}.
This is proved by essentially the same argument as in \cite{CG}; we just have to clarify the various kinds of possibilities for a generalized  convex toric domain.   Nevertheless, the argument is somewhat tricky and we go over it carefully. 
The main ingredient is a \lq\lq blowup--blowdown\rq\rq \, correspondence, that converts the
problem of finding an ambient isotopy between two  symplectic  embeddings 
$g_0$ and $g_1$  of $X_{\Omega_1}$ into $\intt\, (X_{\Omega_2})$ to
the problem of constructing an isotopy (of symplectic forms) between two cohomologous symplectic forms $\om_0, \om_1$ on 
a fixed symplectic manifold $Y$.  

If $\Om_2$ is good in the sense of Definition~\ref{def:good}, it is the moment polytope of a toric manifold $\ov X_{\Om_2}$, and we denote by $\Cc_2$ the chain of  symplectic $2$-spheres that lie over the boundary $\p \Om_2$  Similarly, if $\Om_1$ is rational with Delzant corners,  the boundary of the  domain $X_{\Om_1}$ can be collapsed along its characteristic foliation to form a chain of symplectic  spheres $\Cc_1$.  In both cases pairs of adjacent spheres in the chains $\Cc_1,\Cc_2$ are symplectically orthogonal. (This holds because it is true in the toric model.)
More generally, given any symplectic embedding $g:X_{\Om_1}\to \intt\, {X}_{\Om_2}$ we can remove the interior of its image and then collapse the boundary appropriately to obtain a chain of symplectic spheres $\Cc_1'$.  
We call the symplectic  manifold $Y: = Y_g$ obtained in this way the blowup of $\ov X_{\Om_2}$ along $g$, and denote its symplectic form by $\om_{g,std}$ and the internal chain of symplectic spheres by $\Cc_{Y}$.   Conversely, given a symplectically  embedded image $\Cc_1': = \io(\Cc_1)$ of $\Cc_1$ in $\ov X_{\Om_2}$ such that the spheres in $\Cc_1'$ are symplectically orthogonal, 
we can blow down $\Cc_1'$ by removing these spheres and inserting a copy of $\Om_1$. 

\begin{lemma}
\label{lem:blowupdown} Suppose that both $\Om_1$ and $\Om_2$ are rational with Delzant corners, and let $(Y,\om_{g,std})$ be the manifold
obtained as above from a symplectic embedding $g: X_{\Om_1} \to \intt X_{\Om_2}$ by collapsing $\p X_{\Om_2}$ to $\Cc_2$  and blowing up $g(X_{\Om_1})$ to $\Cc_Y$.
Then there is a bijection between the following:
\begin{itemize}
\item [{\rm (i)}] Symplectic embeddings $X_{\Omega_1} \to \op{int}(X_{\Omega_2})$, up to ambient, compactly supported  isotopy.
\item [{\rm (ii)}] Equivalence classes of symplectic forms $\omega'$ on $Y$, standard near $\Cc_2$, such that $\omega'$ restricts to $\om_{g,std}$ on $\Cc_Y$, modulo compactly supported diffeomorphisms  of $Y\less \Cc_2$ that are the identity on $\Cc_{Y} $.
\end{itemize}
\end{lemma}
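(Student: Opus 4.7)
My plan is to construct explicit maps in both directions and verify they are mutually inverse, following the general strategy of the analogous result in \cite{CG} but appealing to the new uniqueness result Proposition~\ref{prop:uniq} to handle the generalized setting.

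For the map (i) $\to$ (ii), given an embedding $g: X_{\Om_1} \to \op{int}(X_{\Om_2})$, I would produce $(Y_g,\om_{g,std})$ by the two-step procedure already described before the lemma: first close up $X_{\Om_2}$ by collapsing $\p X_{\Om_2}$ along its characteristic foliation (legal because $\Om_2$ has Delzant corners) to obtain $\ov{X}_{\Om_2}$ together with the chain $\Cc_2$, then excise $g(\op{int}(X_{\Om_1}))$ and collapse the new boundary to form the chain $\Cc_Y$. To check that this descends to isotopy classes, suppose $g_0$ and $g_1$ are joined by a compactly supported ambient isotopy $\psi_t$ of $\op{int}(X_{\Om_2})$. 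The family $g_t:=\psi_t\circ g_0$ determines a smooth family of blowup models giving a path of symplectic forms $\om_t$ on the fixed manifold $Y=Y_{g_0}$, all of which agree near $\Cc_2$ with $\om_{g_0,std}$ and restrict to $\om_{g,std}$ on $\Cc_Y$. A Moser argument on $[\om_t]$, whose cohomology class is constant and whose variation is compactly supported away from both chains, then produces a compactly supported diffeomorphism of $Y\less \Cc_2$ fixing $\Cc_Y$ that carries $\om_{g_0,std}$ to $\om_{g_1,std}$.

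For the inverse map (ii) $\to$ (i), given a class $[\om']$ represented by a form that is standard near $\Cc_2$ and equals $\om_{g,std}$ on $\Cc_Y$, I would blow $\Cc_Y$ back down to recover an embedding. The key input is the symplectic neighborhood theorem applied to the chain of pairwise symplectically orthogonal spheres $\Cc_Y$: the areas of the components (which are determined by the restriction to $\Cc_Y$, hence fixed) together with orthogonality pin down a neighborhood of $\Cc_Y$ up to symplectomorphism, so we may identify such a neighborhood with the standard blowup neighborhood of the image of $X_{\Om_1}$. Excising and reinserting a copy of $X_{\Om_1}$ yields a symplectic manifold containing an embedded $X_{\Om_1}$. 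Because $\om'$ is standard near $\Cc_2$, blowing $\Cc_2$ back down via the inverse of the characteristic foliation collapse produces a symplectic manifold canonically identified with $X_{\Om_2}$, so one obtains a well-defined symplectic embedding $X_{\Om_1}\hookrightarrow \op{int}(X_{\Om_2})$. That this is inverse to the previous construction follows from the uniqueness of symplectic neighborhoods of $\Cc_2$ and $\Cc_Y$, while independence of the auxiliary choices (of isotopy extending the Moser vector field near each chain) is controlled by applying Proposition~\ref{prop:uniq} to the complement of $\Cc_Y$ in a suitable rational exhaustion of $X_{\Om_2}$.

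The main obstacle is precisely the point where \cite{CG} invoked Gromov's uniqueness for symplectic forms on star-shaped domains standard at infinity: when $\Om_2$ does not contain a neighborhood of the origin, $X_{\Om_2}$ need not even have $S^3$-boundary, so that argument is unavailable. I would replace every such use by the generalized uniqueness statement in Proposition~\ref{prop:uniq}. A small preliminary reduction will be required so that $\Om_1$ and $\Om_2$ are genuinely rational with Delzant corners (as assumed in the lemma statement): for the ambient constructions one first exhausts $\Om_2$ from the inside by good subregions $\Om_2'\subset \intt\,\Om_2$, carries out the correspondence for each such $\Om_2'$, and then takes limits; compatibility with inclusions $\Om_2'\subset\Om_2''$ guarantees a coherent correspondence on $\Om_2$ itself.
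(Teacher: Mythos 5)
Your plan follows the paper's proof closely: the same blowup/blowdown dictionary, with Proposition~\ref{prop:uniq} replacing Gromov's uniqueness for star-shaped domains as the key input. Two points of imprecision are worth flagging. In the $(\mathrm{ii})\to(\mathrm{i})$ direction you say that blowing $\Cc_2$ back down yields a manifold ``canonically identified with $X_{\Om_2}$''; but what you actually obtain is a symplectic form $\om$ on the smooth manifold $\intt\,X_{\Om_2}$ that is standard near $\p X_{\Om_2}$ but need not be $\om_{\rm std}$ in the interior. Proposition~\ref{prop:uniq} is what supplies the compactly supported diffeomorphism $\phi$ with $\phi^*\om_{\rm std}=\om$, and composing $\phi\circ g$ then gives the embedding into $(\intt\,X_{\Om_2},\om_{\rm std})$. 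You do invoke Proposition~\ref{prop:uniq}, but only for ``independence of auxiliary choices''; in fact it is doing the main work of producing the embedding, and the contractibility statement of the same proposition is what makes the assignment well defined on equivalence classes (since two choices of $\phi$ differ by a compactly supported symplectomorphism in the connected group). Second, your closing paragraph about exhausting $\Om_2$ by good subregions is unnecessary at this stage: the lemma already hypothesizes that $\Om_1$ and $\Om_2$ are rational with Delzant corners, so Proposition~\ref{prop:uniq} applies directly without an exhaustion; the exhaustion argument belongs to the later reduction (in the proof of Theorem~\ref{thm:main}) from general convex regions to good ones, not here.
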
   

\begin{proof} Since a very similar result (concerning the embeddings of disjoint balls rather than a concave region) is established in \cite[\S2.1]{McPolt} (see also \cite[Thm.7.1.20]{INTRO}), we only sketch the proof here.
First, consider a symplectic embedding $g_1: X_{\Om_1}\to \op{int}(X_{\Omega_2})$, and slightly extend it to $X_{r_1\Om_1}$ for some $r_1>1$. Similarly extend $g$ to  $X_{r_1\Om_1}$. 
Because the Hamiltonian group acts transitively on the points of $\intt\, (X_{\Om_2})$ 
we may alter  $g_1$ by a Hamiltonian isotopy so that it agrees with
$g$ on 
$X_{r_0 \Omega_1}$ for small enough $r_0>0$.  Let $\psi$ be a  compactly supported diffeomorphism  of 
 $\intt\,  X_{r_1 \Omega_1}$ that restricts to multiplication by $r_0$ on  $X_{\Omega_1}$, and let
 $\psi_Y: (X_{\Om_2}, g(X_{\Om_1})) \to (X_{\Om_2}, g(X_{r_0\Om_1})) $ be the extension by the identity of $g\circ \psi \circ g^{-1}$.
 Then  $\psi_Y^{-1}\circ g_1\circ \psi (X_{\Om_1}) = g(X_{\Om_1})$  so that the pushforward of $\om_{std}$ by $\psi_Y^{-1}\circ g_1\circ \psi$ induces  a symplectic form $\om_{g_1}$ on $Y$ that satisfies the conditions in (ii). 
 It is now straightforward to check that if we vary $g_1$ by a homotopy then $\om_{g_1}$ varies by an isotopy that is constant near
 $\Cc_Y\cup \Cc_2$.  Hence (i) implies (ii).
 \MS
 
 Given a representative symplectic form $\om'$ as in (ii), we can assume our form is standard near $\Cc_{Y}$, and since it is already standard near $\Cc_2$, it blows down to give a symplectic form $\omega$ on $\op{int}(X_{\Omega_2})$, standard near the boundary.  By Proposition~\ref{prop:uniq}, there is a compactly supported diffeomorphism $\phi$ of $\intt\, (X_{\Om_2})$ such that $\phi^*(\om_{std}) = \om$, so $\phi \circ g$ is a symplectic embedding $(X_{\Om_1}, \om_{std}) \to \bigl(\intt\, (X_{\Om_2}), \om_{std}\bigr)$.  This gives a well-defined bijection on equivalence classes since  two choices for $\phi$ differ by composition with a compactly supported symplectomorphism of $\intt\,  X_{\Om_2}$,
 and this group is path-connected by Proposition~\ref{prop:uniq}.   
\end{proof}

\begin{rmk}\rm As in \cite{CG}, this argument applies equally well to the case when $\Om_1$ is disconnected, since the Hamiltonian group acts transitively on $n$-tuples of points in $\intt\, (X_{\Om_2})$. Thus there is 
no  need for the chain of spheres $\Cc_Y$ to be connected. 
\end{rmk}

We are now ready to prove Proposition~\ref{prop:conn}, that states that for convex $\Om_2$ and concave $\Om_1$ any two symplectic embeddings of $X_{\Om_1}$ into $\intt\, (X_{\Om_2})$ are isotopic.

\begin{proof}[Proof of Proposition~\ref{prop:conn}]

By Lemma~\ref{lem:blowupdown}, it suffices to show that any two symplectic forms  on $Y$ that are standard near $\Cc_2$ and on $\Cc_Y$ are diffeomorphic by a compactly supported diffeomorphism of $Y\less \Cc_2$ that is the identity on $\Cc_Y$. These forms blow down to symplectic forms $\om_1, \om_2$ on $X_{\Om_2}$ that are standard near the boundary and on the contractible set $g(X_{\Om_1})$. Thus 
Proposition~\ref{prop:uniq} implies  that there is a compactly supported diffeomorphism of $X_{\Om_2}$ that takes one to the other.  It remains to adjust these forms by an isotopy so that this diffeomorphism can be chosen to be the identity on the contractible set $g(X_{\Om_1})$.
\end{proof}

The following is a standard corollary which will be useful to us:

\begin{cor}
\label{cor:sta}
Let $\Omega_1$ be concave and $\Omega_2$ be convex.  There is a symplectic embedding
\[ \intt\, (X_{\Omega_1}) \to \intt\, (X_{\Omega_2})\]
if and only if for all $0 < t < 1$ there is a symplectic embedding
\[ X_{t \cdot \Omega_1} \to \intt\, (X_{\Omega_2}).\]
\end{cor}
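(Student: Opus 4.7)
The plan is to prove the two implications separately. The forward direction $(\Rightarrow)$ is essentially immediate: whenever $0 < t < 1$, the region $t\cdot\Omega_1$ sits inside the topological interior of $\Omega_1$ relative to $\R^2_{\ge 0}$ (since $\Omega_1$ is concave with vertex at the origin), and consequently $X_{t\cdot\Omega_1}$ is a compact subset of $\intt(X_{\Omega_1})$; restricting any symplectic embedding $\intt(X_{\Omega_1}) \hookrightarrow \intt(X_{\Omega_2})$ to this compact subset produces the required map.

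For the reverse direction $(\Leftarrow)$, the idea is to exhaust $\intt(X_{\Omega_1})$ by an increasing union $\bigcup_n X_{t_n\cdot\Omega_1}$ for a sequence $t_n \nearrow 1$, and to paste the hypothesized embeddings together using the ambient uniqueness statement of Proposition~\ref{prop:conn}. Let $\phi_n : X_{t_n\cdot\Omega_1} \to \intt(X_{\Omega_2})$ denote the embeddings given by hypothesis. I will construct inductively symplectic embeddings $\psi_n : X_{t_n\cdot\Omega_1} \to \intt(X_{\Omega_2})$ satisfying the compatibility property $\psi_{n+1}|_{X_{t_n\cdot\Omega_1}} = \psi_n$, starting from $\psi_1 := \phi_1$. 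In the inductive step, given $\psi_n$, the restriction $\phi_{n+1}|_{X_{t_n\cdot\Omega_1}}$ is a second symplectic embedding of the concave toric domain $X_{t_n\cdot\Omega_1}$ into $\intt(X_{\Omega_2})$, so Proposition~\ref{prop:conn} supplies a compactly supported ambient (symplectic) isotopy of $\intt(X_{\Omega_2})$ whose time-one map $\Psi$ satisfies $\Psi \circ \phi_{n+1}|_{X_{t_n\cdot\Omega_1}} = \psi_n$; setting $\psi_{n+1} := \Psi \circ \phi_{n+1}$ completes the induction. The common extension $\psi := \bigcup_n \psi_n$ is then a well-defined symplectic embedding of $\bigcup_n X_{t_n\cdot\Omega_1}$ into $\intt(X_{\Omega_2})$.

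The only genuinely technical point is to verify that $\bigcup_n X_{t_n\cdot\Omega_1}$ actually exhausts $\intt(X_{\Omega_1})$. This reduces to showing that the radial exit function $F(p) := \inf\{t > 0 : \Phi(p) \in t\cdot\Omega_1\}$ is continuous and strictly less than $1$ on $\intt(X_{\Omega_1})$ -- which is transparent from the concavity of $\Omega_1$ and the fact that any interior point admits a small radial outward perturbation staying in $\Omega_1$ -- so that $F$ is bounded away from $1$ on any compact subset. I do not expect any serious obstacle, since this is a standard exhaustion-and-paste argument and all the substantive work has already been carried out in Proposition~\ref{prop:conn}.
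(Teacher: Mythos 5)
Your proof is correct and takes essentially the same route as the paper: the paper also treats only the nontrivial direction by exhausting $\intt(X_{\Omega_1})$ by the $X_{t_k\cdot\Omega_1}$ and using Proposition~\ref{prop:conn} to adjust each $g_{k+1}$ by a compactly supported Hamiltonian isotopy so that it extends $g_k$, then taking the union. Your extra remarks (the trivial forward direction, and the explicit radial exit function $F$ showing $\bigcup_k X_{t_k\cdot\Omega_1}=\intt(X_{\Omega_1})$) merely spell out details the paper leaves implicit.
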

\begin{proof} Suppose given a sequence of embeddings $g_k:X_{t_k \cdot \Omega_1} \to \intt\, (X_{\Omega_2})$ where $t_k\to 1$. 
Proposition~\ref{prop:conn} implies that for each $k$ there is a compactly supported Hamiltonian isotopy $\phi_{k,s}, 0\le s\le 1,$ of $\intt\,  X_{\Om_2}$ such that 
$\phi_{k,1}\circ g_{k+1}|_{X_{t_k \cdot \Omega_1}} = g_k$.
Thus, by replacing $g_{k+1}$ by $\phi_{k,1}\circ g_{k+1}$ we  can arrange that $g_{k+1}$ extends $g_k$ for all $k$, which gives a well defined embedding
of $X_{\Om_1} = \bigcup_k X_{t_k\cdot \Om_1}$ into $\intt\, (X_{\Om_2})$.
\end{proof}

It remains to prove Lemma~\ref{lem:s2s2} that claims that all symplectic forms on $S^2 \times S^2$ that are constant on three or four of the spheres $S_i, 1\le i\le 4,$ are standard. Moreover the group of symplectomorphisms that are the identity near these spheres is  contractible.
\MS

\begin{proof}[Proof of Lemma~\ref{lem:s2s2}]
Let $\Ss$ be the union of the spheres $\bigcup_{i=1}^kS_i$ defined in \eqref{eq:S1234},  where $k=3$ or $4$.
We must first show that if $\om$ is a symplectic  form on $S^2 \times S^2$ with nonstandard set disjoint from $\Ss$
there is a
symplectomorphism,
compactly supported in $(S^2 \times S^2) \setminus \Ss$ between  $(S^2 \times S^2, \omega)$ and $(S^2 \times S^2, \om_{std} )$.
\MS

\NI {\bf Step 1} {\it There is a 
diffeomorphism $\phi$
of $S^2 \times S^2$,

with the following properties: 
\begin{itemize}\item[{\rm (a)}] $\phi ^*(\om)$ is the product form $ \sum^2_{i=1} dz_i \wedge d\theta_i $,

\item[{\rm (b)}]  $\phi$ fixes product neighborhoods of the $S_i$, preserving the normal coordinate\footnote{Here, by a product neighborhood of $S_1$, we mean a neighborhood of the form $K \times S_1$, and the normal co-ordinate refers to the coordinate on $K$; these terms are defined analogously for the other $S_i$.};
\item[{\rm (c)}]   $\phi$
is the identity on
neighborhoods of $S_1$ and $S_3$; 
\item[{\rm (d)}]  $\phi$ is the identity on
neighborhoods of any intersection point of the $S_i$.
\end{itemize}
}

\NI {\it Proof:}  Claims (a), (c) are proved in  \cite[Thm. 9.4.7]{JHOL}; we begin by briefly reviewing the proof.

Choose an $\om$-tame almost complex structure $J$ that is standard in a neighborhood of each of the spheres $S_i$, and generic elsewhere, and
consider the moduli spaces $\mathcal{M}_A$ and $\mathcal{M}_B$ of $J$-holomorphic spheres (modulo parametrization) in classes $A = [S_1], B=[S_3]$.  Note that   these moduli spaces are compact because $J$ is generic away from the four spheres $S_i$. 
Consider the map $\mathcal{M}_A \times \mathcal{M}_B \to S^2 \times S^2$ given by mapping the pair of spheres $(C,C')$ to their unique point of intersection; the proof of \cite[Thm. 9.4.7]{JHOL} shows that this is a diffeomorphism.  Since each $A$-sphere (resp. $B$-sphere)  intersects $S_3$ (resp. $S_1$) in a unique point, we may identify $\mathcal{M}_A \times \mathcal{M}_B$ with $S^2 \times S^2$ and hence obtain a map
 $$
 \Psi: S^2 \times S^2 \to (S^2 \times S^2,\omega)
 $$
 that is the identity on $S_1\cup S_3$ and fixes the four points $(p_\pm, p_\pm)$.  Moreover,  (b) holds because the fact that
  $J$ is a product near  $\bigcup_{i=1}^4S_i$ implies that the spheres $\{p\}\times S^2, S^2\times  \{p\}$ are holomorphic for $p$ sufficiently close to $p_\pm$.  In particular, $\Psi={\rm id}$ near the four points $(p_\pm,p_\pm)$.
\MS

We next arrange that (c) holds by
 altering the map $\Psi$ to a map $\Psi'$ as follows.  We can assume our neighborhood has the form $(S^2 \times U) \cup (U \times S^2)$, where $U$ is a disc.  Outside of this neighborhood, we set $\Psi' = \Psi$.  On $S^2 \times U$, we note that in view of (b) we can write $\Psi(q_1,q_2) = (\eta(q_1,q_2),q_2)$, where $\eta$ is smooth.  Now choose a subdisc $D$ in $U$ containing $p_-$,
 a smooth map $f$, preserving $U$, mapping $D$ to $p_-$, and having nonnegative determinant, and define $\Psi'(q_1,q_2) = (\eta(q_1,f(q_2)),q_2)$.  We define $\Psi'$ on $U \times S^2$ analogously.   Given our choices, this is a well-defined diffeomorphism.  Our choices also imply that $\Psi'$ satisfies $(b) , (c), (d)$.
 Using a similar retraction, we may also  arrange that the following condition holds:
 \MS
 
 (c$^\prime$): {\it There are diffeomorphisms $\psi_2, \psi_4$ of $S^2$ such that $\Psi'(q_1,q_2) = (q_1, \psi_2(q_2)),$ for $(q_1,q_2)$ near $S_2 = \{p_+\}\times S^2$ and
 $\Psi'(q_1,q_2) = (\psi_4(q_1),q_2)$ for $(q_1,q_2)$ near $S_4 = S^2\times \{p_+\}$.} 
 
 \MS
 
 Finally to prove (a) we observe that, because both $(\Psi')^*(\om)$ and the product form are nondegenerate and of the same sign on all the spheres $\{q\}\times S^2, S^2\times \{q\},$ where $q\in S^2,$ the straight line between these two forms consists of symplectic forms. Thus Moser's argument constructs an isotopy $\Phi_t, \Phi_0 = {\rm id.}$ such that $\Phi_1^*(\Psi')^*(\om) = \om_{std}$.   This proves (a). Moreover, properties (b),(c), (d) still hold.
  Indeed, when the symplectic forms agree, the Moser isotopy generated by the straight line is constant, hence (c) and (d) hold.  
  The same argument ensures that  $\phi: = \Psi'\circ \Phi_1$ satisfies (b)
 if we  first arrange that $(\Psi')^*(\om) = \om$ near all four spheres $S_i$.  Since (c$^\prime$) holds, this  may be accomplished by adjusting $\Psi'$ near the sphere $S_2$ (resp. $S_4$) by an isotopy that near $S_2$ (resp. $S_4$) depends only on the coordinate $q_2$ (resp. $q_1$).  \end{proof}
 \MS
 
 \NI {\bf Step 2:}. 
 {\it  With 
 $\phi$ as in Step 1, there is
 a symplectomorphism $\gamma$ of $S^2 \times S^2$, such that $\phi \circ \gamma$ continues to restrict to the identity on
neighborhoods of $S_1$ and $S_3$,
and also 
restricts to the identity on a neighborhood of $S_2:= p_+ \times S^2$.} 
\begin{proof}
We  define $\gamma$ as follows.  We first note that the restriction of $\phi$ to $S_2$ is area-preserving.  Now   
choose a path $\beta_t$ from the identity to 
$\phi^{-1}|_S$, in the space of area-preserving maps.  We know that $\phi$ fixes neighborhoods of $p_-$ and $p_+$, so standard arguments
imply that we can choose this path such that each $\beta_t$ fixes 
a neighborhood of $p_-$ pointwise.  

Choose also a smooth function $\alpha$ from $[0,1]$ to itself that is nondecreasing, equal to $0$ in a neighborhood of $0$ and equal to $1$ in a neighborhood of $1$.

We now define $\gamma$ via the rule
\[ \gamma: S^2 \times S^2, \quad (q_1,q_2) \to (q_1, \beta_{\alpha(z(q_1))}(q_2)),\]
 where by $z(q_1)$ we mean the projection to the $z$-coordinate.
Then, $\gamma$ restricts to the identity on neighborhoods of $S_1: = p_- \times S^2$ and $S_3: = S^2 \times p_-$.
Moreover, $\phi \circ \gamma$ is the identity
in a neighborhood of
$S_2$.   Hence, $\phi \circ \gamma$ has the claimed properties.    
\end{proof}

\NI {\bf Step 3.}  {\it There is a diffeomorphism
 $\phi'$
of $S^2 \times S^2$ that satisfies all the conditions in Step 1 and in addition restricts to the identity on a neighborhood of $S_2$.}

\begin{proof}
First note that the diffeomorphism $\gamma$ defined in step 2 satisfies the following.

\begin{itemize}\item[{\rm (a)}]
 $\gamma^*(\om_{std})$ is standard in a neighborhood of $S_1 \cup S_3 \cup S_2$; 
 
\item[{\rm (b)}] the line $t  \gamma^*(\om_{std}) + (1-t) \omega_{std}$ is symplectic.  
\end{itemize}

Claim a) is immediate from the definition of $\gamma$.  To check (b), we compute that the forms $dz_1$ and $d\theta_1$ are fixed by $\gamma^*$,  while the form $dz_2$ pulls back to the sum of $\beta_z^*dz^2$ and a multiple of $dz_1$, with the analogous statement holding for $d \theta_2$; the claim follows from this.
  
Given these facts, we can compose $\phi \circ \gamma$ with a Moser isotopy to produce a map $\phi'$ that satisfies properties (a) - (d) from Step 1, while in addition 
restricting to the identity in a neighborhood
of $S_2$. \end{proof}

\NI
{\bf Step 4.}  {\it Completion of the proof of the first claim.}
\begin{proof}
We now consider the sphere $S_4 = S^2 \times p_+$.  We repeat the argument from the previous two steps.  Namely, we take 
\[ \gamma'(q_1, q_2) = (\beta'_{\alpha(z(q_2))} q_1, q_2)\]
and consider $\phi' \circ \gamma'$. 
The same considerations as above apply: $\phi' \circ \gamma'$ fixes neighborhoods of all four $S_i$, $\gamma'^*(\om_{std})$
 is standard in a neighborhood of all the $S_i$, and the line $t  \gamma'^*(\om_{std}) + (1-t) \omega_{std}$ is symplectic.  Thus, after composing with a Moser isotopy, there exists the required map.
 \MS
 
 \NI
 {\bf Step 5.}  {\it Proof of the second claim.}
 Gromov~\cite{GRO} showed in 1985 that the identity component of the full group 
  of symplectomorphisms of $(S^2\times S^2, \om_{std})$ has the homotopy type of $\SO(3)\times \SO(3)$, and this proof easily adapts to show that the subgroup considered here deformation retracts to the subgroup of 
$\SO(3)\times \SO(3)$ that fixes a neighborhood of the four spheres $\bigcup_{i=1}^4 S_i$. But this consists only of the identity element.  
This completes the proof. For more details see \cite[Ch.9.5]{JHOL}.
\MS

This completes the proof of Lemma~\ref{lem:s2s2}. \end{proof}

\subsection{Proof of Theorem~\ref{thm:main}}\label{ss:proofmain}

Let us now proceed with the proof of Theorem~\ref{thm:main}.  The crux of the issue is the following result:

\begin{thm}
\label{thm:main1}
Let $\Omega_1$ be rational and concave, with rational weights $(a_1, \ldots, a_n)$ and let $\Omega_2\subset \R^2_{>0}$ be  convex with rational weights $(b; b_1, \ldots, b_m)$.  Then there is a symplectic embedding
\[ X_{t\Omega_1} \to \intt\, (X_{\Omega_2})\]
for all $0<t<1$ if and only if there is a symplectic embedding
\[ \bigsqcup_i \intt\, (B(a_i)) \sqcup \bigsqcup_j \intt\, (B(b_j)) \to \intt\, (B(b)).\]
\end{thm}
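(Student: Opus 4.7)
My plan is to follow the strategy of \cite{CG}, reducing the embedding problem to a ball-packing problem via the weight decomposition, but replacing the invocation of Gromov's uniqueness for star-shaped domains standard at infinity with the newly established Propositions~\ref{prop:uniq} and~\ref{prop:conn}. First I would set up the dictionary: the cutting decomposition writes $T(b) \less \intt\,(\Omega_2) = \Omega_0' \sqcup \Omega_1' \sqcup \Omega_2'$ as a disjoint union of (possibly empty) concave pieces whose combined weight expansions recover $(b_j)$, and piece-by-piece cutting produces a canonical symplectic embedding $\Psi_2 : \bigsqcup_j \intt\,(B(b_j)) \hookrightarrow \intt\,(B(b))$ whose image has complement $X_{\Omega_2}$; analogously $\Psi_1 : \bigsqcup_i \intt\,(B(a_i)) \hookrightarrow \intt\,(X_{\Omega_1})$ arises from cutting $\Omega_1$. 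For the forward direction, given $g_t : X_{t\Omega_1} \hookrightarrow \intt\,(X_{\Omega_2})$, I would compose with the canonical inclusion $\intt\,(X_{\Omega_2}) \hookrightarrow \intt\,(B(b)) \less \Psi_2(\bigsqcup_j B(b_j))$ and precompose with $\Psi_1|_{t\Omega_1}$, obtaining $\bigsqcup_i \intt\,(B(ta_i)) \sqcup \bigsqcup_j \intt\,(B(b_j)) \hookrightarrow \intt\,(B(b))$; a standard compactness/diagonal argument as $t\to 1$, in the spirit of Corollary~\ref{cor:sta}, then yields the claimed ball packing.

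For the reverse direction, starting from a ball packing $\Phi : \bigsqcup_i \intt\,(B(a_i)) \sqcup \bigsqcup_j \intt\,(B(b_j)) \to \intt\,(B(b))$ and fixing $0<t<1$, the crucial step is to apply Proposition~\ref{prop:conn} to the (disconnected) concave region $\bigsqcup_j B(b_j)$; the remark following Lemma~\ref{lem:blowupdown} records that Proposition~\ref{prop:conn} extends to disjoint unions. This produces a compactly supported ambient isotopy of $\intt\,(B(b))$ carrying $\Phi|_{\bigsqcup_j \intt\,(B(b_j))}$ onto $\Psi_2$. After the isotopy, the images $\Phi(\intt\,(B(ta_i)))$ are disjoint from $\Psi_2(\bigsqcup_j B(b_j))$ and hence lie in a neighborhood of $\intt\,(X_{\Omega_2})$ inside $\intt\,(B(b))$; a small shrinking shows they actually lie inside $\intt\,(X_{\Omega_2})$. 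A second application of Proposition~\ref{prop:conn}, this time to the concave region $t\Omega_1$ embedded into the convex region $\Omega_2$, lets me isotope the embedding $\bigsqcup_i \intt\,(B(ta_i)) \hookrightarrow \intt\,(X_{\Omega_2})$ into the canonical cutting position for $\Omega_1$, which after a blowdown of the associated exceptional chain of spheres (as in Lemma~\ref{lem:blowupdown}) assembles into an embedding $X_{t\Omega_1} \to \intt\,(X_{\Omega_2})$ for each $t<1$.

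The hard part is the isotopy step in the reverse direction. In \cite{CG} the star-shapedness of $X_{\Omega_2}$ allowed a direct application of Gromov's uniqueness theorem for symplectic forms on domains in $\mathbb{R}^4$ that are standard at infinity, but in our generalized setting $X_{\Omega_2}$ need not even be diffeomorphic to a ball and its boundary may be $T^3$ rather than $S^3$ when $\Omega_2$ is bounded away from the axes. This is precisely the issue that Proposition~\ref{prop:uniq}, built from a refined uniqueness result for symplectic forms on $S^2 \times S^2$ relative to a configuration of coordinate spheres (Lemma~\ref{lem:s2s2}), is designed to overcome, and Proposition~\ref{prop:conn} then supplies the connectedness of embedding spaces that powers the isotopy arguments above. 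With these in hand, the argument of \cite{CG} proceeds with only bookkeeping modifications to track the three complementary concave pieces $\Omega_0', \Omega_1', \Omega_2'$ rather than a single one.
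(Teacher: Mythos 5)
Your forward direction is fine and matches the paper's: the Traynor balls for the weight decompositions give a disjoint packing, and the $t\to1$ limit is handled exactly as in Corollary~\ref{cor:sta}. You are also right that Propositions~\ref{prop:uniq} and~\ref{prop:conn} are the new ingredients that replace Gromov's uniqueness for star-shaped domains in this generalized setting.

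The reverse direction, however, has a real gap. After your first application of Proposition~\ref{prop:conn}, you have a collection of \emph{disjoint} open balls $\Phi(\intt B(t a_i))$ sitting inside $\intt X_{\Omega_2}$. You then propose a second application of Proposition~\ref{prop:conn}, ``this time to the concave region $t\Omega_1$ embedded into the convex region $\Omega_2$,'' to move these balls into ``the canonical cutting position for $\Omega_1$.'' This step is circular: there is no embedding of $t\Omega_1$ into $\Omega_2$ yet --- that is exactly what the theorem is asking you to produce --- so there is no ``canonical cutting position'' to isotope to. Moreover, even granting some reference configuration, an ambient isotopy cannot do what is needed here. The Traynor balls that arise from the weight decomposition of $t\Omega_1$ are not disjoint: their closures meet along boundary faces, and it is precisely this overlapping/gluing pattern that allows them to be assembled into the chain of spheres $\Cc_1$ and blown down to recover $X_{t\Omega_1}$. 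Your balls are pairwise disjoint and remain so under any ambient isotopy, so blowing them up yields disjoint exceptional spheres, not a chain, and there is nothing to blow down that produces $X_{t\Omega_1}$.

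What is missing is the inflation step, which is the engine that actually uses the ball-packing hypothesis. The paper's proof of Theorem~\ref{thm:main1} begins by embedding $X_{r\Omega_1}$ for $r$ small and rational --- this exists trivially by scaling, and crucially already carries the correct chain-of-spheres structure $\Cc_1$ upon blowup. The ball packing is then used not to move balls around, but to guarantee (via Seiberg--Witten theory) the existence of a pseudoholomorphic curve with the right intersection numbers with $\Cc_1$ and $\Cc_2$, so that the inflation procedure of \cite{McOp} can grow the form along $\Cc_1$ by a factor approaching $1/r$. Blowing down and applying Proposition~\ref{prop:uniq} to standardize the resulting form then produces the embedding. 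Your proposal uses Proposition~\ref{prop:conn} and blowup/blowdown but omits inflation; without it, there is no mechanism to convert the disjoint-ball data into an embedded concave toric domain of the correct size.
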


Let us defer the proof for the moment and explain how it implies Theorem~\ref{thm:main}.  

\begin{proof}[Proof of Theorem~\ref{thm:main}, assuming Theorem~\ref{thm:main1}.]

The argument is generally similar to the argument in \cite{CG, Mell}.
\MS

\NI {\bf Step 1.} { \it (i) implies (ii) and (iii).}

A result due to Traynor  \cite{Traynor} implies that if $\Delta$ is equivalent to the triangle with vertices $(0,0), (z,0)$ and $(0,z)$ after applying an integral affine transformation, then $X_\Delta$ contains a symplectically embedded ball $\intt\,(B(z))$.  Hence, (i) implies (ii).
The fact that (i) implies (iii) follows from the Monotonicity Property of ECH capacities stated at the beginning of \S\ref{ss:prelim}.

\MS

\NI {\bf Step 2.} {\it  (ii) implies (i).}   \

First note that it suffices to prove the result for the case where $\Omega_2$ is completely off the axes.  Indeed, an embedding into $\intt\, (X_{\Omega_2})$ in this case gives an embedding into the corresponding $\intt\, (X_{\Omega_2})$ in the other cases, since $\intt\, (X_{\Omega_2})$ is a subset of the interior in the other cases, and the weights in the other cases are the same.  We will therefore assume that $\Omega_2$ is completely off the axes in the remainder of this proof.

Now fix a parameter $\lambda \in (1,2)$.  Then there exists a rational concave domain  $\Omega'_1$ with weights $(a'_1, \ldots, a'_n)$ and a good convex $\Omega'_2$ with weights $(b; b'_1, \ldots, b'_m)$ such that 
each $a'_i \le a_i$, each $b'_j \le b_j$, and
\[ 
\frac{1}{\lambda} \Omega_1 \subset \intt\, (\Omega'_1) \subset \Omega_1, \qquad 
 \Omega_2 \subset \Omega'_2 \subset \lambda_{\ell_1,\ell_2} \cdot \Omega_2.\]
Here by $\lambda_{\ell_1,\ell_2} \cdot \Omega_2$ we mean the radial expansion of $\Om_2$ by the factor $\la>1$ with center $(\ell_1,\ell_2)\in \intt (\Om_2)$ via the inverse of the  map $f_\la$ in \eqref{eq:retract}.
Then, by Theorem~\ref{thm:main1}, we have an embedding
\[ \frac{1}{\lambda^2} X_{\Omega_1} \to \intt (X_{\Omega_2}), \quad \mbox{ for all } \la>1,
\]
which gives the desired embedding in view of Corollary~\ref{cor:sta}.
\MS

\NI
{\bf Step 3.}  {\it  (iii) implies (ii).}\
By (iii), we have that
\[ c_k(X_{\Omega'_1}) \le c_k(X_{\Omega'_2}) \quad \forall k.\]
Thus, by the definition of the weight expansion and the Traynor trick, we obtain that
\[ 
c_k\Bigl( \bigsqcup\, \intt\, (B(b'_j)) \sqcup \bigsqcup \intt\, (B(a'_i)) \Bigr) \le c_k(B(b)) \quad \forall k.
\]
It is known that ECH capacities give a sharp obstruction to all ball packing problems of a ball [Hutchings, Remark 1.10]. 
Hence (ii) holds, as claimed.
\end{proof}

We now prove Theorem~\ref{thm:main1}.

\begin{proof}[Proof of Theorem~\ref{thm:main1}.]

The \lq\lq only if\rq\rq\, direction follows from the Traynor trick described in Step 1 of the proof of Theorem~\ref{thm:main}, so we just have to prove the \lq\lq if\rq\rq\, direction.  We prove this in steps, by a modification of the inflation method.
\MS

 We embed $ X_{r\Omega_1}$ for $r$ small and rational into $\intt\, (X_{\Omega_2})$, and denote by $(Y, \om_Y)$ the symplectic manifold obtained from $\ov X_{\Om_2}$ by  blowing up $ X_{r\Omega_1}$ as in Lemma~\ref{lem:blowupdown}. Thus $Y$ contains two chains of spheres: $\Cc_Y$, which is  the blowup of $ X_{r\Omega_1}$ and  $\Cc_2$ which is formed by collapsing the circle orbits in  $\p X_{\Omega_2}$.
 
\MS
  
 \NI
{\bf Step 1.} {\it  For every $\eps>0$, there is a symplectic form $\om_Y'$ on $Y$ that agrees with $\om_Y$ near $\Cc_2$ and with $s \om_Y$ on $\Cc_1$, where $\frac 1r-\eps< s <\frac 1r$.}

As in \cite[Thm. 2.1]{CG},
this form  is constructed by the inflation process described in  \cite{McOp}. (For a very simple example, see
\cite[\S2.1]{Mell}.)
 The inflation requires a pseudoholomorphic curve $C$, whose intersection number with the sphere $S_i$ in
$\Cc_2$ is $\ka a_i$, while that with the sphere $S_j$ in $\Cc_1$ is $\ka b_j$, where $\ka$ is a large constant such that $\ka a_i, \ka b_j$ are integers.  The existence of such a curve $C$ follows from Seiberg--Witten theory and the existence of the ball embedding.
This process yields a form that has the desired integrals over the spheres   in $\Cc_1$ and $\Cc_2$. Indeed, we rescale so that the sizes of the spheres in $\Cc_2$ do not change during the inflation, while
those of $\Cc_1$ increase from $ra_i$ to $\frac{r + \ka }{1+\ka} a_i$, and hence become arbitrarily close to $ a_i$ as $\ka$ increases.
  Further these spheres 
 remain symplectically orthogonal  since, apart from the rescaling,  the form is changed only near the curve $C$ along which we inflate.  Thus, by the symplectic neighborhood theorem, a neighborhood $N$ of $\mathcal{C}_{2}$ is symplectically isotopic to its toric model with the standard symplectic form.  Moreover, we can extend this isotopy to $Y$ by the identity: more precisely, the isotopy is induced by a (time-dependent) vector field $v$ and we can cut off this vector field via pointwise multiplication with a compactly supported function $f$ in $N$ such that the isotopy generated by $v$ and $fv$ agree on a small-enough sub-neighborhood of 
 $\Cc_2$  in $N$.  Thus, there is an ambient isotopy of $Y$ that maps the form obtained by inflation
 to a symplectic form $\omega'$ that is standard in a neighborhood of $\Cc_2$, as claimed.  

\MS

 \NI
 {\bf Step 2.} {\it  Completion of the proof.}
  Since $X_{r\Om_1}$ is diffeomorphic to $X_{\Om_1}$, and  the new symplectic form $\om'$ on  $Y$  restricts on $\Cc_1$ to
  $s$ times the original form,  
  the blowdown of $Y$ along $\Cc_1$ is diffeomorphic to $\ov X_{\Om_2}$.
Moreover, this blowdown is equipped with a symplectic form $\om$ that agrees with $\om_{std}$  near $\p(\ov X_{\Om_2}) = \Cc_2$ 
and by construction there is a symplectic embedding $\io:(X_{\Om_1}, t \om_{std})\to (\intt\,  X_{\Om_2},\om)$, where $t: = rs$ is arbitrarily close to $1$. 
But by Proposition~\ref{prop:uniq}, there is a compactly supported diffeomorphism $\psi$ of  $\intt\,  X_{\Om_2} =  
 \ov X_{\Om_2}\less \Cc_2$ such that $\psi^*(\om) = \om_{std}$.   Therefore $\psi\circ \io$ is the desired embedding
 $(X_{\Om_1},t \om_{std})\to (\intt\,  X_{\Om_2},\om_{std})$.
 \end{proof}
 
 \subsection{Subtraction formula for the ECH capacities}\label{ss:subtract}
 
The aim of this section is to prove the following formula.

\begin{lemma}\label{lem:ECHk} Let $X = X_\Om$ where $\Om =\Om(b;b_1,\ldots)$.
Then
\begin{equation}
\label{eqn:echsub}
c_k(X) = \min_{k = \ell - k_1 - \ldots - k_m}\ c_\ell B(b) - c_{k_1} B(b_1) - \ldots - c_{k_m} B(b_m).
\end{equation}
\end{lemma}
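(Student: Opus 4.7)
The plan is to derive the subtraction formula by combining the lattice-path characterization of ECH capacities of convex toric domains with the geometric identity in Lemma~\ref{lem:needed}, and then to pass from finite to infinite weight expansions by approximation. The strategy uses the tools developed in \S\ref{sec:weight} and \S\ref{sec:ECH} directly, rather than quoting Hutchings' subtraction formula as a black box.

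The first step is to use the lattice-path formula $c_k(X_\Omega) = \min_\Lambda \ell_\Omega(\Lambda)$, where $\Lambda$ ranges over boundaries of convex lattice polygons $P \subset \R^2_{\ge 0}$ subject to an appropriate lattice-point count giving $k+1$ total lattice points. In the classical case where $\Omega$ contains a neighborhood of the origin with finite weight expansion, this formula is due to Hutchings; its extension to our generalized setting follows from Theorem~\ref{thm:main}, which characterizes ECH capacities in terms of ball embeddings that depend only on the weight data $(b; (b_j))$ and not on the position of $\Omega$ relative to the axes.

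Next, apply Lemma~\ref{lem:needed}: for any convex lattice polygon $P = \Omega(b';(b'_j))$ with boundary $\Lambda$, the lemma gives
\[
\ell_\Omega(\Lambda) = \ell_{T(b)}(\Lambda_3') - \sum_{i=0}^2 \ell_{\Omega_i'}(\Lambda_i'),
\]
where $\Lambda_3'$ is the slant edge of the enclosing triangle $T(b')$ and the $\Lambda_i'$ are the concave lattice paths associated to the three concave corner cuts $\Omega_i'$ of $\Omega$. The first term $\ell_{T(b)}(\Lambda_3')$ is precisely $c_\ell(B(b))$ for $\ell$ the number of lattice points in the enclosing triangle, while each $\ell_{\Omega_i'}(\Lambda_i')$ realizes the ECH capacity $c_{k_i}(X_{\Omega_i'})$ of a concave toric domain, which by the disjoint-union formula equals $\max_{\sum k_{ij} = k_i} \sum_j c_{k_{ij}}(B(b_j))$ with $(b_j)$ the weights from the $i$-th corner cut. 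Substituting these identifications, re-indexing so that the full list of $(b_j)$ from the three corner cuts comprises the weights of $\Omega$, and minimizing over the polygon $P$ produces exactly the right-hand side of \eqref{eqn:echsub}, with the lattice-point count translating into the constraint $k = \ell - \sum_j k_j$.

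Finally, in the infinite weight case, approximate $\Omega$ from outside by the rational polygons $\Omega_N$ obtained by stopping the cutting algorithm after $N$ cuts; then $\Omega \subseteq \Omega_N$ with $\Omega_N \to \Omega$ in Hausdorff distance. The left-hand side of \eqref{eqn:echsub} for $\Omega_N$ decreases to $c_k(X_\Omega)$ by ECH monotonicity and continuity, while the right-hand side decreases to that of $\Omega$ because allowing more cuts only enlarges the set of admissible decompositions in the infimum. The finite-weight identity $c_k(X_{\Omega_N}) = R_k(\Omega_N)$ then passes to the limit. The principal obstacle is establishing the lattice-path formula $c_k(X_\Omega) = \min_\Lambda \ell_\Omega(\Lambda)$ in the fully generalized setting where $\Omega$ may not touch the origin: Hutchings' original proof uses star-shapedness in an essential way, and one must bootstrap from Theorem~\ref{thm:main} to reduce the computation to a ball-packing problem controlled uniformly by the weights.
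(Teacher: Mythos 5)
Your proposal is on the right track for one direction of the proof, but there is a genuine gap in the central substitution step and the other direction is missing entirely.

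The key flaw is the claim that substituting the identifications from Lemma~\ref{lem:needed} \lq\lq produces exactly the right-hand side of \eqref{eqn:echsub}.\rq\rq\ The decomposition $\ell_\Omega(\Lambda) = \ell_{T(b)}(\Lambda_3') - \sum_i \ell_{\Omega_i'}(\Lambda_i')$ is an exact identity, and $\ell_{T(b)}(\Lambda_3')$ does equal $c_{k_3-1}(B(b))$, but the third ingredient you invoke --- that each $\ell_{\Omega_i'}(\Lambda_i')$ \emph{realizes} $c_{k_i}(X_{\Omega_i'})$ --- is false in general. For a concave toric domain, $c_{k_i}$ is a \emph{maximum} over concave lattice paths with the given lattice-point count (this is Choi et al.), so the path $\Lambda_i'$ induced by your chosen $\Lambda$ satisfies only the one-sided inequality $\ell_{\Omega_i'}(\Lambda_i') \le c_{k_i}(X_{\Omega_i'})$. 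Consequently, starting from the minimizing $\Lambda$ for $c_k(X)$, the decomposition gives you $c_k(X) \ge c_{k+K}(B(b)) - \sum_i c_{k_i}(X_{\Omega_i'})$ for \emph{some} $K = k_0 + k_1 + k_2$, hence $c_k(X) \ge \min_\ell\bigl[c_{k+\ell}(B(b)) - c_\ell(\sqcup B(b_i))\bigr]$. That is a one-sided inequality, not the claimed equality, and your argument does not show the reverse.

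The reverse inequality $c_k(X) \le \min_\ell\bigl[c_{k+\ell}(B(b)) - c_\ell(\sqcup B(b_i))\bigr]$ cannot come from the lattice-path formula alone; it requires a separate argument. The natural one (used in the paper) is to invoke the symplectic embedding $X \sqcup \bigl(\sqcup_i B(b_i)\bigr) \se B(b)$ furnished by the definition of the weight decomposition, then apply the Monotonicity and Disjoint Union axioms to get $c_k(X) + c_\ell(\sqcup B(b_i)) \le c_{k+\ell}(B(b))$ for all $\ell$. Your proposal never brings in the embedding, and without it the equality is not established. Two smaller issues: (1) you attribute the lattice-path formula in the generalized setting to Theorem~\ref{thm:main}, but that theorem characterizes embeddings, not ECH capacities via lattice paths; the correct route is to cite Hutchings' formula for $\Omega$ off the axes (where $X_\Omega$ is a Liouville domain to which the formula literally applies) and then reduce the on-axis case by inner dilation and monotonicity. (2) For the infinite-weight case, rather than outer Hausdorff approximation --- which requires a convergence argument for both sides that you only sketch --- it suffices to observe that the Disjoint Union axiom for $\sqcup_j B(b_j)$ remains valid because ECH capacities of such domains are defined as a supremum over embedded compact Liouville subdomains, so the finite-weight argument carries over verbatim.
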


This formula is proved in  \cite[Thm.A.1]{CG}  in the case when the convex set $\Om$ contains a neighborhood of the origin. 
The proof in the general case follows by essentially the same argument; the crucial new step is proved in  Lemma~\ref{lem:needed}.
 Below we use the notation  of \cite{CG} that was introduced in \S\ref{ss:length}.

\begin{proof}
Let $X$ be our given convex toric domain, with weight expansion $(b; b_1, \ldots)$.   The strategy of proof is to prove, for all $k$, the following inequality
\begin{equation}
\label{eqn:needed}
c_k(X)\ \le \ \min_{k_1 - \ell = k}\ c_{k_1}(X) - c_\ell\bigl(\sqcup B(b_i)\bigr)\ \le \ c_k(X).
\end{equation}
This implies \eqref{eqn:echsub}.  Indeed, in the case where there are finitely many $b_i$, \eqref{eqn:echsub} follows immediately from \eqref{eqn:needed} by the Disjoint Union axiom, applied to $\sqcup B(b_i)$; the same argument works in the infinite case, because the ECH capacities are defined in this case as a supremum over embedded compact Liouville domains.
\MS

\NI {\bf
 Step 1.}  {\it We have $c_k(X)\ \le \ \min_{k_1 - \ell = k}\ c_{k_1}(X) - c_\ell\bigl(\sqcup B(b_i)\bigr)$.}

 By definition of the weight sequence, there is a symplectic embedding
\[ X \sqcup ( \sqcup_i B(b_i)) \to B(b).\]
It then follows from the Disjoint Union and Monotonicity Axioms that for any $k$ and $\ell$,
\begin{align}\label{eqn:i1} 
c_{k}(X) + c_\ell(\sqcup_i B(b_i) ) \le c_{k + \ell}(B(b)).
\end{align}
 
 As explained in \S\ref{ss:cut}, the $b_i$ are, canonically, the weights of (possibly empty) concave toric domains $X_0, X_1$ and $X_2$.  For any concave toric domain, its ECH capacities agree with that of its canonical ball packing.  Hence, we obtain the equality
\begin{equation}
\label{eqn:concaveball}
c_{\ell}( \sqcup B(b_i)) = \max_{k_0 + k_1 + k_2 = \ell}( c_{k_0}(X_0) + c_{k_1}(X_1) + c_{k_2}(X_2)),
\end{equation}
since both sides equal $\max\ \bigl\{ \sum_i c_{\ell_i}(B(b_i)) \ \bigl| \sum \ell_i = \ell\bigr\}.$ Hence, combining with \eqref{eqn:i1} yields the inequality
\begin{equation*}
\label{eqn:i2}
c_{k}(X) \le\min_{\ell - k_0 - k_1 - k_2 = k} c_{\ell}(B(b)) - c_{k_0}(X_0) - c_{k_1}(X_1) - c_{k_2}(X_2).
\end{equation*}
 In view of \eqref{eqn:concaveball}, this completes the proof of Step 1. 
\MS

\NI {\bf Step 2.}   {\it We have $ \min_{k_1 - \ell = k}\ c_{k_1}(X) - c_\ell\bigl(\sqcup B(b_i)\bigr)\ \le c_k(X).$}

  We  prove  this by showing that, given $k$, there exists $k_0, \ldots, k_2$ such that
\begin{equation}
\label{eqn:other}
c_k(X) \ge c_{k + k_0+k_1 + k_2} B(b) - \sum_i c_{k_i}(X_i),
\end{equation}
which,  in view of \eqref{eqn:concaveball}, then implies the desired inequality.

To proceed, given a convex lattice path $\Lambda$ (possibly closed), let $L(\Lambda)$ denote the number of lattice points in the region bounded by $\Lambda$.  For a concave lattice path $\Lambda$, let $L'(\Lambda)$ denote the number of lattice points, not including lattice points on the upper boundary.  In the case (for example) of convex toric domains $X'$ that are completely off the axes, we have that
\begin{equation}
\label{eqn:formula}
c_k(X') =\min \lbrace \ell_{\Omega}(\Lambda) \rbrace,
\end{equation}
where the minimum is over closed lattice polygons $\Lambda$ with $L(\Lambda) = k + 1$; see \cite[Thm. 1.11]{Hutchq}.

We now use this formula to get a lower bound on $c_k(X_{\Omega})$.  

Let $\Lambda$ be the boundary of a convex lattice polygon $P$ with $L(\Lambda) := k+1$, that we assume translated so that it meets both axes and also the slant edge of the simplex $T(b')$.
  As in Lemma~\ref{lem:needed}, we decompose $\Lambda$ into (possibly empty) lattice paths $\Lambda_0, \Lambda_1$ and $\Lambda_2$, together with (possibly empty) edges on the axes or on the slant edge of  $T(b')$.
Then, $\Lambda_0, \La_1, \Lambda_2$ are affine equivalent to concave lattice paths $\Lambda'_0, \Lambda'_1$ and $\Lambda'_2$, and we define $k_i := L'(\Lambda'_i)$, for  $i=0,1,2$. We further denote by
 $\Lambda'_3$  the  slant edge of $T(b')$ 
 and we define $k_3 := L(\Lambda'_3)$.

Now we observe the following.  First of all, $k = k_3 - k_0- k_1 - k_2 - 1$.   Hence
\begin{equation*}
\label{eqn:obs1}
 c_{k + k_0+ k_1 + k_2}(B(b)) = c_{k_3-1}(B(b)) = \ell_{T(b)}( \Lambda'_3).
 \end{equation*}
Moreover, by the formula for the ECH capacities  of concave toric domains given in \cite[Thm.1.21]{Choi},
\begin{equation}
\label{eqn:obs2} 
\ell_{\Omega_i}(\Lambda'_i) \le c_{k_i}(\Omega_i), \quad 0 \le i \le 2.
\end{equation}
Now recall the following identity 
\begin{equation}
\label{eqn:needed20}
\ell_\Omega(\Lambda) = \ell_{T(b)}(\Lambda'_3) - \sum^2_{i=0}\ell_{\Omega_i}(\Lambda'_i). 
\end{equation}
from Lemma~\ref{lem:needed}.
If 
$\Omega$ is off the axes, then, in view of \eqref{eqn:formula}, we may combine \eqref{eqn:needed20} with \eqref{eqn:obs1} and \eqref{eqn:obs2} to obtain \eqref{eqn:other}.
On the other hand, if  $\Omega$ touches the axes, we  reduce to the off--axes case by noting that $\Omega$ contains its dilation by any factor $\lambda < 1$ about an interior point. Therefore, by monotonicity, the above lower bound still holds.
This completes the proof. \end{proof}

\subsection{Elementary ECH capacities}\label{ss:elem}

We now show that
the key formula \eqref{eqn:needed} also holds for the elementary ECH capacities  $c^{elem}_k, k\ge 0,$ that are
defined in Hutchings~\cite{Helem}.  The previous work \cite{Helem} showed that these 
 capacities agree with  the standard ECH capacities  for 
certain convex toric domains that are more restricted than the ones we consider here (for these convex toric domains, the region $\Omega$ is required to be the region between the graph of a concave function and the axes) 
 but have the advantage that
  the elementary capacities for $\C P^2$ are known to agree with those for the ball, while the corresponding statement for the standard capacities is at present unknown.    This fact is useful for us because we want to study packings into manifolds without boundary; see Corollary~\ref{cor:super}.

To begin, we recall the definition of  $c^{elem}_k$ for nondegenerate
 Liouville domains $X'$.  We complete $X'$ by attaching cylindrical ends, denoting the resulting manifold by $\overline{X}'$, and define 
\begin{equation}
\label{eqn:defnelemECH}
c^{elem}_k(X') = \text{sup} \quad \text{inf} \quad E(u),
\end{equation}
where the sup is over all choices of $k$ points and cobordism-compatible almost complex structures $J$, the inf is over $J$-holomorphic curves $u$ passing through these points, asymptotic at $+\infty$ to a Reeb orbit set, and $E(u)$ denotes the energy of the curve.  For the $X'$ that are actually relevant here, all $u$ are asymptotic to Reeb orbit sets and $E(u)$ is just the action of the corresponding orbit set.  
Recall also that a cobordism-compatible almost complex structure is  of symplectization type on the cylindrical end and is compatible with the symplectic form on $X$.  


We now show  that
the two definitions do agree on
 the general convex toric domains considered here. Our proof
adapts arguments from \cite[Sec. 5]{Helem} in combination with a generalized ``ECH index\rq\rq\,  axiom.

\begin{prop}\label{prop:elem}
Let $X$ be any convex toric domain.  Then $c^{elem}_k(X) = c_k(X)$.
\end{prop}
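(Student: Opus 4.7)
The plan is to reduce, via Lemma~\ref{lem:ECHk}, to the case of balls, where Hutchings~\cite{Helem} has already established $c_k^{elem}(B(a)) = c_k(B(a))$. Concretely, it suffices to prove the analogous subtraction formula for elementary ECH capacities on any generalized convex toric domain $X = X_{\Om}$ with weight expansion $(b;b_1,b_2,\dots)$:
\begin{equation*}
c_k^{elem}(X) \;=\; \min_{k \,=\, \ell - k_1 - \cdots - k_m}\ c_\ell^{elem}(B(b)) - c_{k_1}^{elem}(B(b_1)) - \cdots - c_{k_m}^{elem}(B(b_m)).
\end{equation*}
Since the right-hand side equals the one in \eqref{eqn:echsub} (using agreement on balls), combining this with Lemma~\ref{lem:ECHk} gives $c_k^{elem}(X) = c_k(X)$.

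For the ``$\le$'' direction I would argue exactly as in Step~1 of the proof of Lemma~\ref{lem:ECHk}: the weight decomposition furnishes a symplectic embedding $X \sqcup \bigsqcup_i B(b_i) \se B(b)$, and then the monotonicity and disjoint--union (super-additivity) axioms for $c_k^{elem}$ from \cite{Helem} yield the desired inequality formally. This step does not use any structural feature of $\Omega$, so the generalization is automatic.

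For the ``$\ge$'' direction, which is the substantive part, I would adapt the holomorphic curve argument of \cite[Sec.~5]{Helem}. After choosing a cobordism-compatible almost complex structure on the completion of $B(b)\setminus (X \sqcup \bigsqcup B(b_i))$ and $k$ generic points in the completion of $X$, one neck-stretches along the boundary components to obtain a broken holomorphic building. The ``ECH index axiom'' — which in \cite{Helem} controls which orbit sets appear at each level — is then used to split the index budget $k$ as $\ell - k_1 - \cdots - k_m$, and the level-by-level action bounds give the required lower bound in terms of $c_\ell^{elem}(B(b))$ minus the $c_{k_i}^{elem}(B(b_i))$. This mirrors Step~2 of Lemma~\ref{lem:ECHk}, but on the level of curves rather than algebraic definitions.

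The main obstacle, and where the ``generalized ECH index axiom'' enters, is that in our setting $\Omega$ need not contain a neighborhood of the origin and $\partial X$ need not be diffeomorphic to $S^3$, so $X$ may fail to be a (nondegenerate) Liouville domain with smooth contact boundary as in \cite{Helem}. To handle this I would approximate: by Remark~\ref{rmk:cut} and an inner exhaustion, $\Omega$ can be approximated from inside by good rational convex regions $\Omega^{(n)} \subset \R^2_{>0}$ with smooth contact boundary and finite weight expansion $(b^{(n)};b_j^{(n)})$ converging to $(b;b_j)$. For each $\Omega^{(n)}$, Hutchings' argument applies verbatim (after an $\SL_2(\Z)$ change of coordinates moving $\Omega^{(n)}$ into the standard position required in \cite{Helem}), yielding the subtraction formula for $c_k^{elem}(X_{\Omega^{(n)}})$. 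Passing to the limit using monotonicity and the supremum-over-Liouville-subdomains formulation of $c_k^{elem}$ implicit in \eqref{eqn:defnelemECH}, together with continuity of the right-hand side of the subtraction formula under $(b^{(n)};b_j^{(n)}) \to (b;b_j)$, completes the proof.
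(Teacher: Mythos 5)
Your overall strategy --- pass through a subtraction formula for $c^{elem}_k$ and invoke Hutchings' agreement on balls --- is genuinely different from the paper's. The paper does \emph{not} prove an elementary subtraction formula: it gets $c^{elem}_k(X)\le c_k(X)$ directly from \cite[Thm.~6.1]{Helem}, and for the reverse inequality it uses the Spectrality Axiom to write $c^{elem}_k(X')=\mathcal A(\alpha)$ for a nullhomologous orbit set $\alpha$, proves a generalized ``ECH index'' property $I(\alpha)\ge 2k$ (the technical core of the argument), and then combines this with the lattice-path formula for $c_k(X')$ from \cite{Hutchq}.

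The gap in your proposal is the ``$\ge$'' direction of the elementary subtraction formula. Because $c^{elem}_k$ is a max--min over $J$ and point constraints \emph{in the completion of $X$}, a lower bound requires exhibiting a specific $J$ on $\overline{X}$ and points for which every relevant $J$-curve has large energy. Neck-stretching the filling $B(b)$ along $\partial X\sqcup\bigsqcup\partial B(b_i)$ instead produces buildings in $\overline{B(b)}$, and the piece of such a building that lands in $\overline{X}$ is constrained only by the top end of the stretching; there is no mechanism turning control on the $B(b_i)$ and cobordism levels into a lower bound on the energy of the $X$-level curve (the level energies add to the total, so a priori the $X$-level could carry arbitrarily little of the budget). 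In short, the argument as sketched runs in the wrong direction for a max--min quantity, and unlike Step~2 of Lemma~\ref{lem:ECHk} it cannot fall back on the combinatorial lattice-path formula, since that formula is precisely what is known for $c_k$ but not yet for $c^{elem}_k$.

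A second, smaller issue: even after approximating $\Omega$ from inside by good rational $\Omega^{(n)}$ off the axes, no $\SL_2(\Z)$ change of coordinates places such a region in the ``standard position'' (between the axes and the graph of a concave function) that \cite{Helem} requires. That is exactly why the paper has to \emph{generalize} the ECH Index Axiom rather than cite it: $X_{\Omega^{(n)}}$ is a Liouville domain but need not be star-shaped, so the paper instead shows (Steps~3--4) that in this setting the ECH index of a relative class depends only on its asymptotics --- using that $H_2(X)$ is generated by a torus fiber near $\partial X$ to kill the relative Chern class and self-intersection contributions --- and derives $I(\alpha)\ge 2k$ from the index inequality for generic data. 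Your approximation step is close in spirit to the paper's Step~7, but invoking Hutchings ``verbatim'' on the approximants does not work, and that is where the new content of the proof lives.
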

\begin{proof}

\vspace{3 mm}

\NI {\bf Step 1:} {\it
The upper bound for $c^{elem}_k(X)$ follows from known work.}  First, we note that by \cite[Thm. 6.1]{Helem},
 we have
\begin{equation}
\label{eqn:upperboundelemech} 
c^{elem}_k(X) \le c_k(X)
\end{equation}
whenever $X$ is a four-dimensional Liouville domain.   Note that this implies that the elementary ECH capacities of $X$ are finite --- easier arguments  suffice for this, but we will anyways want the strength of the bound \eqref{eqn:upperboundelemech}.  
\MS

\NI
{\bf Step 2}: {\it  The proof, modulo the ``ECH index\rq\rq\, property, in the ``off the axes case\rq\rq. } 
Let us now consider the problem of finding a matching lower bound.  Let us assume to start that $X$ is a convex toric domain completely off the axes, with smooth boundary.
Then $X$ is a four-dimensional Liouville domain.  Let $L = c^{elem}_k(X) + 1.$ The boundary of $X$ is degenerate (since it is Morse-Bott).  However, standard theory (see e.g. \cite{Hutchq}) implies that we can perturb $X$ to a nondegenerate Liouville domain $X'$ with the following properties:
\begin{itemize}
\item The orbits of $\partial X'$ of action $\le L$ are entirely in the torus fibers.  
\item The nullhomologous (in $\partial X')$ orbit sets $\alpha$ of $\partial X'$ of action $\le L$ correspond to labeled convex polygonal lattice paths $\Lambda_{\alpha}$.    The action of such an orbit set is given by $\ell_{\Omega} (\Lambda_{\alpha})$, where $\ell_\Omega$ is as defined in \eqref{eq:Omlength}.
\item An $\alpha$ from the previous item has a well-defined, integer valued ECH index $I$, and it satisfies the following bound: $I(\alpha) \le 2(L(\Lambda_{\alpha}) - 1)$ 
\end{itemize}

Now, by the Spectrality Axiom for the elementary ECH capacities proved by Hutchings in 
\cite[Thm. 4.1]{Helem},
$c^{elem}_k(X') = \mathcal{A}(\alpha)$, where $\alpha$ is a Reeb orbit set, and $\mathcal{A}$ denotes its symplectic action.  The Spectrality Axiom also implies that $\alpha$ can be assumed  nullhomologous in $X'$; thus, by the first bullet point $\alpha$ is nullhomologous in $\partial X'$.  In general, the ECH index of an orbit set $\alpha$ is only defined up to ambiguity of the divisibility of $c_1(\xi) + 2 PD([\alpha])$; in the present case, though $c_1 = 0$, and we have just shown that $\alpha$ is nullhomologous, so $\alpha$ has a well-defined integer valued ECH index $I$.

We now claim that $\alpha$ can in addition be assumed to satisfy the following:
\begin{equation}
\label{eqn:echk}
I(\alpha) \ge 2k.
\end{equation}
We defer the proof of \eqref{eqn:echk}, which is a bit technical, to the subsequent steps, and first explain why it implies that 
Proposition~\ref{prop:elem} holds.
Combining \eqref{eqn:echk}, and the second and third bullet points above, we see that $c^{elem}_k(X')$ is the 
$\ell_{\Omega}$ length of a convex polygonal lattice path with at least $k+1$ lattice points.  However, by known formulas (similar to the formulas in the previous section), e.g. from
\cite{Hutchq},
 $c_k(X')$ is the minimum of the $\ell_{\Omega}$ length over such paths.  Hence, it follows that $c^{elem}_k(X') \ge c_k(X')$.  Hence, $c^{elem}_k(X') = c_k(X')$ in view of 
\eqref{eqn:upperboundelemech} 
and so by continuity the same formula holds for $X$.  
\MS

\NI
{\bf Step 3:} {\it  The generalized ECH Index Axiom --- first considerations.}  It thus remains to explain the proof of \eqref{eqn:echk}.  This is very similar to the proof of the ``ECH Index\rq\rq\, Axiom for ECH capacities, proved by Hutchings in \cite[Thm. 4.1]{Helem}.
 However, it is stated by Hutchings only for star-shaped domains, and so does not directly apply here.  

Recall from \eqref{eqn:defnelemECH}
 that $c^{elem}_k(X')$ is the supremum over $J$ of the minimal energy of a $J$-holomorphic curve passing through $k$ general points.
Following Hutchings, we note that:
\begin{itemize}\item[-] All relevant curves $u$ are somewhere injective, since for any multiply covered curve we can look at its underlying somewhere injective part, reducing the energy without changing the point constraints.
\item[-]  Since $X'$ has  nondegenerate boundary, we can replace the $\text{sup}/ \text{inf}$ with a $\text{max} / \text{min}$, because the set of possible actions of  an orbit set is discrete. 
\item[-]  If $u$ is a somewhere injective $J$-holomorphic curve in the completion $\overline{X}'$ of $X'$, passing through $x_1, \ldots, x_k$ and asymptotic to an orbit set $\alpha$, the ECH index inequality implies that
\begin{equation}
\label{eqn:indexineq}
\ind(u) \le I(u),
\end{equation}
where $I(u)$ is the ECH index and $\ind(u)$ is its Fredholm index.
\item[-] Finally,  the first and second bullet points in Step 2 imply that $\alpha$ is actually nullhomologous in $\partial {\ov X}'$. 
\end{itemize}

\MS

\NI {\bf  Step 4.}  {\it The ECH index computation.}  We now claim that all relevant curves satisfy
 $I(u) = I(\alpha)$.  To prove this, it suffices to show that the ECH index of a relative homology class in $X'$ depends only on its asymptotics.  So, let $Z$ and $Z'$ be two different elements in $H_2(X;\alpha)$.  Then, $S := Z'- Z \in H_2(X)$ (i.e. is a homology class, not a relative one).  Recall that the ECH Index is defined by the formula $I(Z) = c_{\tau}(Z) + Q_{\tau}(Z) + CZ^I_{\tau}(Z).$  Thus it suffices to show that
\[ c_{\tau}(S) = 0 \quad \quad Q_{\tau}(Z + S) = Q_{\tau}(Z),\]
where $c_{\tau}$ denotes the relative Chern class and $Q_{\tau}$ denotes the relative intersection pairing.   

Let us start by considering the Chern class term.  Because the homology of $X$ is generated by the homology of a fiber, we can assume that $S$ is a fiber; we can then assume that $S$ is a fiber close to $\partial X$.  We therefore need to compute $ch(\Lambda^2 TX)$, evaluated on $S$.  We can assume that a neighborhood of $\partial X$ is identified with a neighborhood of $Y \times \lbrace 0 \rbrace$ in its symplectization, identifying the given almost complex structure with a symplectization admissible almost complex structure $J$, and identifying $\partial X$ with $Y \times \lbrace 0 \rbrace$.  There is then a frame given (in symplectization coordinates) by $\lbrace \partial_\theta, J \partial_\theta, \partial_s, J \partial_s \rbrace$.  Here, $\theta$ denotes the restriction of the angular vector field on the moment plane, relative to a choice of origin anywhere in the interior of the moment image of $X$; and, $s$ denotes the $\mathbb{R}$-coordinate in the symplectization.  Thus, $TX$ is actually trivial as a complex vector bundle in this neighborhood, so $c(\Lambda^2 TX)[S]$ vanishes (for example, we could just take the nowhere vanishing form given by $\lbrace \partial_\theta \wedge \partial_s \rbrace$ for the pullback of $\Lambda^2 TX$ to the chosen representative of $S$).

We now consider the relative intersection pairing.  By bi-linearity of the intersection pairing, it suffices to show that 
\[ Q_{\tau}(S,S) = 0, \quad Q_\tau(Z,S) = 0.\]  
That $Q_{\tau}(S,S) = 0 $ follows by taking two disjoint representatives of a fiber.   

That $Q_\tau(Z,S) = 0$ follows by again using our symplectization model.  To elaborate, we can choose $s$ close to $0$ so that a representative for $Z$ intersects $\lbrace s \rbrace \times Y$ transversally; call this oriented one-manifold $h$.  Then, we can choose a representative $S'$ for $S$ that also lies entirely in $\lbrace s \rbrace  \times  Y$ and that is transverse to $h$.  Then   
\[ Q_\tau(Z,S) = \# \lbrace h \cap S' \rbrace.\]
However, $h$ is nullhomologous in $\lbrace s \rbrace \times Y$, since it is nullhomologous in $\lbrace 0 \rbrace  \times Y$ by Step 4, hence the above count of intersections vanishes.
\MS

\NI {\bf Step 5.} {\it Completion of the proof when $\Om$ is off the axes.}  

For generic choice of $J$ and $x_1, \ldots, x_k$, we must have $ind(u) \ge 2k$.  Hence, by the index inequality \eqref{eqn:indexineq}, we must have $I(\alpha) \ge 2k$.  This gives the desired inequality in the generic case.  However, in fact the $\text{max} \text{min}$ in the definition of $c_k^{elem}$ can be assumed to occur under generic choice of data.  Indeed, certainly the maxmin is bounded from below by the subset of generic data.  On the other hand, if the max is achieved at $(J, x_1, \ldots, x_k)$, then one can approximate this data with generic data, and apply a Gromov compactness argument: if the curves for the approximating data have energies accumulating to a point strictly less than $c^{elem}_k$, then they must limit to a curve for $(J, x_1, \ldots, x_k)$ that also has strictly smaller energy than the maxmin (noting, again, that the set of actions of orbit sets is discrete).  In Remark~\ref{rmk:compact}, we give some further 
details of this Gromov compactness argument (which uses Taubes' compactness for currents, to circumvent the lack of an a priori genus bound).
\MS

\NI {\bf Step 7.} {\it General convex domains.}

We now consider a general convex toric domain, not necessarily off the axes, and not necessarily a Liouville domain, and prove the same formula in this case by reducing to the off the axes case.  The ECH capacities of such a domain can be defined as a supremum over embedded nondegenerate Liouville domains.  The elementary ECH capacities of such a domain are defined as a supremum over embedded ``admissible\rq\rq\, domains.   An admissible domain is a disjoint union of Liouville domains and closed symplectic $4$-manifolds.  In the present situation, $X$ contains no closed symplectic $4$-manifolds; hence, the elementary ECH capacities and the ECH capacities are supremums over the same set.  Thus, it follows from \eqref{eqn:upperboundelemech} that this upper bound holds for general convex toric domains as well.  On the other hand, we can find a matching lower bound for our $X$ by approximating a general convex domain by Liouville domains off the axes.
\end{proof}

\begin{rmk} \label{rmk:compact} {\it  Further details of the Gromov compactness argument.}

For completeness, we provide more details about the Gromov-Taubes compactness argument above

As explained in Step 5 above, it suffices to show that if $J_i \to J$ in $C^{\infty}$, then $J_i$-holomorphic curves $u_i$ with uniformly bounded action converge to a $J$-holomorphic curve that is admissible for the maxmin for the elementary ECH capacities.   The subtlety is that there is no a priori bound on the genus, so we have to use a compactness theorem due to Taubes and generalized by Doan-Walpulski \cite{dw}.

Taubes' compactness gives convergence as a current and as a point set to a proper holomorphic map.  
The argument  requires a compact manifold, possibly with boundary, and to apply it in this case we take an exhaustion of $\overline{X'}$ by compact sets and pass to a subsequence; this gives convergence on compact sets to some proper holomorphic map $u$  and what remains is to show that the domain of $u$ can be assumed a finitely punctured compact Riemann surface.  The first point is that, as in the proof of Step 3 above, we can assume that the $u_i$ are all asymptotic to the same orbit set $\alpha$.  Next, by again applying Taubes compactness, except to translates of the portions of the $u_i$ in the symplectization end of $\overline{X'}$, and 
arguing as in \cite[Lem. 5.11]{hutchings_lecture},
we can guarantee convergence to a possibly broken holomorphic current, whose action is the action of the $u_i$.  This implies that the $u_i$ can be assumed to have homology class independent of $i$.  As in the argument above, we can next assume that the $u_i$ have no multiply covered components.  Then we can apply the relative adjunction formula (see e.g. \cite[Eq. (3.3)]{hutchings_lecture}): the relative adjunction formula implies a lower bound on the Euler characteristic of the $u_i$, given an upper bound on the writhe, and a bound on the writhe is known, see e.g. \cite[Eq. (3.9)]{hutchings_lecture}.  There is also a bound on the number of components of $u_i$, because the $u_i$ have $k$-marked point constraints, and we can discard any component not passing through a marked point without increasing energy.  Thus, by passing to a subsequence, we can conclude that each component of the $u_i$ has genus and number of punctures independent of $i$, and then the standard SFT compactness theory applies to give the desired convergence to a finitely punctured compact Riemann surface.
\end{rmk}

\section{The accumulation point theorem}\label{sec:acc}

Let $\Om$ be a convex domain. This section proves Theorem~\ref{thm:acc}, stating that the steps of any staircase in $X_\Om$  must converge 
to a special point that depends only on the volume and perimeter of $\Om$.

\subsection{Preliminaries}\label{ss:acc1}

In this section, we are concerned with the capacity functions for (generalized) convex toric domains $X_\Omega$ as well as for the rational symplectic $4$-manifold $M_{\mathcal{B}}$ whose symplectic form is encoded by the blowup vector $\mathcal{B}:=(b;(b_j))$. As we will see, if $\Omega(b;b_1,\hdots,b_n)$ and $M_{\mathcal{B}}$ are encoded by the same sequence $(b;(b_j))$, then the capacity functions are equal. If there is a well-defined $X_\Omega$ or $M_{\mathcal{B}}$ given by $(b;(b_j))$, we let $c_{(b;(b_j))}$ denote the capacity function for $X_\Omega$ or $M_{\mathcal{B}}$.

We analyze the capacity function $c_{(b;(b_j))}$ by the method used in \cite{AADT}, that goes back to \cite{Mell,ball}.  We refer the reader to \cite[\S2.3]{AADT} or \cite[\S2]{BHM} for more background details.

Each ellipsoid $E(1,z)$, where $ z\ge 1,$ can be identified with the concave domain $X_{\Om'}$, where $\Om'$ is the triangle with vertices $(0,0), (z,0), (0,1)$. Thus, as described in \ref{ss:cut}, it has a {\bf weight decomposition} $\bw(z) = (a_1,a_2,\dots)$ with $a_1=1$ and $a_i\ge a_{i+1}$.
This is finite, ending in copies of $1/q$, exactly if $z= p/q$ is rational with $\gcd(p,q)=1$.

The entries $a_i$ of $\bw(p/q)$ satisfy the identities
\begin{align}\label{eq:wtai}
\sum a_i = p/q+1-1/q,  \qquad \sum_i a_i^2 = p/q.
\end{align}
The following conditions are known to be equivalent:\footnote
{The equivalence of the first three conditions is proved in \cite{Mell}, while that of the fourth needs in addition \cite[Prop.1.9]{Hutchq}.}
\begin{itemize}\item[{\rm (a)}]
the ellipsoid $E(1,p/q)$ embeds symplectically in $\C P^2(b)$\footnote
{ $\C P^2(b)$ is the complex projective plane with a K\"ahler form that integrates over the line $L$ to $b$.};
\item[{\rm (b)}] the disjoint union of $n$ open  balls of sizes $a_1,\dots, a_n$ embeds into the open ball $\intt\,  B(b)$;
\item[{\rm (c)}] there is a symplectic form on the $n$-fold blowup $\C P^2\# n \ov{\C P}\,\!^2$ of $\C P^2(b)$ in the class Poincar\'e dual to $bL - \sum_{i=1}^n a_i E_i$, where $L$, resp. $E_i$, is the homology class of the line, resp. of  the blown up points.
\item[{\rm (d)}] the {\bf volume constraint} $b^2  - \sum_{i=1}^n a_i^2 > 0$ holds, and also $\sum_{i=1}^n d_ia_i < db$ whenever $(d;d_1,\dots,d_n)$ are nonnegative integers (not all zero) such that 
$\sum_{i=1}^n(d_i^2+d) \le d^2 + 3d$.
\end{itemize}
The obstructions to the existence of the symplectic form in (c) are the volume constraint together with the 
 the constraints imposed by the {\bf exceptional divisors} $\bE$ in $\C P^2\# n \ov{\C P}\,\!^2$.
These classes $\bE$ are the homology classes $dL-\sum_{i=1}^n m_i E_i$ in $\C P^2\# n \ov{\C P}\,\!^2$ of symplectically  embedded spheres of self-intersection $-1$, and, because they have nonzero Gromov--Witten invariant, always   have symplectic representatives even as the cohomology class of the symplectic form varies.
It follows that every symplectic form $\al$ on $X_n(b)$ must integrate positively over every such $\bE$, that is
the class  $
{\rm PD}(\al): =b L - \sum_{i=1}^n a_i E_i$  must satisfy
\begin{align}\label{eq:constraint}
b d - {\textstyle \sum_{i=1}^n} d_i a_i > 0,\quad \mbox{ for all exceptional }\; \bE = dL-{\textstyle\sum_{i=1}^n} d_i E_i.
\end{align} 
Notice that because 
\begin{align}\label{eq:Dioph}
c_1(\bE) = 3d-\sum_{i=1}^n  d_i=1. \quad\mbox{ and } \quad \bE\cdot\bE = \sum_{i=1}^n d_i^2= -1,
\end{align}
 the tuple $(d;d_1,\dots,d_n)$ satisfies the condition in (d) above. However, condition (d) is more general in that it can be satisfied by classes $\bE$ satisfying the equations in \eqref{eq:Dioph} that are not represented by exceptional spheres. Nevertheless, it follows from Seiberg--Witten theory that these classes do all have (possibly disconnected) $J$-holomorphic representatives, and (d) (which is proved in \cite{Hutchq} using ECH) shows that these classes still impose embedding restrictions. 
In the following, we will call a class $\bE=dL-\sum_{i=1}^n d_i E_i$ that satisfies \eqref{eq:Dioph} a {\bf quasi-exceptional class}.
The exceptional classes are distinguished by the fact that they intersect all the other exceptional classes non-negatively --- a fact that can have important repercussions.
\MS

In the situation at hand, the target is not the ball $B(b)$ but the convex domain $X_\Om$ or the closed manifold $M_{\mathcal{B}}$.  However, by Theorem~\ref{thm:main} and the above discussion about embedding ellipsoids,
 an ellipsoid $E(1,z)$ (which is concave) embeds into $X_\Om$ or $M_{\mathcal{B}}$ if and only if there is an embedding 
$$
 \bigsqcup_{1\le i\le n}\intt\, B(a_i) \sqcup \bigsqcup_{j\ge 1}\intt\, B(b_j) \;\se \; B(b),
$$
where $X_\Om = \Om(b; (b_j))$ or $\mathcal{B}=(b;(b_j))$ and $\bw(z) = (a_i)$.

To clarify notation,  we shall denote the classes in the blowup of $\C P^2(\mu b)$ corresponding to the balls $B(a_i)$ by $E_i, 1\le i 
\le n$, and those corresponding to the negative weights $b_j, j\ge 1,$ by  $\TE_j$.
As in \cite{AADT} we denote quasi-exceptional classes as
\begin{align}\label{eq:E}  
\bE = dL  - \sum_{j\ge 1}\Tm_j \TE_j - \sum_{1\le i\le n} m_i E_i,
\end{align}
 where we assume\footnote
 {
 Since the entries of $(b_j)$ are always assumed nonincreasing, the obstruction from a given class $\bE$ is always greatest when 
 its entries are ordered. Therefore, because reordering does not affect the breakpoint of a class, when analyzing the capacity function we may always assume that the obstruction classes are ordered.}
    the $\TE_j$ and $E_i$ are {\bf ordered} so that $\Tm_1\ge \Tm_2\ge\dots$, and $m_1\ge m_2 \ge \dots \ge m_n$.
By \eqref{eq:constraint}, for every quasi-exceptional class $\bE$,  the size $\mu$ of the target 
$\mu X_\Om$ or $\mu M_{\mathcal{B}}$ must satisfy
$$
\mu b d \ge \sum_i m_ia_i + \sum_j \Tm_j (\mu b_j).
$$
Therefore, for $(b;(b_j))$ and rational $z$ with weight decomposition $(a_i)$, we always have
\begin{align}\label{eq:mu}  
c_{(b;(b_j))}(z)\ge  \mu_{\bE}(z) = \frac{\sum_i m_ia_i }{db - \sum_j \Tm_j b_j}, 
\end{align}
where $(a_i)$ is the weight decomposition of $z$.
\MS

In order for the obstruction $ \mu_{\bE}(p/q)$  from  $\bE$ to be larger than the volume $V_{(b;(b_j))}(p/q)$, and hence potentially visible in the capacity function $c_{(b;(b_j))}$, 
there must be a rational point $z=p/q$ where $\mu_{\bE}(p/q)$ is  
 larger than the volume constraint $V_{(b;(b_j))}(p/q)$.   If such a point exists, we say that $\bE$ is {\bf obstructive} at $z$.  In this case,
it turns out that there is a unique point $a=p/q$ (called the {\bf break point} of $\bE$) such that
the vector  $(m_1,\dots,m_n)$ formed by the coefficients of the $E_i$ in
$\bE$  are almost parallel to the integral weight decomposition
 $W(p/q): = q \bw(p/q)$, while a suitable multiple of the vector given by the other  coefficients
$(\Tm_1, \dots,\Tm_N)$ of $\bE$  is a close approximation to the (possibly infinite) set of negative weights $(b_1,b_2,\dots)$.
Moreover the difference $ \mu_{\bE}(z) - V_{(b;(b_j))}(z)$ is a local maximum when $z=p/q$, and the graph of $ \mu_{\bE}(z)$ (which is piecewise linear)
 has a convex corner at $a=p/q$.  The following upper bound for the value of $ \mu_{\bE}(p/q) $  is proved in \cite[Lem.2.28]{AADT}:
\begin{align}\label{eq:light} 
 \mu_{\bE}(p/q)  \le \sqrt\frac{p/q}{\Vol(b;(b_j))}\; \Bigl(\frac{\sqrt{b^2-\sum_j b_j^2}}{
 \sqrt{b^2\frac{d^2}{d^2+1}-\sum_j  b_j^2}}\Bigr)
\end{align}
We will say that $\bE$ is {\bf live at} $z$   if $$
c_{(b;(b_j))}(z) = \mu_\bE(z)>  \sqrt\frac{z}{\Vol(b;(b_j))} = V_{(b;(b_j))}(z).
$$  By definition, a quasi-exceptional  class $\bE$ is obstructive at its break point, but we cannot assume that it is  live there.  Note that quasi-exceptional classes that are not exceptional are usually not live.
Many of our results do not depend on the fact that the classes considered define live obstructions.  A sequence of quasi-exceptional classes $(\bE_k)_{k\ge 1}$ will be said to form a {\bf pre-staircase} in $X_{(b;(b_j))}$ if these classes are obstructive and if their break points $p_k/q_k$ form a convergent sequence.

\begin{lemma}\label{lem:basicE} \begin{itemize}\item[{\rm(i)}]
Each quasi-exceptional class has only a finite number of break points.
\item[{\rm(ii)}]  There are only finitely many ordered  quasi-exceptional classes of 
degree  $\le d$.
\end{itemize}
\end{lemma}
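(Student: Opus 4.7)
My plan is to prove the two parts separately. Part (ii) reduces to a Diophantine count, while part (i) follows from analyzing the piecewise-linear structure of $\mu_\bE$ as a function of $z$.

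For part (ii), I fix $d$ and consider a quasi-exceptional class $\bE = d'L - \sum_j \Tm_j \TE_j - \sum_i m_i E_i$ of degree $d' \le d$. By \eqref{eq:Dioph},
\[
\sum_i m_i + \sum_j \Tm_j \;=\; 3d' - 1, \qquad \sum_i m_i^2 + \sum_j \Tm_j^2 \;=\; (d')^2 + 1.
\]
The sum-of-squares identity bounds each entry $m_i, \Tm_j$ above by $\sqrt{d^2+1}$ and allows at most $d^2+1$ of them to be nonzero. Since each entry is a nonnegative integer and the tuples $(m_i)$ and $(\Tm_j)$ are nonincreasing, only finitely many tuples arise for each $d'$, giving finitely many ordered quasi-exceptional classes of degree at most $d$.

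For part (i), I fix a quasi-exceptional class $\bE$ and write $\mu_\bE(z) = N(z)/D$, where $D = d'b - \sum_j \Tm_j b_j$ is independent of $z$ and $N(z) = \sum_i m_i a_i(z)$ depends on $z$ only through the ordered weight decomposition $(a_i(z))$ of $E(1,z)$. By part (ii) only finitely many $m_i$ are nonzero; let $n_0$ denote the largest index with $m_{n_0} > 0$. Iterating the recursion $\bw(z) = (1, \bw(z-1))$, valid for $z \ge 2$, shows that for $z \ge n_0$ the first $n_0$ entries of $\bw(z)$ are all equal to $1$, so $N(z) = \sum_i m_i$ is constant and $\mu_\bE$ has no corners on $[n_0, \infty)$. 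On the compact interval $[1, n_0]$, each map $z \mapsto a_i(z)$ is continuous and piecewise linear, with corners only at rationals $p/q$ where the combinatorial type of the first $i$ cuts in the cutting algorithm for $E(1,z)$ changes. Since only finitely many such types arise for $i \le n_0$, the function $N(z)$, and hence $\mu_\bE$, has finitely many convex corners on $[1, n_0]$, giving at most finitely many break points.

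The hard part will be the combinatorial step in (i): justifying that the corners of $z \mapsto a_i(z)$ on $[1, n_0]$ lie in a finite set of rationals. I would prove this by induction on $i$, using the recursion above together with the scaling rule $\bw(z) = z \cdot \bw(1/z)$ for $z \in (0,1)$, and showing that the combinatorial type of the first $i$ cuts is locally constant in $z$ away from finitely many rationals of denominator bounded in terms of $i$; on each constancy interval, $a_i(z)$ is then given by an explicit affine formula in $z$.
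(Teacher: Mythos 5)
For part (ii), your argument is correct but does slightly more work than the paper's: the paper observes only that $c_1(\bE) = 1$ gives $\sum_j \Tm_j + \sum_i m_i = 3d' - 1$, and a fixed sum of nonnegative integers already has finitely many ordered tuples of solutions. The quadratic identity you add is valid but unnecessary.

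For part (i), you take a genuinely different route, and there is a real gap. The paper's proof is short: it invokes the structural fact, established from the error-vector estimate $\eps\cdot\eps < 1$ in \eqref{eq:eps2} of \S\ref{ss:acc1}, that at a break point $a=p/q$ the integral weight decomposition $W(p/q)$ has length exactly $n$, the number of nonzero $m_i$ (if the weight expansion of $a$ were shorter, some $m_i\ge 1$ would be matched against a zero entry of $\bw$ and alone contribute $\ge 1$ to $\eps\cdot\eps$). Since there are only finitely many rationals whose weight expansion has a given length, (i) follows immediately. You instead try to bound the convex corners of $z\mapsto\mu_\bE(z)$ by bounding the corners of each $a_i(z)$ on $[1,n_0]$. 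The reduction to $[1,n_0]$ is correct, but the claim that each $a_i(z)$ has only finitely many corners there is not a triviality — it is essentially equivalent to the weight-length finiteness the paper uses, and you explicitly leave that combinatorial induction undone. As written, the proof of (i) is therefore incomplete. Moreover, if you do carry out the induction, the corner set you produce will be exactly the rationals in $[1,n_0]$ of weight length $\le n_0$, so you will be re-deriving the paper's break-point observation by a longer route; it is cleaner to invoke the break-point structure from \S\ref{ss:acc1} directly, as the paper does.
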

\begin{proof}  Let $\bE = dL - \sum_{j=1}^N \Tm_j \TE_j - \sum_{i=1}^n m_i E_i$, where each $m_i\ne 0$.

As explained above, each break point for $\bE$ has integral  weight decomposition  $W(p/q) = (q:=q_1,q_2,\dots,1: = q_n)$ of length $n$; but there are only  finitely many rational numbers of bounded weight length.  This proves (i).   (ii) holds because the integers $\Tm_j, m_i$ are nonnegative and, because $c_1(\bE) = 1$, satisfy the identity $\sum_j \Tm_j +\sum_i m_i = 3d-1$.
\end{proof}

Following \cite[(2.19)]{AADT}, we write 
\begin{align}\label{eq:definition}
\la_a: =\sqrt{\frac{a}{\Vol(b;(b_j))}}, \qquad
\bw&: =(\la_a b_1, \la_a b_2,\dots; a_1,\dots,a_n) \\ \notag
\bbm_\bE= (\widetilde{\bbm}, \bbm):  & = (\Tm_1,\dots, \Tm_N, 0, \dots;  m_1,\dots, m_n)
\end{align} 
Here, the vector $\bw: = (w_\nu)_{\nu\ge 1}$  has infinitely many entries if there are infinitely many negative weights $b_j$, and we extend $\bbm_\bE$ by adding zeros to the tuple $(\Tm_j)$ as  necessary so that its entries match those  of  $\bw$. Note that  $\bw$  depends on the target, while $\bbm_\bE$ depends on the obstructive class $\bE$.
\MS

\subsection{Proof of Theorem~\ref{thm:acc}}

We  will prove the following version of Theorem~\ref{thm:acc}. It is more general since it applies to any sequence of obstructive classes, not only to those that affect the capacity function.

Here, we let $\Vol: = \Vol(b;(b_j)):=b^2-\sum b_j^2$ and $\Per: = \Per(b;(b_j)):=3b-\sum b_j$ be the volume and perimeter corresponding to both $X_\Omega$ or $M_{\mathcal{B}}$. 

\begin{prop}\label{prop:accum1} \begin{itemize}\item [{\rm (i)}] Let $(b; (b_j))$ be the weights corresponding to $X_\Omega$ or $M_{\mathcal{B}}$, 
and $\bE_k, k\ge 1, $ an infinite sequence of obstructive quasi-exceptional classes with distinct break points $p_k/q_k$. If $\Per\ge 2\sqrt{\Vol}$, 
then $p_k/q_k$ 
 must converge to the unique  solution $a_0\ge 1$  of the  equation
\begin{align}\label{eq:acc} z^2 - \Bigl(\frac{\Per(b;(b_j))^2}{\Vol(b; (b_j))} - 2\Bigr)z + 1. 
\end{align} Additionally, if $\Per<2\sqrt{\Vol}$, such a sequence cannot exist. 

\item [{\rm (ii)}]   If the capacity function $c_{(b;(b_j))}$ has infinitely many nonsmooth points, then these nonsmooth points converge to 
the accumulation point $a_0$ and
$$
c_{(b;(b_j))}(a_0) = \sqrt{\frac{a_0}{\Vol(b;(b_j))}} =:V_{(b;(b_j))}(a_0).
$$
\end{itemize}
\end{prop}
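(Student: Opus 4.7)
The plan is to adapt the argument of \cite{AADT} by carefully tracking asymptotics when the negative weight sequence $(b_j)$ is infinite. Write each quasi-exceptional class as $\bE_k = d_k L - \sum_j \Tm_j^{(k)} \TE_j - \sum_i m_i^{(k)} E_i$. Since the break points $p_k/q_k$ are distinct, Lemma~\ref{lem:basicE}(ii) forces $d_k \to \infty$, and at the break point the vector $(m_i^{(k)})$ is asymptotically parallel to the integer weight expansion $W(p_k/q_k)$, so the Diophantine identities \eqref{eq:Dioph} become
\[
\sum_j (\Tm_j^{(k)})^2 = d_k^2 - p_k q_k + o(d_k^2), \qquad \sum_j \Tm_j^{(k)} = 3 d_k - p_k - q_k + o(d_k),
\]
and the obstruction equation rearranges to $\mu_{\bE_k}(a_k)(b d_k - \sum_j \Tm_j^{(k)} b_j) = p_k(1+o(1))$, where $a_k = p_k/q_k$.

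The key asymptotic computation: applying Cauchy--Schwarz to $\sum_j \Tm_j^{(k)} b_j$ via $\sum_j b_j^2 = b^2 - \Vol$ gives the bound \eqref{eq:light}; enforcing the obstructiveness $\mu_{\bE_k}(a_k) > \sqrt{a_k/\Vol}$ forces the inequality $\bigl(d_k\sqrt{\Vol} - b\sqrt{p_k q_k}\bigr)^2 < b^2 - \Vol$, so $d_k \sqrt{\Vol} = b \sqrt{p_k q_k} + O(1)$ and hence $d_k/q_k \to b\sqrt{a_0/\Vol}$ with $a_0 := \lim p_k/q_k$. Cauchy--Schwarz is thus asymptotically tight, so $\Tm_j^{(k)} = c_k b_j + o(c_k)$ with $c_k \sim q_k \sqrt{a_0/\Vol}$, which implies $\sum_j \Tm_j^{(k)} = c_k(3b - \Per) + o(q_k)$. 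Comparing this with $3d_k - p_k - q_k$, dividing by $q_k$ and passing to the limit yields
\[
\sqrt{a_0/\Vol}\,(3b - \Per) = 3 b \sqrt{a_0/\Vol} - (a_0 + 1),
\]
which rearranges to $\Per\sqrt{a_0} = (a_0 + 1)\sqrt{\Vol}$; squaring gives the quadratic \eqref{eq:acc}. Its constant term is $1$ (so the two roots are reciprocal, giving a unique real root $\ge 1$ when one exists) and its discriminant is $(\Per^2/\Vol)(\Per^2/\Vol - 4)$, so no real root $\ge 1$ exists when $\Per < 2\sqrt{\Vol}$; in that range the asymptotic chain above is inconsistent, so no such sequence of break points can occur. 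This proves (i).

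For (ii), each nonsmooth point of $c_{(b;(b_j))}$ is the break point of a live obstructive class, so by (i) the nonsmooth points accumulate at $a_0$; the same asymptotic computation gives $\mu_{\bE_k}(a_k) \to V(a_0)$, so the staircase step heights decrease to zero, and continuity of $c_{(b;(b_j))}$ together with the volume lower bound $c_{(b;(b_j))} \ge V$ forces $c_{(b;(b_j))}(a_0) = V(a_0)$. The main obstacle, and the genuine generalization beyond \cite{AADT}, is making the entrywise estimate $\Tm_j^{(k)} = c_k b_j + o(c_k)$ rigorous when the support of $(b_j)$ is infinite: the bounded Cauchy--Schwarz defect only yields $\ell^2$ control on the error vector $(\Tm_j^{(k)} - c_k b_j)$, so one must verify that both the linear and $b_j$-weighted sums of this error inherit the required precision. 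This is handled by applying Cauchy--Schwarz a second time to the error against the appropriate test sequences and exploiting the decay of $(b_j)$ (guaranteed by $\sum b_j^2 < \infty$ and $\sum b_j \le 3b$) to suppress the tail.
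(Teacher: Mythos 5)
Your high-level plan matches the paper's: both adapt the error-vector argument from \cite{AADT}, writing $\bbm_{\bE}=\frac{d}{\la_a b}\bw+\eps$, bounding $\|\eps\|_2<1$ from obstructiveness, and then extracting the quadratic for $a_0$ from the linear constraint $c_1(\bE)=1$. You also correctly identify the crux: in the infinite-weight case, the $\ell^2$ control on $\eps$ does not automatically control the two $\ell^1$-type sums $\sum_j\eps_j$ and $\sum_j\eps_j b_j$ that appear when one passes from $c_1(\bE)=3d-\sum\Tm_j-\sum m_i$ and $\btm\cdot\bf b$ to the scalar equation in $\Per$, $\Vol$.

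Where you go wrong is in the claimed resolution. You propose handling the tail by ``applying Cauchy--Schwarz a second time to the error against the appropriate test sequences and exploiting the decay of $(b_j)$.'' This cannot close the gap as stated. Cauchy--Schwarz applied to $\sum_j\eps_j\cdot 1$ on an infinite index set gives $|\sum_j\eps_j|\le\|\eps\|_2\cdot\sqrt{\#\{\text{support}\}}$, which is useless without an a priori bound on the number of nonzero $\eps_j$; the decay of $(b_j)$ in $\ell^1$ or $\ell^2$ does not by itself truncate the support of $\eps$, since $\eps_j = \Tm_j - \frac{d}{\la_a b}\la_a b_j$ involves both the real numbers $b_j$ (which never vanish) and the integers $\Tm_j$. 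The mechanism the paper actually uses is different and essential: because $c_1(\bE)=1$ forces $\sum_j\Tm_j+\sum_i m_i=3d-1$, the integer vector $\btm$ has \emph{at most $3d-1$ nonzero entries}. This splits $\eps$ into a head of length $\approx 3d$ (where ordinary Cauchy--Schwarz gives $O(\sqrt{d})$) and a tail where $\Tm_j=0$ so that $\eps_j=-\frac{d}{\la_a b}\la_a b_j$ exactly, giving $|\sum_{j>3d}\eps_j|\le\frac{d}{b}\sum_{j>3d}b_j=\ka_d\,d$ with $\ka_d\to 0$ precisely because $\sum_j b_j$ converges. Without the bounded-support observation, the tail term cannot be suppressed, and the asymptotic identity you need --- $\sum_j\Tm_j^{(k)}=c_k(3b-\Per)+o(q_k)$ --- is not justified.

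Two smaller gaps: in (ii) you assert that each nonsmooth point of $c_{(b;(b_j))}$ is the break point of a live class; the paper is careful to avoid this, instead taking a nonsmooth point $(z_k,y_k)$ lying on the graph of $\mu_{\bE_k}$ with break point $a_k$ possibly different from $z_k$, and separately arguing $|a_k-z_k|$ is bounded. You also do not address the degenerate case in which the break points $a_k$ are eventually constant, which the paper rules out by a concavity argument at the volume curve. These are worth flagging because they show part (ii) is not an immediate corollary of the asymptotic computation in part (i).
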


The point $a_0$ in the above theorem is called the {\bf accumulation point} of $X_\Omega$ or $M_{\mathcal{B}}$, and in the case of $X_\Om$  is often denoted as $a_0^\Om$.

\begin{rmk}\label{rmk:perfectObs}\rm In all  cases that have been fully calculated,
 if the capacity function $c_{(b;(b_j))}$ has infinitely many  nonsmooth points, infinitely many of these points are local maxima at the break points of these classes.    This happens because the obstructing classes $(\bE_k)_{k\ge 1}$ that form the staircase are perfect (that is for each $k$ the entries $(m_{k1}, \dots, m_{kn_k})$ of $\bbm_{\bE_k}$ form the integral weight decomposition of the corresponding break point $p_k/q_k$), and are live at their break points.  The corresponding obstruction is given in a neighborhood of $p_k/q_k$ by a function whose graph for $z<p_k/q_k$ is a line  through the origin and for $z>p_k/q_k$ is horizontal (see \cite[Lem.16]{BHM} for example),  so that the $p_k/q_k$ are convex outer corners. 

 Thus  these outer corners are visible in the capacity function.  However, in many cases of descending stairs there is another obstruction that obscures the intersection points between the obstruction functions  of adjacent steps:  see for example, \cite[Example 32, Fig.5.3.1]{BHM} or \cite[Rmk.1.2.7]{MMW}.   The examples in \S\ref{sec:stair} below are less explicit: we simply show that there are infinitely many different
obstructive classes that are live somewhere, and therefore must form a staircase.  Our  proof of 
part (ii) of Proposition~\ref{prop:accum1}  does not show that when there is a staircase  the break points of the obstructing classes must give visible peaks, even though this seems very plausible. 
\end{rmk}
\MS

\begin{cor}\label{cor:accum1}  For each  $\Omega(b;(b_j))$ or $M_{(b;(b_j))}$, there is a constant  $N: = N^{(b;(b_j))}$ such that no obstructive class has break point $> N$.
\end{cor}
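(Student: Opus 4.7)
The plan is to derive Corollary~\ref{cor:accum1} as a direct contradiction consequence of Proposition~\ref{prop:accum1}(i). Suppose no such $N$ exists; then for every positive integer $k$ there is an obstructive quasi-exceptional class $\bE_k$ with break point $p_k/q_k > k$. In particular, the set of break points of obstructive classes for $(b;(b_j))$ is unbounded.

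The one point requiring care is producing from this an honest infinite sequence of \emph{distinct} obstructive classes with \emph{distinct} break points tending to infinity, since a priori the same class could appear many times. This is where Lemma~\ref{lem:basicE}(i) comes in: each quasi-exceptional class has only finitely many break points, so the break points attached to any single class form a bounded set. Since our break points $p_k/q_k$ are unbounded, the classes $\bE_k$ cannot repeat infinitely often, and after passing to a subsequence I may assume they are pairwise distinct. After a further subsequence I may then assume the break points $p_k/q_k$ themselves are distinct and still satisfy $p_k/q_k \to \infty$.

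Now I invoke Proposition~\ref{prop:accum1}(i). If $\Per \ge 2\sqrt{\Vol}$, the proposition says that the break points of any such infinite sequence must converge to the finite root $a_0 \ge 1$ of $z^2 - (\Per^2/\Vol - 2)z + 1 = 0$; this directly contradicts $p_k/q_k \to \infty$. If instead $\Per < 2\sqrt{\Vol}$, the proposition asserts that no infinite sequence of obstructive classes with distinct break points exists at all, which is an immediate contradiction. The hard work has been done in Proposition~\ref{prop:accum1}; the corollary is essentially the statement that the accumulation point is finite, combined with the pigeonhole-style observation from Lemma~\ref{lem:basicE}(i), and I do not anticipate any genuine obstacle beyond being careful about the distinctness reduction.
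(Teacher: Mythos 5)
Your proposal is correct and follows exactly the same contradiction argument the paper uses: the paper's proof is the single sentence "If not, we could find a sequence of obstructive classes whose break points diverge to $\infty$, which contradicts Proposition~\ref{prop:accum1}." You have additionally spelled out, via Lemma~\ref{lem:basicE}(i), why one can extract an infinite sequence with distinct break points (and distinct classes), a detail the paper elides but which is genuinely implicit in applying Proposition~\ref{prop:accum1}(i).
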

\begin{proof} If not, we could find a sequence of obstructive classes  whose break points diverge to $\infty$, which contradicts Proposition~\ref{prop:accum1}.
\end{proof}

\begin{cor}\label{cor:accum2} {\bf (Ellipsoidal Packing Stability)}  For each  $\Omega(b;(b_j))$ or well-defined blowup vector $(b;(b_j))$, there is a constant $a^{(b;(b_j))}_{\max}$ such that 
$c_{(b;(b_j))}(z) = V_{(b;(b_j))}(z)$ for all $z\ge a^{(b;(b_j))}_{\max}$.
\end{cor}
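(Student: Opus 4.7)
The plan is to argue by contradiction, building on Corollary~\ref{cor:accum1} and Proposition~\ref{prop:accum1}. Suppose that no such $a_{\max}^{(b;(b_j))}$ exists; then we may select a strictly increasing sequence $z_k \to \infty$ with $c_{(b;(b_j))}(z_k) > V_{(b;(b_j))}(z_k)$, and for each $k$ a quasi-exceptional class $\bE_k = d_k L - \sum_j \Tm_{k,j}\TE_j - \sum_i m_{k,i}E_i$ with $\mu_{\bE_k}(z_k) > V_{(b;(b_j))}(z_k)$. For any fixed class $\bE$, the bound $a_i(z) \le 1$ for the weights of $E(1,z)$ yields $\mu_\bE(z) \le (\sum_i m_i)/(db - \sum_j \Tm_j b_j)$, independently of $z$. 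Since $V_{(b;(b_j))}(z_k) \to \infty$, each individual class can obstruct at only finitely many $z_k$, so after passing to a subsequence the $\bE_k$ are pairwise distinct; Lemma~\ref{lem:basicE}(ii) then forces $d_k \to \infty$.

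Next, by Corollary~\ref{cor:accum1} each break point satisfies $p_k/q_k \le N := N^{(b;(b_j))}$. Passing to a further subsequence we may assume $p_k/q_k \to a^\ast \in [1,N]$. Since Lemma~\ref{lem:basicE}(i) says each class has only finitely many break points while $d_k \to \infty$ with the $\bE_k$ distinct, we can moreover arrange that the $p_k/q_k$ are pairwise distinct along this subsequence. Proposition~\ref{prop:accum1}(i) applied to $(\bE_k)$ then yields $a^\ast = a_0^{(b;(b_j))}$, so the break points cluster at a fixed finite accumulation point while $z_k \to \infty$.

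To produce the contradiction I would establish a uniform upper bound $M = M^{(b;(b_j))}$ with $\mu_{\bE_k}(z) \le M$ for all $z \ge 1$ and all large $k$. At the break point, \eqref{eq:light} gives
\[
\mu_{\bE_k}(p_k/q_k) \le V_{(b;(b_j))}(p_k/q_k) \cdot \sqrt{\frac{b^2 - \sum_j b_j^2}{b^2 d_k^2/(d_k^2+1) - \sum_j b_j^2}},
\]
where the second factor tends to $1$ as $d_k \to \infty$ and is uniformly bounded once $d_k$ is large enough that $(d_k^2+1)\Vol > b^2$; the finitely many low-degree classes are handled individually via Lemma~\ref{lem:basicE}(ii). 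To promote this to a bound for all $z$, combine the Cauchy--Schwarz estimate $\sum_i m_i a_i(z) \le \sqrt{\sum_i m_i^2}\,\sqrt{z} \le \sqrt{d_k^2+1}\,\sqrt{z}$ (using $\sum_i a_i(z)^2 = z$ and $\sum_i m_i^2 \le d_k^2+1$) with the denominator constraint $db - \sum_j \Tm_j b_j < \sqrt{(d^2+1)\Vol}$ forced on every obstructive class by \eqref{eq:light}. This produces the required uniform $M$, and the contradiction follows: once $z_k > \Vol \cdot M^2$, we have $V_{(b;(b_j))}(z_k) > M \ge \mu_{\bE_k}(z_k)$.

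The main obstacle is precisely this uniform-bound step. The estimate \eqref{eq:light} lives at the break point, and although the Cauchy--Schwarz bound above does control $\mu_\bE(z)$ for general $z$, extracting a single constant $M$ that works for every obstructive class of arbitrary degree demands careful bookkeeping between the multiplicative factor in \eqref{eq:light}, the lower bound on the denominator $db - \sum_j \Tm_j b_j$, and the degree-$d$ dependence of the Cauchy--Schwarz factor $\sqrt{d^2+1}$.
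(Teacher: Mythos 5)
Your first two paragraphs are sound — the contradiction set-up, the extraction of distinct classes $\bE_k$ with $d_k\to\infty$, and the observation via Corollary~\ref{cor:accum1} and Proposition~\ref{prop:accum1}(i) that the break points cluster at $a_0$ while the obstructed points $z_k$ escape — but the third paragraph, which you yourself flag as "the main obstacle," contains a genuine gap that the proposed Cauchy--Schwarz bookkeeping cannot repair.  The estimate $\mu_{\bE_k}(z) \le \frac{\sqrt{\sum_i m_{k,i}^2}\,\sqrt{z}}{d_kb - \sum_j\Tm_{k,j}b_j}$ is tight only when $\bm{m}_k$ is parallel to $\bw(z)$, i.e.\ near the break point; away from the break point it loses exactly the information that makes the obstruction decay. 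Concretely, the denominator $d_kb-\sum\Tm_{k,j}b_j$ is asymptotically $\frac{d_k\Vol}{b}$ (this is what makes $\mu_{\bE_k}(a_k)\approx V(a_k)$), so with the crude bound $\sum m_{k,i}^2\le d_k^2+1$ you obtain only $\mu_{\bE_k}(z)\lesssim \frac{b}{\sqrt{\Vol}}\,V(z)$, and $\frac{b}{\sqrt{\Vol}}>1$ always; this neither yields a uniform constant $M$ nor beats the volume curve.  Even the sharp relation $\sum m_{k,i}^2 = d_k^2+1-\|\btm_k\|^2$, fed into the reverse (Lorentz) Cauchy--Schwarz inequality $(d^2-\|\btm\|^2)(b^2-\|\bm b\|^2)\le(db-\btm\cdot\bm b)^2$, still leaves you needing to absorb the extra "$+1$" — which is precisely the Diophantine surplus that makes $\bE_k$ obstructive at all. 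So your step would prove, at best, $\mu_{\bE_k}(z)\le V(z)$ without that $+1$, which is vacuous.

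The paper's proof avoids this entirely by arguing about the obstructive \emph{interval} $I_\bE$ rather than the obstruction \emph{value}. The break point $a_\bE$ is the point of $I_\bE$ with shortest weight expansion; if $a_\bE$ is not an integer, then $\lceil a_\bE\rceil$ has even shorter weight expansion and hence cannot lie in $I_\bE$, so $I_\bE$ extends at most $1$ past $a_\bE$; and if $a_\bE=k$ is an integer, then for every $z>k$ the weight expansion begins with $1^{\times k}$, so $\mu_\bE(z)$ is \emph{constant} for $z>k$ and must eventually be beaten by the increasing volume curve. Since by Corollary~\ref{cor:accum1} there are only finitely many integer break points $\le N$, this gives a uniform $R$ with $I_\bE\subset[1,a_\bE+R]\subset[1,N+R]$, whence $a_{\max}=N+R$. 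This combinatorial structure of the weight expansion — in particular the flattening of $\mu_\bE$ past $\lceil a_\bE\rceil$ — is the key ingredient missing from your approach, and it is not something any Cauchy--Schwarz-type estimate can substitute for.
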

\begin{proof}  We know from \cite[2.30]{AADT} that, for each obstructive class $\bE$, each interval  $I_{\bE}$
on which $\mu_\bE(z)> V_{(b;(b_j))}(z)$
contains a unique point $a_\bE$ (called the break point)  whose continued fraction expansion $(w_1,w_2,\dots, w_n)$ has strictly shorter length than that of every other element of  $I_{\bE}$. Therefore,
by Corollary~\ref{cor:accum1}, it suffices to show that  for every obstructive class $\bE$ 
there is $R>0$ such that the upper bound of $I_\bE$ is at most  $a_\bE + R$.

If $a_\bE$ is not an integer then the weight expansion of $k: = \lceil  a_\bE \rceil$ is $(1^{\times k})$ and has strictly shorter weight expansion than $a_\bE$.  
Therefore  in this case it suffices to take $R\ge 1$.   On the other hand, if $a_\bE = k$ is an integer, then the  weight expansion of every $z>a_\bE$ begins with $1^{\times k}$ 
which implies that $\mu_{\bE}(z) = \frac{\sum_{i=1}^k m_i}{db-\sum \Tm_j b_j}$ is constant for $z> a_\bE.$ Since $V_{(b;(b_j))}(z)\to \infty$ as $z\to \infty$, the interval
 $I_\bE$  must have finite length.  Since there are only a finite number of such break points, we may take $R$ to be the supremum of $1$ and these lengths.
\end{proof}

\begin{rmk}\rm The function $\Om\mapsto a^\Om_{\max}$, where  $a^\Om_{\max}$  is the minimal $z$-value such that ellipsoidal packing stability holds, exhibits rather interesting behavior.   For a partial calculation in the case of the  family of polydiscs $X_{\Om_{[0,1]\times [0,s]}}$ see \cite{JinL}.
\end{rmk}

\begin{cor}\label{cor:accum3} 
Let  $\bE_k = \bigl(d_k; (\Tm_{kj})_{j\ge 1}; (m_{ki})_{i\ge 1}\bigr), k\ge 1$ be a sequence of obstructive classes in
 $\Om = \Om(b; (b_j))$ with distinct break points.
  Then for all $j$ for which $b_j\ne 0$ there is $k_j$ such that $\Tm_{kj}\ne 0$ for all $k\ge k_j$.
In particular, if there are infinitely many $b_j$, the number $N_k$ of nonzero entries $\Tm_{kj}$ in $\bE_k$ tends to infinity.
\end{cor}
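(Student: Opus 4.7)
The plan is to argue by contradiction: suppose there is an index $j_0$ with $b_{j_0}\ne 0$ together with an infinite set $S\subset \mathbb{N}$ on which $\Tm_{kj_0}=0$. I will show via a sharpened Cauchy--Schwarz inequality that $(d_k^2+1)\,b_{j_0}^2 < b^2$ for every $k \in S$, which bounds the degrees $d_k$; then Lemma~\ref{lem:basicE} will force only finitely many distinct break points to occur within $S$, contradicting the distinctness hypothesis.

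To obtain the key bound, I would rewrite obstructiveness of $\bE_k$ at its break point $a=p_k/q_k$ as $\bbm_{\bE_k}\cdot \bw > \la_a d_k b$, with $\bw$ and $\bbm_{\bE_k}$ from \eqref{eq:definition}. Since $\Tm_{kj_0}=0$, the dot product is unchanged upon zeroing out the $j_0$-th entry of $\bw$, so Cauchy--Schwarz applied to the reduced vectors gives
$$\bbm_{\bE_k}\cdot\bw \ \le \ \sqrt{d_k^2+1}\,\sqrt{\la_a^2\Bigl(\sum_j b_j^2 - b_{j_0}^2\Bigr) + a},$$
using the quasi-exceptional identity $\sum\Tm_{kj}^2+\sum m_{ki}^2 = d_k^2+1$ and $\sum a_i^2 = a$ from \eqref{eq:wtai}. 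Squaring the resulting inequality $\la_a d_k b < \sqrt{d_k^2+1}\,(\dots)$, substituting $\la_a^2=a/\Vol$, and using $\Vol = b^2-\sum b_j^2$, the algebra collapses to $(d_k^2+1)\,b_{j_0}^2 < b^2$.

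With this in hand, $d_k$ is bounded for $k\in S$. By Lemma~\ref{lem:basicE}(ii) only finitely many ordered quasi-exceptional classes of bounded degree exist, and by (i) each contributes only finitely many break points; hence $\{p_k/q_k : k\in S\}$ is finite, contradicting distinctness. This forces $b_{j_0}=0$, the desired contradiction. The ``in particular'' claim is then a bookkeeping consequence: since the $(b_j)$ are nonincreasing, infinitely many nonzero $b_j$ means all are nonzero, so for each $J$ there exists $K_J$ beyond which $\Tm_{kj}\ne 0$ for every $j\le J$, giving $N_k\ge J$ for all $k\ge K_J$. I anticipate the sharp form of Cauchy--Schwarz (made available by the hypothesized vanishing of the $j_0$-th entry of $\bbm_{\bE_k}$) to be the only subtle ingredient; once that is in place, the remainder is routine algebra and a direct appeal to Lemma~\ref{lem:basicE}.
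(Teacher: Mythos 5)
Your proof is correct and takes a genuinely different route from the paper. The paper argues via the accumulation point theorem: passing to a subsequence on which $\Tm_{kj_0}=0$, the obstruction functions $\mu_{\bE_k}$ are unchanged if we drop $b_{j_0}$ and pass to $\Om_{j_0}:=(b;(b_j)_{j\ne j_0})$, which has strictly larger volume; applying Proposition~\ref{prop:accum1} to both $\Om$ and $\Om_{j_0}$ then forces $\lim_k \mu_{\bE_k}(p_k/q_k)$ to equal two different numbers, $\sqrt{a_0/\Vol(\Om)}$ and $\sqrt{a_0/\Vol(\Om_{j_0})}$, a contradiction. Your argument bypasses Proposition~\ref{prop:accum1} entirely: the sharpened Cauchy--Schwarz, enabled by zeroing out the $j_0$-th coordinate of $\bw$, gives the clean quantitative bound $(d_k^2+1)b_{j_0}^2 < b^2$, and then Lemma~\ref{lem:basicE} alone finishes. (The algebra does close: using $\|\bbm_{\bE_k}\|^2=d_k^2+1$ from \eqref{eq:Dioph}, $\|\bw\|^2=\la_a^2 b^2$, and $\la_a^2=a/\Vol$, the inequality $\la_a d_k b < \sqrt{d_k^2+1}\,\la_a\sqrt{b^2-b_{j_0}^2}$ squares to $d_k^2 b^2<(d_k^2+1)(b^2-b_{j_0}^2)$, i.e.\ $(d_k^2+1)b_{j_0}^2<b^2$.) This is the more elementary and self-contained route, and it buys an explicit byproduct the paper's proof does not give: any class obstructive for $\Om$ that omits the $j_0$-th negative weight has degree bounded by $\sqrt{(b/b_{j_0})^2-1}$. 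It is also worth noting that the paper's own obstructiveness inequality $\eps\cdot\eps<1$ (eq.~\eqref{eq:eps2}) is exactly the unsharpened form of the estimate you are refining, so the two arguments are close in spirit even though they diverge immediately afterward. One small bookkeeping point to make explicit when writing this up: Lemma~\ref{lem:basicE}(ii) counts \emph{ordered} classes, and the footnote after \eqref{eq:E} justifies that reordering changes neither obstructiveness nor the break point, so the bounded-degree conclusion really does cap the number of distinct break points.
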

\begin{proof} If not, by passing to a subsequence, we may suppose that there are integers  $j_0, n$ such that $\Tm_{kj_0} = 0$ for all $k\ge n$.   Then by \eqref{eq:mu}, the obstruction $\mu_{\bE_k}(p/q)$  is the same for both $\Om$ and $\Om_{j_0}: = (b; (b_j)_{j\ne j_0})$, while $\Vol(\Om_{j_0}) > \Vol(\Om)$. 
 By assumption the classes $\bE_k$ form a pre-staircase for $\Om$, which implies by Proposition~\ref{prop:accum1}~(ii) that, if $\bE_k$ has break point $p_k/q_k$ then $\lim_k p_k/q_k = a_0$ and
 $$
 \lim_{k\to \infty} \mu_{\bE_k}(p_k/q_k) = \sqrt{\frac{a_0}{\Vol(\Om)}}.
 $$
Since $\Vol(\Om_{j_0})> \Vol(\Om)$, the volume constraint  for $\Om_{j_0}$ is smaller than that for $\Om$. Hence the classes
$(\bE_k)_{k\ge 1}$ also form a pre-staircase for $X_{\Om_{j_0}}$  and
$ \lim_{k\to \infty} \mu_{\bE_k}(p_k/q_k) = \sqrt{\frac{a_0}{\Vol(\Om_{j_0})}}$.  But this is impossible.
\end{proof}
Finally, here is a consequence in the closed case.

\begin{cor}\label{cor:curvature}
Let $M$ be a rational symplectic $4$-manifold with $c_1(\omega) \cdot [\omega] \le 0$.  Then $M$ does not admit an infinite staircase.
\end{cor}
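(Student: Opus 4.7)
The plan is to reduce the corollary directly to Proposition~\ref{prop:accum1}(i) via the interpretation of $c_1(\omega)\cdot[\omega]$ as the perimeter of a blowup vector, as discussed in Remark~\ref{rmk:curvature}. First I would recall that since $M$ is a rational symplectic $4$-manifold, its symplectic form is encoded by a (finite) blowup vector $\mathcal{B}=(b;b_1,\ldots,b_n)$, and under this identification one has
\[
\Per(M) \;=\; 3b - \sum_{i=1}^{n} b_i \;=\; c_1(\omega)\cdot[\omega].
\]
The hypothesis of the corollary therefore reads $\Per(M)\le 0$. On the other hand, $\Vol(M) = b^2 - \sum_i b_i^2 > 0$, so $2\sqrt{\Vol(M)}>0\ge \Per(M)$, and the strict inequality $\Per(M)<2\sqrt{\Vol(M)}$ holds.

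Next I would argue by contradiction: suppose $M$ admits an infinite staircase, i.e.\ the capacity function $c_M$ has infinitely many nonsmooth points. By the machinery reviewed in \S\ref{ss:acc1} (applied in the closed case, as indicated in Remark~\ref{rmk:curvature}), each nonsmooth point of $c_M$ arises from an obstructive quasi-exceptional class of $M_{\mathcal{B}}$ being live at its break point. Infinitely many nonsmooth points therefore produce an infinite sequence of obstructive quasi-exceptional classes $\bE_k$ whose break points $p_k/q_k$ are mutually distinct.

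Finally, I would invoke the second half of Proposition~\ref{prop:accum1}(i): whenever $\Per<2\sqrt{\Vol}$, no such infinite sequence of obstructive classes with distinct break points can exist. Combined with the inequality established in the first step, this immediately contradicts the existence of the infinite staircase, completing the proof.

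The only delicate point — not really an obstacle, but worth making explicit — is that Proposition~\ref{prop:accum1} is stated for a weight tuple $(b;(b_j))$ that can be interpreted either as a convex toric domain or as a rational symplectic $4$-manifold $M_{\mathcal{B}}$; one must use the latter interpretation here, relying on the verbatim extension of the accumulation-point argument to the closed case noted in Remark~\ref{rmk:curvature}. Once this identification is in place, the corollary is just the specialization of Proposition~\ref{prop:accum1}(i) to $\Per\le 0$.
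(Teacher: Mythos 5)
Your proof is correct and follows exactly the same route as the paper, which states simply that the corollary is an immediate consequence of Proposition~\ref{prop:accum1}(i). You have merely spelled out the identification $\Per(M)=c_1(\omega)\cdot[\omega]$, the inequality $\Per\le 0<2\sqrt{\Vol}$, and the invocation of the last sentence of Proposition~\ref{prop:accum1}(i), all of which is what the paper leaves implicit.
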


\begin{proof} This is an immediate consequence of Proposition~\ref{prop:accum1}~(i).
\end{proof}

As mentioned in Remark~\ref{rmk:curvature}, the quantity $\Per(M_{\mathcal{B}})=c_1 \cdot [\omega]$ is a classical topological invariant of symplectic $4$-manifolds that can be interpreted  (up to a universal constant) as the {\bf total scalar curvature}, i.e. the integral of the Hermitian curvature of any compatible metric. 
Note also that, by a theorem of Taubes \cite{Taubes}, every closed symplectic $4$-manifold with $b^+_2 \ge 2$ has nonpositive total scalar curvature.  Of course such manifolds are never blowups of $\mathbb{C}P^2$, but one could perhaps interpret the previous theorem as evidence that these never have infinite staircases.  For other evidence, see Entov--Verbitsky~\cite{EV}.

\begin{proof}[Proof of Proposition~\ref{prop:accum1}]
Let $a=p/q$ be the break point of an obstructive  quasi-exceptional class $\bE$.  Using the notation in \eqref{eq:definition}, we define its {\bf error vector} $\eps$ by 
\begin{align}\label{eq:eps}
\bbm_{\bE} = \frac{d}{\la_a b} \bw + \eps.
\end{align} 
A straightforward calculation using \eqref{eq:mu} and \eqref{eq:eps} shows that
\begin{align}\label{eq:key1}
\mu_{\bE}(a) > \la_a \;\Longleftrightarrow \; \eps\cdot \bw > 0.
\end{align}
As in the proof of \cite[(4.6)]{AADT},  the identity $-1 = \bE\cdot\bE =  d^2  -\bbm_{\bE}\cdot \bbm_{\bE} $ 
and the fact that $\bw\cdot \bw = \la_a^2 b^2$ readily imply that if $\bE$ is obstructive at its break point, i.e. if $\mu_\bE(a) > \la_a$, then
 we must have
\begin{align}\label{eq:eps2}
\eps\cdot \eps< 1.
\end{align}
Even though the vector $\bw =:(w_\nu)_{\nu\ge1}$  has infinite length,\footnote
{
For clarity we restrict to the case when $\Om$ has infinitely many negative weights $(b_j)$ since the finite case is dealt with in \cite{AADT}.} its entries have finite sum since $\sum _j b_j < b$.  Therefore, since $\sum_j \Tm_j + {\textstyle \sum_{i=1}^n} m_i = 3d-1$ (because $c_1(\bE) = 1$),  we can estimate
\begin{align} \notag
\bigl|-1-{\textstyle \sum_\nu} \eps_\nu\bigr|  &  = \bigl| -1+ \frac d{\la_ab} ({\textstyle \sum_\nu} w_\nu) - (3d-1) \bigr|\\ \notag
&=  \frac d{\la_ab}\  \bigl| ({\textstyle \sum_\nu} w_\nu) - 3\la_a b \bigr|\\ \notag
& = \frac d{\la_ab} \bigl| a+1-\la_a (3b- {\textstyle \sum_j} b_j) -  1/q\bigr|\\ \notag
&= \frac d{\la_ab} \bigl| a+1- \Per(b;(b_j))\sqrt {\frac{a}{\Vol(b;(b_j))}} -  1/q\bigr|\
\\ 
&= \frac d{\la_ab}\bigl| f(a) - 1/q\bigr|
\end{align}
where  $f(z)$ is the function
\begin{align}\label{eq:f}
f(z): =  z+1- \Per(b;(b_j)) \sqrt { \frac{z}{\Vol(b;(b_j))}}.
\end{align}
Note that the
the third equality above uses 
$$
{\textstyle \sum_\nu }w_\nu = \la_a{\textstyle \sum_j} b_j + {\textstyle \sum_{i=1}^n} a_i =  \la_a{\textstyle \sum_j} b_j  +  (a + 1 -  1/q).
$$

With $f(z)$ as above, the equation $f(z)=0$ has the same roots as the accumulation equation \eqref{eq:acc} when $\Per(b;(b_j))\ge0$: to see this just multiply $f(z)$ by its conjugate $z+1 + \sqrt {z \frac{\Per(b;(b_j))^2}{\Vol(b;(b_j))}} $. When $\Per(b;(b_j))<2\sqrt{\Vol(b;(b_j))}$, $f(z)$ has no real roots. Thus our aim is to 
find estimates for $|\sum \eps_\nu|$ that imply that $f(a_n)$ must converge to zero if  $\bE_n$ is a sequence of obstructive classes with distinct breakpoints $a_n$.

To this end, write $\eps= \eps_1+\eps_2+\eps_3$, where the $\eps_i$ are defined as follows. The first vector $\eps_1$ consists of the first $3d$
entries of $\eps$, with zeros at all other entries. The second vector $\eps_2$ 
(which may be infinitely long) consists of the entries of $\eps$ corresponding to the other $b_j$, with zeros at all other entries. The final vector $\eps_3$ consists of the entries of $\eps_i$ corresponding to the $a_i$.
We will bound $ |\eps\cdot (1, . . . , 1)|: = |\sum_\nu \eps_\nu|$ by
bounding the dot product for  each of the three vectors $\eps_i$.

For $|\eps_1 \cdot (1, . . . , 1)|$, we have by Cauchy-Schwarz that
\begin{align}\label{eq:eps11}
|\eps_1 \cdot (1, . . . , 1)|\le \sqrt{3d}.
\end{align}

For $|\eps_2\cdot(1,...,1)|$, we note first that the nonzero entries of
$\eps_2$ are entries in $ -\frac{d}{\la_ab} \bw$. This is a consequence of the 
fact that $\bbm_{\bE}$ has at most $3d - 1$ nonzero entries, 
since the class $\bE$ must have $ 1=c_1(\bE)  = 3d - \sum \Tm_j - \sum m_i$.
 On the other hand,  $\sum_j b_j$  converges, and so we obtain the key estimate:
\begin{align}\label{eq:eps12}
|\eps_2 \cdot(1,...,1)| = \sum_{j> 3d} \frac db b_j \le \ka_d d,
\end{align}
where $\ka_d > 0$ is a constant that tends to $0$ as $d\to \infty$. This crucial fact holds because $\sum_j b_j$ converges.  In particular, the constant $\ka_d$ depends only on $(b; (b_j))$, and not on $\bE$.

For $\eps_3$, we have by Cauchy--Schwartz that  $|\eps_3\cdot(1,...,1)|<\sqrt  L$, where $L$ is the weight length of $a$, i.e. the number $n$ of nonzero entries in    
the weight expansion $(a_1,\dots,a_n)$ of $p/q$.
To find an appropriate bound, notice first that
since $\eps\cdot\eps< 1$ by \eqref{eq:eps2}, we do know that each  entry of $\eps$ has norm $<1$.
By definition, the weight decomposition $(a_1,\dots,a_n)$  of $p/q$ has last two entries $1/q< 1$ so that the fact that  $m_{n-1} \ge m_n \ge 1$ implies that the last two entries in  $\eps$  are at least $1 - \frac d{\la_a bq}$.
As in \cite[(4.5)]{AADT}, this easily implies that 
\begin{align}\label{eq:eps3}
\frac d{\la_a b q} > 1/4, \quad \mbox{ where }\;\; a = p/q.
\end{align}
Next observe that if $k< a=p/q  < k+1$, then $L$ is largest if $a$ has weight expansion $[1^{\times k}, (\frac 1q)^{\times (q-1)}]$, which shows that  $L< a + q$. Together with \eqref{eq:eps3} this implies that  
\begin{align}\label{eq:eps4}
|\eps_3\cdot(1,...,1)| \le \sqrt{L} \le\sqrt{ a + \frac {4d} {\la_a b}}.
\end{align}
Putting this all together, we obtain the following variant of the key inequality [Eq. 4.8, AADT],:
\begin{align}\label{eq:eps5}
\frac{d}{\la_ab}\bigl|f(a) -\frac 1q\bigr|& \le 1 + \sqrt{3d} + \sqrt{a + 4\frac{d}{\la_ab}} + \ka_d d \\ \notag
&\le  1 + \sqrt{3d} + \sqrt{a} + 2\sqrt{\frac{d}{\la_a b}} + \ka_d d.
\end{align}

Now observe that by Lemma~\ref{lem:basicE}~(ii), any sequence of distinct  obstructive classes $\bE_k$ must have degrees $d_k$ that tend to infinity.
We now show that 
the corresponding sequence of 
break points $a_k$ must be bounded.   For if not, since $f(a_k)> a_k/2$ for large $a_k$ by \eqref{eq:f},  there is $C>0$ such that
 $f(a_k)/\la_{a_k} >  C \sqrt a_k$ for large $k$, so that the left hand side of \eqref{eq:eps5} is $\ge C' d_k \sqrt{a_k}$. But then it is not bounded by the terms on the right hand side of   
 \eqref{eq:eps5} as $k\to \infty$.
We conclude that 
 there is $M$ such that
 $a_k\le M$ for all $k$.

 Thus we have
 \begin{align*}
\frac{d_k}{\la_{a_k}b}\bigl|f(a_k) -\frac 1{q_k}\bigr|
&\le  1 + \sqrt{3d_k} + \sqrt{M} + 2\sqrt{\frac{d_k}{\la_{a_k} b}} + \ka_{d_k} d_k.
\end{align*}
But this inequality implies that $a_k\to a_0$.  For otherwise, we may pass to a subsequence such that the  $a_k$ are bounded away from $a_0$ so that $f(a_k)$ is bounded away from $0$.  Since the $a_k = p_k/q_k \le M$ are distinct (by hypothesis),  we must have $1/q_k \to 0$, so that the left hand side is $\approx C\ d_k$ for some constant $C>0$.
But this is impossible 
because $\ka_{d_k}\to 0 $ as $d_k\to \infty$.
This completes the proof of (i).
\MS

It remains to show that the existence of infinitely many nonsmooth points $(z_k, y_k)$ of $c_{(b;(b_j))}$ implies that
the points $z_k$ converge to the accumulation point $a^{(b;(b_j))}_0$ and that
$$
c_{(b;(b_j))}(a^{(b;(b_j))}_0) = V_{(b;(b_j))}(a^{(b;(b_j))}_0) =\sqrt {\frac{a^{(b;(b_j))}_0}{\Vol(b;(b_j))}}.
$$
  To prove this, we  argue as in Steps 0 and 4 of the proof of Theorem~1.13 in \cite[\S4]{AADT}. For the remainder of the proof, we set $a_0:=a^{(b;(b_j))}_0$.

  It suffices to show that every sequence of such points has a subsequence that converges to $(a_0, V_{(b;(b_j))}(a_0))$.  
 By Lemma~\ref{lem:basicE}, we may pass to a subsequence so
 that  $(z_k, y_k)$  lies on the graph of $\mu_{\bE_k}$, where the $\bE_k$ are distinct with breakpoints $a_k$  and have degrees $d_k$ that diverge to $\infty$. 
 By passing to a further subsequence 
  we may assume that  the sequence 
   $(a_k)$ is monotonic with limit $a_\infty$, and that, if $(a_k)$ is not constant, then by (i), $a_\infty = a_0$.
    Further, since, as explained in the proof of Corollary~\ref{cor:accum2},  the interval between $z_k$ and $a_k$ consists of points whose weight length is longer than that of $a_k$,
   the distance $|a_k-z_k|$ is bounded. Therefore 
 we may assume that the $z_k$ also converge, with limit $z_\infty$.  Since the points $(z_k,y_k)$ lie on the graph of $c_{(b;(b_j))}$, and the degree $d_k\to \infty$, it follows from \eqref{eq:light} that $c_{(b;(b_j))}(z_\infty) = V_{(b;(b_j))}(z_\infty)$. In the case where $a_\infty=a_0$, we are finished. It remains to consider the case where $a_k$ is constant and $a_k=a_\infty$.
 
  In this case, their constant value $a_\infty$ is obstructed, so by the argument above is not $z_\infty$.  Therefore, it suffices to show that  this is not possible. 
 
 If $a_\infty\ne z_\infty$, because $\bE_k$ is obstructive between $z_k$ and $a_k$ by construction, 
  the line segment between $(z_k,\mu_{\bE_k}(z_k))$ and  $(a_k,\mu_{\bE_k}(a_k))$ lies above the volume constraint, and converges to
  the line segment between  $(z_\infty,\mu_{\bE_k}(z_\infty)) = (z_\infty, V_{(b;(b_j))}(z_\infty))$ and $(a_\infty,\mu_{\bE_k}(a_\infty))$.
  Further, since $d_k\to \infty$ the values $\mu_{\bE_k}(a_k)$ converge to $V_{(b;(b_j))}(a_\infty)$.
  Therefore the end points of the limiting line segment both  lie on the graph of $V_{(b;(b_j))}$.
  Since this graph is concave down, and the end points are distinct, this is impossible.
\end{proof}

\begin{cor}\label{cor:ellip} If $b$ is irrational, the ellipsoid $E(1,b)$ has no staircase.
\end{cor}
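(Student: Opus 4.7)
The plan is to apply Proposition~\ref{prop:accum1} to the moment triangle $\Om$ of $E(1,b)$, combined with a direct analysis coming from the trivial inclusion $E(1,z)\subseteq E(1,b)$ for $z\le b$. By the rescaling $E(1,b) = b\cdot E(1,1/b)$, which preserves the nonsmoothness set of the capacity function, I may assume $b>1$. The moment polytope has vertices $(0,0), (1,0), (0,b)$, so $\Vol(\Om)=b$ (twice the area), and I would compute $\Per(\Om) = 1+b$: the two axis edges contribute their affine lengths $1$ and $b$, while the hypotenuse has irrational slope $-b$ and hence zero affine length by Example~\ref{ex:afflength}. Substituting into the accumulation equation $z^2-(\Per^2/\Vol - 2)z + 1 = 0$ yields $z^2 - \tfrac{b^2+1}{b}z+1=0$, which factors as $(z-b)(z-1/b)=0$, so the accumulation point is $a_0 = b$.

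The key step is to observe that $c_{E(1,b)}(z) = 1$ identically on $[1,b]$. The upper bound $c_{E(1,b)}(z)\le 1$ is immediate from $E(1,z)\subseteq E(1,b)$ for $z\le b$. For the matching lower bound, monotonicity of the first ECH capacity gives $\lambda \ge 1$, since $c_1(E(1,z))=1$ for $z\ge 1$ while $c_1(\lambda E(1,b))=\lambda$. Hence $c_{E(1,b)}$ is smooth (constant) on $[1,b]$, so it has no nonsmooth points there.

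Now suppose for contradiction that $E(1,b)$ has an infinite staircase. By Proposition~\ref{prop:accum1}(ii), its nonsmooth points accumulate at $a_0 = b$; the previous paragraph then forces all but finitely many of them to lie in $(b,\infty)$, converging to $b$ from the right. The main obstacle to a short proof is to rule out this right-sided accumulation. The plan is to exploit the scaling identity $c_{E(1,b)}(z) = z\cdot c_{E(1,b)}(1/z)$, which follows from $E(1,z) = z\cdot E(1,1/z)$: this identity sends putative right-accumulation of nonsmooth points at $b$ bijectively to left-accumulation at $1/b$. Combined with the companion identity $c_{E(1,b)}(w)=w$ on $[1/b,1]$ (another consequence of the scaling identity together with the key observation above), both potential one-sided accumulations are squeezed against a region on which $c_{E(1,b)}$ is already known to be smooth. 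A continuity argument then forces the sequence of error vectors $\eps$ in the proof of Proposition~\ref{prop:accum1} for any putative right-accumulating sequence $a_k\to b^+$ to violate the bound $\eps\cdot\eps<1$, delivering the desired contradiction. The trickiest point will be tracking exactly how nonsmoothness and obstructive classes transform under the $z\leftrightarrow 1/z$ identity, particularly for arguments less than $1$, where the capacity function must be defined by the same scaling rule.
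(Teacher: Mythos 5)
Your first two steps match the paper exactly: you compute $\Per=1+b$, $\Vol=b$, hence accumulation point $a_0=b$, and you establish $c_{E(1,b)}\equiv 1$ on $[1,b]$ via non-squeezing. The gap is in the third step, where you try to rule out a staircase accumulating at $b$ from the right. The scaling identity $c_{E(1,b)}(z)=z\,c_{E(1,b)}(1/z)$ (with $c$ extended to arguments $<1$ via the literal definition $\inf\{\lambda\,:\,E(1,a)\se\lambda E(1,b)\}$) is just a restatement of $E(1,a)\cong a\,E(1,1/a)$ and carries no new information. Under $z\mapsto 1/z$, a sequence of nonsmooth points $a_k\to b^+$ with $a_k>b$ is sent to $1/a_k\to (1/b)^-$ with $1/a_k<1/b$. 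But the region on which you have established smoothness of the extended function is precisely $[1/b,\,b]$: the interval $[1,b]$ where $c\equiv 1$, together with $[1/b,1]$ where the companion identity gives $c(w)=w$. The reflected accumulation lives strictly \emph{below} $1/b$, outside this window, so nothing is squeezed. There is no contradiction, and the subsequent appeal to the error-vector bound $\eps\cdot\eps<1$ has no lever to push against.

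What you are missing is a genuine lower bound on $c_{E(1,b)}(a)$ for $a$ slightly greater than $b$. The paper obtains one by evaluating the single ECH capacity $c^{ECH}_k$ with $k=\lceil b\rceil$: for $E(1,b)$ this equals $b$, while for $E(1,a)$ with $b<a<\lceil b\rceil$ it equals $a$, so monotonicity gives $c_{E(1,b)}(a)\ge a/b$. The subscaling property $E(1,a)\se\frac{a}{b}E(1,b)$ supplies the matching upper bound, hence $c_{E(1,b)}(a)=a/b$ on $(b,\lceil b\rceil)$. This pins down $c_{E(1,b)}$ on a full two-sided neighborhood of $a_0=b$, making accumulation of nonsmooth points there impossible. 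An alternative to the ECH lower bound, mentioned by the paper in Example~\ref{ex:E'}, is the obstruction from the perfect class $\bE'=(k;k-1,1^{\times(k-1)};1^{\times(k+1)})$ with $k=\lfloor b\rfloor$, which yields the same slope $1/b$. You need one such concrete obstruction valid for $a>b$; the $z\leftrightarrow 1/z$ symmetry alone cannot supply it.
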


\begin{proof}
In this case, $\Vol(E(1,b)) = b$ while $\Per(E(1,b))= 1+b$, so that the
 accumulation point  is $b$.   We may 
compute the embedding function $c_{E(1,b)}$ for 
$$
   \lfloor{b}\rfloor \le a \le \lceil{b}\rceil
$$
as follows.
By Gromov's non-squeezing theorem, we have
$$
   c_{E(1,b)}(a) = 1 \;\;\mbox{ for }\;\;  1 \le a \le b.
$$
Further, because  $E(1,\la b)\se E(\la,\la b) = \la E(1,b)$ for 
all $\la > 1$, there are embeddings $E(1,a)\se \frac ab E(1,b)$ for all $a>b$ so that
$$
   c_{E(1,b)}(a) \le a/b \;\;\mbox{ for }\;\; b \le a \le  \lceil{b}\rceil.
$$
(This is known as the  \lq\lq subscaling\rq\rq\, property of $c_{E(1,b)}$.)
On the other hand, if we compute the ECH capacity\footnote
{
Recall that the ECH capacities $c^{ECH}_k$ of the ellipsoid $E(a,b)$ are the numbers $\{ka + \ell b \ | \ k,\ell\ge 0\}$ arranged (with multiplicities) in nondecreasing order.}
$c^{ECH}_k(E(1,b)) $ for $k =  \lceil{b}\rceil$, we have $c^{ECH}_k(E(1,b)) = b$ always, and  
$$
     c^{ECH}_k(E(1,a)) = a, \;\;\mbox{ for }\;\; b \le a \le  \lceil{b}\rceil.
$$
It therefore follows that $c_{E(1,b)}(a) \ge a/b, b \le a \le  \lceil{b}\rceil$.  Thus, since these bounds agree, 
$$
  c_{E(1,b)}(a) = a/b ,\;\;\mbox{ for }\;\;  b \le a \le  \lceil{b}\rceil.
$$
Since we have now computed $c_{E(1,b)}(a)$ in a neighborhood of $b$, it is therefore clear there is no infinite staircase accumulating at $b$.
\end{proof}

\begin{rmk}\rm  We show in Example~\ref{ex:E'}, that instead of using ECH capacities to find a lower bound for $c_{E(1,b)}(a)$ on the interval $[b, \lceil{b}\rceil]$, one can use the obstruction coming from an appropriate  perfect class $\bE$.
Moreover, we  will see  in \S\ref{ss:ghost} that given any irrational ellipsoid $E(1,b)$ there are infinitely many obstructive classes whose breakpoints $p_k/q_k$ (necessarily) converge to $b$. However these classes are overshadowed by the obstruction coming from this class $\bE$. Thus, in the language of \cite{MMW},  these obstructive classes form a prestaircase but not a staircase. This example shows that one should expect many domains $X_\Om$ to have infinitely many obstructive classes, even though $X_\Om$ may not have a staircase.
\end{rmk}

\section{The subleading asymptotics of ECH capacities}\label{sec:subECH}

Let us review some notation and context from the introduction.  It was shown in \cite{ECH_volume, Hutchq} that  the ECH capacities of a closed $4$-manifold $X$ detect the volume of $X$ in the following sense
\begin{align}\label{eq:ek0}
\lim_{k\to \infty} \frac{c_k^2(X)}{2k} =  \Vol(X),\;\;\quad \mbox{ where }\  \Vol(X): =  b^2 - \sum b_i^2.
\end{align}
 We define the {\bf subleading asymptotics} $e_k(X)$ 
by setting
\begin{align}\label{eq:ek}
e_k(X): = c_k(X) - \sqrt{ 2k \Vol(X) }
\end{align}

The purpose of this section is to prove Theorem~\ref{thm:convexper}, which shows that when $X$ is a toric domain the  subleading asymptotics of the $c_k$  detect  the perimeter of $X$, and then derive its consequences, Theorem~\ref{thm:packing},  Corollary~\ref{cor:new} and Corollary~\ref{cor:super}.  

\subsection{The main result}\label{ss:asympt}

In this section, we prove Theorem~\ref{thm:convexper}.

We first check this theorem for the ball. 

\begin{lemma}\label{lem:Perball}  Theorem~\ref{thm:convexper}  holds when $X = B(1)$ is the ball. Moreover, 
$\limsup_k e_k(B(1)) = -1/2$.
\end{lemma}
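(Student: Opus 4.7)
The plan is to appeal to the explicit description of the ECH capacities of the ball, which follow from the ellipsoid formula: since $B(1) = E(1,1)$, the values $c_k(B(1))$ are the nondecreasing rearrangement of $\{m+n : m,n \geq 0\}$, so each integer $d \geq 0$ appears with multiplicity $d+1$. In other words, $c_k(B(1)) = d$ exactly when
\[
\tfrac{d(d+1)}{2} \le k \le \tfrac{d(d+3)}{2}.
\]
Since $B(1)$ corresponds to the triangle $T(1)$, Corollary~\ref{cor:aff1} gives $\Vol(B(1)) = 1$ and $\Per(B(1)) = 3$, so the claims amount to showing $\liminf_k e_k(B(1)) = -3/2$ and $\limsup_k e_k(B(1)) = -1/2$.

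The first step is to read off the extremal values. On the range $\tfrac{d(d+1)}{2} \le k \le \tfrac{d(d+3)}{2}$ where $c_k = d$, the quantity $e_k = d - \sqrt{2k}$ is monotonically decreasing in $k$, so it attains its maximum at $k_d := d(d+1)/2$ and its minimum at $k'_d := d(d+3)/2$. A direct expansion gives
\[
e_{k_d} = d - \sqrt{d(d+1)} = d - d\sqrt{1 + \tfrac{1}{d}} \longrightarrow -\tfrac{1}{2},
\]
\[
e_{k'_d} = d - \sqrt{d(d+3)} = d - d\sqrt{1 + \tfrac{3}{d}} \longrightarrow -\tfrac{3}{2},
\]
as $d \to \infty$, confirming that $-1/2$ and $-3/2$ are both subsequential limits of $e_k(B(1))$.

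To upgrade these to the correct $\limsup$ and $\liminf$, I would sandwich $e_k$ on each block: if $c_k(B(1)) = d$, then
\[
d - \sqrt{d(d+3)} \;\le\; e_k(B(1)) \;\le\; d - \sqrt{d(d+1)}.
\]
Since both bounds converge to their respective limits and every sufficiently large $k$ lies in exactly one such block, any subsequential limit of $e_k(B(1))$ lies in $[-3/2, -1/2]$. Combined with the two explicit subsequences above, this identifies the $\liminf$ and $\limsup$ as claimed. There is no real obstacle here; the only thing to be careful about is the trivially verified monotonicity of $d - \sqrt{2k}$ in $k$ on a block where $d$ is fixed, which is what produces the tight two-sided bound.
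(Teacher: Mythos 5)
Your proof is correct and takes essentially the same approach as the paper: both start from the explicit block description $c_k(B(1))=d$ for $d(d+1)/2 \le k \le d(d+3)/2$, then observe that on each block $e_k = d - \sqrt{2k}$ is decreasing, so the extreme values occur at the block endpoints and converge to $-1/2$ and $-3/2$ respectively. You have simply written out the Taylor expansion and the sandwich argument that the paper leaves implicit.
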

\begin{proof} Recall that the ECH capacities of the unit ball $B(1)$ are given by
$$
c_k(B(1)) = n\qquad \mbox{ if } \;\;\; \frac{n^2+n}2\le k \le \frac{n^2+3n}2.
$$
Thus $n : = c_k(B(1))\approx  \sqrt {2k}$  which, because $\Vol(B(1)) = 1$, is consistent with \eqref{eq:ek0}. The difference 
$e_k = c_k(X) - \sqrt{ 2k \Vol(X) }$ is most negative when $k = 
(n^2+3n)/2$ for some $n$ and most positive when $k = 
(n^2+n)/2$.  From this it is easy to check that $\liminf_k e_k(B(1)) = -3/2 = -\Per/2$, while
$\limsup_k e_k(B(1)) = -1/2$.
\end{proof}

For a domain of the form $\Om(b;(b_j))$ as the $\Per(\Om)=3b-\sum b_j,$
 the ball $B(b)$ is the only domain of the form $\Om(b, (b_j))$ with $\Per(X) = 3b$. Hence, we  assume from now on that $\Per(X)< 3b$, a fact that is used in the definition of $\eps_2$ in \eqref{eqn:othereps}.  
We will prove Theorem~\ref{thm:convexper}
by first establishing that $\liminf_{k} e_k(X) \le - \Per(X)/2$ and then that 
$\liminf_{k} e_k(X) \ge - \Per(X)/2$.
The following warm-up lemmas will be useful. 

\begin{lemma}\label{lem:warmup}
Fix $(b_1,\ldots,b_n)$, a finite set of real numbers, and a parameter $\eps> 0$.   Then, there are infinitely many positive integers $m$ such that $mb_j$ is within $\eps$ of an integer for all $j$.
\end{lemma}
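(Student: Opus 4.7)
The plan is to apply a pigeonhole argument on the torus $\mathbb{T}^n = (\mathbb{R}/\mathbb{Z})^n$, in the spirit of Dirichlet's simultaneous approximation theorem. First I would consider the map $\phi : \mathbb{Z}_{\ge 0} \to \mathbb{T}^n$ defined by $\phi(m) = (mb_1, \ldots, mb_n) \bmod 1$, and write $\|x\|$ for the distance from a real number $x$ to the nearest integer. For any integer $K \ge 1$, I partition $\mathbb{T}^n$ into $K^n$ half-open subcubes of side length $1/K$. Since $K^n + 1 > K^n$, two of the images $\phi(0), \phi(1), \ldots, \phi(K^n)$ must lie in the same subcube; say $\phi(m_1)$ and $\phi(m_2)$ with $0 \le m_1 < m_2 \le K^n$. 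Setting $m := m_2 - m_1$ then gives a positive integer with $\|m b_j\| < 1/K$ for each $j = 1, \ldots, n$.

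To promote the existence of one such $m$ to an infinitude, I would apply the construction above with a sequence of shrinking tolerances $\epsilon_k := \epsilon / 2^k$, producing for each $k \ge 1$ a positive integer $m_k$ with $\|m_k b_j\| < \epsilon_k \le \epsilon$ for every $j$. Now one of two cases occurs. Either the sequence $(m_k)$ is unbounded, in which case its image is an infinite subset of positive integers each of which satisfies the required estimate. Or the sequence takes only finitely many values, in which case some value $m$ appears infinitely often, forcing $\|m b_j\| < \epsilon_k$ for arbitrarily small $\epsilon_k$, and hence $mb_j \in \mathbb{Z}$ for every $j$; then every positive multiple $rm$ trivially satisfies $(rm) b_j \in \mathbb{Z}$, so $\|(rm)b_j\| = 0 < \epsilon$, and the integers $\{rm : r \ge 1\}$ give the desired infinite family.

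The core of the argument is the pigeonhole step on $\mathbb{T}^n$, which is entirely standard; the only item requiring care is the dichotomy in the second paragraph, which is what converts the single-$m$ output of Dirichlet into an infinitude regardless of whether the $b_j$ are rational, irrational, or a mixture. I do not anticipate any substantive obstacle.
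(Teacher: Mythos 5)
Your proof is correct but takes a genuinely different route from the paper. The paper fixes a $\mathbb{Q}$-basis $(1,a_1,\ldots,a_s)$ for the $\mathbb{Q}$-span of $(1,b_1,\ldots,b_n)$, clears denominators by multiplying by a suitable integer $T_0$, and then appeals to Kronecker's approximation theorem on the reduced tuple $(a_1,\ldots,a_s)$: since $1,a_1,\ldots,a_s$ are $\mathbb{Q}$-independent, the orbit $\{(ka_1,\ldots,ka_s) \bmod 1\}$ is dense in the torus, which supplies infinitely many $k$ at once. Your argument instead applies Dirichlet's pigeonhole directly to the raw tuple $(b_1,\ldots,b_n)$ with no preliminary reduction, and then converts a single Dirichlet output into infinitely many via the dichotomy (unbounded sequence of approximants vs. exact integrality of some $mb_j$). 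Your approach is more elementary: it avoids both the choice of a $\mathbb{Q}$-basis and the invocation of Kronecker, and it makes the passage from ``exists one $m$'' to ``exist infinitely many'' completely explicit, where the paper leaves that step implicit inside the word ``dense'' (it relies on the injectivity of $k\mapsto (ka_i)\bmod 1$, which follows from $\mathbb{Q}$-independence, to upgrade density to infinitude). The paper's route buys a cleaner structural picture, separating the rational and irrational content of the $b_j$; yours buys brevity and self-containment. Both are valid.
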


\begin{proof}
Fix a $\mathbb{Q}$-basis $(1,a_1,\ldots,a_s)$ for $\Q\langle 1,b_1,\ldots,b_n\rangle.$  
Write each 
\[ b_j =\frac{ p_{0j}}{q_{0j}} + \sum_{i=0}^s \frac{ p_{ij}}{q_{ij}} a_i,\]
and define $T_{0}: = \prod_{i\ge 0,j\ge 1} q_{ij}$. 
Now consider $m = k T_{0}$, where $k$ ranges over the positive integers.  Then for each $1\le j\le n$, 
\[ k T_{0} b_j = k \frac{T_{0}}{q_{0j}} p_{0j} + \sum_i \frac{T_{0}}{q_{ij}} p_{ij} k a_i.\]
Thus, if $M: = \max_{i,j} \frac{T_{0}}{q_{ij}} p_{ij}$ and we choose $k$ so that 
 each $k a_i$ is within $\eps/M$ of an integer, the quantity 
 $k T_{0} b_j$ is within $\eps$ of an integer. There are infinitely many such $k$  because, by 
 Kronecker's Approximation Theorem~\footnote
{
See for example the discussion by Keith Conrad at https://mathoverflow.net/questions/18174}, \
 the set of fractional parts of $(k a_1, \ldots, k a_s)_{k\ge 1}$ is dense in the torus.  
\end{proof}

\begin{lemma}\label{lem:warmup2} Let $b_j$ be any  sequence of positive numbers with $\sum b_j <\infty$, and for each integer $m>0$ 
define $S_m : = \{j \ | \ b_j>1/(2m)\}$.
Then, for any $\eps> 0$, there are only finitely many integers $m$  such that $|S_m| 
 \ge m\eps.$
\end{lemma}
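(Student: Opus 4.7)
The plan is to exploit the summability hypothesis $\sum_j b_j < \infty$ via a Markov-style inequality on the sequence $(b_j)$, combined with a standard head/tail split of the index set. The defining condition $b_j > 1/(2m)$ for $j \in S_m$ is tailor-made for a Markov bound: summing the inequality $1 < 2m\, b_j$ over $j \in S_m$ gives the pointwise estimate $|S_m| \leq 2m \sum_{j \in S_m} b_j$. The one obstacle to concluding immediately is that $\sum_{j \in S_m} b_j$ may tend to the full (positive) sum $\sum_j b_j$ as $m \to \infty$, so the naive bound only gives $|S_m|/m = O(1)$, not $o(1)$. The fix is to separate a fixed finite head of indices, on which $|S_m|$ is controlled trivially by a constant, from a tail on which the Markov bound becomes arbitrarily good.

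Concretely, I would fix $\eps > 0$, then use $\sum_j b_j < \infty$ to choose $N$ so large that $\sum_{j > N} b_j < \eps/4$. Split
\[ S_m \;=\; \bigl(S_m \cap \{1,\dots,N\}\bigr) \;\sqcup\; \bigl(S_m \cap \{j > N\}\bigr). \]
The head contributes at most $N$ to $|S_m|$ regardless of $m$. For the tail, the same Markov argument restricted to indices $j > N$ gives
\[ \bigl|S_m \cap \{j > N\}\bigr| \;\leq\; 2m \!\!\sum_{\substack{j \in S_m \\ j > N}} b_j \;\leq\; 2m \sum_{j > N} b_j \;<\; \tfrac{m\eps}{2}. \]
Therefore $|S_m| \leq N + m\eps/2$, so for any $m > 2N/\eps$ we have $|S_m| < m\eps$. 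Consequently the set of $m$ with $|S_m| \geq m\eps$ is contained in $\{1,2,\dots,\lfloor 2N/\eps\rfloor\}$, which is finite, completing the proof.

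There is no deep obstacle here; the only conceptual point is recognizing that the summability hypothesis provides exactly the quantitative tail control needed to cancel the factor of $m$ coming from the Markov estimate. This lemma is presumably being set up to control the number of \lq\lq large\rq\rq{} negative weights $b_j$ that can contribute to an obstructive class of a given degree, which will be relevant in the proof of Theorem~\ref{thm:convexper} via the estimates on $\eps_2$ in the spirit of \eqref{eq:eps12}.
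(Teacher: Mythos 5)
Your proof is correct, and it takes a genuinely different and cleaner route than the paper's. The paper argues by contradiction: assuming there are infinitely many $m$ with $|S_m| \ge m\eps$, it constructs an increasing sequence $m_0 < m_1 < \cdots$ and then selects pairwise disjoint blocks of indices, showing each block contributes at least a uniform positive amount (of order $\eps/8$) to $\sum_j b_j$, which forces divergence. This requires a somewhat delicate bookkeeping of how many new indices are available in each $S_{m_p}$ after removing those already used. Your argument goes forward instead: fix a tail index $N$ with $\sum_{j>N} b_j < \eps/4$, bound the head by $N$ outright, and apply Markov's inequality $|S_m \cap \{j > N\}| \le 2m\sum_{j>N} b_j$ to the tail, giving $|S_m| < N + m\eps/2$ and hence an explicit bound $m \le 2N/\eps$ on the exceptional set. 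Your approach buys simplicity and an explicit threshold; the paper's approach is conceptually in the ``pigeonhole into disjoint blocks'' style but is harder to read and verify. Both are correct, and you have also correctly identified the role this lemma plays in controlling the number of large negative weights $b_j$ in the estimate \eqref{eq:eps12} within the proof of Theorem~\ref{thm:convexper} (more precisely, in Step 2 of Lemma~\ref{lem:ECHle}, estimate \eqref{eqn:slightlydelicate}).
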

\begin{proof}
We argue by contradiction, showing that if the conclusion is false, then $\sum b_j$ must diverge. More precisely, 
this assumption allows us to find infinitely many disjoint subsets of the index set for the $b_j$, whose corresponding sums
each contribute at least a uniform amount to the sum, forcing divergence.

To implement this, assume that the claim is false for some $\eps>0$, and choose an increasing sequence $m_0, m_1,m_2,\dots $ of integers with 
\[ (m_1-m_0)/m_1 \ge 1/2, \quad 1/m_1 < \eps/4,\] 
\[ (m_2-m_0-m_1)/m_2 \ge 1/2, \quad 2/m_2 <  \eps/4, \ldots \] 
\[ (m_p - m_0 - \ldots - m_{p-1})/m_p \ge 1/2, \quad p/m_p < \eps/4 \]
and such that for each $m_p$, we have 
$|S_{m_p}| \ge m_p \eps.$  
Now choose exactly $\lceil m_0 \eps\rceil$ indices of the elements $b_j \in S_{m_0}$, such that the corresponding contribution of these indices to $\sum b_j$ is at least $\eps/2$.   Next, choose exactly 
$\lceil m_1 \eps\rceil - \lceil m_0\eps\rceil$ 
indices of the $b_j$ in $S_{m_1}$, with different indices from those previously chosen for  $S_{m_0}$, which must contribute at least 
$(m_1 - m_0) \eps/ (2m_1) - 1/(2m_1) \ge \eps/8$
 towards $\sum_j b_j$.  Continuing in this manner -- i.e. choosing exactly $\lceil  m_2 \eps\rceil - \lceil  m_1 \eps\rceil - \lceil m_0 \eps\rceil$ of the indices of $b_i$ in $S_{m_2}$ that are different from those we picked for $S_{m_1}$ and $S_{m_0}$, etc., --- produces a contribution towards of $\sum_j b_j$ at least $\eps/8$ for each subset, forcing the sum to diverge.
\end{proof}

\begin{lemma} \label{lem:ECHle} For any convex toric domain $X$,
we have
$\liminf_{k} e_k(X) \le - \Per(X)/2.$
\end{lemma}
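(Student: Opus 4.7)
The plan is to apply the ECH subtraction formula (Lemma~\ref{lem:ECHk}) with a carefully chosen decomposition. For a given integer $n$ and truncation level $N$, I take $\ell = (n^2+3n)/2$, which by the formula for $c_k(B(b))$ makes $c_\ell(B(b)) = nb$ sit at the \emph{right} endpoint of a constant interval of the ball's capacity function---exactly the regime from Lemma~\ref{lem:Perball} where the ball alone achieves $e_\ell(B(b)) \to -\Per(B(b))/2$. For each $j \le N$ I take $m_j$ to be the integer nearest to $nb_j/b$ and set $k_j = (m_j^2+m_j)/2$, which is the \emph{left} endpoint of the constant interval on which $c_{k_j}(B(b_j)) = m_j b_j$; this left choice maximizes $k = \ell - \sum k_j$, hence maximizes $\sqrt{2k\Vol(X)}$. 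For $j > N$ I take $k_j = 0$. The subtraction formula then gives $c_k(X) \le nb - \sum_{j \le N} m_j b_j$.

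Writing $\delta_j = m_j - nb_j/b$ with $|\delta_j| \le \eta$, a direct computation gives
\[ c_k(X) \le nb(1-\sigma_1) + O(\eta), \qquad 2k = n^2(1-\sigma_1) + n(3-\beta_1) + O(n\eta), \]
where $\sigma_1 = \sum_{j\le N}b_j^2/b^2$ and $\beta_1 = \sum_{j\le N}b_j/b$. With $V = 1 - \sum_j b_j^2/b^2 = \Vol(X)/b^2$ and $\tau = \sum_{j>N}b_j^2/b^2$, Taylor expanding $\sqrt{2k\Vol(X)}$ about $nbV$ and using $b(3-\beta_1) = \Per(X) + \sum_{j>N}b_j$ yields
\[ e_k(X) \le -\frac{\Per(X)}{2} - \frac{1}{2}\sum_{j>N}b_j + \frac{n}{2b}\sum_{j>N}b_j^2 + O(\eta) + O\bigl(nV^{-1}\tau^2\bigr). \]
The second term is non-positive and can be discarded.

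The Diophantine input is Lemma~\ref{lem:warmup}, which for any finite $N$ and $\eta>0$ supplies infinitely many integers $n$ with $\|nb_j/b\| < \eta$ simultaneously for $j \le N$. To conclude, I diagonalize in $K$: choose $N_K$ large enough that $\sum_{j>N_K}b_j^2 < 1/K^3$ and $\sum_{j>N_K}b_j < 1/K$ (possible since both sums converge), take $\eta_K = 1/K$, and by Lemma~\ref{lem:warmup} applied to $(N_K,\eta_K)$ select $n_K \to \infty$ satisfying the Diophantine condition but with $n_K \sum_{j>N_K}b_j^2 \to 0$. Each error term in the displayed inequality is then $O(1/K)$, giving $e_{k_K}(X) \le -\Per(X)/2 + O(1/K)$ and hence $\liminf_k e_k(X) \le -\Per(X)/2$.

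The main obstacle is the diagonal argument: for fixed $N$ the $n$ from Lemma~\ref{lem:warmup} form a syndetic subset of $\mathbb{N}$, but the gap-size can blow up as $N$ grows because of the Diophantine properties of $\{b_j/b\}_{j\le N}$. The fix exploits two independent freedoms: once $N_K$ is fixed one can take $n_K$ arbitrarily large in the admissible set, and by enlarging $N_K$ further one can shrink the tail $\sum_{j>N_K}b_j^2$ enough that the permissible window $[L_K,\, C/(K\sum_{j>N_K}b_j^2)]$ is wide enough to contain such an $n_K$; the freedom to enlarge $N_K$ is independent of the syndetic density, which is the key point that makes the diagonalization go through.
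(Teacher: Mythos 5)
Your proposal uses the same two main tools as the paper's proof (the subtraction formula of Lemma~\ref{lem:ECHk} with $\ell = (n^2+3n)/2$ and the sub-ball indices at the left endpoint of the capacity plateau, plus the simultaneous Diophantine approximation Lemma~\ref{lem:warmup}), but the crucial step where you diverge --- setting $k_j = 0$ for $j > N$ rather than taking $k_j$ to be the nearest integer to $nb_j/b$ for \emph{all} $j$ --- introduces a tail error $\tfrac{n}{2b}\sum_{j>N}b_j^2$ that grows linearly in $n$, and your attempt to neutralize it by a diagonalization has a genuine gap. The claim that ``the freedom to enlarge $N_K$ is independent of the syndetic density'' is false: enlarging $N_K$ imposes \emph{more} simultaneous approximation conditions $\|n b_j/b\|<\eta_K$, $j\le N_K$, so the least admissible $n$, call it $L(N_K,\eta_K)$, can increase without bound as $N_K$ grows --- faster, in principle, than $1/\sum_{j>N_K}b_j^2$ --- and you give no argument that the window $[L(N_K,\eta_K),\; c/(K\sum_{j>N_K}b_j^2)]$ is ever nonempty. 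In fact you need such $n$ of arbitrarily large size (to produce infinitely many good $k$ for the $\liminf$), but for a fixed $N$ your constraint $n\sum_{j>N}b_j^2 < 1/K$ bounds $n$ from above, so you are forced to let $N$ and $n$ grow together, which is exactly where the obstruction bites.

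The paper's proof sidesteps this entirely by \emph{not} truncating: it takes $k_j := [mb_j]$ for every $j$ (automatically $0$ for $j$ large since $\sum b_j < \infty$). Then the term $m\sum_j b_j^2$ appears in both $c_k(X)$ and in $2k\Vol$, and cancels in $e_k = c_k - \sqrt{2k\Vol}$, so no linearly-growing tail ever appears. The residual error is $\sum_j(|\delta_j|+\delta_j^2)$, which is controlled by a second combinatorial lemma (Lemma~\ref{lem:warmup2}, bounding the number of $j$ with $mb_j \ge 1/2$) that your proposal does not invoke --- this is the genuinely new ingredient your argument is missing. If you drop the truncation and follow the paper's Step 2 estimates, your calculation goes through without any diagonalization.
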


\begin{proof}
We give the proof in several steps. We first define a sequence $k$ where we expect $e_k(X) $ to be small, and then estimate $e_k(X) $
for these values of $k$.  \MS

\NI
{\bf Step 1:} {\it Choosing  $k$.} 
We assume without loss of generality that $b = 1$ and, by Lemma~\ref{lem:Perball} that $\Per < 3$. We fix  a parameter $\eps> 0$, and, to make more readable formulas in what follows, 
we define the auxiliary parameters $\eps_1, \eps_2$ where
\begin{equation}
\label{eqn:othereps}
0<\eps_1 < \eps/32, \quad 0<\eps_2 < \eps/(16 (3-\Per)).
\end{equation}
 Now choose $M$ such that $\sum_{j > M} b_j \le \eps_1.$  (This is possible since $\sum b_j$ converges.)
 Then apply Lemma~\ref{lem:warmup}  to $(b_1,\ldots,b_M)$, with parameter $\eps_2$ , and let $\Mm: = \Mm(M,\eps_1)$ be the infinite set of integers  guaranteed by this lemma.  Fix $m\in \Mm$, and, for
all $j\ge 1$, define $k_j: = [ m b_j]$, where $[\;\; ]$ denotes the closest integer:
 if there are two equally close integers we choose the smaller one.  
 Thus
 $$k_j = m b_j + \delta_j, \quad \mbox{ where } \ -1/2 < \delta_j \le 1/2.
 $$
Since $m b_j\to 0$ as $j\to \infty$, there are only finitely many  nonzero $k_j.$  
 For each $m\in \Mm$, we now consider $k = k(m)$ defined by 
\begin{equation}
\label{eqn:defnk}
2k: = m^2 + 3m - \sum_j  (k_j^2 + k_j).
\end{equation}

\NI
{\bf Step 2:} {\it For sufficiently large $m\in \Mm$, we have}
\begin{align} \label{eqn:easy} \sum_j |\delta_j| b_j&\le \eps/8
\\  \label{eqn:slightlydelicate} 
\sum_j\bigl( |\delta_j| + \delta_j^2\bigr) &< m \eps/8.
\end{align}

\begin{proof} Above, we chose $M$ so that $\sum_{j>M} b_j < \eps_1$, and then chose $m\in \Mm$ to be one of the sequence of numbers
with  $|mb - k_j| =:|\de_j|< \eps_2$ for all $j\le M$.
Therefore 
\begin{align*}
\sum_j |\delta_j| b_j & = \sum_{j\le M} |\delta_j| b_j + \sum_{j> M} |\delta_j| b_j \\
& \le \eps_2 \sum_j b_j + \sum_{j>M} b_j \\
& < \eps_2 (3-\Per) + \eps_1\;  \le \; \eps/8,
\end{align*}
where the last inequality uses \eqref{eqn:othereps}.

Next consider \eqref{eqn:slightlydelicate}.  By applying Lemma~\ref{lem:warmup2} with $\eps$ replaced by $\eps/32$  we may conclude that
for sufficiently large $m$ there are at most $m \eps/32$ of the $b_j$ with $m b_j \ge 1/2$.  Each of these has $|\delta_j| \le 1$ and $\delta_j^2 \le 1$, so that their total contribution to the sum in \eqref{eqn:slightlydelicate} is no more than $m \eps/16$.  As for the remaining $b_j$, by definition these have $mb_j \le 1/2$, so that they have $\delta_i = - mb_i$ and $\de_i^2< mb_i$; thus, for sufficiently large $m$ their total contribution is bounded by $2 m \sum_{i > M} b_i \le 2 m \eps_1 \le m\eps/16,$ where in the final inequality we have again applied \eqref{eqn:othereps}.
\end{proof}

\NI {\bf Step 3.} {\it We estimate $e_k:=c_k - \sqrt{2k \Vol}$ for the sequence of  $k$ chosen in \eqref{eqn:defnk}.}

\NI {\it Proof.}\
To begin, we find formulas for $c_k$ and $2k \Vol$.
We
 saw in \eqref{eqn:echsub}  that
\begin{equation}
\label{eqn:echsub0}
c_k(X) = \min_{k = \ell - k_1 - \ldots - k_q}\ c_\ell B(b) - c_{k_1} B(b_1) - \ldots - c_{k_q} B(b_q).
\end{equation}
Therefore, if $k,m,k_j$ are as in \eqref{eqn:defnk}, and we use 
 the formula for the ECH capacities of a ball from Lemma~\ref{lem:Perball} we find that
\begin{align}
\label{eqn:upperbound}
c_k(X) &\le c_{(m^2 + 3m)/2}(B(1)) - \sum_j c_{ (k_j^2 + k_j)/2} B(b_j)\\ \notag
 & = m - \sum_j (m b_j+ \delta_j) b_j 
 = m(1 - \sum b_j^2) - \sum \delta_j b_j \\  \notag
 &= m\ \Vol - \sum \delta_j b_j.
\end{align}
Again using $k_j = m b_j+ \delta_j$, we can rewrite \eqref{eqn:defnk} as
\[ 
2k = m^2 + 3m - \bigl(m^2 \sum  b_j^2 + m \sum b_j + 2m \sum \delta_j b_j + \sum \delta_j + \sum \delta_j^2\bigr).\]
By multiplying this by $\Vol: = 1-\sum b_j^2$, using the identity $ \Per = 3- \sum_j b_j$ and then the estimates in Step 2,  we find that
\begin{align*}\notag 2k\Vol &= m^2 (\Vol)^2 + m (\Per)(\Vol) - 2\Vol \, m \sum \delta_j b_j - \Vol( \sum \delta_j + \delta_j^2)
\\ 
&= (m^2(\Vol)^2)\bigl[ 1 + \Per/(m\Vol) - 2/(m\Vol) \bigl(\sum \delta_j b_j\bigr)\\
&\notag\qquad\qquad  \qquad\qquad \qquad\qquad\qquad\qquad   - 1/(m^2\Vol)\cdot \bigl( \sum \delta_j + \delta_j^2\bigr) \bigr]\\ \notag
& \ge (m^2(\Vol)^2)\bigl[ 1 + \Per/(m\Vol) - 2/(m\Vol)\cdot ( \eps/8) - 1/(m\Vol)\cdot (\eps/8)\bigr] \\ \notag
& \ge (m^2(\Vol)^2)\bigl[ 1 + 1/(m\Vol)\cdot (\Per -  \eps/2)\bigr].
\end{align*}
Therefore, 
\begin{align*}
\sqrt{2k\ \Vol} &\ge m\Vol \sqrt {1 + \Per/(m\Vol) - \eps/(2m\Vol)}\\
&  \ge m\Vol\bigl(1 +  \Per/(2m\Vol) - \eps/(2m\Vol)\bigr)
\end{align*}
where the last inequality holds for fixed $\eps$ and sufficiently large $m\in \Mm$.
Therefore, using the upper bound for $c_k(X)$ in \eqref{eqn:upperbound} and the inequality
\eqref{eqn:easy}, we find that
\begin{align*} e_k  = c_k - \sqrt{(2k\ \Vol)} &\le \bigl(m\Vol  - \sum \de_j b_j \bigr) - m\Vol - \Per/2 + \eps/2   \\
& \le  - \Per/2 + \eps.
\end{align*}
Since $\eps$ is arbitrarily small,  the proof of Lemma~\ref{lem:ECHle}  is complete.
\end{proof}

We next prove the other inequality.

\begin{lemma} \label{lem:ECHge} For any convex toric domain $X$,
we have
\[\liminf_{k} e_k(X) \ge - \Per(X)/2.\]
\end{lemma}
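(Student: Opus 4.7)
The plan is to use the subtraction formula \eqref{eqn:echsub} together with a careful Cauchy--Schwarz estimate.  Given $k$ and a realizing decomposition $c_k(X) = c_\ell(B(b)) - \sum_j c_{k_j}(B(b_j))$, I would let $n, n_j \in \Z_{\ge 0}$ be the integers with $c_\ell(B(b)) = bn$ and $c_{k_j}(B(b_j)) = b_j n_j$.  Setting $J = \{j : n_j > 0\}$, $m = n + 3/2$, and $m_j = n_j + 1/2$, the conditions $\ell \le n(n+3)/2$, $k_j \ge n_j(n_j+1)/2$, and $\ell = k + \sum k_j$ yield the key constraint
\[m^2 - \sum_{j \in J} m_j^2 \;\ge\; 2k + \tfrac{9 - |J|}{4}.\]
Meanwhile, using $\Per = 3b - \sum_j b_j$, a direct calculation gives the identity
\[bn - \sum_j b_j n_j + \tfrac{\Per}{2} \;=\; bm - \sum_{j \in J} b_j m_j - \Delta, \qquad \Delta := \tfrac{1}{2}\sum_{j \notin J} b_j.\]

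Next I would apply Cauchy--Schwarz in the form $\bigl(\sum_{j \in J} b_j m_j\bigr)^2 \le (b^2 - \Vol)\sum_{j \in J} m_j^2$ and minimize $bm - \sqrt{(b^2 - \Vol)S}$ over $m$ and $S := \sum_{j \in J} m_j^2$ subject to the constraint above.  A Lagrange multiplier computation yields $bm - \sum_{j \in J} b_j m_j \ge \sqrt{\Vol(2k + (9 - |J|)/4)}$, and combined with the identity this gives
\[c_k(X) \;\ge\; \sqrt{\Vol(2k + (9 - |J|)/4)} \;-\; \Delta \;-\; \tfrac{\Per}{2}.\]

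The main obstacle will be controlling the remaining error $\sqrt{2k\Vol} - \sqrt{\Vol(2k + (9-|J|)/4)} + \Delta$ for the minimizing decomposition, since the bound depends on both $|J|$ and $\Delta$, neither a priori uniformly controlled.  Since $\sum_j b_j$ converges, for any $\eps > 0$ I would fix $N$ with $\sum_{j > N} b_j < \eps$ and argue by a perturbation comparison that the minimizer must satisfy $J \supseteq \{1, \ldots, N\}$ for all sufficiently large $k$, whence $\Delta < \eps/2$: intuitively, if $n_{j_0} = 0$ for some $j_0 \le N$, the constraint has enough slack (of order $\sqrt{k}$) to accommodate adding $n_{j_0} = 1$ without increasing $n$, strictly decreasing the value and contradicting minimality.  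A similar argument controls $|J|$, showing that at the minimum $|J|$ is effectively bounded by the number of weights $b_j$ above a threshold shrinking with $k$, so that the correction $\sqrt{2k\Vol} - \sqrt{\Vol(2k + (9-|J|)/4)} = O(|J|/\sqrt{k})$ is $o(1)$.  Combining these with Lemma~\ref{lem:ECHle} yields $\liminf_k e_k(X) = -\Per/2$, completing the proof of Theorem~\ref{thm:convexper}.
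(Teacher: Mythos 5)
Your algebra is sound through the Cauchy--Schwarz and Lagrange multiplier step: setting $m=n+3/2$, $m_j=n_j+1/2$ one does get the constraint $m^2-\sum_{j\in J}m_j^2\ge 2k+\tfrac{9-|J|}{4}$, the identity relating $bn-\sum b_jn_j+\Per/2$ to $bm-\sum_{j\in J}b_jm_j-\Delta$, and the resulting bound $c_k(X)\ge\sqrt{\Vol\bigl(2k+\tfrac{9-|J|}{4}\bigr)}-\Delta-\Per/2$. The route is genuinely different from the paper's: you attempt a self-contained estimate, whereas the paper invokes Hutchings' result \cite[Lem.~3.8]{Ruel} as a black box for the bound $\limsup_K\bigl(c_K(\sqcup B(b_i))-\sqrt{4KV_1}\bigr)\le-\tfrac12\sum b_i$ on disjoint unions of balls and then applies a single reverse Cauchy--Schwarz for the Minkowski form to the vectors $(b,\sqrt{2V_1})$ and $(\sqrt{d^2+3d},\sqrt{2K})$, which entirely bypasses the need to track $|J|$ or $\Delta$.

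The error control, however, is a genuine gap, not merely a technicality. You need the minimizer to satisfy both $\Delta=o(1)$ and $|J|=o(\sqrt{k})$, and neither is established. The ``slack of order $\sqrt{k}$'' is a feature of the heuristic Lagrange-optimal configuration, not something proved about the discrete minimizer: the subtraction formula only gives $k\le\tfrac{n^2+3n}{2}-\sum_j\tfrac{n_j^2+n_j}{2}$, and nothing rules out equality, in which case adding $n_{j_0}=1$ while keeping $n$ fixed violates the constraint, and increasing $n$ to compensate changes the objective by $b-b_{j_0}>0$, so minimality is not contradicted. The upper bound on $|J|$ is even less clear: the minimizer has an incentive to include as many small $b_j$ as slack permits, and there is nothing in the sketch preventing $|J|=\Theta(\sqrt{k})$, in which case the correction $\sqrt{2k\Vol}-\sqrt{\Vol(2k-(|J|-9)/4)}$ stays bounded below by a positive constant rather than vanishing. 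Both obstacles can likely be overcome with a careful swap-and-comparison argument, but that would essentially amount to reproving \cite[Lem.~3.8]{Ruel}; the paper's citation is the shorter path.
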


\begin{proof}
We apply the sequence subtraction formula in Lemma~\ref{lem:ECHk}  to write
\begin{align}\label{eqn:ck}
 c_k(X) = c_\ell B(b) - c_{k_1}B(b_1) - \ldots - c_{k_r} B(b_r)
 \end{align}
for suitable $\ell, k_i$ such that 
\[ \ell - (k_1 + \ldots k_r) = k.\] 
Let us write 
\[ K = k_1 + \ldots + k_r, \quad 2V_1 = \sum b_i^2, \quad \Vol = b^2 - \sum b_i^2  \]
to simplify the notation.    

\vspace{2 mm}

First note that as $k$ tends to $\infty$, so do $\ell$ and $K$. This is clear for $\ell$ since $\ell\ge k $.  
On the other hand if $K$ remains bounded then the contribution of the balls $B(b_i)$ to the limit
$\lim_k \frac {c_k^2(X)}{k^2} = \Vol(X)$ tends to zero, which implies that $\Vol(X) = \Vol(B(b))$, a contradiction.

We now complete the proof by reducing to the concave case and applying a variant of Cauchy--Schwarz.
Fix $\eps> 0$.
By the formula for the ECH capacities of a ball given in Lemma~\ref{lem:Perball}, we can assume
 that 
 \begin{align}\label{eqn:elld}
  2 \ell = d^2 + 3d.
  \end{align} 
 for some $d$. Indeed if $d^2 + d <2 \ell < d^2 + 3d$ then $c_k(X) = c_{k+1}(X)$ which would imply that
$e_k(X) > e_{k+1}(X)$, so that it suffices to find a lower bound for  $e_{k+1}(X)$.

 In \cite[Lem.3.8]{Ruel}, Hutchings 
   showed  that for any union of balls $\sqcup B(b_i)$ 
 $$
 \limsup_k \bigl(c_k(\sqcup B(b_i)) -   \sqrt{(2K)(2V_1)}\bigr) \le - \frac 12 \sum b_i
 $$
 where $V_1,K$ are as above.  Therefore, if 
  $k$ is sufficiently large, so that, by the above,  $K = \sum k_i$ is also large, 
  we may conclude that   
\[  c_{k_1}B(b_1) + \ldots + c_{k_r} B(b_r) \le \sqrt{ (2 K ) (2V_1)} - \sum b_i/2 + \eps.\]

Thus, in view of \eqref{eqn:elld}, for any $\eps>0$, we have
\begin{equation}
\label{eqn1}
c_k(X) \ge bd - \sqrt{ (2K)(2V_1) } + \sum b_i/2 - \eps
\end{equation}
 sufficiently large $k$. 
On the other hand, by the variant of the Cauchy-Schwarz inequality for the Minkowski metric applied to the vectors $(b,\sqrt{2 V_1}), (\sqrt{d^2 + 3d}, \sqrt{2K})$, we have
\[ b \sqrt{d^2 + 3d} - \sqrt{ 2V_1} \sqrt{2 K} \ge \sqrt{ b^2 - 2 V_1}\sqrt{ d^2 + 3d - 2K} = \sqrt{\Vol} \sqrt{2k}.\]
Combining this equation with \eqref{eqn1} then gives
\[ c_k(X) - \sqrt{ (2k)\Vol } \ge b ( d - \sqrt{d^2 + 3d}) + \sum b_i/2 - \eps.\]
Since 
$ d - \sqrt{d^2 + 3d} \ge -3/2,$
we  conclude that $$
c_k(X) - \sqrt{ (2k)\Vol } \ge - \frac{3b - \sum b_i}{2} -  \eps= -\Per/2 - \eps.
$$
Since $\eps$ was arbitrary,   the result follows. 
\end{proof}

We can now give the promised proof.

\begin{proof}[Proof of Theorem~\ref{thm:convexper}]
This holds by combining Lemma~\ref{lem:ECHle} with Lemma~\ref{lem:ECHge}.
\end{proof}

\subsection{Consequences for full fillings}\label{ss:fullFilling}

Let us now prove Theorem~\ref{thm:packing}.  

Since Theorem~\ref{thm:packing} potentially involves disjoint unions, we start with the following helpful lemma.

\begin{lemma}\label{lem:sublead}
Let $e_k$ denote the subleading terms for either the ECH capacities or the elementary ECH capacities.  Let $X$ be the disjoint union of domains $Q_1, \ldots, Q_r$ 
Then  
\[ \liminf_k e_k(X) \ge \sum_i \liminf_k e_k(Q_i).\]
\end{lemma}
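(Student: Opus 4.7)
The plan is to exploit the Disjoint Union axiom together with the additivity of volume. Concretely, the Disjoint Union axiom (which holds for both standard and elementary ECH capacities, the latter by Proposition~\ref{prop:elem}) gives
\[ c_k(X) \;=\; \max_{k_1 + \cdots + k_r = k} \sum_i c_{k_i}(Q_i), \]
and we have $\Vol(X) = \sum_i \Vol(Q_i)$. I would pick a specific partition of $k$ proportional to the volumes and show that the resulting lower bound is asymptotically sharp enough to compare against $\sqrt{2k\Vol(X)}$. Write $v_i = \Vol(Q_i)$, $V = \Vol(X)$, and $L_i = \liminf_k e_k(Q_i)$.

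First I would dispose of degenerate cases: if some $L_i = -\infty$ the claim is vacuous, and if some $v_i = 0$ then $c_k(Q_i) \equiv 0$ by the Volume axiom, so $L_i = 0$ and that summand drops out harmlessly. So I may assume each $v_i > 0$ and each $L_i$ finite. Given $k$ large, choose $k_i := \lfloor k v_i / V \rfloor$ for $i < r$ and $k_r := k - \sum_{i<r} k_i$, so that $|k_i - k v_i/V| \le r$ and each $k_i \to \infty$ linearly in $k$. By the Disjoint Union axiom,
\[ c_k(X) \;\ge\; \sum_i c_{k_i}(Q_i) \;=\; \sum_i \sqrt{2 k_i v_i} \;+\; \sum_i e_{k_i}(Q_i). \]

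Next, fix $\epsilon > 0$. Since each $k_i \to \infty$ and $\liminf_k e_k(Q_i) = L_i$, for all sufficiently large $k$ we simultaneously have $e_{k_i}(Q_i) \ge L_i - \epsilon$ for every $i$. For the square-root terms I would Taylor expand using $k_i = k v_i/V + O(1)$:
\[ \sqrt{2 k_i v_i} \;=\; v_i\sqrt{2k/V}\,\sqrt{1 + O(1/k)} \;=\; v_i \sqrt{2k/V} \;+\; O(1/\sqrt{k}), \]
so summing over $i$ gives $\sum_i \sqrt{2 k_i v_i} = \sqrt{2kV} + O(1/\sqrt{k})$. Combining,
\[ e_k(X) \;=\; c_k(X) - \sqrt{2kV} \;\ge\; \sum_i L_i - r\epsilon + O(1/\sqrt{k}). \]
Taking $\liminf_k$ on both sides and then sending $\epsilon \to 0$ yields the lemma.

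The only point requiring any care is the uniformity in $i$ of the bound $e_{k_i}(Q_i) \ge L_i - \epsilon$, which is the main (and only real) obstacle; but because each $v_i > 0$ one has $k_i \to \infty$ at a linear rate in $k$, so the finitely many threshold indices for the $\liminf$ inequalities can all be satisfied simultaneously for $k$ sufficiently large. Everything else is elementary.
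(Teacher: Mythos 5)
Your proof is correct and takes essentially the same approach as the paper's: both choose the partition $k_i \approx k\,\Vol(Q_i)/\Vol(X)$, invoke the Disjoint Union axiom to get the lower bound $c_k(X) \ge \sum_i c_{k_i}(Q_i)$, and combine the $\liminf$ definition with the fact that $\sum_i \sqrt{2k_i V_i} = \sqrt{2kV} + o(1)$. You supply slightly more detail on the square-root expansion and on degenerate cases (though note the Volume axiom alone only gives $c_k = o(\sqrt k)$, not $c_k \equiv 0$, when $v_i = 0$); otherwise the argument is the same.
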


\begin{proof}

Fix k.  By the disjoint union property of elementary ECH capacities or ECH capacities, $c_k(X)$ is the max, over $k_i$ such that $k_1 + \ldots + k_r \leq k$, of $c_{k_1}(Q_1) + \ldots + c_{k_r}(Q_r).$  
We consider $k_i = \lfloor k (V_i / V)\rfloor$, where $V_i$ is the volume of $Q_i$ and $V$, the total volume, is the sum of the $V_i.$  Let $p_i$ be the liminf of $e_k(Q_i)$.  By our assumption on the liminf, for large enough k, 
\[ c_{k_1}(Q_1) + \ldots + c_{k_r}(Q_r) \ge \sqrt{ 4 k_1 V_1} + ... + \sqrt{4 k_r V_r} + p_1 + \ldots + p_r - \eps_1,\]
where $\eps_1 > 0$ is arbitrarily small.  On the other hand, by plugging in directly,
\[ \sqrt{ 4 k_1 V_1} + ... + \sqrt{4 k_r V_r} = \sqrt{ 4k V} - \eps_2,\]
with $\eps_2 > 0$ arbitrarily small for sufficiently large $k.$  
Since $k_1 + \ldots + k_r \le k,$ 
\[ c_k(X) \ge c_{k_1}(Q_1) + \ldots + c_{k_r}(Q_r),\] 
hence the result.
\end{proof}

We can now prove Theorem~\ref{thm:packing}.

\begin{proof}[Proof of Theorem~\ref{thm:packing}]
In the notation of the theorem, let $X_1 = X_{\Omega_1} \sqcup \ldots \sqcup X_{\Omega_n}$ and $X_2 = X$.

If such a symplectic embedding exists, then for each $k$, the Scaling property of ECH capacities implies that
\[ \lambda c_k(X_1) \le c_k(X_2),\]
for all $\lambda < 1$, hence
\[ c_k(X_1) \le c_k(X_2)\]
for all $k$.  Since $X_1$ and $X_2$ have the same volume, it then follows that
\[ e_k(X_1) \le e_k(X_2)\]
for all $k$.  By taking the liminf over $k$, and applying Theorem~\ref{thm:convexper} and Lemma~\ref{lem:sublead}, together with \cite[Thms. 4.1, 7.3]{Helem}
for the case where $X_2 = \mathbb{C}P^2$,
it therefore follows that
\[ - \sum_i \Per(\Omega_i) \le - \Per(X_2),\]
hence the result.
\end{proof}

Let us now prove the promised corollaries.

\begin{proof}[Proof of Corollary~\ref{cor:new}]
We have $\Per(E(1,a)) = 1 + a$ under our assumptions.  Let $\Per$ denote the perimeter of $X$.  Assume that a full filling exists, so that $\Vol(X) = a$.  
By Theorem~\ref{thm:packing},
we have
\[ (1+a) \ge \Per.\]
We can then rewrite this as
\[ \sqrt{\Vol/a} + \sqrt{a \Vol} \ge \Per,\]
hence,
\[ a\Vol + 2 \Vol + \Vol/a \ge \Per^2\]
and
\[ a^2 + a(2 - \Per^2/\Vol) + 1 \ge 0.\]
This implies that $a \ge a_0.$
\end{proof}

\begin{proof}[Proof of Corollary~\ref{cor:super}]
Since a ball and $\mathbb{C}P^2$ both have positive perimeter, this is an immediate consequence of Theorem~\ref{thm:packing}.
\end{proof}

\begin{rmk} This corollary is consistent with what we know about full fillings of monotone manifolds such as $\C P^2$ or $\CP^1\times \C P^1$.  As noted in \cite{AADT}, these manifolds admit increasing staircases whose steps  converge to $a_0$ with inner corners  on  the volume curve; but these inner corners occur at rational values $a< a_0$, that are ratios of side lengths of appropriate almost toric fibrations. 
\end{rmk}

\begin{rmk}
\label{rem:wormleighton}
Work of Wormleighton (\cite[Thm. 3]{Wormleighton}) shows that for concave toric domains $X_{\Omega}$,
\[ \liminf_{k \to \infty} e_k(X_{\Omega}) \ge -\frac{1}{2} \Per(X_{\Omega}).\]
Thus, by our argument, Theorem~\ref{thm:packing} holds for packings by concave toric domains as well.
 
It would be interesting to know if the same phenomenon holds for the concave into concave or convex into concave cases.  However, ECH capacities alone cannot prove this.  Indeed, one can take $X$ to be the ellipsoid $E(1,3/2)$, and $Y$ to be a concave rearrangement with the same weights but larger perimeter.  Then, one would like to show that $X$ can not embed into $Y$;  however, ECH capacities cannot see the difference between $X$ and $Y$ at all, since the weights determine the ECH capacities \cite{Choi}. 
\end{rmk}

\section{Domains with no staircases}\label{sec:nostair}
 
 We now discuss conditions on $\Om$ that imply that $X_\Om$ has no staircase. Since the accumulation point $z_\infty$ of any staircase is $\ge 1$ and satisfies the condition $z_\infty + \frac 1{z_\infty} = \frac{\Per^2}{\Vol} - 2$, one obvious necessary condition for the existence of a staircase is that 
 $\frac{\Per^2}{\Vol} \ge 4$, or equivalently $\Per \ge 2\sqrt{\Vol}$.  Thus no domain $\Om$ that is disjoint from the axes and has  smooth boundary of positive curvature can support  a staircase. Our first main result is that even if only a small part of $\p \Om$ has positive curvature, 
 $X_\Om$ has no staircase.
In fact we prove something stronger: namely such a domain does not support infinitely many obstructive classes with different centers.\footnote
{Our argument does not rule out the possibility that there are infinitely many different obstructive classes with the same center, though this seems very unlikely.}

We then describe a few cases in which we can rule out the existence of a staircase  by showing, using properties of the Gromov width,  that the accumulation point is obstructed.  Finally we note that
the general question of which domains have staircases seems to be  very subtle. As we pointed out in Corollary~\ref{cor:ellip}, it follows easily from the accumulation point theorem that   no irrational ellipsoid $E(1,a)$ has a staircase. Nevertheless, we show in Proposition~\ref{prop:obs} that such an ellipsoid always has infinitely many obstructive classes --- these just happen to be overshadowed and so not visible in the capacity function.

 \subsection{Curvy domains have no staircases}\label{ss:curvy}

In this section, we prove the following more precise version of Theorem~\ref{thm:curvy}:

\begin{prop} \label{prop:round} Let $\Om$ be a convex region  whose boundary contains a $C^3$-smooth segment $S$ that is not linear.
Then $X_\Om$ admits only finitely many obstructive classes, and in particular has no staircase.
\end{prop}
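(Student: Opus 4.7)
The approach is to argue by contradiction via the accumulation point theorem (Proposition~\ref{prop:accum1}) together with the error-vector analysis developed in its proof. Suppose there is an infinite sequence of obstructive quasi-exceptional classes $\bE_k$. By Lemma~\ref{lem:basicE}(ii) the degrees $d_k$ must tend to infinity, and by Lemma~\ref{lem:basicE}(i) together with Proposition~\ref{prop:accum1}(i), after passing to a subsequence the break points $a_k$ converge to the accumulation point $a_0$.

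The next step is to exploit the smooth strictly convex arc $S$ to pin down the structure of the cuts $b_j$ supported on $S$. By Remark~\ref{rmk:cut}(i)(b), each rational slope $-p/q$ (with $\gcd(p,q)=1$) lying in the open interval of tangent slopes of $S$ corresponds to a nontrivial cut at $S$ of positive size $b_{p,q}$. The $C^3$-smoothness together with the positive curvature forces precise asymptotic decay of $b_{p,q}$ as $p+q \to \infty$; consequently, the number of cuts at $S$ of size $\ge b/d_k$ grows without bound, and the family $\{b_{p,q}\}$ is a genuine continuum of weights rather than a finite polytopal collection, which will be the essential geometric input.

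The heart of the argument uses the decomposition $\bbm_{\bE_k}=\frac{d_k}{\la_{a_k}b}\bw_{a_k}+\eps_k$ with $\eps_k\cdot\eps_k<1$ from \eqref{eq:eps}--\eqref{eq:eps2}. In particular, each coefficient $\Tm_{k,j}$ corresponding to a cut at $S$ is a nonnegative integer within distance less than $1$ of the real number $\frac{d_k}{b}b_j$, so it is essentially determined. Combined with the obstructiveness condition $\eps_k \cdot \bw_{a_k}>0$ from \eqref{eq:key1}, the global identities $c_1(\bE_k)=1$ and $\bE_k\cdot\bE_k=-1$, and Corollary~\ref{cor:accum3} (which forces the number of active $\Tm_{k,j}$ to tend to infinity), the plan is to show that for sufficiently large $d_k$ the contributions from $S$ to the squared error $\sum_j \eps_{k,j}^2$ must accumulate past $1$, contradicting $\eps_k \cdot \eps_k < 1$. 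Once no infinite sequence of obstructive classes can exist, the fact that the set of nonsmooth points of $c_\Om$ is a subset of the break points of obstructive classes (Lemma~\ref{lem:basicE}(i)) yields the staircase conclusion.

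The main obstacle is the last quantitative step: turning the $C^3$-smoothness and strict convexity of $S$ into a sharp lower bound on $\sum_{j \in S} \eps_{k,j}^2$ that diverges with $d_k$. The subtle point is to balance three competing quantitative features simultaneously: the polynomial decay of $b_{p,q}$ along $S$ (which governs how many integer approximations $\Tm_{k,j}$ are forced to be nonzero at scale $d_k$), the unavoidable rounding error in approximating a continuum of real weights $\frac{d_k}{b}b_{p,q}$ by integers, and the rigidity imposed by the Diophantine constraints $c_1(\bE_k)=1$ and $\bE_k\cdot\bE_k=-1$. A cleaner route, if available, might instead compare $\bE_k$ to a perfect class anchored at $a_0$ and show that the curvy geometry of $S$ precludes the cyclic/arithmetic patterns that such perfect classes must possess; this is a possibility worth exploring before committing to the direct error-vector estimate.
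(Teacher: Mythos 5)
Your overall strategy — argue by contradiction using the error vector $\eps_k$ from \eqref{eq:eps}, the constraint $\eps_k\cdot\eps_k<1$ from \eqref{eq:eps2}, and the polynomial decay of the cut sizes $b_j$ induced along the smooth convex arc $S$ — is essentially the paper's approach. Invoking Corollary~\ref{cor:accum3} to force the number of active $\Tm_{k,j}$ to grow is also the right complementary ingredient (the paper packages this as Lemma~\ref{lem:round} plus Corollary~\ref{cor:round}, which truncates the domain to get a contradiction on accumulation points, but these routes are equivalent in spirit).

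However, you have correctly diagnosed that you are missing the key quantitative step, and I want to point out that the framing you offer for that step — a ``sharp lower bound on $\sum_{j\in S}\eps_{k,j}^2$ that diverges with $d_k$'' — is not the right mechanism, and would not work as stated. There is no divergence to be had: the sum is bounded above by $1$, and that upper bound is precisely what you leverage. The actual argument is a bootstrap. First, you extract from the $C^3$-smoothness and strict convexity the decay rate $b_j'\sim c/j^2$ (computing the tangent lines at the points where $f'(x)=1/k$, which requires the $C^3$ hypothesis since you Taylor-expand to order $x^3$). Second, from $\eps\cdot\eps<1$ and the fact that unused cuts of index $\ge N$ contribute $(d b_j'/\la_a)^2$ to the sum, you obtain an upper bound on $d$ of the form $dc\lesssim K^{r-1/2}$ where $K$ is the smallest index of a cut of size $\ge$ roughly $1/d$, and $r=2$. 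Third — and this is the pivot you are missing — this bound forces $d b_j / \la_a < 1/2$ for several indices $j$ just below $N$, so that if $\Tm_j\ge 1$ there (which it is, by your reduction to the case where the class uses all cuts up to index $N$), each such index contributes strictly more than $1/4$ to $\eps\cdot\eps$. Four of them already violate $\eps\cdot\eps<1$, so $N$ is bounded independently of $\bE$. Your phrase ``$\Tm_{k,j}$ is essentially determined'' glosses over this dichotomy: when $d b_j/\la_a<1/2$, a nonnegative integer within error $<1$ is \emph{not} uniquely determined, and it is precisely the ambiguity between $0$ and $1$ (with its guaranteed cost of $>1/4$) that drives the contradiction. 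I would also flag that your invocation of the Diophantine identities $c_1(\bE_k)=1$ and $\bE_k\cdot\bE_k=-1$ is a red herring here; the paper's argument uses only the error-vector inequality \eqref{eq:eps2} (which is itself derived from those identities) and needs nothing further from them. The alternative ``cleaner route'' via perfect classes anchored at $a_0$ that you float at the end is not pursued in the paper and, as far as I can tell, would not be cleaner: the obstructive classes need not be perfect, and there is no obvious arithmetic pattern to exclude.
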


Before proving the proposition, we give a proof of Theorem \ref{thm:class}, which utilizes Proposition~\ref{prop:round}.

\begin{proof}[Proof of Theorem \ref{thm:class}]
    Here, $X_\Omega$ is a smooth convex toric domain. If $X_\Omega$ contains a neighborhood of the origin, then $\p^+\Omega$ must be smooth.  So either $X_\Omega$ is an ellipsoid, or the perimeter $\p^+\Omega$ 
    has a curvy point and there is no staircase by Proposition~\ref{prop:round}. If $X_\Omega=E(1,b)$, then 
    by Corollary~\ref{cor:ellip} and work of Cristofaro-Gardiner in \cite{CG2}, $X_\Omega$ has an infinite staircase exactly if it is a ball, a scaling of $E(1,2),$ or a scaling of $E(1,3/2).$
    
    If $X_\Omega$ does not contain a neighborhood of the origin, then again $\p\Omega$ must have a curvy point, and the claim holds as above.
\end{proof}

We now return to the proof of Proposition \ref{prop:round}, which
is based on the following lemma.   Recall from the discussion after \eqref{eq:E} that the only really relevant obstructive  classes $\bE = dL - \sum_{j=1}^N \Tm_j \TE_j - \sum_{i=1}^n m_i E_i$ are ordered, that is  $\Tm_1\ge \Tm_2\ge \dots$ and $m_1\ge m_2\ge\dots$.

\begin{lemma}\label{lem:round}   Let $\Om = \Om(b;(b_j))$ be such that $(b_j)$ contains a subsequence $(b_k')_{k\ge k_0},$ 
of the form $b_k' = c/k^r + o(1/k^r)$ for some $r>1$ and constants $c>0, k_0 \ge 1$. Then there is $N_0$ such that no  (ordered) obstructive class $\bE$
with $\Tm_j>0$ for all $j<N$  has $N\ge N_0$. 
\end{lemma}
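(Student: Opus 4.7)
The proof uses the error vector analysis from the proof of Proposition~\ref{prop:accum1}. For any obstructive quasi-exceptional class $\bE$ of degree $d$ with break point $a$, the error vector $\eps$ defined by $\bbm_\bE = \frac{d}{\la_a b}\bw + \eps$ satisfies $\eps\cdot\eps < 1$, and for indices $j$ in the negative-weight block one has $\eps_j = \Tm_j - db_j/b$. If $\Tm_j > 0$ (hence $\Tm_j \geq 1$, since $\Tm_j$ is an integer) and $b_j \leq b/(2d)$, then $\eps_j \geq 1/2$, contributing at least $1/4$ to $\eps\cdot\eps$. Under the hypothesis that $\Tm_j > 0$ for all $j < N$, this forces at most three such $j$, and since $(b_j)$ is nonincreasing we deduce $b_{N-4} > b/(2d)$. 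Moreover, by the ordering condition, $\Tm_j = 0$ for all $j \geq N$.

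My plan is then to extract two incompatible bounds on $K := \max\{k : j_k < N\}$, where $(j_k)$ are the indices of the subsequence $(b_k')$ in $(b_j)$. For the upper bound, the at-most-three indices $j < N$ with $b_j \leq b/(2d)$ include every subsequence index $k \in (K', K]$, where $K' := \max\{k : b_k' > b/(2d)\}$. Hence $K \leq K' + 3$, and the asymptotic $b_k' = c/k^r + o(1/k^r)$ gives $K' \leq (2dc/b)^{1/r}(1 + o(1))$. Therefore $K \lesssim d^{1/r}$ for $d$ large.

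For the lower bound, since $\Tm_{j_k} = 0$ for every $k > K$, we have $\eps_{j_k}^2 = d^2(b_k')^2/b^2$ for such $k$, so
\[
1 \;>\; \sum_{k > K} \eps_{j_k}^2 \;=\; \frac{d^2}{b^2}\sum_{k > K}(b_k')^2 \;\sim\; \frac{d^2 c^2}{(2r-1)\,b^2\, K^{2r-1}},
\]
using the tail estimate $\sum_{k>K}1/k^{2r} \sim 1/((2r-1)K^{2r-1})$. Rearranging yields $K \gtrsim d^{2/(2r-1)}$.

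Combining these bounds forces $d^{2/(2r-1)} \lesssim d^{1/r}$. Since $\frac{2}{2r-1} - \frac{1}{r} = \frac{1}{r(2r-1)} > 0$ for all $r > 1/2$, this inequality fails for $d$ larger than a constant $d_0$ depending only on $b, c, r$. Hence every obstructive $\bE$ satisfying $\Tm_j > 0$ for all $j < N$ must have $d \leq d_0$, and from $\sum_j \Tm_j \leq 3d-1$ together with $\Tm_j \geq 1$ for $j < N$ we get $N - 1 \leq 3d_0 - 1$, i.e.\ $N \leq 3d_0 =: N_0$. The main obstacle will be making the asymptotic inequalities genuinely uniform: one must carefully track the $o(1)$ correction terms (depending on $k_0$ and the rate in the subsequence hypothesis) to produce an explicit threshold $d_0$ beyond which the two competing bounds on $K$ contradict each other, rather than relying on ``$d$ large'' in an unquantified way.
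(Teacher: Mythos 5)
Your argument is correct and rests on exactly the same pillars as the paper's: the obstructive bound $\eps\cdot\eps<1$, the tail estimate $\sum_{k>K}d^2(b'_k)^2\lesssim 1$ (giving $K\gtrsim d^{2/(2r-1)}$), and the observation that $\Tm_j\ge 1$ with $b_j\lesssim 1/d$ forces $\eps_j\ge 1/2$, so at most three such $j$ can occur. The only cosmetic difference is in how the contradiction is assembled: you translate the ``at most three small $b_j$'' fact into an explicit upper bound $K\lesssim d^{1/r}$ and compare exponents $\tfrac{2}{2r-1}>\tfrac{1}{r}$ to bound $d$ (then $N\le 3d$), whereas the paper uses the same lower bound on $K$ to show $db_{j_i}\lesssim K^{-1/2}\to 0$ for the four subsequence indices just below $N$ and produces four entries of $\eps$ exceeding $1/2$ directly. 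These are equivalent; your phrasing is perhaps a touch more transparent about where the exponent gap $\tfrac{1}{r(2r-1)}>0$ enters, and the reduction ``$d\le d_0\Rightarrow N\le 3d_0$'' cleanly makes $N_0$ explicit. Your flagged concern about uniformity in the $o(1)$ terms is real but routine: pick $k_1$ so that $\tfrac{c}{2k^r}\le b'_k\le \tfrac{2c}{k^r}$ for $k\ge k_1$, and handle the bounded-$K$ regime separately by noting that then $\sum_{k>K}(b'_k)^2$ is bounded below by a fixed positive constant, which bounds $d$ directly.
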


The proof is deferred to the end of the subsection

\begin{cor}\label{cor:round}  In the situation of Lemma~\ref{lem:round}, the set of centers of  classes that are obstructive for $X_\Om$ is finite.
  In particular, $X_\Om$  has no staircase.
\end{cor}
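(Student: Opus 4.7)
The plan is to deduce the corollary from Lemma~\ref{lem:round} by combining it with Corollary~\ref{cor:accum3}. Note first that the hypothesis of Lemma~\ref{lem:round} forces the negative weight sequence $(b_j)$ to be infinite, since $b_k' = c/k^r + o(1/k^r)$ defines an infinite subsequence of positive weights.

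Suppose for contradiction that the set of break points of obstructive classes for $X_\Om$ is infinite. Because every obstructive class may be reordered without changing its break point (see the footnote in the proof of Proposition~\ref{prop:accum1}), we may choose for each such break point an ordered obstructive representative. This produces an infinite sequence $\bE_k$ of ordered obstructive classes with distinct break points. By Corollary~\ref{cor:accum3}, since $(b_j)$ has infinitely many nonzero entries, the number $N_k$ of nonzero entries $\Tm_{kj}$ in $\bE_k$ tends to infinity as $k \to \infty$.

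On the other hand, because each $\bE_k$ is ordered (so that $\Tm_{k1} \ge \Tm_{k2} \ge \dots \ge 0$ is a nonincreasing sequence of nonnegative integers), the nonzero entries form an initial segment; that is, if $\bE_k$ has $N_k$ nonzero $\Tm$-entries, then $\Tm_{kj} > 0$ for all $j < N_k + 1$. Applying Lemma~\ref{lem:round} to each $\bE_k$ with $N = N_k + 1$ gives $N_k + 1 < N_0$ for all $k$, contradicting $N_k \to \infty$. Hence the set of centers is finite.

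Finally, if $X_\Om$ admitted an infinite staircase, then by definition $c_\Om$ would have infinitely many nonsmooth points, and each of these nonsmooth points is realized as the break point of some live obstructive class; this would produce infinitely many distinct centers, contradicting what we just established. Therefore $X_\Om$ has no staircase. The only step that requires any real work is the interplay between ordering and Corollary~\ref{cor:accum3}, and this is essentially immediate once the convention that nonzero entries are initial has been noted.
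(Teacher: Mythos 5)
Your proof of the finiteness claim is correct but takes a genuinely different route from the paper's. The paper argues as follows: by Lemma~\ref{lem:round}, every ordered obstructive class has $\Tm_j = 0$ for $j \ge N_0$, so the obstruction functions $\mu_{\bE_k}$ are unchanged if one truncates the weight sequence at $N \ge N_0$; therefore the same classes are obstructive for the truncated domains $\Om_N$, forcing the break points to converge simultaneously to $a_{\Om_N}$ for all $N\ge N_0$, which is impossible since the accumulation points $a_{\Om_N}$ eventually differ. Your approach instead invokes Corollary~\ref{cor:accum3} directly to get $N_k\to\infty$ and then confronts this head-on with Lemma~\ref{lem:round}. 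This is cleaner because it reuses a lemma already proved in the paper rather than re-running a truncation argument that is, in substance, the same argument used to prove Corollary~\ref{cor:accum3} in the first place. The observation that ordering forces the nonzero $\Tm_{kj}$ to be an initial segment, so that $N=N_k+1$ is a legitimate choice in Lemma~\ref{lem:round}, is correct and is exactly the point needed to make the two ingredients mesh.

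One small caveat concerns your final sentence deducing ``no staircase.'' You assert that each nonsmooth point of $c_\Om$ is realized as the break point of some live obstructive class. The paper explicitly does \emph{not} establish this; see Remark~\ref{rmk:perfectObs}, which says that the proof of Proposition~\ref{prop:accum1}(ii) does not show that the break points of the obstructing classes give visible peaks. What the proof of Proposition~\ref{prop:accum1}(ii) does give is this: if $c_\Om$ has infinitely many nonsmooth points, then (after passing to a subsequence) these points lie on the graphs of obstruction functions $\mu_{\bE_k}$ for distinct classes $\bE_k$ of unbounded degree, with break points $a_k$ lying at bounded distance from the nonsmooth points, and the constant-$a_k$ case is explicitly ruled out. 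This is what produces infinitely many distinct centers, and you should cite that argument rather than the stronger (and unproved) claim that nonsmooth points coincide with break points. With that substitution your proof is complete.
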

\begin{proof}  If there were infinitely many such centers, then they would have to a form a sequence that converges to
the accumulation point $a_\Om$.  Since we may always replace these classes by the corresponding ordered classes,  Lemma~\ref{lem:round} implies that there is $N_0$ so that the obstructions given by this set of ordered classes do not depend on the $b_j, j\ge N_0$. Hence, because $\Vol(\Om) < \Vol(\Om_N)$ where $\Om_N = \Om(b; (b_j)_{j <N})$,
 these classes are also obstructive in $X_{\Om_N}$ for all $N\ge N_0$. Thus their centers would have to converge to $a_{\Om_N}$
 for all such $N$.  But if $a_{\Om_{N}} = a_{\Om_{N+1}}$ for some $N>N_0$ then  $\frac{\Per(\Om_{N})}{\Vol(\Om_{N})} = 
 \frac{\Per(\Om_{N+1})}{\Vol(\Om_{N+1})} = :\frac PV $, which is impossible when $b_{N+1}$ is so small that $b_{N+1} P < V$.
\end{proof}

\begin{proof}[Proof of Proposition~\ref{prop:round} assuming Lemma~\ref{lem:round}] In the cutting process that expresses $\Om$ as $\Om(b;(b_j))$ there is a cut that is tangent to $\p \Om$ at some point $p\in S$. 
 By an affine change of coordinates, we may assume that $p = (0,0)$, that the cut is along the $x$-axis and that near $p$ the curve $S$ is the graph of a $C^2$-smooth increasing function $f:[0,\eps)\to [0,\infty)$.  Moreover, since $S$ is nonlinear,   by changing the choice of $p$, if necessary, we may assume that $f''(0)>0$.  
 Thus we are in the situation of Figure~\ref{fig:S}, and, for some $k_0$, there is a sequence of cuts tangent to $S$ at points $(x_k,f(x_k)), k\ge k_0, $ such that $x_k\to 0$ and $f'(x_k) = 1/k$.
 
 Assume first that
 \begin{align*}
 f(x) = \frac{c}{2} x^2 + \frac{d}{3} x^3
 \end{align*}
  for some constants $c,d$ so that
 $f'(x_k) = cx_k + d x^2= \frac 1k$.  Let us also assume for convenience that $d > 0$.  Then this has positive solution
\begin{align*} x_k = \frac{ -c + \sqrt{ c^2 + 4\frac{d}{k} }}{2d} &= \frac{c}{2d} \bigl(-1 + \sqrt{1 + \frac{4d}{c^2k}}\bigr) \\
& = \frac{c}{2d}\bigl(\frac{2d}{c^2k} - \frac{4d^2}{c^4 k^2}\bigr) + O(\frac{1}{k^3})\\
& = C_1 \frac{1}{k} + C_2 \frac{1}{k^2} + O(\frac{1}{k^3})
\end{align*}
 for some explicit constants $C_1, C_2$.  Then 
 \[ f(x_k) =  C'_1 \frac{1}{k^2} + C'_2 \frac{1}{k^3} + O(\frac{1}{k^4}).\]
 Let $y_k$ denote the point where the tangent line through $(x_k,f(x_k))$ meets the $x$-axis.  Then the cuts have size
\begin{align*} y_k - y_{k+1} &= (x_k - x_{k+1}) - k ( f(x_k) - f(x_{k+1}) ) \\
&= (C_1 - C'_1) \frac{1}{k(k+1)} + O( \frac{1}{k^3}) = \frac{1}{2c} ( \frac{1}{k(k+1)}) + O( \frac{1}{k^3}).
\end{align*}
 Thus we get a sequence of cuts of sizes $y_k - y_{k+1} = \frac{1}{2ck(k+1)} + O(\frac{1}{k^3})$.  The same result holds in the case $d \le 0$. Indeed, the $d = 0$ case is simpler, and  the $d < 0$ case follows by essentially the same argument.
 
 In general,   when $x\approx 0$, $f(x) = \frac{c}{2} x^2 + \frac{d}{3} x^3 + O(x^4)$, for some constants $c$ and $d$. A similar argument then shows that, as before, for large $k$ the cuts have size $C \frac{1}{k(k+1)}+ O(\frac1{k^3})$. 
 
 Therefore, in all cases we may apply Lemma~\ref{lem:round} to conclude that $X_\Om$ admits only finitely many obstructive classes. 

  \end{proof}
  
  \begin{figure}
\includegraphics[scale=0.6]{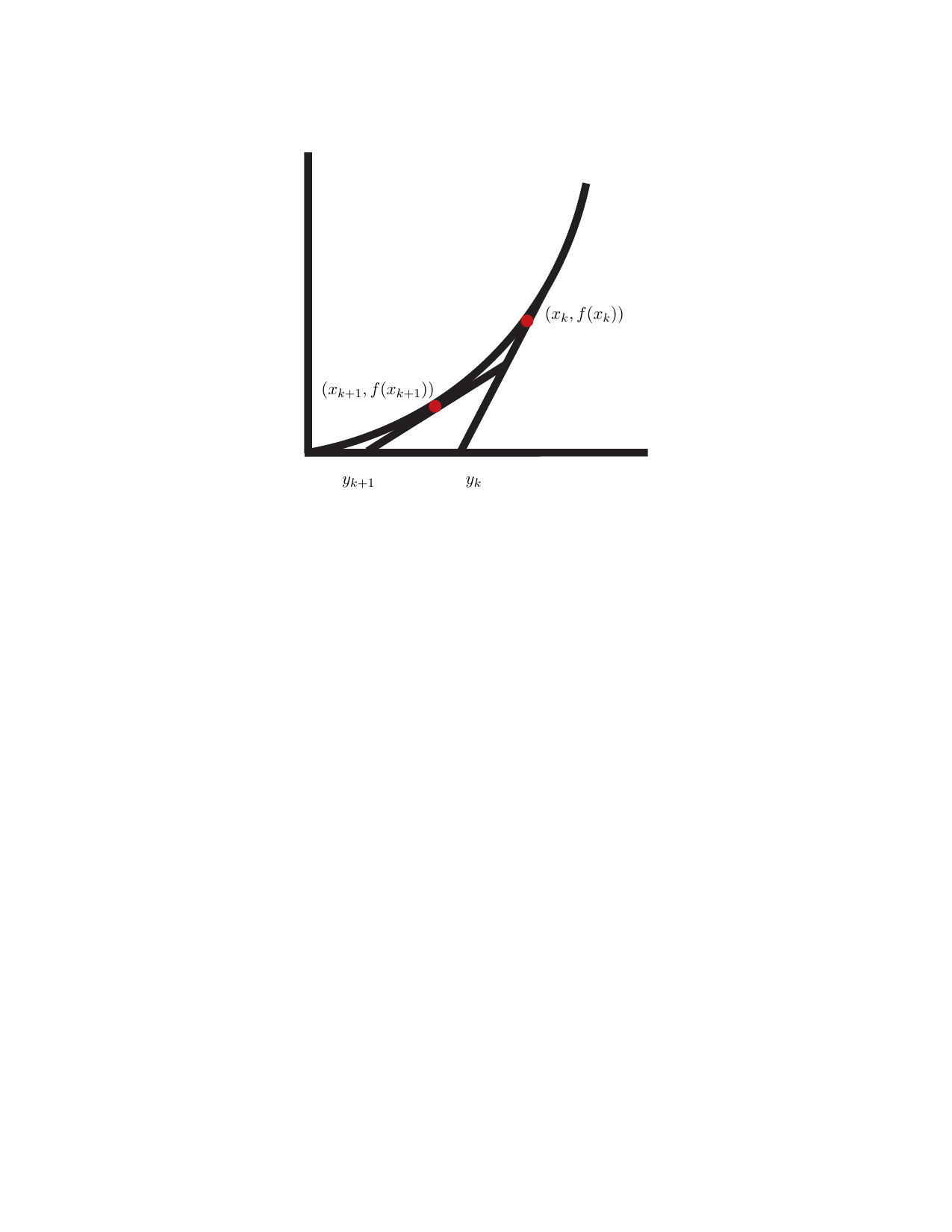}\vspace{-4in}
\caption{This figure illustrates the cutting algorithm in the proof of Proposition~\ref{prop:round}.}
\label{fig:S}
\end{figure}

\begin{proof}[Proof of Lemma~\ref{lem:round}] Suppose, by contradiction that there is an  infinite sequence of ordered obstructive classes $\bE_N$ where  $N$ increases to $ \infty$ such that
$\Tm_j= 0$ for $j\ge N$    while $\Tm_{N-1}> 0$.
In the notation of \eqref{eq:definition}, for each $N$  write 
\[ \bbm_{\bE} = \frac d{\la_a}\bw + \eps\] 
as in \eqref{eq:eps}, where $\eps$ is the error vector and we have set $b=1$ for simplicity.
(The other quantities $\la_a, \bw$  are defined in \eqref{eq:definition}.  To simplify notation, we also suppress the index $N$.)
 Note that by Corollary~\ref{cor:accum2} we may choose constants $M_1,M_2$ (independent of $\bE_N$) 
 so that  $0 < M_1 \le \lambda_a \le M_2$.
We aim to  show that under the given conditions  when $N$ is sufficiently large we cannot have $\eps\cdot\eps< 1$,  basically    because  sequences of the form $b_k' = c/k^r + o(1/k^r), k\ge 0,$ cannot be well approximated by integers.
\MS

For each $N$ the sequence $b_j, j\ge N,$ contains a subsequence with entries from the $b'_k, k\ge k_0$.  Let $K(N)$ denote the smallest value of $k$ in this sequence.
Then the error vector $\eps$ contains a subsequence with entries of the form $- d b'_k/\lambda_a, k\ge K(N)$.

 Since $\eps\cdot \eps< 1$, we must have, for $N$ sufficiently large, 
\begin{align*}
M_2^2 &> \sum_{k \ge K_N}   d^2 (b'_k){^2}  \ge d^2 
\sum_{k\ge K} \frac{c^2}{2k^{2r}} \\ & \ge (dc)^2/2 \int_{K}^\infty x^{-2r} dx \ge
 \frac {(dc)^2}{2(2r-1)K^{2r-1}},
\end{align*}
so that $dc \le \sqrt{2(2r-1)} M_2 K^{r-1/2}$
(Note that we have absorbed the $o(1/k^{2r})$ term into the $c/k^{2r}$ term  in the above equation, hence the appearance of a $2$ in the denominator.)

Now suppose $N$ so large that $K: = K(N)\ge k_0+ 5$. Then there are 
 four distinct $j_i < N, 1\le i\le 4, $ with $b_{j_i} = b'_k$ for some $k\ge k_0$ with $b_{j_i} \le 2 c (K-i)^{-r}$.  (Note that we have again absorbed the $o(1/k^r)$ term into the coefficient, hence the appearance of the $2$.)

For such $j_i$ we have
$$
d b_j \le 2 \frac {dc}{ (K-i)^r}  \le 2M_2 \cdot 2^{r} \sqrt{ (2r-1) } \frac{ K^{r-1/2}}{K^r}.
$$
(We have absorbed the ratio $K/(K-i)$  into the $2^r$ term.)  Thus for sufficiently large $N$ (and hence large  $K=K(N)$) we have $\frac{d b_{j}}{\lambda_a} < 1/2$ for these four $b_j$.
But then the corresponding coefficients of $\eps$ are $> 1/2$ since the corresponding entries $\Tm_j$ in $\widetilde{\bf m}$ are at least $1$. Therefore, since there are four of these terms, we find that $\eps\cdot\eps> 1$, a contradiction.
\end{proof}

\subsection{Cases in which the accumulation point is obstructed}\label{ss:accobstr}
We can also rule out staircases in a different way by giving suitable constraints on the perimeter such that $X_\Omega$ is blocked from having a staircase because  the capacity function is greater than the volume at the accumulation point.  We  illustrate this
approach by describing some examples in which the obstruction comes from the the Gromov width $c_{Gr}(X_\Omega):= \max\{ \la | B(\la)\se X_\Om\}$. For the domains $X_\Om$ this is just
 the reciprocal of
 the first ECH capacity.) 
 
\begin{prop} \label{prop:noStairGromov}
     Let $X_\Omega$ be a convex toric domain such that $c_{Gr}(X_\Omega) \leq d \le \sqrt{\Vol(\Om)}$.
     Then, if 
     \begin{equation} 
     \label{eq:PVbound} \Per(\Omega)<d+\frac{\Vol(\Omega)}{d},
     \end{equation} 
     there cannot be a full filling at the accumulation point of $X_\Omega.$ In particular, $X_\Omega$ does not have a staircase. 
\end{prop}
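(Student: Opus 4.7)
The strategy is to show directly that under the stated hypothesis the accumulation point $a_0$ of $X_\Omega$ is \emph{obstructed}, i.e.\ $c_\Omega(a_0) > V_\Omega(a_0)$. Once this is done, Theorem~\ref{thm:acc}(ii) immediately rules out an infinite staircase, since by that theorem the presence of a staircase forces $c_\Omega(a_0)=V_\Omega(a_0)$.

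\textbf{Step 1 (Gromov-width lower bound).} First I would observe that for every $a\ge 1$,
\[
c_\Omega(a) \;\ge\; \frac{1}{c_{Gr}(X_\Omega)} \;\ge\; \frac{1}{d}.
\]
Indeed, if $E(1,a) \hooksymp \lambda X_\Omega$, then $B(1)\subset E(1,a) \hooksymp \lambda X_\Omega$ forces $\lambda\, c_{Gr}(X_\Omega)\ge 1$, so $\lambda\ge 1/d$. Taking the infimum over such $\lambda$ gives the displayed bound. In particular, $c_\Omega(a_0)\ge 1/d$.

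\textbf{Step 2 (Translating the perimeter hypothesis into a bound on $a_0$).} Recall from Theorem~\ref{thm:acc}(i) that the accumulation point is the unique $a_0 \ge 1$ solving
\[
a_0 + \frac{1}{a_0} \;=\; \frac{\Per(\Omega)^2}{\Vol(\Omega)}-2.
\]
I claim the hypothesis $\Per(\Omega) < d+\Vol(\Omega)/d$ is equivalent to $a_0 < \Vol(\Omega)/d^2$. Set $t:=\Vol(\Omega)/d^2$; since $d\le\sqrt{\Vol(\Omega)}$ we have $t\ge 1$. Squaring the hypothesis and dividing by $\Vol(\Omega)$,
\[
\frac{\Per(\Omega)^2}{\Vol(\Omega)}-2 \;<\; \frac{\Vol(\Omega)}{d^2} + \frac{d^2}{\Vol(\Omega)} \;=\; t+\frac{1}{t}.
\]
Because the function $x\mapsto x+1/x$ is strictly increasing for $x\ge 1$, and both $a_0,t\ge 1$, this is equivalent to $a_0<t=\Vol(\Omega)/d^2$.

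\textbf{Step 3 (Comparison with the volume constraint).} From $a_0<\Vol(\Omega)/d^2$ we get
\[
V_\Omega(a_0) \;=\; \sqrt{\tfrac{a_0}{\Vol(\Omega)}} \;<\; \frac{1}{d}.
\]
Combining with Step 1, $c_\Omega(a_0)\ge 1/d > V_\Omega(a_0)$, so $a_0$ is strictly obstructed. Thus no full filling at $a_0$ is possible, proving the first assertion. By Theorem~\ref{thm:acc}(ii), an infinite staircase would force $c_\Omega(a_0) = V_\Omega(a_0)$, a contradiction; hence $X_\Omega$ admits no staircase.

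\textbf{What I expect to be the main obstacle.} There is no genuine technical obstacle here: once one packages the accumulation-point equation as $a_0+1/a_0 = \Per^2/\Vol - 2$, the argument is a one-line algebraic equivalence combined with the elementary Gromov-width obstruction. The only bookkeeping point worth being careful about is checking that $\Vol(\Omega)/d^2\ge 1$, which is exactly what the hypothesis $d\le\sqrt{\Vol(\Omega)}$ supplies, so that the monotonicity step $a_0<t \Leftrightarrow a_0+1/a_0 < t+1/t$ is valid.
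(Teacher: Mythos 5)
Your proposal is correct and follows the same route as the paper's proof: both use the Gromov-width lower bound $c_\Omega(a_0)\ge 1/d$ and then translate the perimeter hypothesis into $a_0<\Vol(\Omega)/d^2$ via the accumulation-point equation $a_0 + 1/a_0 = \Per^2/\Vol - 2$; your explicit appeal to the strict monotonicity of $x\mapsto x+1/x$ on $[1,\infty)$ is just a cleaner packaging of the paper's step ``because $\Vol/d^2 \ge d^2/\Vol$.''
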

\begin{proof} 
     Let $a_0 \geq 1$ be the accumulation point of $X_\Omega.$ We show that under the given conditions, the Gromov width obstructs the accumulation point.
     If the Gromov width of $X_\Omega$ is $\leq d,$ then 
     \[c_{X_\Omega}(a_0) \geq c_{X_\Omega}(1) \geq 1/d.
     \]
    For there to be a staircase accumulating at $a_0$, we then must have
    \[
     \sqrt{a_0/\Vol(\Omega)} \geq 1/d \implies
     d^2a_0 \geq \Vol(\Omega).
     \]
      Hence, it suffices to show that under the assumptions on $\Omega$, we have
     \begin{equation} \label{eq:GrVolBound} 
     d^2a_0< \Vol(\Omega). 
     \end{equation}
Since $\Per(\Omega)<d+\Vol(\Omega)/d,$ we have
     \[ \frac{1}{a_0}+a_0+2=\frac{\Per(\Omega)^2}{\Vol(\Omega)}<\frac{(d+\Vol(\Omega)/d)^2}{\Vol(\Omega)}=\frac{d^2}{\Vol(\Omega)}+\frac{\Vol(\Omega)}{d^2}+2,\]
     and hence, because $\frac{\Vol(\Omega)}{d^2}\ge \frac{d^2}{\Vol(\Omega)}$ we conclude $a_0<\Vol(\Omega)/d^2$ as desired. 
\end{proof}

In the following corollary, we list some domains that satisfy the assumptions of Proposition~\ref{prop:noStairGromov} and as such have no staircases. For brevity we will say that a {\bf piece of the boundary is irrational} if it has zero affine length. Thus it is the connected union of  line segments of irrational slope with segments that are nonlinear, but not necessarily $C^3$-smooth.

\begin{cor} The  domain $X_\Omega$ cannot have a staircase if $\Omega$ is given by: 
\begin{itemlist}
    \item[{\rm (i)}]  The region bounded by the axes and an irrational piece from $(0,1)$ to $(b,0)$ with $b \geq 1$.
    \item[{\rm (ii)}] The region bounded by the axes and a curve  $S$ from $(0,1)$ to $(b,0)$ with maximum $y$-coordinate of $d$ such that there is a point $(c,d)\in S$ 
    for which  $\Aff(S)\leq d +\frac{c}{d} - 1$.
    \item[{\rm (iii)}] A \lq\lq fuzzy\rq\rq\, polydisk: i.e. for $b \geq 1$, a domain bounded by the axes, a horizontal line from $(0,1)$ to $(0,b)$, and a strictly convex piece  from $(1,b)$ to $(0,b+\eps)$. 
\end{itemlist}
\end{cor}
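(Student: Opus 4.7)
The strategy is to verify the hypotheses of Proposition~\ref{prop:noStairGromov} in each case---namely $c_{Gr}(X_\Omega) \le d \le \sqrt{\Vol(\Omega)}$ and $\Per(\Omega) < d + \Vol(\Omega)/d$---for a suitable $d$. The Gromov width bound comes each time from Gromov's nonsqueezing theorem applied to a containing strip, and the perimeter bound exploits the fact that an irrational or strictly convex boundary piece contributes zero affine length to $\Per(\Omega)$ while still enlarging the region.

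In case (i), $\Omega$ lies in the strip $\{0\le y\le 1\}$, so $X_\Omega$ embeds in $\mathbb{C}\times B^2(1)$ and $c_{Gr}(X_\Omega)\le 1$. Take $d=1$. The zero affine length of the irrational piece gives $\Per(\Omega)=1+b$, and convexity implies $\Omega$ contains the triangle with vertices $(0,0),(b,0),(0,1)$, so $\Vol(\Omega)\ge b\ge 1$. When this volume inequality is strict---equivalently, when the irrational piece bulges above the chord from $(0,1)$ to $(b,0)$---we obtain $\Per(\Omega)=b+1<1+\Vol(\Omega)=d+\Vol(\Omega)/d$ and Proposition~\ref{prop:noStairGromov} applies. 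Otherwise the irrational piece must be the straight chord itself, which forces $b$ to be irrational (so that the chord indeed has zero affine length) and $X_\Omega=E(1,b)$, to which Corollary~\ref{cor:ellip} applies.

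In case (ii), let $d$ denote the maximum $y$-coordinate of $S$, attained at $(c,d)$; then $\Omega\subset\{0\le y\le d\}$ gives $c_{Gr}(X_\Omega)\le d$. Convexity of $\Omega$ implies $S$ lies above the two chords $(0,1)$--$(c,d)$ and $(c,d)$--$(b,0)$, so $\Omega$ contains the quadrilateral with vertices $(0,0),(b,0),(c,d),(0,1)$, whose area is $(bd+c)/2$; hence $\Vol(\Omega)\ge bd+c$. The hypothesis on $\Aff(S)$ then yields
\[ \Per(\Omega) = b + 1 + \Aff(S) \le b + d + c/d = d + \frac{bd+c}{d} \le d + \frac{\Vol(\Omega)}{d}. \]
Strict inequality---guaranteed whenever $S$ is not exactly the piecewise linear path through $(0,1),(c,d),(b,0)$ meeting both bounds tightly---together with the constraint $d\le\sqrt{\Vol(\Omega)}$ (which is implicit in the geometry of the statement) gives the hypothesis of Proposition~\ref{prop:noStairGromov}.

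In case (iii), $\Omega$ lies in $\{0\le x\le 1\}$, so $c_{Gr}(X_\Omega)\le 1$; set $d=1$. Strict convexity of the top piece rules out any line segments, so its affine length vanishes and $\Per(\Omega)=1+b+0+(b+\eps)=1+2b+\eps$. Since this piece lies strictly above its chord from $(1,b)$ to $(0,b+\eps)$, the portion of $\Omega$ above the polydisk edge $y=b$ has area strictly greater than $\eps/2$; hence $\Vol(\Omega)>2b+\eps$ and
\[ \Per(\Omega) = 1 + 2b + \eps < 1 + \Vol(\Omega) = d + \Vol(\Omega)/d. \]
Since $d=1\le\sqrt{2b+\eps}<\sqrt{\Vol(\Omega)}$ follows from $b\ge 1$, Proposition~\ref{prop:noStairGromov} applies.

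The main obstacle is handling the degenerate configurations where $\Per<d+\Vol/d$ is only non-strict---most notably the irrational ellipsoid subcase of (i), which must be covered separately via Corollary~\ref{cor:ellip}, and the tight piecewise linear case of (ii). In cases (ii) and (iii), strict convexity of the boundary piece together with the strictness built into the hypothesis produces the required inequality in all generic situations.
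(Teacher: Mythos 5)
Your approach mirrors the paper's: in all three cases one applies Proposition~\ref{prop:noStairGromov} after bounding the Gromov width, perimeter, and volume. However, there is a genuine gap in your treatment of case (i). You assert that $\Omega$ lies in the strip $\{0\le y\le 1\}$, so that $c_{Gr}(X_\Omega)\le 1$ and you may take $d=1$. But nothing in the hypothesis forces the irrational piece $S$ to be a decreasing curve: $S$ starts at $(0,1)$ and ends at $(b,0)$ but is free to bulge above $y=1$ (an irrational piece is merely a curve of zero affine length, and convexity at the corner $(0,1)$ does not prevent $S$ from initially moving upward). When $S$ rises above $y=1$, your Gromov width bound fails and the choice $d=1$ no longer satisfies the hypothesis $c_{Gr}(X_\Omega)\le d$. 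The paper's own proof avoids this by letting $d$ be the \emph{maximum $y$-coordinate} of $\p\Omega$ (so $d\ge 1$), from which $c_{Gr}(X_\Omega)\le\min(b,d)$, $\Per(\Omega)=b+1$, and $\Vol(\Omega)>bd$; the inequality $\Per<d'+\Vol/d'$ then holds with $d'=\min(b,d)$, since $\Vol/d' > $ the larger of $b,d$.

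On the other hand, you correctly noticed a subtlety the paper's terse proof glosses over: when the irrational piece is exactly the chord from $(0,1)$ to $(b,0)$ (forcing $b\notin\Q$, hence $X_\Omega=E(1,b)$), the volume inequality $\Vol>bd$ degenerates to an equality and Proposition~\ref{prop:noStairGromov} does not directly apply; this case must instead be handled by Corollary~\ref{cor:ellip}. Retain that observation but repair the Gromov width step by using the maximal $y$-coordinate as the paper does. Your cases (ii) and (iii) match the paper's argument; in (ii) you should also justify why $d\le\sqrt{\Vol(\Omega)}$ (e.g.\ by replacing $d$ with $\min(b,d)$ when $b<d$) rather than asserting it is ``implicit in the geometry.''
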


\begin{proof}
To rule these out, we merely write the Gromov width, perimeter, and volume bounds and then  check that \eqref{eq:PVbound} in Proposition~\ref{prop:noStairGromov} holds. 

    For (i), if the point on $\Omega$ with maximum $y$-coordinate is $d \geq 1,$ then $c_{Gr}(X_\Omega) \leq \min(b,d)$, $\Per(\Omega)=b+1,$ and $\Vol(\Omega)>bd$. 

    For (ii), we have $c_{Gr}(X_\Omega) \leq \min(b,d)$,  $\Vol(\Omega)>bd+c,$ and $\Per(\Om)\leq b+d +\frac{c}{d}$.

    For (iii), we have Gromov width $=1$, $\Per(\Omega)=2b+1+\eps$ and $\Vol(\Omega)>2b+\eps$. Note, we require the strictly convex assumption, so we get the strict bound on $\Vol(\Omega)$. 
\end{proof}

\begin{rmk}
If $c_{Gr}(X_{\Omega})=d$ then $ d \le \sqrt{\Vol(\Om)}$.  If we also  have  equality in \eqref{eq:PVbound},  then $1/d = c_{X_\Om}(1)$ equals the 
volume obstruction $\sqrt{\frac{a_0}{\Vol}}$ at  the accumulation point and there can be no
increasing staircase, though there could be a decreasing one. 
Note that we also have equality in \eqref{eq:PVbound} if we relax the strict convex assumption for the fuzzy polydisc, i.e. if our domain is given by replacing the vertical right hand edge of the polydisc by an irrationally slanted line. 

   Just as in the ellipsoid case, one might be able to rule  out the existence of a  descending staircase by computing an appropriate ECH capacity. 
  \end{rmk}

\subsection{Ghost stairs for irrational ellipsoids}\label{ss:ghost}

We now show that any irrational ellipsoid $E(1,\al)$ supports both ascending and descending sequences of perfect classes $(\bE_k)_{k\ge 1}$ that are obstructive but do not constitute staircases because they are {\bf overshadowed}: in other words there is a different class $\bE'$ (called an {\bf overshadowing class}) whose obstruction goes through the accumulation point and is at least as large as those from the $\bE_k$.
It turns out that these classes $\bE_k$ are determined by the  convergents to the irrational number $\al$, where the even (resp. odd) convergents give rise to the ascending (resp. descending) obstructive classes.
We begin by introducing some terminology.

A {\bf quasi-perfect class} $\bE=(d;\btm;{\bf m})$ is a quasi-exceptional class such that ${\bf m}=W(p,q)$ for some positive integers $p,q$. Here, $W(p,q)$ is the normalized (or integral)  weight expansion given by $W(p,q):=q\bw(p/q).$ A {\bf perfect class} is an exceptional quasi-perfect class. For a (quasi)-perfect class, we call $p/q$ the {\bf center} of the class. From \eqref{eq:wtai}, the Diophantine equations \eqref{eq:Dioph} for a quasi-perfect class are 
\begin{equation} \label{eq:DiophPerfect}
    3d-\sum_{i=1}^n \Tm_i=p+q \quad \text{and} \quad d^2-\sum_{i=1}^n \Tm_i^2=pq-1.
\end{equation} 

One essential feature of $W(p,q)$ is its relation to the continued fraction of $p/q.$ Following \cite[Section 2.2]{ball}, if the continued fraction of $p/q$ is $[a_0;a_1,\hdots,a_n],$ then 
\[ W(p,q)=\bigl(X_0(p/q)^{\times a_0},X_1(p/q)^{\times a_1},\hdots,X_n(p/q)^{\times a_n}\bigr)
\]
for some integers $X_i(p/q).$ Here the last entry is $X_n(p/q): =  1$, and the other integers $X_i : = X_i(p/q)$ $i = n-1, n-2,\dots$  are defined  recursively by
$X_{i-1} = a_i X_i + X_{i+1}$, where we set  $X_{n+1}(p/q): = 0$.  

 If $z$ is sufficiently close to $p/q =  [a_0;a_1,\hdots, a_n]$, then its continued fraction (which may be infinite) has the same initial terms
$a_0,\dots, a_{n-1}$ as $p/q$; moreover, when $n$ is odd, the $n$th entry is $a_{n}$ for $z< p/q$ and $a_{n} - 1$ for $z> p/q$.
The components $w_i(z)$ of the weight expansion $\bw(z)$ are now  linear functions of  $z$: for example if $7/3 < z < 5/2$ then 
$\bw(z) = (1,1,z-2, z-2,5 - 2z,\dots)$.  It is again convenient to write
\[ \bw(z)=\bigl(x_0(z)^{\times a_0},x_1(z)^{\times a_1},\hdots\bigr)\]
where the $x_i$ are appropriate linear functions. Thus for $z\in (7/3, 5/2)$ as above, $x_0(z) = 1, x_1(z) = z-2, x_2(z) = 5-2z$ and so on.

\begin{lemma} \label{lem:convObs}  Let $\bE = (d, \btm, W(p,q))$ be a quasi-perfect class with center $a: = p/q$, where $a$ has continued fraction expansion 
 $a= [a_0;a_1,\hdots,a_n]$.
     \begin{itemize}\item[{\rm(i)}] If $n$ is odd (resp. even) and $|p/q-z|>0$ is sufficiently small, then
 $x_{n+1}(z) = p_n-q_nz$ (resp. $x_{n+1}(z)=-p_n+q_nz$). 

\item[{\rm(ii)}] For suitable $z_1,z_2$ with $z_1< p/q < z_2$ we have
  \begin{align*}
   W(p,q) \cdot \bw(z)=\begin{cases}
qz \quad \text{if $z_1<z<p/q$} \\
  p \quad \text{if $p/q \leq z <z_2$}.\end{cases}
\end{align*}
 \item[{\rm(iii)}] 
  if $p/q$ is an odd convergent of $z,$ then $z<p/q$ and $\mu_{\bE,\bm{b}}(z)$ is given by 
  \begin{align}\label{eq:perfectObs1}
\mu_{\bE,\bm{b}}(z) =    \frac{qz}{bd-\btm \cdot \bm{b}}.
\end{align}
 \item[{\rm(iv)}] 
  if $p/q$ is an even convergent of $z,$ then $p/q<z$ and $\mu_{\bE,\bm{b}}(z)$ is given by 
  \begin{align}\label{eq:perfectObs2}
\mu_{\bE,\bm{b}}(z) =    \frac{p}{bd-\btm \cdot \bm{b}}.
\end{align}
\end{itemize}
\end{lemma}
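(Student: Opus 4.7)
The plan is to establish (i) directly from continued-fraction identities, then use (i) to compute $W(p,q)\cdot \bw(z)$ for (ii) by splitting into the two cases of how the continued fraction of $z$ compares to that of $p/q$, and finally deduce (iii) and (iv) by substitution into \eqref{eq:mu}.

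For (i), I would first establish the standard identity $x_i(z) = |q_{i-1}z - p_{i-1}|$ by induction from $x_0(z)=1$, $x_1(z)=z-a_0$, the weight recursion $x_{i+1}(z) = x_{i-1}(z) - a_i x_i(z)$, and the convergent recursion $p_i = a_i p_{i-1}+p_{i-2}$, $q_i = a_i q_{i-1}+q_{i-2}$. At $i = n+1$ this gives $x_{n+1}(z) = |qz - p|$, and the sign of $qz - p$ is fixed by the standard fact that odd convergents of $z$ lie above $z$ while even convergents lie below: when $n$ is odd the relevant neighborhood is $z < p/q$ (so $x_{n+1}(z) = p-qz$), and when $n$ is even it is $z > p/q$ (so $x_{n+1}(z) = -p + qz$).

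For (ii), I would split by which side of $p/q$ the variable $z$ lies on. On the ``extending'' side---where the continued fraction of $z$ extends that of $p/q$, i.e.\ $z < p/q$ when $n$ is odd and $z > p/q$ when $n$ is even---the initial block pattern of $\bw(z)$ matches $(x_0(z)^{\times a_0},\dots,x_n(z)^{\times a_n})$ and the remaining entries fall beyond the support of $W(p,q)$, so
\[W(p,q)\cdot \bw(z) \;=\; \sum_{i=0}^n a_i X_i\, x_i(z),\]
an affine function of $z$, say $Az + B$. Evaluation at $z = p/q$ gives $Az+B = q\sum_i a_i x_i(p/q)^2 = q\cdot(p/q) = p$ by \eqref{eq:wtai}. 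Writing $X_i = q\,x_i(p/q) = \sigma_i(p q_{i-1}-p_{i-1}q)$ with $\sigma_i = \op{sgn}(q_{i-1}(p/q)-p_{i-1})$, the slope works out to $A = \sum_{i\ge 1} a_i X_i \sigma_i q_{i-1} = \sum_{i\ge 1} a_i(p q_{i-1}-p_{i-1}q) q_{i-1}$, and telescoping via $a_i q_{i-1} = q_i - q_{i-2}$ together with the determinant identity $p_k q_{k-1} - p_{k-1} q_k = (-1)^{k-1}$ yields $A = q$ for $n$ odd and $A = 0$ for $n$ even. Hence the formula equals $qz$ on the $z<p/q$ side and $p$ on the $z>p/q$ side in the extending case. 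On the other side, the continued fraction of $z$ takes the non-canonical form $[a_0;a_1,\dots,a_n-1,1,\dots]$; the only change in $\bw(z)$ is that the final copy of $x_n(z)$ is replaced by $\tilde{x}_{n+1}(z) = x_{n-1}(z) - (a_n-1)x_n(z) = x_n(z) + x_{n+1}(z)$, where $x_{n+1}(z)$ is read from the CF recursion of $p/q$ (so $x_{n+1}(z) = \pm(qz-p)$ by part (i)). Since the last nonzero entry of $W(p,q)$ is $X_n = q\cdot x_n(p/q) = 1$, the dot product becomes
\[W(p,q)\cdot\bw(z) \;=\; \sum_{i=0}^n a_i X_i x_i(z) + X_n\cdot x_{n+1}(z) \;=\; (Az+B) \pm(qz - p),\]
and substituting $A, B$ with the parity-appropriate sign produces the complementary value. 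Parts (iii) and (iv) are then immediate: by definition of a quasi-perfect class, $\sum_i m_i a_i = W(p,q)\cdot\bw(z)$, so plugging (ii) into \eqref{eq:mu} yields the two formulas.

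The main obstacle is the algebra on the non-extending side of $p/q$ in (ii): the non-canonical identity $\tilde{x}_{n+1}(z) = x_n(z)+x_{n+1}(z)$, combined with the key fact $X_n = 1$, must interact with the extending-side value $Az+B$ to produce precisely the complementary formula with the correct signs, requiring careful bookkeeping of the parity-dependent signs throughout.
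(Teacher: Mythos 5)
Your proposal is correct, but it takes a genuinely different route from the paper.  The paper's own proof of this lemma is essentially a set of citations: part (i) is attributed to~\cite[Lem.\,2.2.1]{ball}, part (ii) to~\cite[Lemma 16]{BHM} (with the observation that it is implicit already in~\cite[\S 2.2]{ball}), and parts (iii) and (iv) are noted to be an ``obvious adaptation'' of that argument to the situation where $\btm$ and $\bm b$ are vectors rather than scalars (cf.\ the discussion around (2.22) in~\cite{AADT}).  Your proposal, by contrast, reconstructs the result from first principles via the identity $x_i(z)=|q_{i-1}z-p_{i-1}|$, the two-term recursions for $p_i,q_i$, and a telescoping evaluation of the slope $A$ of the affine function $z\mapsto W(p,q)\cdot\bw(z)$ on the extending side, followed by the passage $\tilde{x}_{n+1}(z)=x_n(z)+x_{n+1}(z)$ together with $X_n=1$ on the non-extending side.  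This is a legitimate, more elementary and self-contained argument that makes the parity bookkeeping transparent, at the cost of some added length.  I verified the telescoping separately: using $a_iq_{i-1}^2=q_iq_{i-1}-q_{i-1}q_{i-2}$ and $a_ip_{i-1}q_{i-1}=p_iq_{i-1}-p_{i-1}q_{i-2}-(-1)^{i-1}$, one gets $A=q_n$ for $n$ odd and $A=0$ for $n$ even, as you asserted (your phrase ``telescoping via $a_iq_{i-1}=q_i-q_{i-2}$'' slightly understates what is needed, since the telescoping is of $a_iq_{i-1}^2$ and $a_ip_{i-1}q_{i-1}$, not of $a_iq_{i-1}$ itself).

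Two small points worth making explicit in a write-up.  First, the statement of (i) does not specify which side of $p/q$ the variable $z$ lies on, but $x_{n+1}(z)$ only has the stated form on the ``extending'' side (or, equivalently, $x_{n+1}$ should be understood as the linear function $x_{n-1}-a_nx_n$ built from the continued fraction of $p/q$, regardless of which side $z$ is on); you implicitly chose the right reading, but it should be flagged.  Second, ``plugging (ii) into \eqref{eq:mu}'' is slightly too quick for (iii) and (iv): statement (ii) is local ($z_1<z<z_2$ near $p/q$), whereas (iii)/(iv) concern all $z$ of which $p/q$ is an odd/even convergent.  The point is that your extending-side argument for (ii) applies verbatim to any such $z$ (since the first $n+1$ partial quotients of $z$ then agree with those of $p/q$), which is exactly what is needed; it would be cleaner to state that extension explicitly rather than route it through (ii).
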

\begin{proof} The first  claim is proved in \cite[Lem.2.2.1]{ball}, while the second, though it is implicit in the results in \cite[\S2.2]{ball} 
is most clearly proved in \cite[Lemma 16]{BHM}.  Claim (iii) and (iv) 
is an obvious adaptation of this proof 
     to the case when $\btm$ and $\bm{b}$ are vectors rather than single numbers: see also the discussion concerning equation~(2.22) in \cite{AADT}. 
\end{proof}

\begin{lemma}
    The classes \begin{equation} \label{eq:Epq}
        \bE(p,q)=\bigl(p;W(p,p-q),1;W(p,q)\bigr)
    \end{equation} are perfect classes.
    \end{lemma}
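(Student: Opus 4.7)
The goal is to verify the two defining properties of a perfect class: that $\bE(p,q)$ is quasi-perfect (meaning it satisfies the Diophantine conditions \eqref{eq:DiophPerfect}), and that it is exceptional in the sense of being represented by a symplectically embedded $(-1)$-sphere, not merely quasi-exceptional. For the Diophantine part I would apply \eqref{eq:wtai} with $(p,q)$ and with $(p,p-q)$ to record the identities $\sum W(p,q)=p+q-1$, $\sum W(p,q)^2=pq$, $\sum W(p,p-q)=2p-q-1$, and $\sum W(p,p-q)^2=p(p-q)$. Substituting $\btm=(W(p,p-q),1)$ and $\bbm=W(p,q)$ into \eqref{eq:DiophPerfect} then yields
\[ 3p-\bigl[(2p-q-1)+1\bigr]=p+q, \qquad p^2-\bigl[p(p-q)+1\bigr]=pq-1,\]
so $\bE(p,q)$ is quasi-perfect.

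For the exceptionality, the plan is to exhibit $\bE(p,q)$ as a Cremona-translate of a basic exceptional class, namely the class $(1;1,1)$ representing a line through two blowup points; since the Cremona transformations preserve the set of exceptional classes in $H^2$ of a blowup of $\CP^2$ (the matrix identity $C^T J C=J$ recorded in the proof of Lemma~\ref{lem:CremECH} shows that $c_1$ and self-intersection are invariant, and Cremona moves carry exceptional divisors to exceptional divisors), this will suffice. I would induct on $p$: in the inductive step I merge the $\Tm$-entries and $m$-entries of $\bE(p,q)$ into a single nonincreasing tuple, apply the Cremona transform $\Cc$ of \eqref{eq:Cremon} to its three largest entries, delete the zero entries produced, and verify that the resulting tuple has the form $(p';W(p',p'-q'),1;W(p',q'))$ with $p'<p$. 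Small examples such as $\bE(5,2)\to\bE(3,1)\to\bE(2,1)\to(1;1,1)$ under successive Cremona moves illustrate the pattern: in each step the pair $(p',p'-q')$ is obtained from $(p,p-q)$ by one step of the Euclidean algorithm. The base case $(1;1,1)$ is classically exceptional.

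The hard part will be the bookkeeping in the inductive step: identifying which three entries form the Cremona triple, and matching the output tuple to the claimed form $\bE(p',q')$ after reordering and deletion of zeros. This reduces to understanding how a single Cremona move interacts with the two weight-expansion blocks $W(p,q)$ and $W(p,p-q)$, which is controlled by the continued-fraction description of $W(p,q)$ recalled just above Lemma~\ref{lem:convObs}. Concretely, I expect each Cremona step to correspond to one step of the Euclidean algorithm on the pair $(p,p-q)$, so that the induction terminates after $O(\log p)$ steps at the base class $(1;1,1)$.
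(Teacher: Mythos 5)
Your proposal is correct and takes essentially the same route as the paper: the Diophantine verification via \eqref{eq:wtai} is identical, and the paper likewise proves exceptionality by Cremona-reducing $\bE(p,q)$ to $(1;1;1)$, inducting on $p$. The bookkeeping you flag as the hard part is resolved in the paper by showing $C_{\tilde 1,\tilde 2,1}\bigl(\bE(p,q)\bigr)\approx\bE(p-q,q)$ when $p>2q$, and when $1<p/q<2$ first rearranging $\bE(p,q)\approx\bE(p,p-q)$ so that $p/(p-q)>2$ --- which is precisely the case split behind the Euclidean-algorithm picture you anticipated, and in which the three entries used by the paper are indeed the three largest of the merged tuple.
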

    
    \begin{proof} We first check \eqref{eq:DiophPerfect} by using the properties of the weight expansion in \eqref{eq:wtai}. We have that 
    \[ 3d-\sum \Tm_i=3p-(p+p-q-1)-1=p+q\] 
    and 
    \[ d^2-\sum \Tm_i^2=p^2-(p(p-q))-1=pq-1.\]
    Hence, $\bE(p,q)$ is a quasi-perfect class. 
    
    To see that
    $\bE(p,q)$ is perfact  we argue by induction on the number of nonzero entries in the tuple
    $\bigl(p;W(p,p-q),1;W(p,q)\bigr)$.
      We reduce this number by Cremona moves.\footnote
      {
  As noted in \cite{ball} these preserve the class of exceptional curves. For more detail see the beginning of \S\ref{ss:perf}  below.}
    To describe these, we write
    $$
    \bE: = (d;\Tm_1,\dots, \Tm_k; m_1,\dots, m_n) = :\bigl(d; c_{\tilde{1}},\dots, c_{\tilde{k}}; c_1,\dots,c_n\bigr)
    $$
    and denote by $C_{{\tilde i}, {\tilde j}, \ell}$ the move that replaces $d$ by $2d-\de$, where $\de: = c_{\tilde i} + c_{\tilde j} + c_\ell$, and subtracts $\de - d$ from
    the entries in the places  ${\tilde i}, {\tilde j}, \ell$.  These moves preserve the Diophantine identities by \cite[Prop 1.2.12]{ball}.
    Thus, when $p>2q$,
\begin{align*}
C_{{\tilde 1}, {\tilde 2}, 1}\bigl( \bE(p,q)\bigr) & = C_{{\tilde 1}, {\tilde 2}, 1}\bigl(p,p-q,q,\dots; q,q,\dots\bigr)\\
&
 = \bigl(p-q; p-q,0,\dots; 0,q, \dots \bigr) \approx   \bE(p-q,q),
 \end{align*}
 where  $ \approx $ means equality after deleting the two zero entries.
 To deal with the case $1<p/q < 2$, we note that the entries in the
 class $\bE(p,q)$ are a rearrangement of those in $\bE(p,p-q)$, where now $p/(p-q) > 2$.  Therefore if $1<p/q < 2$ we can rearrange to $\bE(p,p-q)$ and then reduce the length  
 as before.  Note that  all the  tuples obtained by this reduction process can be rearranged to have the form $  \bE(p,q)$ for some $p,q$. The shortest such tuple is $\bE(1,1) = (1;1;1)$, which corresponds to the exceptional class $L - E_1 - E_2$.
\end{proof}

We next consider obstructive classes for the ellipsoid $E(1,\al)$. Let $[a_0;a_1,\hdots]$ denote the (potentially infinite) continued fraction of $\al$. The convergents $$
p_1/q_1=[a_0;a_1],\ p_2/q_2=[a_0;a_1,a_2],\ \hdots,\ p_{n}/q_{n}=[a_0;a_1,\hdots,a_{n}],\hdots,
$$
form a decreasing sequence when $n$ is odd and an increasing sequence when $n$ is even. 
Let $z_{n}:=p_{n}/q_{n}$ and
\[ \bE_n:=\bE(p_n,q_n):=\bigl(p_{n};W(p_{n},p_{n}-q_{n}),1; W(p_{n},q_{n})\bigr)
\]
denote the corresponding sequence of perfect  classes. 
We saw in Corollary~\ref{cor:ellip} that if $\al$ is irrational the ellipsoid $E(1,\al)$ does not have a staircase. Indeed its obstruction $c_{E(1,\al)}$ is constant and equal to $1$ for $z\le \al$, the accumulation point, and  is given on the interval $ \al \le z\le  \lceil \al\rceil$ by the straight line through the origin of slope $1/\al$.  We show in Example~\ref{ex:E'} below  that this obstruction is given by the perfect class
with $$
\bE': = (d;\Tilde{\bf m}; \bm) = (k; k-1, 1^{\times (k-1)}; 1^{\times (k+1)}), \;\;\mbox { where } \;\; k<\al < k+1,
$$
so that $c_{E(1,\al)}(z) = z/\al, \;\; \al\le z\le \lceil \al\rceil$. 
 Nevertheless, we now show that there are infinitely many other obstructive classes for $E(1,\al)$ that give obstructions 
 whose peaks also lie on this line. Thus they do not form a staircase. Further, Proposition~\ref{prop:obs} implies that the obstruction from $\bE_n$ when $n$ is even is equal to $c_{E(1,\al)}(z)=1$ for $p_n/q_n<z<\al$, so all of these obstructions are live, but as they agree for all $n$, they do not form a staircase either. 
 \MS
 
\begin{prop} \label{prop:obs}
    Let $\ell(\al)$ be the length of the continued fraction of $\al>1$ with convergents $p_n/q_n$.     For $n< \ell(\al)$, the classes $\bE_n: = \bE(p_n,q_n)$ are obstructive for $c_{E(1,\al)}(z)$. In particular, if $\al \not\in \Q$, then there are infinitely many obstructive classes for $c_{E(1,\al)}(z)$, however they do not form a staircase.
\end{prop}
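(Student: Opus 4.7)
My plan is to compute the obstruction value $\mu_{\bE_n,\bb}(p_n/q_n)$ exactly, identify it with $c_{E(1,\al)}(p_n/q_n)$, and then deduce obstructiveness from the strict inequality $c_{E(1,\al)}(p_n/q_n)>V_{E(1,\al)}(p_n/q_n)$, which holds at every convergent of an irrational $\al$. To begin, I would apply Lemma~\ref{lem:convObs}(ii) to the class $\bE_n=(p_n;W(p_n,p_n-q_n),1;W(p_n,q_n))$ with target $E(1,\al)$, whose negative weight sequence $\bb$ equals $(\al-1)\bw\bigl(\al/(\al-1)\bigr)$ and corresponds to the concave triangular region $E(\al-1,\al)$ (see the cutting discussion of \S\ref{ss:cut}). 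The numerator $W(p_n,q_n)\cdot \bw(p_n/q_n)=p_n$ is immediate from the identity $\sum_i W(p_n,q_n)_i^{\,2}=p_nq_n$, so the task reduces to evaluating the denominator $\al p_n-\btm\cdot \bb$.

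The denominator computation is the technical heart of the proof. Because $p_n/q_n$ is a convergent of $\al$, the partial quotients of these two numbers agree through level $n$, and a short calculation (treating $a_0=1$ and $a_0\ge 2$ separately) transfers this agreement to the partial quotients of $p_n/(p_n-q_n)$ and $\al/(\al-1)$. This alignment forces the initial blocks of $\bw\bigl(p_n/(p_n-q_n)\bigr)$ and $\bw\bigl(\al/(\al-1)\bigr)$ to coincide entry for entry, so $\btm\cdot \bb$ decomposes block-by-block and can be evaluated by repeated application of Lemma~\ref{lem:convObs}(ii) on the target side; the trailing ``$1$'' in $\btm$ absorbs the truncation discrepancy, which is exactly the Bezout difference $p_nq_{n-1}-p_{n-1}q_n=(-1)^{n-1}$. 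The anticipated closed form is
\[\btm\cdot \bb=\begin{cases}(\al-1)p_n & \text{if $n$ is even,} \\ \al(p_n-q_n) & \text{if $n$ is odd,}\end{cases}\]
giving $\mu_{\bE_n,\bb}(p_n/q_n)=1$ for even $n$ (where $p_n/q_n<\al$) and $\mu_{\bE_n,\bb}(p_n/q_n)=(p_n/q_n)/\al$ for odd $n$ (where $p_n/q_n>\al$).

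In both cases this value matches $c_{E(1,\al)}(p_n/q_n)=\max\bigl(1,(p_n/q_n)/\al\bigr)$, computed using Corollary~\ref{cor:ellip} and the perfect class $\bE'$ of Example~\ref{ex:E'}. Since $c_{E(1,\al)}(z)>V_{E(1,\al)}(z)$ at every $z\neq \al$, each $\bE_n$ is obstructive at its center; when $\al$ is irrational, the infinitely many distinct convergents produce infinitely many distinct obstructive classes. These fail to form a staircase because $\mu_{\bE_n}\le c_{E(1,\al)}$ pointwise, and the capacity function is already realized globally by the two overshadowing classes $(1;1;1)$ and $\bE'$, which together give $c_{E(1,\al)}(z)=\max(1,z/\al)$ with its single corner at $z=\al$; hence no $\bE_n$ contributes a visible nonsmooth point. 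The main obstacle is the block-wise verification of the denominator formula: this demands a careful induction tracking how the recursions $p_n=a_np_{n-1}+p_{n-2}$ and $q_n=a_nq_{n-1}+q_{n-2}$ interact with the continued fraction of $\al/(\al-1)$, and how the $\pm 1$ truncation errors there match the $-1$ appearing in $\bE_n\cdot \bE_n=-1$.
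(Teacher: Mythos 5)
Your proposal is correct and follows essentially the same route as the paper's proof. Both compute $\mu_{\bE_n,E(1,\al)}$ at the center $p_n/q_n$ via the formula $\frac{p_n}{\al p_n - \btm\cdot\bb}$, reduce the denominator to a continued-fraction alignment fact (that $(p_n-q_n)/q_n$ is a convergent of $\al-1$), apply Lemma~\ref{lem:convObs} to evaluate the resulting dot products (with the trailing $1$ in $\btm$ contributing $x_{n+1}(\al-1)$), and obtain the same closed forms $\btm\cdot\bb=(p_n-q_n)\al$ (odd $n$) and $p_n(\al-1)$ (even $n$), whence $\mu = z_n/\al$ or $1$, both realized by other classes and hence overshadowed. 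The only differences are cosmetic: you parametrize the target's negative weights as $(\al-1)\bw\bigl(\al/(\al-1)\bigr)$ whereas the paper writes $\bigl(\al-1,\bw(\al-1)\bigr)$ directly (the two are equal, as one can check by the usual CF recursion), and your description of the computation as ``repeated application'' with ``blocks coinciding entry for entry'' is slightly loose --- the entries of $\bw(p_n/(p_n-q_n))$ are rational while those of $\bw(\al/(\al-1))$ are generically not, so what aligns is the block structure, not the entries, and a single application of Lemma~\ref{lem:convObs}(ii) together with part (i) for the $x_{n+1}$ term does the job. Your Bezout remark for the trailing $1$ is heuristically apt but not quite what is needed; the precise statement is that its coefficient in $\bb$ is $x_{n+1}(\al-1) = (p_n-q_n) - q_n(\al-1)$, which is what makes the sum telescope.
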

\begin{proof}
    We begin with the case where $n$ is odd implying that $p_n/q_n>\al.$
      Let $[a_0;a_1,\hdots,a_n,\hdots]$ denote the continued fraction of $\al$.  To show $\bE_n$ is obstructive for $n$ odd, we will explicitly compute $\mu_{\bE_n,E(1,\al)}$ at the center $z_{n}.$   The negative weight expansion corresponding to $E(1,\al)$ is
    \[ (b;\bm{b}):=(\al;\al-1,\bw(\al-1)).\]
    We denote $\mu_{\bE_n,E(1,\al)}$ by $\mu_{\bE_n,\al}$
    By Lemma~\ref{lem:convObs}, we have that 
    \[ \mu_{\bE_n,\al}(z_{n})=\frac{p_{n}}{\al p_{n}-\btm \cdot \bm{b}}.
    \]
 
    We now compute $\btm \cdot \bm{b}$.  By definition of $\bE_n$ in \eqref{eq:Epq}, $\btm$ is the weight expansion of $p_n/(p_n-q_n)$ with a $1$ adjoined at the end. We first assume that $\al>2,$ which implies the continued fraction of $p_n/(p_n-q_n)$ is $[1;a_0-1,a_1,\hdots,a_{n}]$. Observe that $(p_n-q_n)/q_n$ has continued fraction $[a_0-1;a_1,\hdots,a_n].$ Hence,
  \begin{equation} \label{eq:btmObs}
      \btm=(W(p_n/(p_n-q_n)),1)=(p_n-q_n,W(p_n-q_n,q_n),1).
  \end{equation}
Note that $(p_n-q_n)/q_n$ is an odd convergent of $z: = \al-1=[a_0-1,\hdots,a_n,a_{n+1}].$

    By \eqref{eq:btmObs} and Lemma \ref{lem:convObs}~(ii), we have that
    \begin{align*} \btm \cdot \bm{b}&=(p_n-q_n,W(p_n-q_n,q_n),1)\cdot(z,\bw(z)) \\
   & = (p_n-q_n)\cdot z+W(p_n-q_n,q_n) \cdot \bw(z) +x_{n+1}(z) \\
           & = (p_n-q_n)\cdot z+q_n\cdot z+x_{n+1}(z) \\
    &=p_n \cdot z+ x_{n+1}(z).
    \end{align*}
But we saw in Lemma \ref{lem:convObs}~(i) that for $z $ less than and sufficiently close to $p_n/q_n-1 = (p_n-q_n)/q_n$, we have
$x_{n+1}(z) = ( p_n-q_n) - q_n z$.  Therefore 
     \begin{equation} \label{eq:obsFinal}
 \btm \cdot \bm{b} = ( p_n-q_n) (1+z) =( p_n-q_n) \al ,
    \end{equation} 
where the last equality holds because $z = \al-1$.

    If $1 \leq \al \leq 2$ with continued fraction $[a_0,\hdots,a_n],$ then the continued fraction of $p_n/(p_n-q_n)$ is $[1+a_1,a_2,\hdots,a_n]$. A similar computation shows that $\btm \cdot \bm{b}=\al(p_n-q_n).$

From \eqref{eq:obsFinal}, if $n$ is odd, we have
\begin{equation} \label{eq:obsEn}
     \mu_{\bE_n,\al}(z_{n})=\frac{p_{n}}{\al p_{n}-\al(p_{n}-q_{n})}=\frac{z_{n}}{\al}. 
\end{equation}
This is above the volume obstruction at $z_n:$
\[\mu_{\bE_n,\al}(z_{n})=\frac{z_{n}}{\al }>\sqrt{\frac{z_{n}}{ \al}}=  V_{E(1,\al)}(z_{n})\]
because  $z_{n}>\al$ for odd $n$. On the other hand, for $k<\al< k+1$. we know from Corollory~\ref{cor:ellip}
that  $c_{E(1,\al)}(z) = z/\al $.  Therefore these obstructions are not visible in the capacity function.

The situation when $n$ is even is very similar. In that case, following a similar process and using the results of Lemma~\ref{lem:convObs} for $n$ even, we have that 
\[ \btm \cdot \bm{b}=p_n(\al-1).\]
Hence, since  the centers of the obstructions are $< b$, we find that
\[ \mu_{\bE_n,\al}(z_n)=\frac{p_n}{\al p_n-(\al-1)p_n}=1.\] 
Thus, by Corollary \ref{cor:ellip}, $c_{E(1,a)}(z)=1$ for $z<\al$. 
\end{proof}

\begin{example}\label{ex:E'}  \rm We now show that the capacity function for $E(1,\al)$ is given on the interval $[\al, n+1]$ by the class
 $\bE': = (k; k-1, 1^{\times (k-1)}; 1^{\times (k+1)})$, where  $\al \in (k,k+1).$   Recall from the above that the negative weight expansion corresponding to $E(1,\al)$ is
    \[ (b;\bm{b})=(\al;\al-1,\bw(\al-1)).\]
  Since $\al \in (k,k+1),$ we have $\bw(\al-1)=(1^{\times (k-1)},\hdots).$  Thus
    \[ (k-1,1^{\times (k-1)}) \cdot (\al-1,\bw(\al-1))=(k-1)(\al-1)+(k-1)=\al(k-1).\]
    Hence, for $z \leq k+1,$ we find that
    \[\mu_{\bE',\al}(z)=\frac{z}{\al k-\al(k-1)}=\frac{z}{\al}\]
as claimed.
\end{example}

\section{Domains with staircases}\label{sec:stair}

This section is devoted to the proof of Theorem~\ref{thm:inftystair}, which claims that there is a family of rational domains $\Om_n, n\ge n_0,$ of increasing cut-length that support staircases. The construction and proof use the methods developed in \cite{BHM,MM,MMW}, while the staircase steps are iteratively generated by perfect seed classes.  It seems likely that there are many different examples of this kind; our example was chosen with a view to minimizing the needed calculations.

\subsection{Outline of the construction}

We construct  staircases whose steps  have the same form as those in $\Hh_b: = \C P^2(1)\# \ov{\C P}^2(b)$ that were constructed and classified in \cite{BHM,MM,MMW}.\footnote
{
Here $\Hh_b$ is  the one-point blowup of $\C P^2$ in which the line has size $1$ and the exceptional divisor size $b\in (0,1)$.}
  Thus we consider sequences of perfect classes $\bE_k = \bigl(d_k; (\Tm_{kj}); W(p_k,q_k)\bigr)$, where the entries of the tuple $(d_k, (\Tm_{kj}), p_k, q_k)$ all satisfy a recursion of the form
\begin{align}\label{eq:recur} 
x_{k+1} = t x_k - x_{k-1},\quad k\ge 1.
\end{align}
Hence the staircase steps are determined by the two {\bf seed classes} $\bE_0, \bE_1$ and the {\bf recursion variable} $t$. 
In order for the resulting sequence of classes to be perfect,  the seeds, which themselves must be perfect, must be suitably compatible and an appropriate recursion variable $t$ must be chosen.  However, it turns out that the conditions developed in
\cite{MM,MMW} to deal with these issues easily generalize to the current situation: in fact, the only real difference between the current situation and that in the previous work is that instead of the two {\bf degree variables} $(d,m)$  needed to describe a class in $\Hh_b$ there are now a finite number of such variables, one for each element of the tuple $(b; (b_j))$ that defines the region $\Om = \Om(b; (b_j))$.

\begin{rmk}\rm The limiting regions $\Om(b;(b_j))$ for the staircases that we construct  have $b=1$ and $b_j$ equal to the limit of the ratios $\Tm_{jk}/d_k$; see \eqref{eq:domn}.  Hence we only construct staircases in  regions with a finite number of sides of rational slope. The most obvious approach to constructing staircases in a domain with infinite cut length would seem to be to consider a domain with at least one edge given by a line of irrational slope $\al$, and then to try to define obstructive classes that are partially recursively defined and partly defined via convergents
as in \S\ref{ss:ghost}.
But it is not at all clear how to do this.  
\end{rmk} 

\begin{definition}\label{def:adj} Two quasi-perfect classes $\bE_k = \bigl(d_k; (\Tm_{kj})_j; W(p_k,q_k)\bigr),\ k=0,1$ with $p_1/q_1 \le p_0/q_0$ are said to be
{\bf adjacent} if 
\begin{align}\label{eq:adj} 
d_0d_1 - \sum_j \Tm_{0j}\Tm_{1j}  = p_1q_0.
\end{align}
\end{definition}

The following lemma generalizes \cite[Lemma 3.1.4]{MM}.

\begin{lemma}\label{lem:recur} Suppose that the quasi-perfect classes $\bE_0,\bE_1$ with  $p_1/q_1 < p_0/q_0$ are  adjacent, and define
$t : = p_0q_1-p_1q_0$. Further, assume that $p_1>p_0, q_1>q_0$
and $t \geq 2$. Then the classes $\bE_k: =\bigl(d_k; (\Tm_{kj})_j; W(p_k,q_k)\bigr)$ with entries $x_k$ defined by the recursion \eqref{eq:recur} are all quasi-perfect. Moreover the sequence of centers $p_k/q_k$ decreases.
\end{lemma}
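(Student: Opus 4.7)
The plan is to verify the two Diophantine identities \eqref{eq:DiophPerfect} defining quasi-perfectness of $\bE_k$ by induction on $k$, carrying the adjacency relation \eqref{eq:adj} as an auxiliary invariant so that the quadratic identity and the adjacency propagate together. The linear identity is immediate; the quadratic identity is the real content. The final claim that the centers decrease drops out from an even simpler invariant.

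First I would dispatch the linear identity $3d_k - \sum_j \Tm_{kj} = p_k + q_k$. Since each of $d_k$, $(\Tm_{kj})_j$, $p_k$, $q_k$ satisfies the same linear recursion $x_{k+1} = tx_k - x_{k-1}$, so do both sides of this identity, and it holds at $k=0,1$ by the quasi-perfectness of the two seeds. Hence it holds for all $k$.

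Next I would tackle the quadratic identity $d_k^2 - \sum_j \Tm_{kj}^2 = p_k q_k - 1$. Since this is quadratic in the recursion variables, a single induction on this identity alone does not close; one needs to carry the adjacency in parallel. Introduce
\[
u_k := d_k^2 - \textstyle\sum_j \Tm_{kj}^2, \quad v_k := p_k q_k, \quad w_k := d_k d_{k-1} - \textstyle\sum_j \Tm_{kj}\Tm_{k-1,j}, \quad \de_k := p_{k-1} q_k - p_k q_{k-1},
\]
and carry the three joint invariants (I) $u_k - v_k = -1$, (II) $w_k = p_k q_{k-1}$, and (III) $\de_k = t$. Invariant (III) falls out of a direct one-step check: $\de_{k+1} = p_k(tq_k - q_{k-1}) - (tp_k - p_{k-1})q_k = \de_k$, and $\de_1 = t$ by the definition of $t$. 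Invariants (I) and (II) hold at $k=0,1$ by quasi-perfectness of $\bE_0,\bE_1$ and by the adjacency hypothesis \eqref{eq:adj}. For the inductive step, expanding the squares and products via the recursion yields
\[
u_{k+1} = t^2 u_k - 2tw_k + u_{k-1}, \qquad v_{k+1} = t^2 v_k - t(p_k q_{k-1} + p_{k-1} q_k) + v_{k-1}, \qquad w_{k+1} = tu_k - w_k.
\]
Subtracting the first two equations and substituting (I), (II), (III) gives $u_{k+1} - v_{k+1} = -t^2 - 1 + t\de_k = -1$, preserving (I); and combining the third with $p_{k+1} q_k = t v_k - p_{k-1} q_k$ gives $w_{k+1} - p_{k+1} q_k = t(u_k - v_k) + (p_{k-1} q_k - p_k q_{k-1}) = -t + \de_k = 0$, preserving (II). This closes the induction and yields the quadratic identity for all $k$.

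For the final claim, $p_1/q_1 < p_0/q_0$ gives $t = p_0 q_1 - p_1 q_0 > 0$, and invariant (III) rewrites as $p_{k-1}/q_{k-1} - p_k/q_k = t/(q_k q_{k-1}) > 0$ once one knows all the $q_k$ are positive; positivity of $q_k$ (and of the other entries) follows from a short auxiliary induction, using that $t \ge 2$ is forced by the seed Diophantine data. The main obstacle in the whole argument is isolating the correct package of invariants (I)--(III) so that the two-term recursion can actually propagate the quadratic condition; once these are in place the algebra is mechanical.
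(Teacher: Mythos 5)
Your proof is correct and follows essentially the same route as the paper's: verify quasi-perfectness of $\bE_2$ using the seeds' Diophantine relations together with adjacency and the invariance of $t$, then close the induction by observing that adjacency and the $t$-value are themselves preserved. Your version simply makes the paper's "the lemma follows by induction" explicit by packaging the adjacency relation (your invariant (II)) and the $t$-invariance (your invariant (III)) as co-inductive hypotheses alongside the quadratic identity (I) and carrying out the algebra in full.
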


\begin{proof} The assumption that  $p_1>p_0, q_1>q_0$ 
and $t \geq 2$ guarantees that the sequences $(p_k), (q_k)$  are increasing. 
A positive tuple is quasi-perfect if its entries satisfy both the linear relation $c_1(\bE_k) = 1$ and the quadratic relation 
$\bE_k\cdot\bE_k = -1$ in \eqref{eq:Dioph}. The linear relation translates to the homogenous linear relation
$3d_k = \sum \Tm_{jk} + p_k + q_k$ and hence is preserved by any linear recursion.  
For quasi-perfect classes $(d,(m_j),p,q)$  the quadratic relation is $d^2 - \sum \Tm_j^2 - pq = -1$. Thus, to check the quadratic relation for $\bE_2$ we calculate:
\begin{align*}
&(td_1-d_0)^2-\sum_j(t\Tm_{1j}-\Tm_{0j})^2-(tp_1-p_0)(tq_1-q_0)\\
&\quad =
t^2\bigl(d_1^2-\sum_j\Tm_{1j}^2-p_1q_1\bigr)
+\bigl(d_0^2-\sum_j\Tm_{0j}^2-p_0q_0\bigr)\\
&\qquad
-2t\bigl(d_0d_1-\sum_j\Tm_{0j}\Tm_{1j}\bigr)
+t(p_1q_0+p_0q_1)\\
&\quad =
-t^2-1-2tp_1q_0+t(p_1q_0+p_0q_1)\\
&\quad =
-t^2-1+t(p_0q_1-p_1q_0)
 = -1
\end{align*}
where the second equality uses the quadratic relation and the adjacency condition, and the last equality uses that $t=p_0q_1-p_1q_0.$
Thus $\bE_2$ is quasi-perfect. 
Since $p_2=tp_1-p_0$ and $q_2=tq_1-q_0,$ we have
\[ p_1q_2-p_2q_1=p_0q_1-p_1q_0=t>0,\]
and as $p_2,q_2>0$, this gives $p_2/q_2<p_1/q_1.$ To see that $\bE_2$ is adjacent to $\bE_1$, we use that $d_2,\Tm_{2j}$ satisfy the recursion to obtain
\begin{align*}
d_1d_2-\sum_j\Tm_{1j}\Tm_{2j}
&=
t\bigl(d_1^2-\sum_j\Tm_{1j}^2\bigr)
-\bigl(d_0d_1-\sum_j\Tm_{0j}\Tm_{1j}\bigr)\\
&=
t(p_1q_1-1)-p_1q_0\\
&=
(tp_1-p_0)q_1
=
p_2q_1
\end{align*} 
where for the second equality we used the quadratic relation and adjacency of $\bE_0,\bE_1,$
and for the third the definition of $t$.  Since $p_1q_2-p_2q_1=p_0q_1-p_1q_0 = t$,  the lemma follows by induction.
\end{proof}

We will apply this lemma to two seed classes, 
 thereby obtaining a sequence of quasi-perfect classes $(\bE_k)_{k\ge 0}$.  
Just as in \cite{MM,MMW} these do form a sequence of obstructive classes for a suitable domain $X_\Om$ whose parameters are determined by the steps $(\bE_k)_{k\ge 0}$.\footnote{
In \cite{MM,MMW}, it was only necessary to choose the correct parameter $b$, but now, since we fix $b: = 1$,  we need to choose  a suitable tuple $(b_j)$.}

One  problem is that 
pre-staircases formed from these classes can rather easily be overshadowed as  in  \S\ref{ss:ghost}; indeed we will see in Lemma~\ref{lem:adjblock} below  that the obstruction from any  class that is adjacent to both $\bE_0$ and $\bE_1$ goes through the accumulation point of the staircase and hence may well overshadow the staircase, that is, give an obstruction larger than those provided by the steps.
  Thus we need to choose the initial steps rather carefully, so that there are no such classes.
\MS

\NI {\bf A new family of staircases}

We  consider the staircase family with steps
\begin{align} \label{eq:seedEx} \bE_0&=\bigl(2;1^{\times 2};W(3,1)\bigr)\\ \notag
 \bE_1 = \bE_1(n):  &=\bE(22+10n,9+4n)\\\notag
 &=\bigl(22+10n;13+6n,9+4n,4+2n^{\times 2},1^{\times (5+2n)};W(22+10n,9+4n)\bigr)\\\notag
 \bE_k = \bE_k(n): & = \bigl(d_k(n);\btm_k(n);W(p_k(n),q_k(n))\bigr) \ \quad \mbox{ where }\\\notag
 \bE_{k+1} = \bE_{k+1}(n) &= t_n \bE_k(n) - \bE_{k-1}(n),\quad t_n: = 5 + 2n.
\end{align}

When applying the recursion, we pad the shorter vector $\btm_0=(1,1)$ with zeros so that it has the same length as $\btm_1(n)$.
By Lemma~\ref{lem:recur}, these classes  are all quasi-perfect,  since the initial two terms $\bE_0, \bE_1$ are adjacent, $t_n = |3(9+4n) - (22+10n)| \geq 2$, $p_1(n)/q_1(n)<p_0(n)/q_0(n)$, and $p_1(n)>p_0(n),\ q_1(n)>q_0(n)$.
Note that when there is no cause for ambiguity, we will simplify notation by omitting the variable $n$ as in \eqref{eq:domn}.

We consider the domain 
\begin{align}\label{eq:domn} 
\Om_n: = \Omega_{\bm{b}(n)}:=\Om(1;\bm{b}(n)):=\Om(1;\lim_{k \to \infty}\frac{\btm_k(n)}{d_k(n)}) = \Om(1;\lim_{k \to \infty}\frac{\btm_k}{d_k}).
\end{align} 
Using Corollary~\ref{cor:dom} stated below, we compute $\lim_{k \to \infty} \btm_k/d_k$ to obtain: 
\begin{align}\label{eqn:bbn}
\bm{b}(n) &=: (b_1,b_2,b_3^{\times 2},b_4^{\times (5+2n)}) \\ \notag
 &=\bigl((2+n)\be_n+1/2,-(2+n)\be_n+1/2,(4+2n)\be_n^{\times 2},\be_n^{\times (5+2n)}\bigr)
 \end{align}
where \[\be_n:=\frac{1}{17+8n+\sqrt{(3+2n)(7+2n)}}.\]
Note that $\lim_{n \to \infty}  {\bf b}(n) =  (3/5,2/5,1/5^{\times 2},0^{\times (5+2n)}).$
We now compute the following:
\begin{align} \label{eq:ident}
 \Vol(\Om_n)&=\tfrac{1}{2}-\be_n^2(45+10n^2+42n) \;\to\; 2/5\\ \notag
  \Per(\Om_n)&=2-\be_n(13+6n) \;\to\; 7/5 \\ \notag
    z_\infty&=\frac{29+14n+3\sqrt{(3+2n)(7+2n)}}{13+6n+\sqrt{(3+2n)(7+2n)}}\; \to \; 5/2 \\ \notag
    V_{{\bf b}(n)}(z_\infty)&=\frac{2(17+8n+\sqrt{(3+2n)(7+2n)})}{13+6n+\sqrt{(3+2n)(7+2n)}}\; \to\; 5/2
\end{align}

\begin{figure}[h!]
\includegraphics{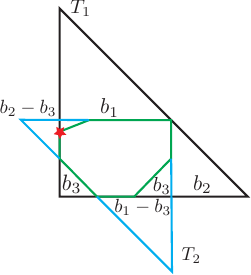}
  \caption{Here is one way to construct  a domain $\Om_n$  with weights $(1;b_1,b_2,b_3^{\times 2},b_4^{\times (5+2n)})$ and $7$ sides. This domain is geometrically Cremona equivalent to $\Omega_n'$ with weight sequence $(1-b_3;b_1-b_3,b_2-b_3,b_3,b_4^{\times (5+2n)})$. The domain $\Om_n$ (the inner green heptagon) can be seen from cutting the black triangle labeled $T_1$ of size $1$ or cutting the blue triangle labeled $T_2$ of size $1-b_3=b_1+b_2-b_3$. The unlabeled scalene triangle is made from $5+2n$ cuts of size $b_4$; its vertex labeled with a red star has determinant $2n+5$ as mentioned in Lemma~\ref{lem:cutOmegaN}.
}
\label{fig:3}
 \end{figure}

The goal of this section is to show that for sufficiently large $n$, $X_{\Omega_n}$ has an infinite staircase. 
The assumption of Theorem~\ref{thm:inftystair} is that the cut length of some $\Omega_n$ realizing the weights $(1;{\bf b}(n))$ tends to infinity with $n$. We define $\Omega_n:=\Omega_n(1;{\bf b}(n))$ to be the green convex domain illustrated in Figure~\ref{fig:3}. As we explain in Remark~\ref{rmk:cutOm}, there  are many different ways to construct a domain with weights 
$(1;{\bf b}(n))$. The cut lengths of these domains might differ, but as we explain in Remark~\ref{rmk:cutOm} however such a domain is formed, 
 the cut length of a domain with weight sequence $(1;{\bf b}(n))$ increases as $n$ increases due to Corollary ~\ref{cor:cutLength}.

\begin{lemma} \label{lem:cutOmegaN}
	The sequences given by the cut length $\{{\rm Cut}(\Omega_n)\}_{n \geq 0}$ and Cremona length $\{cr(\Omega_n)\}_{n \geq 0}$ are unbounded. 
\end{lemma}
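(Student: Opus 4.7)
The strategy is to reduce both assertions to Corollary~\ref{cor:cutLength}. The geometric input is the observation that the ``red vertex'' of $\Om_n$ labeled in Figure~\ref{fig:3}, where the $2n+5$ cuts of size $b_4$ abut the rest of the boundary, has singularity order $2n+5$. Indeed, those cuts are produced by the cutting algorithm with outward primitive normals obtained by successive additions along the cutting tree of Remark~\ref{rmk:cut}, and the two boundary edges of $\Om_n$ incident at that vertex have integral primitive normals whose determinant is $2n+5$. Since the singularity order at a vertex is invariant under integral affine transformations, the maximum vertex singularity order $o_n$ of $\Om_n$ satisfies $o_n\ge 2n+5\to\infty$, so Corollary~\ref{cor:cutLength} immediately yields $\op{Cut}(\Om_n)\to\infty$.

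The Cremona-length assertion is more delicate, because Cremona equivalence of weight sequences does not in general correspond to an affine transformation of the underlying polygon, and singularity orders are not obviously preserved. The plan is the following: for each $n$, fix some $\Om'_n\sim\Om_n$ realizing $cr(\Om_n)$; then $\op{Cut}(\Om'_n)\le cr(\Om_n)$ by definition, so by Lemma~\ref{lem:cutBound} every vertex of $\Om'_n$ has singularity order at most $8F_{cr(\Om_n)}^2$. If $cr(\Om_n)$ were bounded along a subsequence, this would produce a family of convex polygons $\Om'_n$ with uniformly bounded vertex singularity orders, but sharing with $\Om_n$ all the invariants preserved by Cremona equivalence: the volume $b^2-\sum b_j^2$, the perimeter $3b-\sum b_j$, the ECH capacities (by Lemma~\ref{lem:CremECH}), and hence by Theorem~\ref{thm:main} the entire ellipsoid embedding function.

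The main obstacle is to show that the existence of such a sequence $(\Om'_n)$ is impossible. One would extract from the shared ECH data an invariant that forces the vertex singularity orders of $\Om'_n$ to grow with $n$; plausible candidates include the arithmetic structure of the accumulation point computed in \eqref{eq:ident}, the lengths of the continued fractions appearing in the perfect classes $\bE_k(n)$ that generate the staircase, or the combinatorial complexity of the recursion \eqref{eq:recur} itself. Carrying out this last step --- bridging Cremona-preserved analytic invariants and the polytopal invariant controlled by Lemma~\ref{lem:cutBound} --- constitutes the main technical content of the Cremona-length portion of the lemma.
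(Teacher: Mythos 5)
Your treatment of the cut-length part is correct and is essentially the paper's argument: identify the red vertex of $\Om_n$ with singularity order $2n+5$ and invoke Corollary~\ref{cor:cutLength}.

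The Cremona-length part, however, has a genuine gap, and you have the difficulty inverted: it is actually the \emph{easier} half of the lemma, because $cr(\Om_n)$ can be computed directly from the weight tuple without any detour through singularity orders, ECH capacities, or a hypothetical minimizing domain $\Om'_n$. Since $b_1+b_2=1$, a single Cremona move $\Cc$ applied to $(1;\bm{b}_n)=(1;b_1,b_2,b_3,b_3,b_4^{\times(5+2n)})$ has defect $d=1-b_1-b_2-b_3=-b_3$, and produces $(1-b_3;\,b_1-b_3,\,b_2-b_3,\,0,\,b_3,\,b_4^{\times(5+2n)})$. After dropping the zero and reordering one obtains a tuple of length $2n+8$ which is reduced (indeed $d=0$ at the next step, since $(1-b_3)-(b_1-b_3)-(b_2-b_3)-b_3=0$), and a reduced ordered tuple has minimal length in its Cremona orbit. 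Hence $cr(\Om_n)=2n+8\to\infty$, with no further work. Your proposed reduction of the Cremona-length statement to a comparison of vertex singularity orders against Cremona-preserved ECH data is left as an unfinished ``main technical content,'' and as you yourself note it faces two obstructions you do not resolve: a length-minimizing tuple in the Cremona orbit need not be realized by any convex polygon $\Om'_n$, and you name no candidate invariant that would actually force vertex singularity orders to grow. Both obstacles evaporate once one simply reduces the tuple and counts entries.
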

\begin{proof}

We first compute the Cremona length. We can perform one Cremona move to $(1;\bm{b}(n))$ which results in 
\[\Omega'_n:=(1-b_3;b_1-b_3,b_2-b_3,0,b_3,b_4^{\times (5+2n)})\]
as $b_1+b_2=1$. 
After deleting the zero the vector is ordered, so the tuple is reduced, since
\[
1-b_3=(b_1-b_3)+(b_2-b_3)+b_3.
\]
Hence, the Cremona length is  $2n+8$. One construction of $\Omega'_n$ can be seen in Figure~\ref{fig:3}. If we consider the order of singularity of the vertex labeled with a red star in Figure~\ref{fig:3}, it is $\det\begin{pmatrix} 2n+5 & 0 \\ -1 & 1 \end{pmatrix}=2n+5.$ Hence, by Corollary ~\ref{cor:cutLength}, the sequence $\{{\rm Cut}(\Omega_n)\}_{n \geq 0}$ is unbounded. 
\end{proof}

\begin{rmk}\label{rmk:cutOm} \rm There are many different ways in which one could cut triangles out of $T(b)$ to construct a domain with weights $(1; \bm{b}_n)$.  For example, because $b_2 + 2b_3 + (5+2n)b_4< 1$, all the cuts after the first one could be put along the same edge.  In contrast to the realization pictured in Figure~\ref{fig:3}, this second choice  of realization is not affine equivalent to its Cremona reduction.  One could also  distribute the $2n+5$ cuts of size $b_4$   among the different edges of $\Om(1; b_1,b_2,b_3^{\times 2})$.  But, because these cuts are all of the same size and because a cut cannot be centered at a nonsmooth vertex, it is easy to check that if the domain $\Om(1; b_1,b_2, b_3^{\times 2}) $ is constructed to have $k$ vertices then however one adds the $2n+5$ cuts of size $b_4$ there has to be a vertex with order of singularity $\ge (2n+5)/k$. Thus, although these realizations might have different cut lengths, it follows from Corollary~\ref{cor:cutLength} that these cut lengths must tend to infinity with $n$.
\end{rmk}

The following proposition describes  conditions under which  a given sequence of classes $(\bE_k)_{k\ge 0}$  forms a staircase. 
Recall from  the beginning of \S\ref{ss:ghost} that given a sequence of obstructive classes $(\bE_k)_{k\ge 0}$ with centers $p_k/q_k$ converging to $z_\infty$, an
overshadowing class is a quasi-perfect class whose obstruction goes through the accumulation point $(z_\infty, V(z_\infty))$ and is larger  than the obstructions given by the $\bE_k$. Since we can always assume that such a class is live on one side of $z_\infty$,  
and since the capacity function is determined by exceptional classes, we may assume that such a class is in fact perfect.

\begin{prop} \label{prop:stair}
Let $(\bE_k: = \bE_k(n))_{k\ge 0}$ be a sequence of quasi-perfect classes generated recursively with $t_n \geq 2$ as above for some $n\ge 0$, and let $X_{\Omega_n}$ be the convex toric domain with negative weight expansion $
(1;\bm{b}(n))$ given in \eqref{eqn:bbn}. If the following conditions hold:
    \begin{itemize}
        \item[{\rm (i)}] for large enough $k,$ the functions $\mu_{\bE_k,\bm{b} (n)}(z)$ are obstructive in $X_{\Omega_n}$ at $z=p_k/q_k$,
        \item[{\rm (ii)}] the classes $\bE_k$ are perfect,
        \item[{\rm (iii)}] there is no overshadowing class, 
    \end{itemize}
    then  $X_{\Omega_n}$ has a staircase. 
\end{prop}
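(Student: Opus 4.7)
The plan is to show that $c_\Omega$ has a non-smooth point at each center $p_k/q_k$ for sufficiently large $k$, producing infinitely many non-smooth points, hence a staircase. First, by Lemma~\ref{lem:recur}, the centers $p_k/q_k$ form a strictly decreasing sequence, and applying Proposition~\ref{prop:accum1}(i) to the obstructive classes supplied by (i), they converge to the accumulation point $z_\infty$ of $X_\Omega$. In particular, infinitely many of the $p_k/q_k$ are distinct.

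Next, I would use the perfect property (ii) together with Lemma~\ref{lem:convObs} to give an explicit closed-form description of $\mu_{\bE_k,\bm{b}}$ near its center $p_k/q_k$. As in Remark~\ref{rmk:perfectObs}, its graph is a line through the origin for $z \le p_k/q_k$ and a horizontal line for $z \ge p_k/q_k$, meeting at a convex outer corner of height $h_k := \mu_{\bE_k,\bm{b}}(p_k/q_k)$; by (i), $h_k > V_\Omega(p_k/q_k)$. Recalling that
\[
c_\Omega(z) = \max\bigl(V_\Omega(z),\ \sup_{\bE\text{ obstructive}} \mu_{\bE,\bm{b}}(z)\bigr),
\]
condition (iii) guarantees that $c_\Omega(p_k/q_k) = h_k$, so the explicit corner of $\mu_{\bE_k,\bm{b}}$ carries over to a non-smooth point of $c_\Omega$.

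The main obstacle will be making rigorous the final assertion, namely that the corner of $\mu_{\bE_k,\bm{b}}$ actually transfers to $c_\Omega$: one must rule out that some other obstructive class rises above $\mu_{\bE_k,\bm{b}}$ in a two-sided neighborhood of $p_k/q_k$, thereby smoothing the corner. For the recursion neighbors $\bE_{k\pm 1}$ this follows from the adjacency condition~\eqref{eq:adj} and the explicit obstruction formulas, which force their graphs to cross below $\mu_{\bE_k,\bm{b}}$ strictly before reaching $p_k/q_k$. For arbitrary other quasi-exceptional classes, one combines the upper bound \eqref{eq:light} on $\mu_{\bE}$ with the convergence $h_k \to V_\Omega(z_\infty)$ (forced by Proposition~\ref{prop:accum1}(ii)) and the non-overshadowing hypothesis (iii) to exclude interference near $p_k/q_k$. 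This mirrors the strategy used in \cite{MM,MMW} for the one-parameter family $\Hh_b$, and should adapt with only cosmetic changes to reflect that $\bm{b}$ is now a tuple rather than a scalar, so that the inner product $\btm \cdot \bm{b}$ replaces the scalar product $mb$ throughout.
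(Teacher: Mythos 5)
Your proposal attempts a direct proof that the peaks of the $\bE_k$-obstructions are visible in $c_\Omega$, but the paper never proves this stronger claim --- it explicitly disclaims it in Remark~\ref{rmk:perfectObs} --- and instead uses a dichotomy that circumvents the question. Your central assertion, that condition (iii) guarantees $c_\Omega(p_k/q_k) = h_k$, has a genuine gap: (iii) rules out a \emph{single} class whose obstruction passes through the accumulation point and dominates the $\bE_k$, but it says nothing about a \emph{sequence} of other obstructive classes $\bE'_k$ that successively exceed $\mu_{\bE_k,\bm{b}}$ near each $p_k/q_k$, which would equally well smooth the corners. Your sketch for ruling out such interference does not close this: the estimate \eqref{eq:light} bounds $\mu_\bE$ at $\bE$'s \emph{own} break point in terms of $\deg\bE$, and gives no direct control of a competing obstruction at a nearby point $p_k/q_k$; and citing Proposition~\ref{prop:accum1}(ii) to get $h_k\to V_\Omega(z_\infty)$ is circular, since that statement already presumes infinitely many nonsmooth points. (That particular limit can instead be extracted from \eqref{eq:light} together with $d_k\to\infty$, but this does not repair the larger gap.)

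The paper's proof proceeds quite differently. It uses Lemma~\ref{lem:unobs} --- not Lemma~\ref{lem:convObs} --- to show that, since $\bE_k$ is perfect, it is \emph{live} at $p_k/q_k$ for the approximating domain $\Om_k=\Om(1;\btm_k/d_k)$; combining this with obstructiveness (i), the convergence $\Om_k\to\Om$, and Proposition~\ref{prop:accum1}(i), it deduces that the accumulation point of $\Om$ is unobstructed by a direct continuity argument. It then dichotomizes: either the $\bE_k$ remain live for $X_\Om$ and large $k$, which yields the staircase; or they do not, in which case the competing obstructions either come from a single class through the accumulation point (an overshadowing class, excluded by (iii)) or from a sequence $\bE'_k$ with $\mu_{\Om,\bE'_k}(p_k/q_k)>\mu_{\Om,\bE_k}(p_k/q_k)$, which necessarily forms a staircase itself. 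So the conclusion is reached without ever establishing that the $\bE_k$-peaks are the visible ones --- precisely the step your proposal commits to but cannot carry out from the stated hypotheses.
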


\begin{proof} We will see in Lemma~\ref{lem:unobs} that if $\bE_k$ is perfect, it  is live at $p_k/q_k$ for the domain
$\Om_{n,k}: = \Om(1,\bm{b}_k(n)) = (1;\frac{\widetilde{m}_{1,k}}{d_k},\hdots,\frac{\Tm_{s,k}}{d_k})$, where $s$ is the length of the vector $\bm{b}_k$, i.e. we have the strict inequality
\[ \mu_{\bE_k,\Om_{n,k}}(p_k/q_k)=c_{\Om_{n,k}}(p_k/q_k)>V_{\Om_{n,k}}(p_k/q_k).\]

If we also know that for sufficiently large $k$ (and fixed $n$), $\bE_k$ is obstructive at $p_k/q_k$ for the limiting domain $X_{\Om_n}$ where the weight sequence of $\Om_n$ is $\lim_{k \to \infty} (1;\bm{b}_k)$, it follows from Proposition~\ref{prop:accum1} that  $p_k/q_k$ converges to the accumulation point of $\Om_n$.  (Note that as $\btm_k,d_k$ satisfy the same recursion, 
the 
sequence $\bm{b}_k$
 converges, see Corollary~\ref{cor:dom0}.)  Moreover,  the accumulation point cannot be obstructed, for if it were, by continuity $\bE_k$ could not be live in $X_{\Om_{n,k}}$ at its break point $p_k/q_k$ 
 for all large $k$.
  If, for fixed n and for large $k$, the classes $\bE_k$ continue to be live in the limiting $X_{\Om_n}$, then they form the desired staircase. If this is not the case, either there is a single obstruction in $X_{\Om_n}$ that goes through the accumulation point and is larger than the obstructions from the $\bE_k = \bE_k(n), k\ge k_0$, or there is a sequence of classes 
 $\bE'_k, k\ge k_1,$ that are live for $\Om_n$ and with  obstructions  $\mu_{\bE'_k,\Om_n}$ that are maximal at $p_k/q_k$ and such that $\mu_{\bE'_k,\Om_n}(p_k/q_k) > \mu_{\bE_k,\Om_n}(p_k/q_k)$ for all large $k$.  But in the first case this single class is by definition an overshadowing class, while in the second, the classes $\bE_k'$ themselves form a staircase.
  \end{proof}

We now give the proof of Theorem~\ref{thm:inftystair}, which cites the work in the following three subsections. 
\begin{proof}[Proof of Theorem~\ref{thm:inftystair}]
	By Lemma~\ref{lem:cutOmegaN}, the domains $X_{\Omega_n} = \lim_k X_{\Omega_{n,k}}$ have increasing cut length. In the next three subsections, we check (in reverse order) that for sufficiently large $n$, the classes $\bE_k(n)$ for $X_{\Omega_n}$ satisfy the three conditions of Proposition~\ref{prop:stair}. Hence, we conclude $X_{\Omega_n}$ has a staircase for $n \geq n_0$. The proof of the first condition is given in Lemma~\ref{lem:nontrivial} and holds for all $n \geq 0$. 
	We reduce the proof that the classes $\bE_k(n)$ are perfect for all $k \geq 0$ and $n \geq 0$  to results that were already proved in \cite{BHM}. The details are in  Lemma~\ref{lem:classesPerfect}.  The last criterion is checked in Proposition~\ref{prop:OS}. This argument is rather tricky since, as explained in Remark~\ref{rmk:over}, there are similarly defined sequences of classes that are overshadowed, and we simplify it by requiring
	 that $n \geq n_0$ for some sufficiently large $n_0$.\footnote{We expect that there is no overshadowing class for all $n,$ but did not carry out the necessary computations.} 
	 The proof here is a combination of the arithmetic arguments in \cite[\S4.3]{MM} with some new estimates.
\end{proof} 

We end this section with a remark and some useful lemmas. The first three lemmas are variants of similar results proved for the Hirzebruch surface in \cite{BHM}, while the last quotes a result from \cite{MM} explaining how to compute the limit of a recursively defined sequence.

\begin{rmk} \label{rmk:obstr} Below we frequently use the fact that if $z$ is sufficiently close to the center $p/q$ of a quasi-perfect class $\bE = (d',(\Tm_j), p,q)$ in some domain $\Om=\Om(1,(b_j))$  then the corresponding obstruction  is 
\begin{align}\label{eqn:obstr}
\mu_{\bE, \Om}(z) = \begin{cases} \frac {qz}{d-\sum_j\Tm_j b_j}, & \mbox{ if } z\le p/q\\
\frac p{d-\sum_j\Tm_j b_j} & \mbox{ if } z\ge p/q.\end{cases}
\end{align}
One measure of \lq\lq sufficiently close\rq\rq\  is that there should be no point lying strictly between $p/q$ and $z$ whose continued fraction 
$[\ell_0; \ell_1,\dots,\ell_n]$ is shorter than that of $p/q$; see \cite{ball} or \cite{MMW}. Another useful fact is that,
 when $z$ is sufficiently close to the break point,  the obstruction from any exceptional class $\bE = (d, \btm, \bf{m})$ always has the form $\mu_{\Om, \bE}(z) = \frac{A + Cz}{ d-\sum \Tm_j b_j}$ for some integers $A,C\ge 0$; see \cite[Prop.2.3.2]{ball}.
\end{rmk}

\begin{lemma} \label{lem:unobs}
Let $\bE=(d;\widetilde{m}_1,\hdots,\widetilde{m}_r;W(p,q))$ be a perfect class in the domain $X_{\Om_B}$, where 
$B=(1;\frac{\Tm_1}{d},\hdots,\frac{\Tm_r}{d}).$
Then, $\mu_{\bE,\Om_B}(p/q)$ is live at $p/q$ for $X_{\Om_B}$ 
i.e. $c_{\Om_B}(p/q)=\mu_{\bE,\Om_B}(p/q)$.
\end{lemma}
\begin{proof}
    To check that $\mu_{\bE,\Om_B}(p/q)$ is live when $B=(1;\widetilde{m}_1/d,\hdots,\widetilde{m}_r/d),$ we first check that $\mu_{\bE,\Om_B}(p/q) > V_{\Om_B}(p/q).$
    We have that
    \begin{align*}
       \mu_{\bE,\Om_B}(p/q)=\frac{p}{d-\sum \Tm_j \frac{\Tm_j}{d}}=
       \frac{pd}{d^2-\sum \Tm_j^2}.
    \end{align*}   
    and
    \begin{align*}
        V_{\Om_B}(p/q)=\sqrt{\frac{p}{q(1-\sum \frac{\Tm_j^2}{d^2})}}=\sqrt{\frac{pd^2}{q(d^2-\sum \Tm_j^2)}}
    \end{align*}
    Hence, $\bE$ is obstructive at $p/q$ for the domain $\Om_B$ since
    \begin{align*}
        \mu_{\bE,\Om_B}(p/q) > V_{\Om_B}(p/q) & \iff
        \frac{pd}{d^2-\sum \Tm_j^2} > \sqrt{\frac{pd^2}{q(d^2-\sum \Tm_j^2)}} \\
     &   \iff
       pq> d^2-\sum \Tm_j^2=pq-1,
    \end{align*}
    where  we have used that $\bE$ is perfect. 
    Then, to check $\bE$ is live, we consider the obstruction from any other exceptional class $\bE':=(d';\widetilde{m}'_1,\hdots \widetilde{m}_r';\bm{m}')$. Let $\bm{m}:=W(p,q)$.  Then because\footnote
    {
    This inequality, which holds only for exceptional classes $\bE$,  is the key difference between perfect and quasi-perfect classes.} $\bE \cdot \bE' \geq 0,$ we have
    \[ dd'-\sum \widetilde{m}_j \widetilde{m}_j' \geq \sum m_j' m_j .\]  
    Therefore, for $a=p/q$ we have
    \begin{align*}
    \mu_{\bE',\Om_B}(a)=\frac{\bm{m}' \cdot w(a)}{d'-\sum \Tm_j'\frac{\Tm_j}{d}} 
 \leq \frac{d(dd'-\sum \Tm_j\Tm_j')}{q(dd'-\sum \Tm_j \Tm_j')}=\frac{d}{q}
    \end{align*}  
    where here we have used that $\bm{m}' \cdot w(a)=\frac{1}{q}\bm{m}' \cdot \bm{m}$.
On the other hand, because $w(a)\cdot w(a)=a=p/q$, we have
\[ \mu_{\bE,\Om_B}(a)=\frac{pd}{d^2-\sum \Tm_j^2} > \frac{d}{q}\]
again because $pq=d^2-\sum \Tm_j^2+1.$ 
Hence $\mu_{\bE,\Om_B}(p/q) > \mu_{\bE',\Om_B}(p/q)$.
\end{proof}

The next two lemmas will be used in the analysis of potential overshadowing classes, since such a 
class must be obstructive at its break point.

\begin{lemma} \label{lem:nontrivialInequ}
    If a Diophantine class $\bE=(d;\btm;\bm{m})$ is obstructive at its break point for the domain $B=(1;\bm{b})$, then 
     \[ \left(d^2-||\btm||^2+1\right)(1-||\bm{b}||^2) > \left(d- \btm \cdot \bm{b}\right)^2.\]
\end{lemma}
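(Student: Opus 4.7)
The plan is to unpack the definition of being nontrivial at the break point and then combine it with two elementary ingredients: the Cauchy--Schwarz inequality applied to $\bm{m}\cdot w(a)$, and the Diophantine identity $\bE\cdot\bE=-1$ rewritten as $\|\bm{m}\|^2=d^2-\|\btm\|^2+1$.

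First I would spell out what the hypothesis says. Let $a=p/q$ be the break point. By definition of nontriviality (see \S\ref{ss:acc1}), we have
\[
\mu_{\bE,\bm{b}}(a)=\frac{\bm{m}\cdot w(a)}{d-\btm\cdot\bm{b}}>V_{\bm{b}}(a)=\sqrt{\frac{a}{1-\|\bm{b}\|^2}}.
\]
Squaring and clearing the positive denominator gives
\[
(\bm{m}\cdot w(a))^2(1-\|\bm{b}\|^2)>(d-\btm\cdot\bm{b})^2\,a.
\]

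Next I would bound $\bm{m}\cdot w(a)$ from above using Cauchy--Schwarz, together with the identity $\|w(a)\|^2=a$ from \eqref{eq:wtai}. This yields
\[
(\bm{m}\cdot w(a))^2\le \|\bm{m}\|^2\,\|w(a)\|^2=\|\bm{m}\|^2\,a,
\]
so the previous inequality upgrades to
\[
\|\bm{m}\|^2\,a\,(1-\|\bm{b}\|^2)>(d-\btm\cdot\bm{b})^2\,a.
\]
Dividing through by $a>0$ and substituting $\|\bm{m}\|^2=d^2-\|\btm\|^2+1$ (which is the content of the Diophantine identity $\bE\cdot\bE=-1$) gives the claimed inequality.

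There is no real obstacle here; the main thing to be careful about is that one is allowed to divide by $d-\btm\cdot\bm{b}$ and by $1-\|\bm{b}\|^2$ with the correct signs. Both are automatic in our setting: $1-\|\bm{b}\|^2=\Vol(B)>0$ since $B$ is a valid convex toric domain, and $d-\btm\cdot\bm{b}>0$ because $\mu_{\bE,\bm{b}}(a)$ is a positive quantity (the coefficient of $b$ in \eqref{eq:mu}). Thus the argument is a short, purely algebraic consequence of Cauchy--Schwarz and the Diophantine identity, requiring only the definitions recorded in \S\ref{ss:acc1}.
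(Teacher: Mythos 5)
Your proof is correct and takes essentially the same route as the paper's: both hinge on the Cauchy--Schwarz bound $\bm{m}\cdot\bw(a)\le\|\bm{m}\|\,\|\bw(a)\|$, the identity $\|\bw(a)\|^2=a$, and the Diophantine relation $\|\bm{m}\|^2=d^2-\|\btm\|^2+1$, combined with the nontriviality hypothesis. The only difference is cosmetic — you square the nontriviality inequality first and then invoke Cauchy--Schwarz, whereas the paper applies Cauchy--Schwarz to $\bm{m}\cdot\bw(z)$ before comparing with $V_{\bm{b}}(z)$ — so the two arguments are the same computation in a slightly different order.
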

\begin{proof}
       By definition of $\mu_{\bE,\bm{b}}(z)$, we have
    \[ (d-\btm \cdot \bm b) \mu_{\bE,\bm{b}}(z)=\bm{m} \cdot \bw(z) \leq || \bm{m} ||\ || \bw(z) ||=\sqrt{z}\sqrt{d^2-||\btm||^2+1}.\]
    If $\mu_{\bE,\bm{b}}(z)$ is obstructive, then we have \[\sqrt{z}\frac{\sqrt{d^2-||\btm||^2+1}}{d-\btm \cdot b}\geq \mu_{\bE,\bm{b}}(z) > V_{\bm{b}}(z),\] which implies that
    \[\frac{\sqrt{d^2-||\btm||^2+1}}{d-\btm \cdot b}> \frac{1}{\sqrt{1-||\bm{b}||^2}} .\]
    Hence
    \[ \left(d^2-||\btm||^2+1\right)(1-||\bm{b}||^2)> \left(d- \btm \cdot \bm{b}\right)^2\]
    as claimed.
\end{proof}

\begin{lemma} \label{lem:sqrt2} Let $\bE=\bE(d; \btm; \bbm)$ be a Diophantine class that is obstructive at its  break point $z$  for $\Om = \Om(b; (b_j))$. Denote the tuple $(b_1,b_2,\dots)$ by  $\bm{b}$, and write $\btm = M\bm{b} + \bm{\eps}$ where $M\in \R$ and $\bm{\eps}\cdot \bm{b} = 0$. Then, we have $(M-d)^2 \le \frac 1{ \|\bm{b}\|^2} - 1$.
\end{lemma}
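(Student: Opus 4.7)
The plan is to derive this lemma as an essentially formal consequence of Lemma~\ref{lem:nontrivialInequ} by exploiting the orthogonal decomposition $\btm = M\bm{b} + \bm{\eps}$. The hypothesis that $\bE$ is obstructive at its break point guarantees that $\mu_{\bE,\bm{b}}(z) \ge V_{\bm{b}}(z)$, which together with Cauchy--Schwarz applied to $\bbm\cdot\bw(z)$ and the Diophantine identity $\|\bbm\|^2 = d^2 - \|\btm\|^2 + 1$ (and $\|\bw(z)\|^2 = z$, from \eqref{eq:wtai}) yields exactly the hypothesis needed to invoke Lemma~\ref{lem:nontrivialInequ}, giving
\[
(d^2 - \|\btm\|^2 + 1)(1 - \|\bm{b}\|^2) \ge (d - \btm\cdot\bm{b})^2.
\]
(Here I will assume $b=1$, consistent with the conventions used throughout \S\ref{sec:stair}; the general case is the same after rescaling.)

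Next I would plug in the orthogonal decomposition. Since $\bm{\eps}\cdot\bm{b}=0$, we have $\btm\cdot\bm{b} = M\|\bm{b}\|^2$ and $\|\btm\|^2 = M^2\|\bm{b}\|^2 + \|\bm{\eps}\|^2$. Substituting these into the displayed inequality and expanding, the terms $d^2\|\bm{b}\|^2$, $-2dM\|\bm{b}\|^2$, and $M^2\|\bm{b}\|^4$ coming from the right-hand side and the expansion of the left-hand side collect into $-\|\bm{b}\|^2 (M-d)^2$, while the remaining terms assemble into $(1-\|\bm{b}\|^2)(1-\|\bm{\eps}\|^2)$. The net inequality reads
\[
\|\bm{b}\|^2 (M-d)^2 \le (1 - \|\bm{b}\|^2)(1 - \|\bm{\eps}\|^2).
\]

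Finally, since $\|\bm{\eps}\|^2 \ge 0$, the right-hand side is bounded above by $1 - \|\bm{b}\|^2$, and dividing through by $\|\bm{b}\|^2$ gives the claimed bound $(M-d)^2 \le \|\bm{b}\|^{-2} - 1$. There is no real obstacle to this proof beyond the bookkeeping of the expansion; the essential content is the observation that the inequality from Lemma~\ref{lem:nontrivialInequ} becomes a clean bound on $M-d$ once one decomposes $\btm$ along and orthogonal to $\bm{b}$, and that dropping the nonnegative contribution $\|\bm{\eps}\|^2$ gives precisely the stated form.
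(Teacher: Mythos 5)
Your proof is correct and follows essentially the same route as the paper: both start from Lemma~\ref{lem:nontrivialInequ}, substitute $\btm\cdot\bm{b}=M\|\bm{b}\|^2$, and expand. The only cosmetic difference is that you keep the exact identity $\|\btm\|^2=M^2\|\bm{b}\|^2+\|\bm{\eps}\|^2$ throughout and obtain the slightly sharper intermediate inequality $\|\bm{b}\|^2(M-d)^2\le(1-\|\bm{b}\|^2)(1-\|\bm{\eps}\|^2)$ before discarding $\|\bm{\eps}\|^2\ge 0$, whereas the paper applies the weaker $\|\btm\|^2\ge M^2\|\bm{b}\|^2$ at the outset and arrives at the same final bound.
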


\begin{proof} As $\bE$ is obstructive, by Lemma~\ref{lem:nontrivialInequ} we know that 
$$
(d-\btm \cdot\bm{b})^2 \le (1-\|\bm{b} \|^2) (d^2 - \|\btm\|^2 + 1),
$$
Since $\btm\cdot\bm{b}= M\|\bm{b}\|^2$,  
 and $ \|\btm\|^2\ge M^2\|\bm{b}\|^2$, this implies that
$$
(d- M\|\bm{b}\|^2)^2 \le (1-\|\bm{b}\|^2) (d^2 - M^2\|\bm{b}\|^2 + 1),
$$
which expands to
$$
d^2 - 2dM \|\bm{b}\|^2 + M^2 \|\bm{b}\|^4  \le d^2 - M^2\|\bm{b}\|^2 + 1  -\|\bm{b}\|^2 d^2 + M^2\|\bm{b}\|^4 - \|\bm{b}\|^2.
 $$
This simplifies to
$$
 M^2\|\bm{b}\|^2   - 2dM \|\bm{b}\|^2 + \|\bm{b}\|^2 d^2 \le 1 -  \|\bm{b}\|^2,
 $$
 or equivalently 
 $
 (M-d)^2 \le \frac 1{ \|\bm{b}\|^2} - 1.
 $
\end{proof} 

This next lemma is  \cite[Lemma 3.1.4]{MM}; it is the basis of many calculations.
\begin{lemma} \label{lem:recur1}
    Let $x_k,k \geq 0$ be a sequence of integers that satisfy the recursion
    \[ x_{k+1}=t x_k-x_{k-1}, \quad t \geq 3,\]
    and let 
    \[ \la=\frac{t+\sqrt{\si}}{2}\] where $\si=t^2-4.$ Then there is a number $X \in Q[\sqrt{\si}]$ such that 
    \begin{equation} \label{eq:recurSln}
         x_k=X\la^k+\ov{X}\ \ov{\la}^k,
    \end{equation}
    where $\ov{a+b\sqrt{\si}}:=a-b\sqrt{\si}$, so that $\la \ \ov{\la}=1.$ Further, if we write $X=X'+X''\sqrt{\si},$ then
    \begin{equation} \label{eq:bigX}
        X'=\frac{x_0}{2}, \quad X''=\frac{2x_1-tx_0}{2\si}.
    \end{equation}
\end{lemma}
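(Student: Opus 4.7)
The plan is to apply the standard theory of linear recurrences with constant coefficients. The recursion $x_{k+1} = tx_k - x_{k-1}$ has characteristic polynomial $\mu^2 - t\mu + 1 = 0$, whose roots are precisely $\la = (t+\sqrt{\si})/2$ and $\ov{\la} = (t-\sqrt{\si})/2$. Since $t \geq 3$ we have $\si = t^2-4 > 0$, so the roots are distinct real numbers, and by Vieta's formulas they satisfy $\la + \ov{\la} = t$ and $\la\,\ov{\la} = 1$.

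By the standard theory, every solution of the recursion is of the form $x_k = A\la^k + B\ov{\la}^k$ for some constants $A,B$. To match the claimed form in \eqref{eq:recurSln}, I would set $A = X$ and $B = \ov{X}$ with $X = X' + X''\sqrt{\si} \in \Q[\sqrt{\si}]$, and then solve for $X'$ and $X''$ using the initial data $x_0, x_1$.

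Specifically, from $x_0 = X + \ov{X}$ and the defining convention $\ov{a+b\sqrt{\si}} = a - b\sqrt{\si}$, we get $x_0 = 2X'$, so $X' = x_0/2$. For $x_1$, expand
\[
x_1 = X\la + \ov{X}\,\ov{\la} = X'(\la + \ov{\la}) + X''\sqrt{\si}(\la - \ov{\la}) = X' t + X'' \si,
\]
using $\la - \ov{\la} = \sqrt{\si}$. Solving for $X''$ gives $X'' = (2x_1 - tx_0)/(2\si)$, as claimed in \eqref{eq:bigX}.

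There is no real obstacle here: the result is a routine consequence of diagonalizing the recursion via its characteristic roots, and the only slight subtlety is bookkeeping to verify that $X$ genuinely lies in $\Q[\sqrt{\si}]$ (which follows because $X', X'' \in \Q$ by the formulas above, given that $x_0, x_1, t, \si \in \Z$). One could alternatively verify the formula by induction on $k$: the base cases $k=0,1$ are immediate from the formulas for $X', X''$, and the inductive step uses only that $\la^{k+1} = t\la^k - \la^{k-1}$ (and similarly for $\ov{\la}$), which holds since $\la, \ov{\la}$ are roots of $\mu^2 - t\mu + 1$.
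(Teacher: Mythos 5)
Your proof is correct and is the standard approach via the characteristic polynomial of the linear recurrence; the paper itself offers no proof, only a citation to \cite[Lemma 3.1.4]{MM}, and what you wrote is precisely the routine argument one would expect there. The verification that $X', X'' \in \Q$ (so $X \in \Q[\sqrt{\si}]$) and the identities $\la+\ov\la = t$, $\la-\ov\la = \sqrt{\si}$, $\la\ov\la = 1$ are all correctly used.
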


\begin{cor}\label{cor:dom0} Suppose that  the  sequences $(a_k), (b_k)$  are defined recursively as in Lemma~\ref{lem:recur1} using the same parameter $t$, and define $A =A'+A''\sqrt{\si}, B = B'+B''\sqrt{\si}$ as above. Then 
$$
\lim_k\frac{a_k}{b_k} = \frac AB, \quad \lim_k a_kB - A b_k = \ov A B - A\ov B.
$$
\end{cor}

\begin{cor}\label{cor:dom}  The domain $\Om_n$ defined in \eqref{eq:domn} has coefficients $\bm{b}(n) = (B_{n1},\dots,B_{nK})$,
where $B_{nj} = M_{nj}/D_n$. Here $M_{nj}$, respectively $D_n$, is the number $X$ defined as above for the recursive sequence $m_{jk}(n), k\ge 0$, respectively $d_{k}(n), k\ge 0$.
\end{cor}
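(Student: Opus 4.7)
The plan is a direct application of Lemma~\ref{lem:recur1} to both the numerator and denominator sequences, noting that the limiting behavior is controlled by the dominant eigenvalue.

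First I would unpack the definitions. By \eqref{eq:domn}, the coefficients $\mathbf{b}_n$ of $\Omega_n$ are precisely $\lim_{k\to\infty} \widetilde{m}_{njk}/d_{nk}$ for each index $j$, assuming this limit exists. The recursion \eqref{eq:seedEx} applies coordinate-wise, so both $\{d_{nk}\}_{k\geq 0}$ and $\{\widetilde{m}_{njk}\}_{k\geq 0}$ satisfy the recursion $x_{k+1} = t_n x_k - x_{k-1}$ with $t_n = 5+2n \geq 5$. Since $t_n \geq 3$, Lemma~\ref{lem:recur1} applies and gives solutions of the form
\[
d_{nk} = D_n \lambda^k + \overline{D}_n \overline{\lambda}^k, \qquad \widetilde{m}_{njk} = M_{nj} \lambda^k + \overline{M}_{nj} \overline{\lambda}^k,
\]
where $\lambda = (t_n + \sqrt{\sigma_n})/2$ and $\sigma_n = t_n^2 - 4 > 0$.

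Next, the identity $\lambda \overline{\lambda} = 1$ combined with $\lambda > 1$ (which holds since $t_n \geq 5$) implies $0 < \overline{\lambda} < 1$. Therefore $\overline{\lambda}^k \to 0$ as $k \to \infty$, and dividing numerator and denominator by $\lambda^k$ gives
\[
\frac{\widetilde{m}_{njk}}{d_{nk}} = \frac{M_{nj} + \overline{M}_{nj} (\overline{\lambda}/\lambda)^k}{D_n + \overline{D}_n (\overline{\lambda}/\lambda)^k} \;\longrightarrow\; \frac{M_{nj}}{D_n}
\]
provided $D_n \neq 0$. This last point is immediate from \eqref{eq:bigX}: for the $d$-sequence with $d_0 = 2$ and $d_1 = 22 + 10n$, we compute $D_n' = 1 > 0$ and $D_n'' = (2(22+10n) - (5+2n)\cdot 2)/(2\sigma_n) = (34+16n)/(2\sigma_n) > 0$, so $D_n = D_n' + D_n''\sqrt{\sigma_n} > 0$. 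Setting $B_{nj} := M_{nj}/D_n$ then gives the claimed formula.

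The only mild subtlety is bookkeeping: one must apply Lemma~\ref{lem:recur1} separately to each coordinate sequence $\widetilde{m}_{njk}$, and then read off $\mathbf{b}_n = (B_{n1}, \dots, B_{nK})$ in the order specified by \eqref{eqn:bbn}. This is routine; no real obstacle arises since the argument reduces entirely to the explicit form of solutions to the linear recursion.
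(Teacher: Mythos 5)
Your proof is correct and is essentially the argument the paper has in mind: the corollary is stated as an immediate consequence of Lemma~\ref{lem:recur1} with no written proof, and your application of that lemma coordinate-wise, together with the observation that $\lambda > 1 > \overline{\lambda} > 0$ forces $\widetilde{m}_{njk}/d_{nk} \to M_{nj}/D_n$, is exactly the intended reasoning. Checking $D_n > 0$ via the formulas in \eqref{eq:bigX} is a sensible detail to include so the division is legitimate.
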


\subsection{There is no overshadowing class}\label{ss:oversh}
The staircases constructed in \cite{MMW} are not overshadowed basically for arithmetic reasons:
we showed that, in the  situation  considered there, there are  two lines that always go through the limit point
$(z_\infty, V_\Om(z_\infty))$. This was enough to allow one to rule out the existence of a third line through this point of the form corresponding to an obstructive class.  In the current situation, such  arguments do not quite suffice, but they do give some useful information. Note that, as mentioned just before Proposition~\ref{prop:stair}, we may assume that the overshadowing class is perfect.

\begin{lemma} \label{lem:perLine} Let $X_\Omega$ be a
 convex toric domain with perimeter $\Per: = \Per(\Om)$ and volume $\Vol: = \Vol(\Om)$. Then
 the line $\frac{1+z}{\Per(\Om)}$ goes through the limit  point $(z_\infty,V_{\Omega}(z_\infty)).$
\end{lemma}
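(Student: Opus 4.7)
The plan is straightforward: I will simply manipulate the defining equation for the accumulation point $z_\infty$ from Theorem~\ref{thm:acc} (equivalently Proposition~\ref{prop:accum1}) to show that the claimed identity $\frac{1+z_\infty}{\Per(\Omega)} = V_\Omega(z_\infty)$ holds. Recall that $V_\Omega(z) = \sqrt{z/\Vol(\Omega)}$, so the goal is to prove
\[
\frac{1+z_\infty}{\Per(\Omega)} \;=\; \sqrt{\frac{z_\infty}{\Vol(\Omega)}}.
\]

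First I would recall that by Theorem~\ref{thm:acc}(i), the accumulation point $z_\infty \ge 1$ is the unique such solution of the quadratic
\[
z^2 - z\Bigl(\frac{\Per(\Omega)^2}{\Vol(\Omega)} - 2\Bigr) + 1 \;=\; 0.
\]
Rearranging this identity by moving the middle term and grouping, we get $z_\infty^2 + 2z_\infty + 1 = z_\infty \cdot \frac{\Per(\Omega)^2}{\Vol(\Omega)}$, that is
\[
(1+z_\infty)^2 \;=\; \frac{\Per(\Omega)^2 \, z_\infty}{\Vol(\Omega)}.
\]
Since $z_\infty \ge 1$ and $\Per(\Omega) > 0$ (because otherwise there is no accumulation point by the case analysis in Proposition~\ref{prop:accum1}(i)), both sides are positive, so taking positive square roots yields $1 + z_\infty = \Per(\Omega)\sqrt{z_\infty/\Vol(\Omega)}$, which is exactly $\tfrac{1+z_\infty}{\Per(\Omega)} = V_\Omega(z_\infty)$.

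There is no genuine obstacle here; the result is essentially just a rewriting of the accumulation-point equation after completing the square. The only minor point to address carefully is the sign when taking the square root, which is handled by the positivity of $1+z_\infty$ and of $\Per(\Omega)$ in any case where $z_\infty$ actually exists.
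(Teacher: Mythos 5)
Your proof is correct and is essentially the same as the paper's: both rewrite the defining quadratic for $z_\infty$ as $(1+z_\infty)^2 = \Per(\Omega)^2 z_\infty / \Vol(\Omega)$ and identify the positive square root with $V_\Omega(z_\infty)$. The paper phrases the chain as a sequence of equivalences starting from the desired identity, while you run it forward from the accumulation-point equation, but the content is identical.
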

\begin{proof} This holds because
    \begin{align*}\frac{1+z_\infty}{\Per}=\sqrt{\frac{z_\infty}{\Vol}} 
    &\iff (1+z_\infty)^2 =z_\infty\frac{\Per^2}{\Vol} \\
    & \iff z_\infty^2-(\Per^2/\Vol-2)z_\infty+1 =0,
    \end{align*}
where   the last identity  holds by definition of $z_\infty$.
\end{proof}

The following result generalizes the notion of \lq\lq blocking class'' from \cite{MM,MMW}.

\begin{lemma}\label{lem:adjblock}  Let $(\bE_k)_{k\ge 0}$ be a recursively defined staircase in $X_\Om$ with accumulation point $z_\infty$, and suppose
that $\bE': = \bigl(d'; (\Tm_j'); W(p',q')\bigr)$ is a quasi-perfect class whose center $p'/q'$  is either larger or smaller than all the centers $p_k/q_k, k\ge 0$.  Then if the class $\bE'$ is adjacent to the first two steps $\bE_0, \bE_1$, the corresponding obstruction $\mu_{\Om,\bE'}$ goes through the limit point $(z_\infty, V_\Om(z_\infty))$.
\end{lemma}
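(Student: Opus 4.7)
The plan is to show that adjacency of $\bE'$ to the first two steps propagates, via the three-term recursion, to adjacency with every step $\bE_k$, and then to combine the $k\to\infty$ limit of this identity with the accumulation point theorem. First, if $\bE'$ is adjacent to both $\bE_{k-1}$ and $\bE_k$ in the sense $d'd_k - \sum_j\Tm'_j\Tm_{kj} = p'q_k$ (the case $p'/q'<p_k/q_k$), the recursion $\bE_{k+1}=t\bE_k-\bE_{k-1}$ gives
\begin{align*}
d'd_{k+1} - \sum_j\Tm'_j\Tm_{k+1,j}
&= t\bigl(d'd_k - \sum_j\Tm'_j\Tm_{kj}\bigr) - \bigl(d'd_{k-1} - \sum_j\Tm'_j\Tm_{k-1,j}\bigr) \\
&= tp'q_k - p'q_{k-1} = p'q_{k+1}.
\end{align*}
The analogous identity (with $p'q_k$ replaced by $p_kq'$) handles the case $p'/q'>p_k/q_k$; hence adjacency to $\bE_0,\bE_1$ forces adjacency to every $\bE_k$.

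Next, by Lemma~\ref{lem:recur1} each of $d_k, q_k, p_k, \Tm_{kj}$ can be written in the form $X\la^k + \ov X\,\ov\la^k$ with $\la>1>\ov\la>0$, so $q_k/d_k \to Q/D$, $p_k/q_k\to z_\infty$, and $\Tm_{kj}/d_k\to b_j$ by the definition \eqref{eq:domn} of $\Om$. Dividing the adjacency identity by $d_k$ in the case $p'/q'<z_\infty$ and passing to the limit yields
\[
d' - \sum_j \Tm'_j b_j \;=\; p'\,\frac{Q}{D}.
\]
Because $(\bE_k)$ forms a staircase accumulating at $z_\infty$, Proposition~\ref{prop:accum1} gives $\mu_{\bE_k,\Om}(p_k/q_k)\to V_\Om(z_\infty)$. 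Writing $\mu_{\bE_k,\Om}(p_k/q_k) = p_k/(d_k - \sum_j\Tm_{kj}b_j)$ and dividing top and bottom by $q_k$ identifies the limit as $z_\infty Q/(D\,\Vol(\Om))$; setting this equal to $V_\Om(z_\infty) = \sqrt{z_\infty/\Vol(\Om)}$ gives $D/Q = V_\Om(z_\infty)$.

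Combining these two identities with the local formula from Remark~\ref{rmk:obstr} for $\mu_{\bE',\Om}$ immediately to the right of $p'/q'$ yields
\[
\mu_{\bE',\Om}(z_\infty) \;=\; \frac{p'}{d' - \sum_j\Tm'_j b_j} \;=\; \frac{p'}{p'Q/D} \;=\; \frac{D}{Q} \;=\; V_\Om(z_\infty),
\]
as required. The case $p'/q'>z_\infty$ is parallel: the limit of the corresponding adjacency identity reads $d'-\sum_j\Tm'_jb_j = z_\infty q'Q/D$, and the line-through-origin formula $\mu_{\bE',\Om}(z_\infty) = q'z_\infty/(d'-\sum_j\Tm'_jb_j)$ again produces $D/Q = V_\Om(z_\infty)$.

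The main obstacle will be largely bookkeeping: keeping the two cases (center above or below $z_\infty$) straight, and verifying that the linear piece of $\mu_{\bE',\Om}$ given by Remark~\ref{rmk:obstr} remains valid in a neighborhood containing $z_\infty$. For the latter one observes that the break points $p_k/q_k$ lie between $p'/q'$ and $z_\infty$ and, by the construction of the staircase, have continued fractions at least as long as that of $p'/q'$, so no rational in the open interval between $p'/q'$ and $z_\infty$ can truncate the validity of the relevant local formula. All the analytic content is already packaged in Lemma~\ref{lem:recur1} and Proposition~\ref{prop:accum1}; the rest is linear algebra.
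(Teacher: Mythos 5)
Your argument is correct and follows the same skeleton as the paper's: propagate adjacency along the recursion, pass to the limit to obtain $D(d'-\sum_j\Tm_j'b_j) = p'Q$ in your normalization, and combine this with the local formula for $\mu_{\Om,\bE'}$ near $p'/q'$. The one genuine variation is how you establish $D/Q = V_\Om(z_\infty)$: you invoke Proposition~\ref{prop:accum1}(ii) and compute the limit of $\mu_{\bE_k,\Om}(p_k/q_k)$, whereas the paper extracts the same identity purely algebraically from the leading order of the quadratic Diophantine constraint $d_k^2 - \sum_j\Tm_{kj}^2 = p_kq_k - 1$, which gives $D^2\Vol(\Om) = PQ$ and hence $V_\Om(z_\infty)=\sqrt{P/(Q\Vol(\Om))}=D/Q$. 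Both work; the paper's version is self-contained and avoids re-invoking a heavy theorem, so you may prefer it. Separately, your closing justification for why the local piece from Remark~\ref{rmk:obstr} persists out to $z_\infty$ is a non sequitur: the relevant criterion is that no rational strictly between $p'/q'$ and $z_\infty$ have a shorter continued fraction than $p'/q'$, and this is unrelated to the break points $p_k/q_k$ of the staircase. The paper is equally terse here, simply citing \eqref{eqn:obstr}; and in the intended application ($\bE'=\bE(2,1)$, $z_\infty < 3$) the identity $W(2,1)\cdot\bw(z) = 2$ holds for all $z\ge 2$, so nothing actually breaks. But you should replace or delete the stated reasoning rather than leave an incorrect justification.
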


\begin{proof} Suppose first that $p'/q' < p_k/q_k$ for all $k$, and denote by $D, B_j,P, Q$ the appropriate quantity $X$ defined in
Lemma~\ref{lem:recur1}  for each of the recursive sequences $x_k: = d_k,\widetilde{m}_{jk},p_k,q_k $. 
Because the adjacency relation \eqref{eq:adj} is bilinear,  we have
$$
D(d' - \sum \Tm_j'B_j) = p'Q.
$$
But by \eqref{eqn:obstr}, $\mu_{\Om,\bE'}(z_\infty) = \frac {p'}{d' - \sum \Tm_j'B_j} = D/Q$.  On the other hand, 
\begin{align*}
&z_\infty = P/Q,\quad  \Vol(\Om) = 1-\sum B_j^2,\;\;\mbox{and } \\
&\quad 
V_\Om(z_\infty) = \sqrt{\frac P{Q(1-\sum B_j^2)}} = D/Q,
\end{align*}
 where the last equality holds because the   Diophantine identity
$d_k^2 - \sum_j \Tm_j^2 = p_kq_k - 1$ implies that $D^2 (1-\sum B_j^2) = PQ$.

The proof of the  second case (with
 $p'/q' > p_k/q_k$ for all $k$) is a very similar argument  and is left to the reader.
\end{proof}

If $(\bE_k)_{k\ge 0}$ is a descending pre-staircase as in our situation, then a class $\bE'$ as above with $p'/q'< z_\infty$ is called an 
{\bf ascending blocking class}, while a similar class with  $p'/q'> z_\infty$ is a potential overshadowing class.

\begin{lemma}
\label{lem:os1}
    Let $\bE'=(d';\btm';W(p',q'))$ be an ascending perfect blocking class for a pre-staircase in $X_\Om$, and let $\bE=(d;\btm;{\bf m})$ be a (not necessarily perfect) descending overshadowing class with obstruction function $\frac{A+Cz}{\la}$. Then, 
    \begin{equation} \label{eq:OScondition}
         (C-A)\la'+\la p'=Cp' \Per
     \end{equation} 
    where $\la'=d'-\btm' \cdot \bm{b}, \la=d-\btm \cdot \bm{b}$, and $\Per = \Per(\Om).$
\end{lemma}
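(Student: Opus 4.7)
The plan is to observe that both the ascending blocking class $\bE'$ and the descending OS class $\bE$ must meet at the common point $(z_\infty,V_\Om(z_\infty))$, and then extract \eqref{eq:OScondition} by a short elimination.

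First I would apply Lemma~\ref{lem:adjblock} to the ascending blocking class $\bE'$: being adjacent to the first two steps of the pre-staircase with center $p'/q'<z_\infty$, its obstruction passes through $(z_\infty,V_\Om(z_\infty))$. Since $z_\infty>p'/q'$, formula \eqref{eqn:obstr} identifies the value of $\mu_{\bE',\bm b}$ at $z_\infty$ as the constant $p'/\la'$. Combining with Lemma~\ref{lem:perLine}, which expresses the volume curve at the limit point as $V_\Om(z_\infty)=(1+z_\infty)/\Per$, one obtains
\begin{equation*}
p'\,\Per\;=\;\la'(1+z_\infty). \tag{$\star$}
\end{equation*}

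Next I would argue the analogous identity for $\bE$: since $\bE$ overshadows the descending pre-staircase, one has $\mu_{\bE,\bm b}(p_k/q_k)\ge\mu_{\bE_k,\bm b}(p_k/q_k)$ for all sufficiently large $k$. Letting $k\to\infty$ and using that the pre-staircase obstructions tend to $V_\Om(z_\infty)$ (Proposition~\ref{prop:accum1}) gives $\mu_{\bE,\bm b}(z_\infty)\ge V_\Om(z_\infty)$. The matching upper bound is the content of the descending analog of Lemma~\ref{lem:adjblock}, which forces a descending OS class adjacent to the first two steps to pass through the same limit point $(z_\infty,V_\Om(z_\infty))$. Combining, $(A+Cz_\infty)/\la=(1+z_\infty)/\Per$, i.e.
\begin{equation*}
(A+Cz_\infty)\,\Per\;=\;\la(1+z_\infty). \tag{$\star\star$}
\end{equation*}

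The remainder is pure algebra. Substituting $(\star)$ into $(\star\star)$ yields $(A+Cz_\infty)\la'=\la p'$, and eliminating $z_\infty$ via $z_\infty=(p'\,\Per-\la')/\la'$ from $(\star)$ rearranges to
$$A\la'+C(p'\,\Per-\la')\;=\;\la p',$$
which is exactly $(C-A)\la'+\la p'=Cp'\,\Per$, the identity \eqref{eq:OScondition}. The only real subtlety is the upper bound $\mu_{\bE,\bm b}(z_\infty)\le V_\Om(z_\infty)$ in Step~2; the cleanest way to handle it is to note that the hypothesis that $\bE$ is an OS (rather than merely strictly dominating) class encodes precisely that its linear obstruction meets, rather than strictly surpasses, the volume curve at $z_\infty$, and this follows from the same intersection computation used in Lemma~\ref{lem:adjblock} adapted to the descending side.
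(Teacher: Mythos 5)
Your proposal is correct and matches the paper's proof in all essentials: both express the same three quantities, $\frac{1+z_\infty}{\Per}$, $\frac{p'}{\la'}$, and $\frac{A+Cz_\infty}{\la}$, as equal to $V_\Om(z_\infty)$ (via Lemma~\ref{lem:perLine}, Lemma~\ref{lem:adjblock}, and the defining property of an overshadowing class respectively), and then eliminate $z_\infty$ to get \eqref{eq:OScondition}. The only wrinkle is your first justification for the upper bound $\mu_{\bE,\bm b}(z_\infty)\le V_\Om(z_\infty)$ in Step~2, which invokes a ``descending analog of Lemma~\ref{lem:adjblock}'' requiring $\bE$ to be adjacent to the first two steps --- a hypothesis not in the lemma statement; your closing sentence correctly discards that route in favor of the definitional fact that an OS class's obstruction passes through (rather than strictly exceeds) the accumulation point, which is all that is needed and is what the paper implicitly uses.
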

\begin{proof}
  At the accumulation point, we know from Lemmas~\ref{lem:perLine}, ~\ref{lem:adjblock} and Remark~\ref{rmk:obstr} that the following quantities are all equal to the volume at the accumulation point: 
    \[ \frac{1+z_\infty}{\Per}\;=\; \frac{p'}{\la'}\;=\;\frac{A+Cz_\infty}{\la}.\]
Solving the first equality for $z_\infty$ gives
\[
z_\infty=\frac{p'\Per-\lambda'}{\lambda'}.
\]
Then substituting this expression for $z_\infty$ into the second equality
gives
\[
\lambda p'
=
\lambda'(A+Cz_\infty)
=
A\lambda'+C(p'\Per-\lambda').
\]
Rearranging yields
\[
(C-A)\lambda'+\lambda p'=Cp'\Per.
\]
The lemma follows.
\end{proof}

Assume that $\bE=(d;\btm;\bm{m})$ is an obstructive class for domain $(1;b_1,\hdots,b_r)$ with  break point $a=p/q$. Let $\la_a=V_{\bm{b}}(a)$. As in \eqref{eq:eps}, write:
\[ (\btm,\bm{m})=\frac{d}{\la_a}(\la_a b_1,\hdots,\la_a b_r,a_1,\hdots,a_N)+\bm{\eps}\]
where $(a_1,\hdots,a_N)$ is the weight expansion of $a=p/q$.
As $\bE$ is obstructive, we can conclude that $\eps\cdot \eps<1$ by \eqref{eq:eps2}. 
This observation is the crucial reason why the following lemma holds.

\begin{lemma} \label{lem:btmD}
    Assume $\bE=(d;\btm;\bm{m})$ is an obstructive class for $(1;b_1,\hdots,b_r).$ Writing $\btm=(\widetilde{m}_1,\hdots,\widetilde{m}_r,\hdots)$. Then, 
    \[ \widetilde{m}_i \in \{\lfloor db_i \rfloor, \lceil db_i \rceil \}\]
    Further, let $J:=\{k,\hdots,k+s-1\}$ be a block of $s \geq 2$ consecutive integers for which the $b_i$ are constant for $i \in J$. Then, $\Tm_i$ is constant on $i \in J$ except for at most one entry.
\end{lemma}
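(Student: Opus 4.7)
The plan is to derive both statements directly from the bound $\bm\eps\cdot\bm\eps<1$ recorded in \eqref{eq:eps2}, which is available because $\bE$ is obstructive at its break point $a$. By the definition of $\bm\eps$ given in the lemma, the $i$-th coordinate on the $\btm$-side is $\eps_i = \widetilde m_i - d b_i$.

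For the first statement, I would simply note that each coordinate satisfies $\eps_i^{\,2} \le \bm\eps\cdot\bm\eps<1$, so $|\eps_i|<1$. Hence the integer $\widetilde m_i$ lies in the open interval $(db_i-1,\,db_i+1)$, which contains only the two integers $\lfloor db_i\rfloor$ and $\lceil db_i\rceil$ (coinciding when $db_i\in\Z$).

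For the second statement, fix a block $J=\{k,\dots,k+s-1\}$ on which $b_i$ equals a common value $b$. If $db\in\Z$, then by the first part $\widetilde m_i=db$ for every $i\in J$ and the conclusion is automatic. Otherwise write $db = M+f$ with $M=\lfloor db\rfloor$ and $f\in(0,1)$; by the first part each $\widetilde m_i$ with $i\in J$ equals either $M$ or $M+1$. Let $s_1$ be the number of entries of $\btm|_J$ equal to $M+1$ and $s_2=s-s_1$ the number equal to $M$. The corresponding coordinates of $\bm\eps$ are $1-f$ and $-f$ respectively, so
\[
\sum_{i\in J}\eps_i^{\,2} \;=\; s_1(1-f)^{2} + s_2 f^{\,2}.
\]
The crux is a one-line convexity estimate: if both $s_1\ge 2$ and $s_2\ge 2$, then
\[
\sum_{i\in J}\eps_i^{\,2} \;\ge\; 2(1-f)^{2} + 2f^{\,2} \;=\; 4\Bigl(f-\tfrac12\Bigr)^{2}+1 \;\ge\; 1,
\]
contradicting $\bm\eps\cdot\bm\eps<1$. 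Hence $\min(s_1,s_2)\le 1$, which is exactly the statement that $\widetilde m_i$ is constant on $J$ with at most one exceptional entry.

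I do not anticipate any real obstacle here: the entire argument is extracting the componentwise bound $|\eps_i|<1$ from $\bm\eps\cdot\bm\eps<1$ and then applying the elementary convexity inequality $2(1-f)^{2}+2f^{\,2}\ge 1$. The essential ingredient is obstructiveness at the break point, which is what delivers \eqref{eq:eps2} and makes the lemma a short arithmetic consequence.
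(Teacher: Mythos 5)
Your proof is correct and reconstructs essentially the argument the paper relies on: the paper's own proof of this lemma is a one-line citation to \cite[Lem.~2.1.7]{ball} (``the result, with the same proof, also holds for $\btm$''), and that cited proof proceeds exactly as you do --- reading the componentwise bound $|\eps_i|<1$ off $\bm\eps\cdot\bm\eps<1$ from \eqref{eq:eps2} for the rounding claim, and using that two $-f$'s plus two $(1-f)$'s already contribute $2(1-f)^2+2f^2\ge 1$ to $\bm\eps\cdot\bm\eps$ for the block claim.
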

\begin{proof}
 The proof, which exploits  the fact that $\eps\cdot \eps<1$,  is  the same as that in \cite[Lemma 2.1.7]{ball}, but with the vector $\btm$ replaced by $\bf{m}$.
    \end{proof}

With these preliminaries in place, we are now ready to prove the main result of this subsection.

\begin{prop}\label{prop:OS} For sufficiently large $n$, the pre-staircase  $\bigl(\bE_k\bigr) : = \bigl(\bE_k(n)\bigr)$  defined in \eqref{eq:seedEx} has no overshadowing class $\bE$.
\end{prop}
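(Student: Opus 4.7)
The plan is to adapt the arithmetic strategy of \cite[\S4.3]{MM}, enhanced by estimates tailored to the multi-entry vector $\bm{b}_n$, and carry it out in three steps.

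\emph{Step 1 (Analytic characterization of an overshadowing class).} I would first show that any overshadowing class $\bE = (d;\btm;\bm{m})$ must have its obstruction line passing through the limit point $(z_\infty, V_{\bm{b}_n}(z_\infty))$. Indeed, the staircase steps $\bE_k(n)$ are live at their break points and $\mu_{\bE_k,\bm{b}_n}(p_k/q_k) \to V_{\bm{b}_n}(z_\infty)$, so if $\bE$ dominates them for all large $k$ then continuity forces $\mu_{\bE,\bm{b}_n}(z_\infty) = V_{\bm{b}_n}(z_\infty)$. By Remark~\ref{rmk:obstr} the obstruction near $z_\infty$ has piecewise linear form $(A + Cz)/\la$ with $\la = d - \btm\cdot \bm{b}_n$, and combining with Lemma~\ref{lem:perLine} gives the algebraic identity
\[ \Per(\Om_n)\,(A + C z_\infty) \;=\; \la\,(1+z_\infty). \]
If moreover an ascending blocking class can be identified via Lemma~\ref{lem:adjblock}, Lemma~\ref{lem:os1} supplies the sharper constraint \eqref{eq:OScondition}.

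\emph{Step 2 (Bounding the parameters of $\bE$).} Next I would constrain $\btm$ and $d$. Decomposing $\btm = M\bm{b}_n + \bm{\varepsilon}$ with $\bm{\varepsilon}\cdot \bm{b}_n = 0$, Lemma~\ref{lem:sqrt2} yields $(M-d)^2 \le \|\bm{b}_n\|^{-2} - 1$, and from \eqref{eqn:bbn}\textendash\eqref{eq:ident} one checks that $\|\bm{b}_n\|^2$ stays bounded away from both $0$ and $1$ uniformly in $n$, so $|M-d|$ is uniformly bounded. Lemma~\ref{lem:btmD} then forces each $\Tm_i \in \{\lfloor db_i\rfloor, \lceil db_i\rceil\}$ and keeps $\Tm_i$ constant within the repeated blocks of $\bm{b}_n$ (the $b_3$-pair and the $b_4$-block of length $5+2n$) except for at most one deviation per block. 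This reduces the possible $\btm$ to a finite, structured family parameterized by $d$ and a few small integer deviations.

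\emph{Step 3 (Arithmetic contradiction for large $n$).} Finally, combining Steps~1 and~2 with the Diophantine identities $c_1(\bE) = 1$ and $\bE\cdot\bE = -1$, the candidate classes reduce to a finite list of types. By \eqref{eq:ident}, $z_\infty$ and $\Per(\Om_n)$ are explicit functions of $\sqrt{(3+2n)(7+2n)}$; this irrational dependence for generic $n$ separates the rational and irrational parts of the constraint from Step~1, yielding two scalar equations. For each candidate type the resulting system has an asymptotic expansion in $n$ whose leading terms contradict either positivity of $\btm$, the nontriviality bound of Lemma~\ref{lem:nontrivialInequ}, or the Diophantine identities once $n$ is large.

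\emph{Main obstacle.} The principal difficulty, which is exactly what forces the hypothesis $n \ge n_0$, is that closely related recursive families are genuinely overshadowed (cf.\ Remark~\ref{rmk:over}); one must locate a threshold beyond which the arithmetic structure of $\Om_n$ rules out every such candidate. The key new input beyond \cite[\S4.3]{MM} is the fact that $\bm{b}_n$ has $9 + 2n$ entries and depends on $n$ through the surd $\sqrt{(3+2n)(7+2n)}$, and I expect the most delicate part of the case analysis to be tracking the $O(1)$ and $O(1/n)$ corrections carefully enough to rule out the small-degree candidate classes that cause problems in the cases $n < n_0$.
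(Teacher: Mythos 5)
Your outline correctly identifies essentially every lemma the paper's proof uses (Lemmas~\ref{lem:perLine}, \ref{lem:adjblock}, \ref{lem:os1}, \ref{lem:sqrt2}, \ref{lem:btmD}, \ref{lem:nontrivialInequ}), and the overall plan --- pass the obstruction line through the accumulation point, separate the rational and $\sqrt{(3+2n)(7+2n)}$-parts to get Diophantine relations, bound the parameters of $\bE$, and force a contradiction for large $n$ --- matches the structure of the paper's argument. However, the proposal has a genuine gap that would stop the argument at the parameter-bounding stage.

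The key step you are missing is the geometric lower bound on $C/\la$. The paper does not obtain a useful bound on $C$ from Lemma~\ref{lem:nontrivialInequ} or Lemma~\ref{lem:sqrt2} alone; those only constrain $\la$ in terms of $d$. The crucial additional input is the slope comparison: for $\bE$ to dominate the peaks $\mu_{\bE_k,\bm{b}_n}(p_k/q_k)$ for all large $k$, its slope $C/\la$ must be at least $s_\infty$, the limiting slope of the secant lines from $(z_\infty, V_{\bm{b}_n}(z_\infty))$ to the peaks; and one checks $s_\infty \ge s_0$, the slope to the peak of $\bE_0$. This is what gives the lower bound in \eqref{eq:ClaAlabound}, and combined with the lower bound on $\la$ from Lemma~\ref{lem:sqrt2} and the relation $3C + A \in [d-2,d+2]$ (which in turn requires knowing $\Tm_1 + \Tm_2 \approx d$ from Lemma~\ref{lem:btmD} and $b_1 + b_2 = 1$), one pins down $d \le 18$ uniformly for $n \ge 3$. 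Without this slope argument the relation $A/\la + z_\infty C/\la = V_{\bm{b}_n}(z_\infty)$ underdetermines the pair $(A,C)$, and no bound on $d$ follows. Two further items you leave as conditional are in fact essential to carry out: the ascending blocking class must be exhibited explicitly --- it is $\bE(2,1) = (2;1,1,1;W(2,1))$, adjacent to both seeds, and needed to get the clean linear relation \eqref{eq:OScondition} whose separation into rational and $\be_n$-parts yields \eqref{eq:arith}; and the endgame is not an ``asymptotic expansion'' argument but a finiteness argument: since $d$ is bounded independently of $n$, a class overshadowing for arbitrarily large $n$ must have the $n$-coefficient in \eqref{eq:arith} vanish, which together with $A=0$ and the constraint $K \in \{0,1\}$ coming from Lemma~\ref{lem:btmD} applied to the length-$(2n+5)$ block of $\bm{b}_n$ yields the contradiction.

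So the approach is the right one, but as written the proposal would stall at bounding $d$ and does not pin down the final contradiction; the slope comparison $C/\la \ge s_\infty \ge s_0$ and the explicit choice of blocking class are the two pieces you would need to add to make the argument go through.
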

\begin{proof}
The proof is quite complex and involves several steps. Since as remarked before Proposition~\ref{prop:stair} a maximal overshadowing class is perfect, we will assume that $\bE = (d; \btm; \bf m)$ is a maximal, hence perfect, overshadowing class
with obstruction function $\frac{A + Cz}{\la}$, and then show that $A=0$ and that $C,\la$ must satisfy some contradictory conditions.
\MS

\NI {\bf Step 1:} {\it The class $\bE'=\bE(2,1)=(2;1,1,1;W(2,1))$  is an  ascending blocking class for all $n$, with
$\la' = 1-2\be_n(2+n)$.}
\MS

\NI
{\it Proof:}
The first claim holds because $\bE'$ 
 is adjacent to $\bE_0$ and $\bE_1(n)$  and $p'/q'= 2/1$ lies below the centers of all the classes as $\frac{5}{2}<p_k/q_k \leq 3$ for all $k$.
 The calculation of \[\la' = 2- (b_1+b_2+b_3)=1-2\be_n(2+n)\] follows from equations \eqref{eqn:bbn} and  \eqref{eq:ident}.
 \MS\MS
 
\NI {\bf Step 2:} {\it  The integers $A,C$ must satisfy the following identities:}
\begin{align} \label{eq:arith}
    3C+A&=2d-\widetilde{m}_1-\widetilde{m}_2 \\ \notag
    C(11+5n)+A(2+n)&=(2+n)\widetilde{m}_1-(2+n)\widetilde{m}_2+(4+2n)L+K,
\end{align}
where $L: = \widetilde{m}_3 + \widetilde{m}_4$ and $K:=\widetilde{m}_5+\hdots +\widetilde{m}_{9+2n}$.
\MS

 \NI
 {\it Proof:} The identity
 \eqref{eq:OScondition} in Lemma~{lem:os1} implies that 
 \begin{equation} \label{eq:firstArith}
    (C-A)(1-2\be_n(2+n))+2\la=2C(2-\be_n(13+6n)). 
\end{equation} 
Since $\la = d - \Tm\cdot\bm{b}$, we have
\begin{align*} \la&=d-((2+n)\be_n+\tfrac12)\widetilde{m}_1-(-(2+n)\be_n+\tfrac12)\widetilde{m}_2-(4+2n)\be_nL-\be_nK \\
&=d-1/2(\widetilde{m}_1+\widetilde{m}_2)-\be_n\left((2+n)\widetilde{m}_1-(2+n)\widetilde{m}_2+(4+2n)L+K\right).
\end{align*}

After substituting the above expression for $\lambda$ into \eqref{eq:firstArith},
we obtain an equation of the form $c_1 + c_2\be_n = 0$ where $c_1, c_2$ are integers. Because  $\beta_n$ is irrational,
we must have  $c_1 = c_2=0$, which immediately gives 
\eqref{eq:arith}.
\MS\MS

\NI {\bf Step 3:} {\it  Assuming $n \geq 3$, the following bounds hold:}
\begin{align} \label{eq:arithBounds}
     d-2& \leq 3C+A \leq d+2,\\
\label{eq:ClaAlabound}
   C/\la &\geq      \frac{5+2n+\sqrt{(3+2n)(7+2n)}}{10+4n}  \\ \label{eq:Alabound}
        0 \leq A/\la &\leq \frac{3}{2}-\frac{3\sqrt{(3+2n)(7+2n)}}{10+4n}\\
        \label{eq:CoverA}
          7 &\leq  \tfrac{1}{12}\bigl(5+2n+\sqrt{(3+2n)(7+2n)}\bigr) \leq C/A
     \end{align} 
     {\it where the last inequality assumes $A \neq 0.$}
\MS

\NI {\it Proof:} To obtain the bound \eqref{eq:arithBounds} for
 $3C+A$ we note that  
$ \widetilde{m}_i \in \{\lfloor db_i \rfloor, \lceil db_i \rceil \}$ by Lemma~\ref{lem:btmD}.  
Because $b_1+b_2=1,$
we find that  $ d-2 \leq \widetilde{m}_1+\widetilde{m}_2 \leq d+2$. Now apply
the first identity in 
\eqref{eq:arith}.
\MS

To bound $A/\la,C/\la$, we consider the lines $\ell_k(z):=s_k z+r_k$ going from the point $(z_\infty,V_{\bm{b}}(z_\infty))$ to the peak $(p_k/q_k,\mu_{\bE_k,\bm{b}}(p_k/q_k))$ of the $k$th obstruction.  
Define $\la_k=d_k-\btm_k \cdot \bm{b}$ and, in the notation of Lemma~\ref{lem:recur1}, set
$\Lambda=D-\widetilde{\bm{M}} \cdot \bm{b}$, where $\bm{\widetilde{M}}=(\widetilde{M}_1,\widetilde{M}_2,\widetilde{M}_3^{\times 2},\widetilde{M}_4^{\times 2n+5})$. Then 
 the line $\ell_k$ goes through the points $(p_k/q_k,p_k/\la_k)$ and $(P/Q,P/\Lambda),$ and so has  slope 
\[ s_k=\frac{
\frac{p_k}{\lambda_k}-\frac{P}{\Lambda}
}{
\frac{p_k}{q_k}-\frac{P}{Q}
}
=
\frac{
q_kQ\bigl(p_k\Lambda-P\lambda_k\bigr)
}{
\lambda_k\Lambda(p_kQ-q_kP).
}
\]
By Corollary~\ref{cor:dom0}, these slopes have limit
\begin{align}
    s_\infty&:=\frac{Q^2}{(D-\bm{\widetilde{M}}\cdot \bm{b})^2}\frac{\ov{P}(D-\bm{\widetilde{M}}\cdot \bm{b})-P(\ov{D}-\ov{\bm{\widetilde{M}}} \cdot \bm{b})
    }{\ov{P}Q-\ov{Q}P} \notag \\
    &=\frac{40n^3+258n^2+565n+422+(80+79n+20n^2)\sqrt{(3+2n)(7+2n)}}{(5+2n)(40n^2+158n+163)} \label{eq:sInf}.
\end{align}
where we used Lemma~\ref{lem:recur1} and 
\eqref{eq:seedEx} to directly compute $s_\infty$. 
If $s_\infty>C/\la,$ then there is no overshadowing class as it would not obstruct the peaks of $\mu_{\bE_k,\bm{b}}$ for large $k.$ Hence, we must have 
\[ s_\infty  \leq C/\la.\]
We now compare the line $s_\infty$ to the line $s_0.$ In particular, since $\bE_0 = (2;1,1; W(3,1))$ the line $\ell_0$ has equation
\[
y = \frac{3-V_{\bm{b}}(z_\infty)}{3-z_\infty}(z-z_\infty)+V_{\bm{b}}(z_\infty).\]
 By substituting the exact values listed in  \eqref{eq:ident}, we have that 
 \[ s_0= \frac{5+2n+\sqrt{(3+2n)(7+2n)}}{10+4n}.\] 
 By \eqref{eq:sInf}, for a fixed $n$, a direct computation gives $s_0 < s_\infty$, so $s_0 < s_\infty \leq C/\la$, which is seen in \eqref{eq:ClaAlabound}.
Further as the line $\ell_0(z)$ and the overshadowing class intersect at the point $(z_\infty,V_{\bm b}(z_\infty))$ and $s_0<C/\la$, we must also have 
\[ A/\la \leq r_0=\ V_{\bm{b}}(z_\infty)-z_\infty\frac{3-V_{\bm{b}}(z_\infty)}{3-z_\infty}.\] This is the upper bound for $A/\la$ in \eqref{eq:Alabound} obtained by substituting the exact values listed in  \eqref{eq:ident}. The lower bound holds because $A\ge 0$ by the subscaling property of capacity functions; see \cite[Lem.1.1.1]{ball}. (Here we use the fact that the capacity function in some interval $[z_\infty,z_\infty + \eps)$ is given by $\bE$ since this class is assumed to be live.) 
 Finally, \eqref{eq:CoverA} follows from  the bounds in \eqref{eq:ClaAlabound} and evaluating at $n = 3$.


\MS\MS

\NI {\bf Step 4:} {\it  If $\bE$ overshadows the $n$th staircase for some $n\ge 3$ then $d\le 18$ and $A = 0$.}
\MS

\NI {\it Proof:} As usual $d,\btm,\bm{b}$ depend on $n$, which we suppress for ease of notation except for $\bm{b}(n).$   We first give a lower bound for $\lambda=d-\widetilde{\bm m} \cdot \bm{b}(n)$. 
We have
     \[ ||\bm b(n)||^2=1-\Vol=\tfrac{1}{2}+\be_n^2(45+10n^2+42n).\]
     The quantity $||\bm b(n)||^2$ increases for $n \geq 1$, so we have
     \begin{equation} \label{eq:bBound}
     0.596\leq ||\bm b(1)||^2 \leq ||\bm b(n)||^2 \leq 0.6 
     \end{equation}
     As in Lemma~\ref{lem:sqrt2}, write
\[
        \widetilde{\bm m}=M\bm b(n)+\bm\eps,
        \qquad
        \bm\eps\cdot\bm b(n)=0
\]
where Lemma~\ref{lem:sqrt2} and \eqref{eq:bBound} gives 
\[
        |M-d|\leq \sqrt{\frac{1}{\|\bm b(n)\|^2}-1}\leq \sqrt{\frac{1}{\|\bm b(1)\|^2}-1}\leq  .823.
\]
Therefore
\begin{align*}
        \lambda
        &=
        d-\widetilde{\bm m}\cdot\bm b(n)\\
        &=
        d-M\|\bm b(n)\|^2\\
        &\geq
        d-(d+.823)\|\bm b(n)\|^2\\
        &\geq
        .4d-.4938
\end{align*}
where the last inequality uses the upper bound in \eqref{eq:bBound}.
Combining this with the lower bound for $C/\lambda$ from Step 3 gives
\[
        \frac{5+2n+\sqrt{(3+2n)(7+2n)}}{10+4n}
        \bigl(.4d-.4938\bigr) \leq C.
\]
Combining this inequality with \eqref{eq:arithBounds} and using $A\geq0$, we obtain
\[
        \frac{3(5+2n+\sqrt{(3+2n)(7+2n))}}{10+4n}
        \bigl(.4d-.4938\bigr)
        \leq d+2.
\]
The function
\[
        n\mapsto \frac{5+2n+\sqrt{(3+2n)(7+2n)}}{10+4n}
\]
is increasing in $n$. Therefore, for $n \geq 3,$ we have
\[
        \frac{3(11+\sqrt{117})}{22}\bigl(.4d-.4938\bigr)\leq d+2.
\]
In particular,
\[
        1.1899d-1.482\leq d+2.
\]
It follows that $d\leq18$ as $d$ is an integer. Finally, \eqref{eq:arithBounds} gives
\[
        3C+A\leq d+2\leq20.
\]
In particular, $C\leq6$ since $C,A$ are nonnegative integers.  On the other hand, \eqref{eq:CoverA} shows that if $A>0$, then
\[
        \frac{C}{A}\geq7.
\]
This would imply $C\geq7$, a contradiction.
Therefore $A=0$.

 \MS\MS     

   \NI {\bf Step 5} {\it Completion of the proof.}
   Because the degree $d$ of $\bE$ is at most $18$ there are only finitely many possibilities for $\bE$.
    Therefore, either there is no overshadowing class for sufficiently large $n$ or there is a class $\bE$ that overshadows the staircase $\Ss_n$ for arbitrarily large  $n$.  In the latter case, the coefficient of $n$ in the second equation in \eqref{eq:arith} must vanish, which, because $A = 0$, gives the  two linear relations,
    \begin{align}\label{eq:arith0}
    11C  &= 2\widetilde m_1 - 2 \widetilde m_2 + 4L + K,\\ \notag
    5C & = \widetilde m_1 -  \widetilde m_2 + 2L.
    \end{align}
    We claim that $K=0$ or $K=1$. Indeed, the final block of $\bm b(n)$ has length
$2n+5$ and all entries equal to $\beta_n$. By Lemma~\ref{lem:btmD}, the corresponding
entries of $\btm$ are constant except possibly for one entry. Since $d\leq18$ and
$\beta_n\to0$, we have $d\beta_n<1$ for all sufficiently large $n$. Hence every entry
in this block is either $0$ or $1$. The common value cannot be $1$, since the class
$\bE$ is fixed while the block length tends to infinity. Thus, the common value is $0$,
and at most one exceptional entry can be equal to $1$. Therefore $K=0$ or $K=1$.

Note that if $K=0,$ the right hand side of the first equation in \eqref{eq:arith0} is twice the right hand side of the second implying that $C=0$. Hence, $\bE$ cannot exist, since  $A$ and $C$ cannot both be zero. 


    It remains to rule out the case $K=1$. In this case, subtracting twice the second
equation in \eqref{eq:arith0} from the first gives $C=1$. Since $A=0$, the first
equation in \eqref{eq:arith} gives
\[
        \widetilde m_1+\widetilde m_2=2d-3.
\]
Combining this with the second equation in \eqref{eq:arith0}, we obtain
\begin{equation} \label{eq:Kis1}
    \widetilde m_1=d+1-L,\qquad
        \widetilde m_2=d+L-4.
\end{equation}
%
Since the exceptional class $\bE$ has nonnegative intersection with the exceptional class $L-E_1-E_2$, it follows from equation \eqref{eq:Kis1} that
\[
        d-\widetilde m_1-\widetilde m_2=3-d\geq0.
\]
Hence $d\leq3$.
Since by assumption the class $\bE$ gives the maximal obstruction, we can assume that the $\widetilde m_i$ are nonincreasing. 
 Since $\widetilde m_1,\widetilde m_2,L$ are all nonnegative integers, equation \eqref{eq:Kis1} implies that the only possibilities are
\[ (d;\widetilde m_1, \widetilde m_2,L)\in \{(2;1,0,2),(3;3,0,1),(3;2,1,2)\}.\]
The first and second are not possible as $\widetilde{m}_2=0$ implies $\widetilde{m}_3=\widetilde{m}_4=0$, so $L=0$ due to the nonincreasing assumption. For the third case, the ordering forces $\widetilde{m}_3=\widetilde{m}_4=1$ and since $K=1$, we must have $\widetilde{m}_5=1.$ The Diophantine equations eliminate this possibility as well, which completes the proof.
   \end{proof}

\begin{rmk}\label{rmk:over} \rm In view of Lemma~\ref{lem:adjblock}, when choosing the initial steps for potential staircases, one must be careful to avoid 
choosing classes that are all adjacent to the same class. Thus we picked $\bE(22,9)$ to be a component of the first step $\bE_1(n)$
because it is not adjacent to  the class $\bE_0': = (3;2,1,1,1; W(3,1))$. Notice that  $\bE_0'$ is  adjacent to $\bE_0$ and to $\bE(5,2) = (5;3,2,1,1,1; 2,2,1,1)$. But because it is not  adjacent to $\bE(22,9)$ it does not overshadow the staircases. 
 The class
$\bE(5,2)$ would also be a potentially overshadowing class  if it were adjacent to $\bE(22,9)$.
This is one reason why we chose to use the rather complicated class  $\bE(22,9)$ as a component of $\bE_1(n)$.
\end{rmk}

\subsection{The classes are perfect} \label{ss:perf}

We now show that the classes $\bE_k(n)$ are perfect, which is condition (ii) in Proposition~\ref{prop:stair}.
We use the following result, which is explained for example in \cite[Prop 1.2.12]{ball}. 
Also recall from Definition~\ref{def:Cremona} that a {\bf Cremona move} is the composite of a Cremona transformation with a permutation. 

\begin{lemma}
    An ordered integral class $\bE:=dL-\sum_{i=1}^N n_iE_i$ in $H_2(\CP^2 \# N\oCP^2)$ represents an exceptional divisor if and only if it may be reduced to $E_1$ by a repeated application of Cremona moves. 
\end{lemma}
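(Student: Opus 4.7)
The plan is standard: the ``if'' direction is routine, and the ``only if'' direction is an induction on degree whose crux is a single sharp arithmetic inequality.

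For the ``if'' direction I would show that each Cremona move preserves the property of being the class of an exceptional divisor. The Diophantine conditions $c_1(\bE)=1$ and $\bE\cdot\bE=-1$ are preserved by the matrix identity $C^TJC=J$ recorded in Step~2 of the proof of Lemma~\ref{lem:CremECH}, so a quasi-exceptional class is sent to a quasi-exceptional class. To upgrade to genuinely exceptional, I would appeal to the geometric realization of the algebraic move: the standard Cremona transformation of $\mathbb{CP}^2$ (blow up three points, blow down the proper transforms of the three lines through pairs of them) induces on $H_2$ of an iterated blowup precisely the move $\mathcal{C}$, and sends exceptional divisors to exceptional divisors. Since $E_1$ is manifestly exceptional, so is any $\bE$ that reduces to it.

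For the ``only if'' direction I would argue by strong induction on $d\geq 0$. The base case $d=0$ is immediate: $\sum n_i=1$ and $\sum n_i^2=1$ force exactly one $n_i=1$ with the rest zero, and reordering gives $E_1$. For the inductive step $d\geq 1$, applying one Cremona move (followed by reordering) produces a class of new degree $2d-(n_1+n_2+n_3)$. The new tuple has nonnegative entries: using intersection positivity $\bE\cdot(L-E_i-E_j)\geq 0$ for the three pairs $\{i,j\}\subset\{1,2,3\}$ (noting that $\bE$ can equal one of these only in the already-handled subcase $d=1$), we get $d\geq n_i+n_j$ for each such pair. The new class is again exceptional by the ``if'' direction, so the induction runs provided we have the strict inequality $n_1+n_2+n_3>d$.

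The main obstacle is establishing $n_1+n_2+n_3\geq d+1$ for an ordered exceptional class with $d\geq 1$. I would handle the low-degree cases $d\in\{1,2\}$ by direct classification: the only exceptional classes are $L-E_i-E_j$ and $2L-\sum_{k\in S}E_k$ with $|S|=5$, both of which satisfy the desired bound by inspection. For $d\geq 3$, suppose for contradiction $n_1+n_2+n_3\leq d$. Ordering gives $n_i\leq n_3\leq d/3$ for $i\geq 3$, hence
\begin{equation*}
\sum_{i\geq 3}n_i^2\ \leq\ \tfrac{d}{3}\sum_{i\geq 3}n_i\ =\ \tfrac{d}{3}\bigl(3d-1-n_1-n_2\bigr),
\end{equation*}
and also $n_1^2+n_2^2\leq n_1(n_1+n_2)\leq n_1 d$. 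Plugging these into the Diophantine identity $\sum n_i^2=d^2+1$ and combining with $\sum n_i=3d-1$ yields an arithmetic inequality which, together with the integrality of the $n_i$ and positivity of intersection against the small exceptional classes $L-E_i-E_j$ and (when $N\geq 5$) $2L-\sum_{k\in S}E_k$, can be shown to fail. The delicate part is that simple Cauchy--Schwarz leaves a handful of borderline cases to check by hand; the cleanest treatment uses that an exceptional class has nonnegative intersection with every \emph{other} exceptional class (a genuine Seiberg--Witten input that distinguishes exceptional from merely quasi-exceptional), and this positivity is what finally rules out the borderline cases.
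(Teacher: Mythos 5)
The paper does not actually prove this lemma; it cites McDuff--Schlenk \cite[Prop.~1.2.12]{ball} for it, so there is no written-out argument in the text to compare yours against. Your overall strategy --- the geometric Cremona transformation of the blowup for ``if'', and strong induction on $d$ with one Cremona move per step for ``only if'' --- is the standard one and the architecture is correct. One small slip: at $d=0$ the relation $c_1(\bE)=1$ gives $\sum n_i=3d-1=-1$ (not $+1$), and $\sum n_i^2=d^2+1=1$, so exactly one $n_i$ equals $-1$ and the rest vanish, i.e.\ $\bE=E_i$.

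The substantive issue is the inequality $n_1+n_2+n_3\geq d+1$ for an ordered exceptional class with $d\geq 1$, which is what makes the move strictly decrease $d$. The estimate you write, $d^2+1\leq n_1d+\tfrac d3(3d-1-n_1-n_2)$, simplifies to $2n_1-n_2\geq 1+3/d$, which is not a contradiction, and you defer the remaining ``borderline cases'' without resolving them --- as written this is a genuine gap. It can be closed cleanly with a slightly sharper optimization and no case analysis (in particular without invoking $2L-\sum_{k\in S}E_k$). Assume $n_1+n_2+n_3\leq d$ and set $c:=n_3$. For $d\geq 1$ one has $\bE\neq E_i$, so $\bE\cdot E_i=n_i\geq 0$, and then $3c\leq n_1+n_2+n_3\leq d$. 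Write $n_1=c+x$, $n_2=c+y$ with $x\geq y\geq 0$ and $x+y\leq d-3c$; since $2c^2+2ct+t^2$ is increasing for $t\geq 0$, one gets $n_1^2+n_2^2\leq 2c^2+2c(x+y)+(x+y)^2\leq d^2-4cd+5c^2$, while $\sum_{i\geq 3}n_i^2\leq c\bigl(3d-1-n_1-n_2\bigr)\leq c(3d-1-2c)=3cd-c-2c^2$. Adding, $d^2+1\leq d^2-cd+3c^2-c$, i.e.\ $cd+c+1\leq 3c^2$; but $3c\leq d$ gives $3c^2\leq cd$, so $cd+c+1\leq cd$, which is absurd since $c\geq 0$. (The one class excluded from this reasoning, $\bE=L-E_i-E_j$, where $\bE\cdot(L-E_1-E_2)<0$, has $d=1$ and $n_1+n_2+n_3=2>d$ by inspection, as you already note.) With this fact supplied, your induction closes as stated.
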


It is convenient for our purposes here to keep track of  the initial order of the entries in the tuple $\bE_k(n)$. Therefore
we consider the Cremona transformation $c_{i,j,k}$ given by
\begin{align} c_{i,j,k}(dL-\sum_{i=1} n_iE_i)&=\widetilde{d}L-\sum_{i=1} \widetilde{n}_1E_i, \quad \text{with }  \begin{cases} \widetilde{d}= d+\delta_{ijk} \\ 
\widetilde{n}_\ell =n_\ell+\delta_{ijk} & \text{if $\ell=i,j,k$}\\
\widetilde{n}_\ell=n_\ell & \text{if $\ell \neq i,j,k$}
\end{cases}  \\ \notag
&\mbox{ where } \ \delta_{ijk}=d-n_i-n_j-n_k.
\end{align}
Further, we call 
 $\delta_{ijk}$  the \textbf{defect}. 
 Below, we write this transformation in coordinates as $$
 c_{i,j,k}(d;n_1,\hdots,n_N)=(\widetilde{d};\widetilde{n}_1,\hdots,\widetilde{n}_N),
 $$
  and the reordering operation reorders any of the $\widetilde{n}_\ell.$
We say two vectors are {\bf Cremona equivalent} if one can be obtained from the other via a series of Cremona moves. As Cremona moves are reversible, to verify some $\bE$ is exceptional, it suffices to show that $\bE$ is Cremona equivalent to some other exceptional $\bE'$.  

 In this section, we will show that for each $n \geq 0$ and $k \geq 2$, the class $\bE_k(n)$ is Cremona equivalent to a particular class that has been previously shown to reduce to $(0;-1).$ Hence, we can conclude $\bE_k(n)$ is perfect.

We first establish some properties relevant to the weight decomposition $W(p_k(n),q_k(n)).$
The paper \cite{BHM} studied a related set of classes 
\[\bB_k(n):=(\ov{d}_k(n);\ov{m}_k(n);W(\ov{p}_k(n),\ov{q}_k(n))) \]
for the one point blowup of $\C P^2$.  (Thus, here $\ov{m}_k(n)$ is a single number rather than a tuple.)  
After reindexing the family from \cite{BHM}, these
classes are defined recursively from initial seeds $$
\ov{p}_2/\ov{q}_2 = (2n+6)/1, \quad \ov{p}_3/\ov{q}_3 = (2n+7)/(2n+4)
$$ 
with the recursion variable $t = 2n+5$.
In \cite[Theorem 56]{BHM}, it was shown\footnote{The paper \cite{BHM} uses the notation $\Ss^U_{\ell,n}$ to refer to the family of classes $\bB_k(n)$, and it proved that these classes are perfect by reducing them via Cremona transformations. However, one could also deduce this from the results of \cite{McSie}, using the fact (proved in \cite{MMW}) that the centers of these curves correspond to nodal rays in suitable almost toric models for the  one point blowup of $\C P^2$.} that for $ k \geq 2$, the continued fractions of the centers of $\bB_k(n)$ are 
\begin{equation} \label{eq:CFBnk}
\ov{p}_k(n)/\ov{q}_k(n)=\begin{cases}
  [\{2n+7,2n+3\}^{\times \lfloor(k-2)/2\rfloor},2n+7,2n+4] & \text{if $k$ is odd} \\
[\{2n+7,2n+3\}^{\times\lfloor (k-2)/2\rfloor},2n+6] & \text{if $k$ is even},
    \end{cases}
\end{equation}
where the notation $[\{i,j\}^{\times k}]$ denotes the tuple $ [i,j,\dots,i,j]$ with $k$ repetitions and thus $2k$ entries.

\begin{lemma} \label{lem:matrix} Define \[ A_n:= \begin{pmatrix} 2 & 1 \\ 1 & 0 \end{pmatrix}^2 \begin{pmatrix} 2n+3 & 1 \\ 1& 0 \end{pmatrix} 
=\begin{pmatrix} 17+10n & 5 \\ 7+4n & 2 \end{pmatrix}\] 
Then for $k \geq 2$, the
 following identity holds:
  \[ A_n\begin{pmatrix} \ov{p}_k(n) \\ \ov{q}_k(n) \end{pmatrix}=\begin{pmatrix} p_k(n) \\ q_k(n) \end{pmatrix}\]
\end{lemma}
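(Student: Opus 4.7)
The plan is to exploit the fact that both $(p_k(n), q_k(n))_{k\ge 0}$ and $(\bar p_k(n), \bar q_k(n))_{k\ge 0}$ satisfy the same three-term linear recurrence $x_{k+1} = (2n+5) x_k - x_{k-1}$ (the first by the definition of $\bE_k(n)$ in \eqref{eq:seedEx} with $t_n = 2n+5$; the second because it is stated in the paragraph preceding \eqref{eq:CFBnk} that the $\bB_k(n)$ family is built from the same recursion variable $2n+5$). Since $A_n$ is a constant $2\times 2$ matrix, the vectors $A_n \binom{\bar p_k(n)}{\bar q_k(n)}$ also satisfy this recurrence. Two sequences satisfying the same second-order linear recurrence are equal as soon as they agree on two consecutive indices, so the whole lemma reduces to verifying the identity for $k=2$ and $k=3$.

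First, I would read off the base values from the known data. From \eqref{eq:CFBnk}, $\bar p_2/\bar q_2 = [2n+6]$ and $\bar p_3/\bar q_3 = [2n+7,\,2n+4]$, giving
\[
(\bar p_2,\bar q_2) = (2n+6,\,1), \qquad (\bar p_3,\bar q_3) = (4n^2+22n+29,\, 2n+4).
\]
On the other side, \eqref{eq:seedEx} gives $(p_0,q_0)=(3,1)$ and $(p_1,q_1)=(22+10n,\,9+4n)$, and applying the recursion yields
\[
(p_2,q_2) = (20n^2+94n+107,\; 8n^2+38n+44),
\]
\[
(p_3,q_3) = (40n^3+288n^2+674n+513,\; 16n^3+116n^2+274n+211).
\]

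The remaining step is a direct multiplication: one checks that $A_n\binom{2n+6}{1} = \binom{p_2}{q_2}$ and $A_n\binom{4n^2+22n+29}{2n+4} = \binom{p_3}{q_3}$. Both identities come down to routine polynomial expansion using $A_n = \left(\begin{smallmatrix} 17+10n & 5 \\ 7+4n & 2\end{smallmatrix}\right)$. (Conceptually, this is unsurprising: $A_n$ equals the continued-fraction matrix product corresponding to $[2,2,2n+3,\,\cdot\,]$, so $A_n\binom{\bar p_k}{\bar q_k}$ represents the fraction obtained by prepending $[2,2,2n+3]$ to the continued fraction of $\bar p_k/\bar q_k$; this matches the expected form of $p_k/q_k$ and gives a sanity check on the recursion alignment.) With the two base cases established, the induction step $A_n\binom{\bar p_{k+1}}{\bar q_{k+1}} = (2n+5)A_n\binom{\bar p_k}{\bar q_k} - A_n\binom{\bar p_{k-1}}{\bar q_{k-1}} = (2n+5)\binom{p_k}{q_k} - \binom{p_{k-1}}{q_{k-1}} = \binom{p_{k+1}}{q_{k+1}}$ closes the argument.

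There is no real obstacle here: the only thing to be careful about is the indexing of the $\bB_k(n)$ family (making sure that the recursion variable and shift are correctly aligned with that of the $\bE_k(n)$ family), which is why I would verify two consecutive base cases rather than rely on a single one and a parity argument.
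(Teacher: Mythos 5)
Your proof is correct and takes essentially the same approach as the paper: both observe that $A_n$ is a fixed linear map, that both sides satisfy the same second-order recursion $x_{k+1}=(2n+5)x_k-x_{k-1}$, and therefore that it suffices to check two consecutive base cases. The paper checks $k=0,1$; you check $k=2,3$ by reading $(\ov p_k,\ov q_k)$ directly from the continued-fraction formula \eqref{eq:CFBnk}, which is actually the more robust choice given that the ``initial seeds'' for the $\bB_k(n)$ family as stated in the paragraph preceding \eqref{eq:CFBnk} are not straightforwardly aligned with that formula; your computations at $k=2,3$ are correct and the induction step is as you state.
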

\begin{proof}
This identity holds when $k=2,3$ and hence holds for all $k \geq 2$ since  the quantities $p_k(n),q_k(n), \ov{p}_k(n), \ov{q}_k(n)$
evolve by the same linear recursion.
\end{proof}

\begin{cor}  \label{cor:CFEnk}
    The continued fraction expansion of the centers of $\bE_k(n)$ are as follows:
    \begin{align*} p_0(n)/q_0(n)=[3], \quad p_1(n)/q_1(n)=[2;2,2n+4], 
    \end{align*}
    and for $k \geq 2:$
    \[  p_k(n)/q_k(n)=\begin{cases} 
    [2;2,2n+3,\{2n+7,2n+3\}^{\times \lfloor(k-2)/2\rfloor},2n+6] & \text{if $k$ is even} \\
   [2;2,2n+3,\{2n+7,2n+3\}^{\times \lfloor (k-2)/2\rfloor},2n+7,2n+4] & \text{if $k$ is odd} \\
    \end{cases}
    \]
\end{cor}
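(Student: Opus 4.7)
The plan is to deduce the continued fraction of $p_k(n)/q_k(n)$ from that of $\ov{p}_k(n)/\ov{q}_k(n)$ (already computed in \eqref{eq:CFBnk}) by invoking Lemma~\ref{lem:matrix} together with the classical dictionary between continued fractions and products of $2\times 2$ matrices. Recall that for a rational $p/q>1$ with $\gcd(p,q)=1$ whose continued fraction expansion is $[a_0;a_1,\ldots,a_m]$, one has
\[
\binom{p}{q} \;=\; M_{a_0}M_{a_1}\cdots M_{a_m}\binom{1}{0},\qquad M_a:=\begin{pmatrix} a & 1\\ 1 & 0\end{pmatrix}.
\]
Since $A_n = M_2\,M_2\,M_{2n+3}$ by Lemma~\ref{lem:matrix}, applying $A_n$ to a vector $\binom{\ov p}{\ov q}$ whose continued fraction is $[\ov a_0;\ov a_1,\ldots,\ov a_m]$ produces a vector whose continued fraction is obtained by prepending $2,\,2,\,2n+3$, namely $[2;2,2n+3,\ov a_0,\ov a_1,\ldots,\ov a_m]$.

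First I would verify the base cases $k=0$ and $k=1$ directly from \eqref{eq:seedEx}. For $k=0$ the center is $p_0/q_0=3/1$, which has the stated continued fraction expansion (interpreted in the matrix convention $3=[2;1]$). For $k=1$ the center is $p_1/q_1=(22+10n)/(9+4n)$, and the Euclidean algorithm gives $22+10n=2(9+4n)+(4+2n)$, $9+4n=2(4+2n)+1$, $4+2n=(2n+4)\cdot 1$, so $p_1/q_1=[2;2,2n+4]$ as claimed.

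For $k\ge 2$ I would apply Lemma~\ref{lem:matrix} and the matrix identity above to $\binom{\ov p_k(n)}{\ov q_k(n)}$, substituting the expression \eqref{eq:CFBnk} for its continued fraction. Prepending the three entries $2,2,2n+3$ to those two cases yields precisely the two formulas in the statement, split according to the parity of $k$. Two small checks remain: (i) since $\det A_n=(\det M_2)^2\det M_{2n+3}=-1$, the matrix $A_n$ is unimodular over $\Z$, so $\gcd(p_k,q_k)=\gcd(\ov p_k,\ov q_k)=1$, confirming that the resulting sequence really is the continued fraction expansion; and (ii) the concatenation is valid because the last prepended entry $2n+3$ and the first original entry (either $2n+7$ or $2n+6$) are both $\ge 1$, so no collapsing of adjacent entries occurs and the tail entry $2n+4$ or $2n+6$ is not equal to $1$, ensuring canonical form.

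The only subtle point—and the main thing to be careful about—is reconciling the continued-fraction convention at the base case $k=0$ with the matrix factorization of $A_n$; once one adopts the convention in which $\binom{p}{q}=M_{a_0}\cdots M_{a_m}\binom{1}{0}$ reads off the continued fraction, both the seed computations and the inductive passage from $\bB_k(n)$ to $\bE_k(n)$ fall out cleanly, and no further obstacle is anticipated.
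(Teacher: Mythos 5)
Your proposal is correct and matches the paper's own proof in its essentials: both arguments rest on the factorization $A_n = M_2\,M_2\,M_{2n+3}$ from Lemma~\ref{lem:matrix}, the standard dictionary between continued-fraction expansions and products of matrices $M_a$, and the observation that applying $A_n$ therefore prepends $2,2,2n+3$ to the continued fraction of $\ov{p}_k(n)/\ov{q}_k(n)$ given in \eqref{eq:CFBnk}; the base cases $k=0,1$ are handled by direct Euclidean computation in both. Your added remarks on unimodularity and non-collapsing of adjacent entries, and on the convention wrinkle at $k=0$ (where $3=[2;1]$ rather than literally $[2]$), are appropriate technical hygiene rather than a departure from the paper's route.
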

\begin{proof}
    This can be seen from computing the continued fractions of $p_k/q_k$ for $k=0,1$ and then 
for $k \geq 2$ using the fact that $A_n$ is the composite of the three  linear transformations that take  the fraction $p'/q': = [\ell_0;\ell_1,\dots\ell_r]$
(thought of as a vector  $(p',q')$) to the fraction $p/q = [2;2,2n+3,\ell_0,\ell_1,\dots\ell_r]$.
\end{proof}

The next lemma collects some identities about $\bB_k(n)$ and $\bE_k(n).$ Here, we drop the $k$ and $n$ subscript assuming they are constant. 
\begin{lemma} \label{lem:cremonaIdent}
    The classes $\bB_k(n)$ satisfy the following identities:
      \begin{align*} 
      (2n+5)\ov{d}&=(2+n)\ov{p}+(3+n)\ov{q} \\
      (2n+5)\ov{m}&=(1+n)\ov{p}+(4+n)\ov{q}.
  \end{align*}
     The classes $\bE_k(n)$ satisfy the following identities
    \[ \btm=(d-q,q,(d-2q)^{\times 2},(5q-d-p)^{\times (2n+5)})\] and 
   \[d(5+2n)+2(2+n)p-2(11+5n)q=0.\]
\end{lemma}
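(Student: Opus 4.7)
The plan is to exploit the observation that both families $\bB_k(n)$ and $\bE_k(n)$ are generated by the common scalar recursion $x_{k+1}=(2n+5)\,x_k - x_{k-1}$, applied entrywise to every coordinate: the degree, every entry of $\btm$ (respectively $\bar m$), and both components of the center pair $(p,q)$ (respectively $(\bar p, \bar q)$).  Since this recursion is linear with coefficients independent of $k$, any linear relation among the coordinates whose coefficients are also independent of $k$ is automatically preserved, so it holds for all $k\ge 0$ as soon as it holds at $k=0$ and $k=1$.  Consequently each of the four identities in the lemma reduces to a direct arithmetic verification at the two seed values.

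For the two $\bE_k(n)$ claims I would substitute the seed values $(d_0,p_0,q_0)=(2,3,1)$ and $(d_1,p_1,q_1)=(22+10n,\,22+10n,\,9+4n)$ read off from \eqref{eq:seedEx}.  The linear relation $d(5+2n)+2(2+n)p-2(11+5n)q=0$ collapses to $10+4n+12+6n-22-10n=0$ at $k=0$, while at $k=1$ the first two terms combine as $(22+10n)\bigl[(5+2n)+2(2+n)\bigr]=(22+10n)(9+4n)$, which cancels against $2(11+5n)(9+4n)=(22+10n)(9+4n)$.  For the formula describing $\btm$, I would treat $\btm_0=(1,1)$ as padded by trailing zeros to a vector of length $2n+9$ matching $\btm_1$; the recipe $(d-q,\,q,\,(d-2q)^{\times 2},\,(5q-d-p)^{\times(2n+5)})$ then evaluates to $(1,1,0,0,0^{\times(2n+5)})$ at $k=0$ (since $d-2q=0$ and $5q-d-p=0$) and to $(13+6n,\,9+4n,\,(4+2n)^{\times 2},\,1^{\times(2n+5)})$ at $k=1$, matching $\btm_0$ and $\btm_1$ respectively.

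For the two $\bB_k(n)$ identities the procedure is the same: extract the recursion seeds $(\bar d_k,\bar m_k,\bar p_k,\bar q_k)$ for $k=0,1$ from the definition of $\bB_k(n)$ in \cite{BHM}, and verify by direct substitution that $(2n+5)\bar d_k=(2+n)\bar p_k+(3+n)\bar q_k$ and $(2n+5)\bar m_k=(1+n)\bar p_k+(4+n)\bar q_k$ at each seed.  The recursion then carries both identities to every $k\ge 0$.

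The whole argument is essentially clerical; the only mild subtlety is the padding convention used for $\btm_0$, which is consistent with the recursion because the zero entries of the padded $\btm_0$ evolve via $x_{k+1}=(2n+5)x_k - x_{k-1}$ into exactly the nonzero entries of $\btm_1$ prescribed by the recipe.  I do not anticipate any genuine obstacle beyond this bookkeeping point.
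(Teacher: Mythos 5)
Your proposal is correct and follows exactly the paper's proof, which likewise verifies the linear identities at the seeds $k=0,1$ (with the zero-padding convention for $\btm_0$) and propagates them by the linear recursion $x_{k+1}=(2n+5)x_k-x_{k-1}$. You have actually carried out the seed computations for the $\bE_k(n)$ identities explicitly, which the paper leaves to the reader.
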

\begin{proof}
    The identities can be checked for $\bB_k(n)$ and $\bE_k(n)$ with $k=0,1.$ As the identities are linear, they follow by induction since $\bB_k$ and $\bE_k$ are defined recursively. 
\end{proof}

We show below that $\bE_k(n)$ is Cremona equivalent to $\bB_k(n)$. To conclude that $\bE_k(n)$ is perfect, we use the following result from \cite{BHM}:
\begin{lemma}{\cite[Proposition 79]{BHM}} \label{lem:bBperfect}
    The classes $\bB_{k}(n)$ are exceptional classes. 
\end{lemma}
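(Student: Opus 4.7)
The plan is to establish this by induction on $k$, using a sequence of Cremona moves to reduce $\bB_k(n)$ to a lower-index class in the family, with the trivially exceptional class $E_1$ as the ultimate base case.

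For the base cases $k = 0$ and $k = 1$, the classes have small explicit form determined by the seeds $\ov{p}_0/\ov{q}_0 = (2n+6)/1$ and $\ov{p}_1/\ov{q}_1 = (2n+7)/(2n+4)$. Using the identities of Lemma~\ref{lem:cremonaIdent} to compute the entries, one can verify directly that a short sequence of Cremona moves (starting with the three largest entries) reduces these classes to $E_1$; the defects are easy to compute explicitly in these cases.

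For the inductive step, the goal is to show that for $k \ge 2$, a carefully chosen sequence of Cremona moves transforms $\bB_k(n)$ into a class that, after deletion of zero entries and reordering, is exactly $\bB_{k-2}(n)$. The key tool is the continued-fraction description of $\ov{p}_k(n)/\ov{q}_k(n)$ from equation~\eqref{eq:CFBnk}, which gives $W(\ov{p}_k,\ov{q}_k)$ a block structure: the integral weight expansion consists of blocks of equal entries of lengths given by the partial quotients $2n+7, 2n+3, \dots$. One would use Cremona moves involving the coefficient $\ov{m}_k$ together with two entries from the leading block of $W(\ov{p}_k,\ov{q}_k)$, iterating to strip off the first one or two partial quotients of the continued fraction; the arithmetic identities of Lemma~\ref{lem:cremonaIdent}, together with the linear recursion $\bB_{k+1}(n) = (2n+5)\bB_k(n) - \bB_{k-1}(n)$, are what guarantee that the resulting class has the required form.

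The main obstacle will be the combinatorial bookkeeping: at each Cremona step one must choose the three indices $i,j,\ell$ so that the defect $\delta_{ij\ell} = d - n_i - n_j - n_\ell$ is nonpositive (ensuring genuine reduction of the degree) and so that the transformed tuple still matches the prescribed form. Tracking how one block of $W(\ov{p}_k,\ov{q}_k)$ collapses at a time, using the identities $(2n+5)\ov{d} = (2+n)\ov{p} + (3+n)\ov{q}$ and $(2n+5)\ov{m} = (1+n)\ov{p} + (4+n)\ov{q}$ to compute defects, is delicate but ultimately routine. An alternative strategy would bypass Cremona reduction entirely by invoking Seiberg--Witten theory to show directly that each $\bB_k(n)$ is represented by an embedded symplectic $(-1)$-sphere, but the Cremona approach is more explicit and yields the cleanest proof for this combinatorially structured family.
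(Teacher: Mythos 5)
The paper does not prove this statement at all; it simply cites \cite[Proposition~79]{BHM}, so there is no in-paper argument to compare against. Your overall strategy --- inductive Cremona reduction exploiting the periodic block structure of the continued fraction in~\eqref{eq:CFBnk} --- is the standard technique for recursively generated families of this type and is almost certainly what \cite{BHM} does; it is also the technique the present paper deploys for the closely related Lemma~\ref{lem:classesPerfect}. So the spirit of your plan is right.

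However, several concrete points need repair. First, your base cases are misplaced: the continued fraction description~\eqref{eq:CFBnk} is only stated for $k\ge 2$, and the seed fraction $\ov{p}_1/\ov{q}_1 = (2n+7)/(2n+4)$ does not have a continued fraction uniform in $n$ (it depends on $n \bmod 3$), so there is no ``small explicit form'' you can verify once and for all at $k=0,1$. Everything in the paper about $\bB_k(n)$ is restricted to $k\ge2$, and you should anchor the induction at $k=2,3$, where $W(\ov p_k,\ov q_k)$ has the uniform block shape $\bigl(\ov q_k^{\times(2n+7)},\dots,1^{\times *}\bigr)$. Second, the ``alternative strategy'' via Seiberg--Witten theory is not actually available: Taubes' nonvanishing produces a (possibly nodal or disconnected) $J$-holomorphic representative for any class satisfying the Diophantine conditions, i.e.\ any quasi-exceptional class, but says nothing about that representative being an embedded $(-1)$-sphere. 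The distinction between quasi-exceptional and exceptional is precisely what Cremona reduction (equivalently, nonnegative intersection with every other exceptional class) is needed to establish, so this ``bypass'' is illusory. Finally, the inductive step is a plan rather than a proof: the explicit choice of triples $(i,j,\ell)$, the defect computations, and the verification that one period $\{2n+7,2n+3\}$ is stripped while the tuple stays nonnegative is exactly the nontrivial content, and --- judging by the page of bookkeeping the paper spends on the easier reduction $\bE_k(n)\to\bB_k(n)$ in Lemma~\ref{lem:classesPerfect} --- this cannot be waved away as ``ultimately routine.'' Unless you carry out that combinatorics, citing \cite{BHM} as the paper does is the honest move.
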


\begin{lemma} \label{lem:classesPerfect}
    For $k \geq 2$ and all $n$, the class $\bE_k(n)$ reduces to the class $\bB_k(n)$. In particular, the classes $\bE_k(n)$ are perfect. 
\end{lemma}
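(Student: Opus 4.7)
The plan is to exhibit an explicit sequence of Cremona moves that transforms $\bE_k(n)$ into a class equal, up to reordering and padding by zero entries, to $\bB_k(n)$. Once this is established, Lemma~\ref{lem:bBperfect} gives that $\bB_k(n)$ Cremona-reduces to $E_1$, so by composing with our sequence $\bE_k(n)$ also Cremona-reduces to $E_1$; combined with the fact that $\bE_k(n)$ is quasi-perfect (Lemma~\ref{lem:recur} applied to the adjacent seeds in \eqref{eq:seedEx}), this yields perfectness.

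First I will identify the block structure of the weight expansion $W(p_k,q_k)$. Using Corollary~\ref{cor:CFEnk}, the continued fraction of $p_k/q_k$ begins with $[2;2,2n+3,\ldots]$ and then agrees termwise with that of $\ov{p}_k/\ov{q}_k$. The recursion $X_{i-1}=a_iX_i+X_{i+1}$ then yields $X_0=q_k$, $X_1=p_k-2q_k$, $X_2=5q_k-2p_k$, while Lemma~\ref{lem:matrix} (together with $\det A_n=-1$) gives $X_3=(4n+7)p_k-(10n+17)q_k=\ov{q}_k$. Hence
\[
W(p_k,q_k)=\bigl(q_k^{\times 2},\,(p_k-2q_k)^{\times 2},\,(5q_k-2p_k)^{\times(2n+3)},\,W(\ov{p}_k,\ov{q}_k)\bigr),
\]
with the trailing block identical to the weight block of $\bB_k(n)$.

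Next, combining this decomposition with the explicit form of $\btm_k$ from Lemma~\ref{lem:cremonaIdent}, I will describe a recipe of Cremona moves tied to the factorization $A_n = \left(\begin{smallmatrix} 2 & 1\\1&0\end{smallmatrix}\right)^{2}\left(\begin{smallmatrix}2n+3&1\\1&0\end{smallmatrix}\right)$: stripping each basic matrix $\left(\begin{smallmatrix}a&1\\1&0\end{smallmatrix}\right)$ corresponds to $a$ Cremona moves that peel off the matching block of $a$ equal weights. Concretely, the $2n+3$ moves in the first group each pair a weight $5q_k-2p_k$ with an entry $5q_k-d_k-p_k$ from $\btm_k$ and one additional $\btm_k$-entry of type $d_k-2q_k$, with defect chosen to zero the weight; the next two moves similarly strip $(p_k-2q_k)^{\times 2}$; the final two moves strip $q_k^{\times 2}$ using the entries $d_k-q_k, q_k$ of $\btm_k$. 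Throughout, the block $W(\ov{p}_k,\ov{q}_k)$ is never touched.

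The main obstacle is verifying that after the whole recipe is applied, the degree has become $\ov{d}_k$ and the combined $(\btm_k,\text{prepended})$-region has collapsed to the single nonzero entry $\ov{m}_k$ with all other entries zero. Two observations make this tractable. First, once the sequence of index triples is fixed the composite Cremona transformation is a fixed linear map on $(d_k,p_k,q_k)$, because each Cremona move acts linearly on the full tuple and the entries of $\btm_k$ are affine in $(d_k,p_k,q_k)$ by Lemma~\ref{lem:cremonaIdent}. Second, both $\bE_k(n)$ and $\bB_k(n)$ satisfy the common recursion $X_{k+1}=(2n+5)X_k-X_{k-1}$, so by linearity it suffices to verify the desired equality of outputs at two consecutive values, say $k=2$ and $k=3$. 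Those checks reduce to straightforward bookkeeping using the identities $(2n+5)\ov{d}=(n+2)\ov{p}+(n+3)\ov{q}$ and $(2n+5)\ov{m}=(n+1)\ov{p}+(n+4)\ov{q}$ from Lemma~\ref{lem:cremonaIdent}.
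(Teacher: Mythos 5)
Your high-level strategy---exhibit a fixed sequence of Cremona moves reducing $\bE_k(n)$ to $\bB_k(n)$, then appeal to Lemma~\ref{lem:bBperfect}---matches the paper, and your linearity-plus-recursion shortcut (verify only $k=2,3$ and deduce the rest from the shared recursion) is a genuinely nice simplification that the paper does not use: the paper instead carries out the entire symbolic computation of the reduction for general $k$. The identification of the block structure of $W(p_k,q_k)$ via Corollary~\ref{cor:CFEnk} and Lemma~\ref{lem:matrix} is also correct.

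However, there is a concrete gap: the specific recipe of Cremona moves you describe does not work, and you have not demonstrated any recipe that does. For the first group you propose $2n+3$ moves, each pairing a weight $5q_k-2p_k$ with an entry $5q_k-d_k-p_k$ from $\btm_k$ and ``one additional $\btm_k$-entry of type $d_k-2q_k$,'' with defect chosen to zero the weight. But $\btm_k$ contains only two entries of type $d_k-2q_k$, so the $2n+3$ moves cannot each use a distinct one; and if the move uses degree $d_k$ and those three entries, the defect is $d_k - (5q_k-2p_k) - (5q_k-d_k-p_k) - (d_k-2q_k) = d_k + 3p_k - 8q_k$, which differs from the value $2p_k-5q_k$ that would zero the weight unless $d_k = 3q_k - p_k$, and that fails already for $\bE_1(n)$ (where $d_1 = 22+10n$ while $3q_1 - p_1 = 5+2n$). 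The true reduction is considerably more intricate: the paper's proof uses six distinct batches of moves, with several rearrangements and simplifications, before arriving at $(\ov{d}_k,\ov{m}_k)$. Your linearity argument, if you had the correct sequence in hand, would indeed let you verify only $k=2$ and $k=3$; but those checks would still need to be carried out symbolically in $n$ (since the sequence length depends on $n$), so the savings over the paper's direct symbolic computation in both $k$ and $n$ are partial. As written the proposal has a real gap: it asserts, without a correct recipe, that a fixed chain of moves lands exactly on $\bB_k(n)$, and the chain you sketch provably does not.
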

\begin{proof}
In this proof, we continually rearrange the classes $\bE,$ so we will only use a semicolon to distinguish the first element $d$ of the tuple $\bE$. Further, we will simplify notation by omitting the decorations by $n$ and $k$. 
Corollary~\ref{cor:CFEnk} and equation \eqref{eq:CFBnk} together explain the relation between  the continued fractions of $p/q$ and $\ov{p}/\ov{q}$, and imply  that 
\[ \bE=\bigl(d;\btm, q^{\times 2},(p-2q)^{\times 2}, (5q-2p)^{\times (2n+3)}, W(\ov{p},\ov{q})\bigr)
\]
since $\ov{p} = 5q-2p$ and $ \ov{q} = (p-2q) - (2n+3)(5q-2p)$.
We aim to reduce $\bE$ to the class
\[ \bigl(\ov{d};\ov{{m}};W(\ov{p},\ov{q})\bigr).\]
Since the last entries in these tuples coincide, it
suffices to show that $$
v: = \bigl(d;\btm,q^{\times 2},(p-2q)^{\times 2},(5q-2p)^{\times (2n+3)}\bigr)
$$ can be reduced to $(\ov{d};\ov{{m}}).$

By substituting for $\btm$ using the formula in  Lemma~\ref{lem:cremonaIdent}, we can rearrange $v$  to
\[ v_0:=\bigl(d;\ d-q,q^{\times 3},(d-2q)^{\times 2},(p-2q)^{\times 2},(5q-2p)^{\times (2n+3)},(5q-d-p)^{\times (2n+5)}\bigr).\]
We compute that 
\[ c_{4,6,7}c_{1,4,5}c_{1,2,3}(v_0)=\bigl(-p+3q;\ 0^{\times 3},5q-d-p,0,d-p,0,p-2q,(5q-2p)^{\times (2n+3)},(5q-d-p)^{\times (2n+5)}\bigr).\]
By rearranging this and dropping the zeros, we find that $v_0$ is Cremona equivalent to
\[ v_1:=\bigl(3q-p;\ p-2q,(5q-2p)^{\times (2n+3)},(5q-d-p)^{\times (2n+6)},d-p\bigr).\]
We now take a composition of $n+1$ Cremona moves 
\[c_{1,2n+2,2n+3} \hdots c_{1,4,5}c_{1,2,3}(v_1).\]
Each of these has defect $(3q-p)-(p-2q)-2(5q-2p)=2p-5q,$ and one can check that  the result of applying these $n+1$ moves to $v_1$ is:
\begin{align*} \bigl(3q-p+(n+1)(2p-5q);\ &p-2q+(n+1)(2p-5q), 0^{\times (2n+2)},\\ &\qquad (5q-2p)^{\times (2n+3)},(5q-d-p)^{\times (2n+6)},d-p\bigr).\end{align*}
Simplifying and rearranging, we get that $v_1$ is Cremona equivalent to
\[v_2:=\bigl((1+2n)p-(2+5n)q;\ (3+2n)p-(7+5n)q,5q-2p,(5q-d-p)^{\times (2n+6)},d-p\bigr).\]
We now apply $c_{1,2,3}$ to $c_2$ and rearrange/drop the zeros to get
\[ v_3:=\bigl(d+2(1+n)p-(7+5n)q;\ d+2(2+n)p-(12+5n)q,(5q-d-p)^{\times (2n+5)},(d-p)^{\times 2}\bigr).\]
We next apply the composition $c_{1,2n+2,2n+3},\hdots c_{1,2,3}$ of $n+1$ Cremona moves to $v_3$. These  each have a defect of $2d-5q$, resulting in: 
\begin{align*}v_4 = \bigl(d(3+2n)+(2+2n)p-2q(6+5n);\ & d(3+2n)+2(2+n)p-(17+10n)q, \\ 
&\ \  \ (d-p)^{\times (2n+2)},(5q-d-p)^{\times 3},(d-p)^{\times 2}\bigr)\end{align*}
We can now apply $c_{2n+6,2n+5,2n+4}$ (which acts on the three terms of size $5q-d-p$) and use Lemma~\ref{lem:cremonaIdent} to simplify and rearrange, to obtain   
\[ v_5:=\bigl(2d(2+n)+p(3+2n)-(17+10n)q;\ \
d(3+2n)+2(2+n)p-(17+10n)q, 
(d-p)^{\times (2n+4)},0^{\times 3}\bigr).
\]
We then take the composition of the $n+2$ Cremona moves $c_{1,2n+4,2n+5}\hdots c_{1,2,3}$, which  each have a defect of $p-d$, 
to get the pair
\[ \bigl(d(2+n)+(5+3n)p-(17+10n)q;\  d(1+n)+3(2+n)p-(17+10n)q\bigr).\]
Using Lemma~\ref{lem:cremonaIdent}~(ii), this simplifies to:
\[ v_6:=\bigr(-d(3+n)+(1+n)p+5q; -d(4+n)+(2+n)p+5q\bigl).\]

We finish the proof by checking that $v_6 = (\ov{d}, \ov{m})$; in other words
\[ \ov{d}=-d(3+n)+(1+n)p+5q \quad \text{and} \quad \ov{m}=-d(4+n)+(2+n)p+5q.\]
But Lemma~\ref{lem:matrix} implies that
$(17+10n)\ov{p}+5\ov{q}=p$ and $(7+4n)\ov{p}+2\ov{q}=q.$ Hence writing $v_6$ in terms of $\ov{p}$ and $\ov{q}$ (first using Lemma~\ref{lem:cremonaIdent} to write $d$ in terms of $p,q$), we get that 
\[ v_6=\left(\frac{(2+n)\ov{p}+(3+n)\ov{q}}{5+2n};\frac{(1+n)\ov{p}+(4+n)\ov{q}}{5+2n}\right)=(\ov{d},\ov{m}),\]
where the last equality follows from Lemma~\ref{lem:cremonaIdent}. This completes the proof.
\end{proof}

\subsection{The classes are obstructive}

We now check that condition (i) in Proposition~\ref{prop:stair} holds.
In view of Lemma~\ref{lem:unobs} we know that each step $\bE_n$ is live at its center for $\Om_B$ where 
 $B=(1;\frac{m_1}{d},\hdots,\frac{m_n}{d}),$ and need to check that it remains obstructive for the limiting domain.

\begin{lemma} \label{lem:nontrivial}
    For $k,n \geq 0$, the classes $\bE_k(n)$ are obstructive at their centers $p_k(n)/q_k(n)$ for $c_{\bm{b}(n)}.$
\end{lemma}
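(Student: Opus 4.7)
The plan is to rewrite the obstructiveness condition at the center $p_k/q_k$ in terms of the error vector $\bm{e}_k := \btm_k - d_k\bm{b}_n$, exploit the recursion structure to reduce to the seed case $k = 0$, and then verify the base case via a clean algebraic identity. Since $\bE_k$ is perfect by Lemma~\ref{lem:classesPerfect}, the Diophantine identity \eqref{eq:DiophPerfect} gives $p_kq_k = d_k^2 - \|\btm_k\|^2 + 1$. Combined with Lemma~\ref{lem:nontrivialInequ}, the condition $\mu_{\bE_k, \bm{b}_n}(p_k/q_k) > V_{\bm{b}_n}(p_k/q_k)$ rearranges (after substituting $\btm_k = d_k\bm{b}_n + \bm{e}_k$) to
\[ F_k := (1 - \|\bm{e}_k\|^2)(1 - \|\bm{b}_n\|^2) - (\bm{e}_k \cdot \bm{b}_n)^2 > 0. \]

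Next I would exploit the fact that $d_k$ and each $\Tm_{kj}$ satisfy the same recursion $x_{k+1} = t_n x_k - x_{k-1}$ with $t_n = 2n+5$. By Lemma~\ref{lem:recur1}, $d_k = D\la^k + \ov{D}\,\ov{\la}^k$ and $\Tm_{kj} = M_j\la^k + \ov{M}_j\,\ov{\la}^k$ with $\la\ov{\la} = 1$ and $\la > 1$; Corollary~\ref{cor:dom} identifies $b_{nj} = M_j/D$, so the $\la^k$ contributions to $\bm{e}_k$ cancel and $\bm{e}_k = \ov{\la}^k\bm{E}$ with $\bm{E} := \bm{e}_0$ independent of $k$. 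Consequently
\[ F_k = \bigl(1 - \ov{\la}^{2k}\|\bm{E}\|^2\bigr)(1 - \|\bm{b}_n\|^2) - \ov{\la}^{2k}(\bm{E}\cdot\bm{b}_n)^2 \]
is a nonincreasing function of $\ov{\la}^{2k}$, which is itself decreasing in $k$ since $0 < \ov{\la} < 1$. Hence $F_k \geq F_0$ for all $k \geq 0$, reducing the problem to $F_0 > 0$.

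For the base case, the seed $\bE_0 = (2; 1, 1; W(3,1))$ has $d_0 = 2$ and $\btm_0 = (1, 1, 0, \ldots, 0)$, so using $b_1 + b_2 = 1$ and \eqref{eqn:bbn},
\[ \bm{e}_0 = \be_n\bigl(-2(2{+}n),\, 2(2{+}n),\, -4(2{+}n),\, -4(2{+}n),\, -2,\ldots,-2\bigr) \]
with $-2$ repeated $5+2n$ times. Direct computation, together with $\|\bm{b}_n\|^2 = 1 - \Vol(\Om_n)$ from \eqref{eq:ident}, yields the two clean identities
\[ \|\bm{e}_0\|^2 = 4\be_n^2\bigl[10(2+n)^2 + (5+2n)\bigr] = 4\|\bm{b}_n\|^2 - 2, \qquad \bm{e}_0 \cdot \bm{b}_n = -\tfrac{1}{2}\|\bm{e}_0\|^2, \]
which combine to give the striking simplification
\[ F_0 = \tfrac{1}{4}\bigl(2 - 3\|\bm{e}_0\|^2\bigr). \]
Thus $F_0 > 0$ is equivalent to $\|\bm{e}_0\|^2 < 2/3$, and this is immediate from the explicit formula $\be_n = 1/(17 + 8n + \sqrt{(3+2n)(7+2n)})$ by a one-line estimate (the limiting value as $n \to \infty$ is $\|\bm{e}_0\|^2 \to 2/5 < 2/3$).

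The main technical step is noticing the two algebraic identities that collapse $F_0$ to the single quantity $(2 - 3\|\bm{e}_0\|^2)/4$; these identities reflect the structural compatibility between $\bE_0$ and the limiting domain $\Om_n$, and once they are in hand the verification is trivial.
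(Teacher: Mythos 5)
Your proposal is correct and takes a genuinely different, arguably cleaner route from the paper. The paper proves obstructiveness by substituting the explicit formula for $\btm_k\cdot\bm{b}_n$ into the inequality $\Vol > (d_k - \btm_k\cdot\bm{b}_n)^2/(p_kq_k)$, showing the right-hand side is monotone increasing in $k$ via a calculus argument on $f(z) = g_1\sqrt{z} + (1-3g_1)/\sqrt{z}$, and then verifying by a lengthy direct computation that its limit is exactly $\Vol$. You instead reformulate the condition as $F_k := (1-\|\bm{e}_k\|^2)\Vol - (\bm{e}_k\cdot\bm{b}_n)^2 > 0$ and exploit the structural fact that $\bm{e}_k = \ov\la^k\bm{e}_0$ --- an immediate consequence of Corollary~\ref{cor:dom} and Lemma~\ref{lem:recur1}, since the $\la^k$ parts of $\btm_k$ and $d_k\bm{b}_n$ cancel by the definition of $\bm{b}_n$. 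Since $F_k$ is linear and decreasing in $\ov\la^{2k}$, and $0<\ov\la<1$, this reduces everything to the base case $F_0>0$, which collapses to $\|\bm{e}_0\|^2<2/3$ after your two algebraic identities. This avoids the limit computation entirely and makes transparent \emph{why} the inequality survives as $k\to\infty$: $F_k$ actually increases to $\Vol > 0$, whereas the paper's normalized quantity $F_k/(p_kq_k)$ decreases to $0$.

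Two small points you should tighten. First, Lemma~\ref{lem:nontrivialInequ} as stated is a one-way implication (obstructive $\Rightarrow$ inequality), while you are using its converse. For a perfect class evaluated at its own center $p/q$ the implication \emph{is} an equivalence, because $\bm{m} = W(p,q) = q\,\bw(p/q)$ makes the Cauchy--Schwarz step an equality; alternatively, just verify the equivalence directly from $\mu_{\bE_k,\bm{b}_n}(p_k/q_k) = p_k/(d_k - \btm_k\cdot\bm{b}_n)$ and the Diophantine identity $p_kq_k = d_k^2 - \|\btm_k\|^2 + 1$. Either way it should be said explicitly rather than attributed to the lemma as stated. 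Second, the ``one-line estimate'' deserves the line: since $\sqrt{(3+2n)(7+2n)} > 2n+4$, one has $\be_n < 1/(21+10n)$ and hence $\|\bm{e}_0\|^2 < \tfrac{40n^2+168n+180}{100n^2+420n+441} < \tfrac12 < \tfrac23$; noting only the limit $2/5$ as $n\to\infty$ does not by itself bound the finite-$n$ values. With those two clarifications the argument is complete.
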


\begin{proof}
We must show that
\begin{equation} \label{eq:nontrivEx}
     \frac{p_k}{d_k-\btm_k \cdot \bm{b}(n)} > \sqrt{\frac{p_k}{q_k \cdot \Vol}}
\end{equation}
where we have suppressed the $n$ in $p_k(n), q_k(n), \btm_k(n)$ for ease of notation and $\Vol:=\Vol(\Omega_n)$. 
The inequality \eqref{eq:nontrivEx} is equivalent to
\[ \Vol > \frac{(d_k-\btm_k\cdot \bm b(n))^2}{p_kq_k}=:R_k. \]
To show that this holds, we will first show that for a fixed $n$, $R_k$ is strictly increasing in $k$ and then show that $\lim_{k\to\infty}R_k=\Vol .$

Using equation~\eqref{eqn:bbn} and Lemma~\ref{lem:cremonaIdent}, we compute
\begin{align*} \btm_k \cdot \bm{b}(n)&=\left(d_k-q_k,q_k,(d_k-2q_k)^{\times 2}, (5q_k-d_k-p_k)^{\times (2n+5)}\right) \cdot \bm{b}(n)  \\
&=\frac{d_k}{2}+\be_n(d_k(5+3n)-(5+2n)p_k+5q_k).
\end{align*}
Using this expression for $\btm_k \cdot \bm{b}(n)$ and the expression for $d_k$ from Lemma~\ref{lem:cremonaIdent}, we get
\begin{align} \label{eq:obsIdent}
     R_k&=\frac{\bigl(-(2+n)p_k+\be_n(10n^2+42n+45)(p_k-3q_k)+(11+5n)q_k\bigr)^2}{(5+2n)^2p_kq_k} \\ \notag
     &=\frac{1}{(5+2n)^2}\left(\sqrt\frac{p_k}{q_k}(-(2+n)+1/\be)n(1/2-\Vol))+\sqrt\frac{q_k}{p_k}((11+5n)-3/\be_n(1/2-\Vol))\right)^2,
\end{align}
where we simplified the right hand side 
using  $\be_n^2(10n^2+42n+45)=1/2-\vol$ from  \eqref{eq:ident}. 
Define $g_1(n)$ and $g_2(n)$ as follows \[ (5+2n)g_1(n)=-(2+n)+\frac1{\beta_n}\left(\frac12-\Vol\right) \quad \text{and} \quad  (5+2n)g_2(n)=11+5n-\frac3{\beta_n}\left(\frac12-\Vol\right),\]
so that
\[ R_k=\left(g_1(n)\sqrt{\frac{p_k}{q_k}}+g_2(n)\sqrt{\frac{q_k}{p_k}}\right)^2.\]
Note, that $g_2(n)+3g_1(n)=1.$ 
Thus, if $z=p_k/q_k$, then
\[
        R_k=f(z)^2,
        \qquad
        f(z):=g_1(n)\sqrt z+(1-3g_1(n))\sqrt{1/z}.
\]

By Lemma~\ref{lem:recur} and Corollary~\ref{cor:CFEnk}, the centers $p_k/q_k$ decrease with $k$, and they lie in the interval $[2,3]$. Hence it is enough to show that $f$ is strictly decreasing on $[2,3]$ for a fixed $n$. We have that $2f'(z)=\frac{g_1(n)}{\sqrt{z}}-\frac{1-3g_1(n)}{z^{3/2}}$, so $f'(z) <0$ whenever
\[   z  < \frac{1}{g_1(n)}-3. \]
As $z \leq 3,$ we need to verify that $g_1(n) \leq \frac{1}{6}.$ One checks directly from the formula for $g_1(n)$ that
\[0<g_1(n)\leq g_1(0)<\frac16.\]
Hence $f'(z)<0$ for all $z\in[2,3]$. We conclude that $R_k$ is strictly increasing in $k.$ 

It remains to compute the limit of $R_k$. By construction, we have
\[ \bm b(n)=\lim_{k\to\infty}\frac{\btm_k}{d_k}.\]
Hence, we have that 
\[ \lim_{k\to\infty}\frac{d_k-\btm_k\cdot\bm b(n)}{d_k}=
1-\|\bm b(n)\|^2=\Vol.\]
Additionally, since the classes $\bE_k(n)$ are quasi-perfect, they satisfy
\[d_k^2-\|\btm_k\|^2-p_kq_k=-1.\]
Dividing by $d_k^2$ gives
\[ \lim_{k\to\infty}\frac{p_kq_k}{d_k^2}= 
1-\|\bm b(n)\|^2=\Vol.\]
Hence,
\[ \lim_{k \to \infty} R_k=\lim_{k\to\infty}
        \frac{(d_k-\btm_k\cdot\bm b(n))^2}{p_kq_k}=\frac{\Vol^2}{\Vol}=\Vol\]
        as desired. 
Since $R_k$ is strictly increasing and converges to $\Vol$, we have $R_k<\Vol$ for every finite $k$.
\end{proof}

\end{document}